\documentclass[reqno, 10pt]{amsart}
{
\usepackage{amsmath}
\usepackage[active]{srcltx}
\usepackage{amssymb}
\usepackage[dvips]{graphicx}
\usepackage{xcolor}
\usepackage{hyperref}
\usepackage[margin=1in]{geometry}

\usepackage{comment}

\usepackage{amsmath}
\usepackage[active]{srcltx}
\usepackage{amssymb}
\usepackage[dvips]{graphicx}
\usepackage{xcolor}
\usepackage{hyperref}

\usepackage{fancyhdr}

\fancyhf{}

\fancyhead[LE]{\small Weighted Uniform Distribution Along Primes} 
\fancyhead[RO]{\small V. Bergelson, G.Kolesnik and Y. Son}                 
\fancyfoot[C]{\thepage}                              

\pagestyle{fancy}
\fancypagestyle{plain}{%
  \fancyhf{} 
  \fancyfoot[C]{\thepage} 
}

\newtheorem{Theorem}{Theorem}[section]
\newtheorem{Lemma}[Theorem]{Lemma}
\newtheorem{Proposition}[Theorem]{Proposition}
\newtheorem{Corollary}[Theorem]{Corollary}
 \newtheorem*{theorem*}{Theorem}

\newtheorem{Example}[Theorem]{Example}

\theoremstyle{definition}
\newtheorem{Definition}[Theorem]{Definition}

\theoremstyle{definition}
\newtheorem{Remark}[Theorem]{Remark}
\numberwithin{equation}{section}
\usepackage{enumerate}

\begin{document}


\title{Weighted uniform distribution of subpolynomial functions along primes and applications} 
\author[V. Bergelson]{Vitaly Bergelson}
\address[V. Bergelson]{Department of Mathematics\\ Ohio State University \\ Columbus, OH 43210, USA}
\email{vitaly@math.ohio-state.edu}

\author[G. Kolesnik]{Grigori Kolesnik}
\address[G. Kolesnik]{Department of Mathematics\\ California State University \\ Los Angeles, CA 90032, USA}
\email{akolesnik628@gmail.com}

\author[Y. Son]{Younghwan Son}
\address[Y. Son]{Department of Mathematics\\ POSTECH \\ Pohang, 37673, South Korea}
\email{yhson@postech.ac.kr}

\date{\today}
\bigskip

\maketitle

\setcounter{section}{0}

\begin{abstract}
Let $u(x)$ be a subpolynomial function in a Hardy field. 
We establish necessary and sufficient conditions for the weighted uniform distribution of the sequences $(u(n))_{n\in\mathbb{N}}$ and $(u(p_n))_{n\in\mathbb{N}}$, where $p_n$ denotes the $n$-th prime.
This extends the main result of \cite{BKS} to the weighted setting and leads to new applications in uniform distribution theory, ergodic theory, and additive combinatorics. 
\end{abstract}

\renewcommand{\thefootnote}{\fnsymbol{footnote}} 
\footnotetext{\emph{Mathematics Subject Classification (2020)} 11K06, 37A44,  }    \renewcommand{\thefootnote}{\arabic{footnote}} 

\section{Introduction}
The goal of this paper is to extend the main result obtained in \cite{BKS} (Theorem \ref{BKS:prev} below) to wider class of functions coming from Hardy fields. 
This generalization enables us to obtain new results in the theory of uniform distribution, ergodic theory, and additive combinatorics. 

Let $\mathbf B$ denote the set of germs at $+ \infty$ of continuous real functions on $\mathbb{R}$.
Note that $\mathbf B$ forms a ring with respect to pointwise addition and multiplication.
A {\em Hardy field} is any subfield of $\mathbf B$ which is closed under differentiation. By ${\mathbf U}$ we denote the union of all Hardy fields.
A classical example of a Hardy field is the class $\mathbf{L}$ of logarithmico-exponential functions introduced by Hardy in \cite{Har1, Har2}, that is, the collection of all germs at $+ \infty$ of real-valued functions that can be constructed using the real constants, the functions $\exp x$ and $\log x$ and the operations of addition, multiplication, division and composition of functions. 

It is a classical fact that for any $u \in {\mathbf U}$, $\lim\limits_{x \rightarrow \infty} u(x)$ exists as an element of $\mathbb{R} \cup \{- \infty, \infty\}$. This implies that periodic functions such as $\sin x$ and $\cos x$ do not belong to ${\mathbf U}$. Also if $u_1$ and $u_2$ belong to the same Hardy field, then the limit $\lim\limits_{x \rightarrow \infty} (u_1(x) - u_2(x))$ and the limit $\lim\limits_{x \rightarrow \infty} \frac{u_1(x)}{u_2(x)}$ exist (they may be infinite). 
In addition, $\mathbf{U}$ is closed under differentiation and integration.
See \cite{Bos} and some references therein for more information about Hardy fields.

Given two functions $u_1, u_2$ belonging to the same Hardy field, we write $u_1(x) \ll u_2(x)$ when there exists $C > 0$ such that $|u_1(x)| \leq C |u_2(x)|$ for all $x$ large enough, and $u_1(x) \prec u_2(x)$ when $\lim\limits_{x \rightarrow \infty} \frac{u_1(x)}{u_2(x)} = 0$.
A function $f \in \mathbf{U}$ is said to be subpolynomial if  $f(x) \ll x^n$ for some positive integer $n$.  

The following result was obtained in \cite{BKS}. 
(The main novelty in Theorem \ref{BKS:prev} is item (2); the equivalence of (1) and (3) was obtained by Boshernitzan in \cite{Bos}).
Throughout this paper, we use the convention that $\mathbb{N} = \{1, 2, 3, \dots \}$. 
\begin{Theorem}[Theorem 1.6 in \cite{BKS}]
\label{BKS:prev}
For a subpolynomial function $u(x) \in \mathbf{U}$, the following are equivalent:
\begin{enumerate}
\item $(u(n))_{n \in \mathbb{N}}$ is uniformly distributed $\bmod \, 1$.
\item $(u(p_n))_{n \in \mathbb{N}}$ is uniformly distributed $\bmod \, 1$, where $p_n$ is the $n$-th prime. 
\item For any polynomial $q(x) \in \mathbb{Q}[x]$,
\begin{equation*} \lim_{x \rightarrow \infty} \frac{|u(x) - q(x) |}{\log x} = \infty.
\end{equation*}
\end{enumerate}
\end{Theorem}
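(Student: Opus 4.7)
The plan is to build the full cycle using condition (3) as pivot. Since the equivalence (1) $\Leftrightarrow$ (3) was established by Boshernitzan \cite{Bos}, the task reduces to proving (3) $\Rightarrow$ (2) and, by contrapositive, that failure of (3) forces failure of (2); together these close the cycle.

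For (3) $\Rightarrow$ (2), I would verify Weyl's criterion along primes. For each nonzero integer $h$, it suffices to show that
\begin{equation*}
\frac{1}{\pi(N)} \sum_{p \leq N} \exp\bigl( 2 \pi i \, h \, u(p) \bigr) = o(1) \quad \text{as } N \to \infty.
\end{equation*}
The standard tool is a combinatorial identity for the von Mangoldt function---Vaughan's identity, or Heath--Brown's identity---which rewrites the prime-weighted sum as a combination of Type I bilinear sums $\sum_{m \leq M} a_m \sum_{n \leq N/m} \exp(2\pi i h u(mn))$ with $M$ small, and Type II bilinear sums $\sum_{m \sim M} \sum_{n \sim N/M} a_m b_n \exp(2\pi i h u(mn))$ with $M$ in an intermediate range. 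Type I sums are handled by iterated van der Corput differencing on the inner exponential sum, using the derivative control afforded by the Hardy field structure together with condition (3). Type II sums are handled by Cauchy--Schwarz in the outer variable, reducing the problem to bounding
\begin{equation*}
\sum_m \exp\bigl( 2 \pi i \, h [u(mn) - u(mn')] \bigr)
\end{equation*}
for generic pairs $(n, n')$, which in turn is treated by further differencing.

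I expect the Type II step to be the main obstacle: one must show that the auxiliary Hardy-field functions $m \mapsto u(mn)$ and $m \mapsto u(mn) - u(mn')$ inherit a quantitative form of (3) strong enough to extract cancellation. Setting up the correct uniform version of Boshernitzan's criterion that survives dilation and subtraction within the Hardy field, and works uniformly in the auxiliary parameters $n, n'$, is the principal technical ingredient.

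For the contrapositive direction (failure of (3) implies failure of (2)), suppose some $q \in \mathbb{Q}[x]$ satisfies $(u(x) - q(x)) / \log x \not\to \infty$. Since $u - q$ lies in a Hardy field (extended to contain $\log x$ if necessary), $(u(x) - q(x))/\log x$ has a limit in $[-\infty, +\infty]$, so this limit must be finite, say $L$. Then $u(p_n) = q(p_n) + (L + o(1)) \log p_n$. The sequence $q(p_n) \bmod 1$ is supported on a finite set of values determined by the denominators of $q$ and by the fact that primes lie in reduced residue classes, while $L \log p_n$ is itself not uniformly distributed $\bmod \, 1$---an assertion that reduces, via the prime number theorem and summation by parts, to the classical non-equidistribution of $(L \log n) \bmod 1$. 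Combining these shows that $(u(p_n))$ cannot be uniformly distributed $\bmod \, 1$, contradicting (2) and completing the cycle.
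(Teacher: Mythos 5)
The paper does not reprove this theorem; it quotes it verbatim from \cite{BKS}, crediting the equivalence $(1) \Leftrightarrow (3)$ to Boshernitzan \cite{Bos} and the new item $(2)$ to \cite{BKS}. So there is no in-paper proof to compare against directly; judging your proposal on its own terms:

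For $(3) \Rightarrow (2)$, a Vaughan or Heath--Brown decomposition into Type~I and Type~II sums, with van der Corput differencing in the Type~I ranges and Cauchy--Schwarz plus differencing in the Type~II ranges, is indeed the standard route and plausibly what \cite{BKS} does. You correctly identify the crux --- proving Type~II cancellation uniformly in the auxiliary dilations $m \mapsto u(mn)-u(mn')$, which requires a form of Boshernitzan's criterion that survives dilation and differencing inside the Hardy field --- but the sketch stops precisely there. As an outline it is fine; it does not yet constitute a proof.

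The contrapositive $\neg(3) \Rightarrow \neg(2)$ has a genuine gap. You write $u(p_n) = q(p_n) + (L + o(1))\log p_n$ and then deduce failure of uniform distribution from the facts that $q(p_n) \bmod 1$ is finite-valued and $L\log p_n$ is not u.d. This does not close, for two reasons. First, the error term $o(1)\cdot\log p_n$ is generically unbounded --- e.g.\ if $u-q = L\log x + \sqrt{\log x}$, the error is $\sqrt{\log p_n} \to \infty$ --- so non-equidistribution of $L\log n$ alone says nothing about $r(p_n) := u(p_n)-q(p_n)$. Second, even granting non-equidistribution of $r(p_n)$, the sum of a sequence taking finitely many values mod $1$ and a non-u.d.\ sequence is not automatically non-u.d.\ without further argument. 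The clean fix, which is exactly what the present paper uses in proving the weighted generalization (Theorem \ref{BKS2}), is: pick a nonzero integer $k$ with $kq(n) \equiv 0 \bmod \mathbb{Z}$, so that $ku(p_n) \equiv kr(p_n) \bmod \mathbb{Z}$; note that $r$ lies in a Hardy field with $|r(x)|/\log x$ bounded, hence $(r(p_n))$ is eventually monotone with $\limsup_n |r(p_n)|/\log n < \infty$ (since $\log p_n \sim \log n$); invoke Niederreiter's criterion (Theorem \ref{lem:Nie} with $W(x)=x$: a monotone sequence whose size is $O(\log n)$ cannot be u.d.\ mod $1$) to conclude $(kr(p_n))$ is not u.d.; hence $(ku(p_n))$, and therefore $(u(p_n))$, is not u.d. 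You need the slow-growth criterion applied to the whole remainder $r$, not just its leading term $L\log x$, together with the multiplication-by-$k$ device to neutralize $q$.
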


Note that Theorem \ref{BKS:prev} implies the classical facts that $(\log n)_{n \in \mathbb{N}}$ and $(\log p_n)_{n \in \mathbb{N}}$ are not uniformly distributed $\bmod \, 1$. (See Example 2.4 in p.8 in \cite{KN}. See also \cite{Tsu} and \cite{Wi}.)
On the other hand, it is known (\cite{Tsu}) that $(\log n)_{n \in \mathbb{N}}$ is $1/n$-uniformly distributed $\bmod \, 1$ meaning that
for any interval $I = [a,b) \subset [0,1)$,
\begin{equation*} 
\lim_{N \rightarrow \infty} \frac{1}{\log N} \sum_{n=1}^N \frac{1}{n} 1_{[a,b)} (\{ \log n \}) = b-a.
 \end{equation*}
Here (and throughout the paper) $\{x\}$ denotes the fractional part of $x$. 

 It is known (see \cite{Wh}) that the sequence $(\log p_n)_{n \in \mathbb{N}}$ is also $1/n$-uniformly distributed $\bmod \, 1$. 
The fact that $(\log n)_{n \in \mathbb{N}}$ and $(\log p_n)_{n \in \mathbb{N}}$ are $1/n$-uniform distributed is a special case of a general theorem (Theorem \ref{BKS} below)  which forms an extension of Theorem \ref{BKS:prev} to slowly growing functions from a Hardy field. 

In order to  present our main result, we need to introduce first the general definition of {\em weighted uniform distribution}.
 \begin{Definition}
 \label{def:weighted-u.d.}
 Let $(w(n))_{n \in \mathbb{N}}$ be a non-increasing, positive sequence such that $\lim\limits_{N \rightarrow \infty} W(N) = \infty$, where $W(N) = \sum\limits_{n=1}^N w(n)$. 
A real sequence $(x_n)_{n \in \mathbb{N}}$ is {\em $w(n)$-uniformly distributed $\bmod \, 1$} if, for any interval $I = [a,b) \subset [0,1)$,
\begin{equation*}
\label{def:w-ud}
\lim_{N \rightarrow \infty} \frac{1}{ W(N)} \sum_{n=1}^N w(n) 1_{[a,b)} (\{ x_n \}) = b-a.
\end{equation*}
\end{Definition}

\begin{Remark}
In this paper, as a rule,  the weight sequence $(w(n))_{n \in \mathbb{N}}$ will come from a Hardy field function $w(x)$ that  has the following properties: 
\begin{enumerate}
\item $w(x)$ is  positive and non-increasing,
\item $w(x) = W'(x)$ for some Hardy field function $W(x)$ such that $\lim\limits_{x \rightarrow \infty} W(x) = \infty$.
\end{enumerate}
In this case, we will use for the normalization the sequence $W(N), N \in \mathbb{N},$ rather than the sequence of discrete sums $\sum_{n=1}^N w(n)$. 
Clearly, such a replacement will not cause any trouble, since 
$$ \lim_{N \rightarrow \infty} \frac{\sum_{n=1}^N w(n)}{W(N)} = 1.$$ 
\end{Remark}

The examples discussed above demonstrate that the notion of $w(n)$-uniform distribution $\bmod \, 1$ is more general than that of the classical uniform distribution. 
Also, by applying the summation by parts, one can show that any uniformly distributed $\bmod \, 1$ sequence is also $w(n)$-uniformly distributed $\bmod \, 1$:
\begin{Theorem} [cf. Lemma 7.1 of Chapter 1 in \cite{KN}; Theorem 7 in \cite{Tsu}]
\label{lem:Tsu1}
 Let $(w(n))_{n \in \mathbb{N}}$ be a non-increasing, positive sequence such that $\lim\limits_{N \rightarrow \infty} W(N) = \infty$, where $W(N) = \sum\limits_{n=1}^N w(n)$. 
 Let $(a_n)_{n \in \mathbb{N}}$ be a sequence of complex numbers. If 
 \[ \lim_{N \rightarrow \infty} \frac{1}{N} \sum_{n=1}^N a_n = a,\] 
 then 
  \[ \lim_{N \rightarrow \infty} \frac{1}{W(N)} \sum_{n=1}^N w(n) \, a_n = a.\] 
In particular, if a sequence $(x_n)_{n \in \mathbb{N}}$ of real numbers is uniformly distributed $\bmod \, 1$, then it is $w(n)$-uniformly distributed $\bmod \, 1$.
\end{Theorem}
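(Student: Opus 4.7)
The plan is to proceed by classical Abel summation (summation by parts), the same tool referenced in Chapter 1 of Kuipers--Niederreiter. After normalising we reduce to $a = 0$: replacing $a_n$ by $a_n - a$ uses only that $\frac{1}{W(N)}\sum_{n=1}^N w(n)\, a = a \cdot \frac{\sum w(n)}{W(N)} = a$. So we may assume $S_N := \sum_{n=1}^N a_n = o(N)$ and we wish to show $\frac{1}{W(N)}\sum_{n=1}^N w(n) a_n \to 0$.

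Write $a_n = S_n - S_{n-1}$ (with $S_0 = 0$) and perform summation by parts to obtain
\begin{equation*}
\sum_{n=1}^N w(n) a_n \;=\; w(N) S_N \;+\; \sum_{n=1}^{N-1} S_n \bigl(w(n) - w(n+1)\bigr).
\end{equation*}
The first term is harmless: since $w$ is non-increasing, $N w(N) \le W(N)$, so $|w(N) S_N|/W(N) \le |S_N|/N \to 0$. The main work is showing that the second term, divided by $W(N)$, also tends to $0$. This is where the non-increasing hypothesis on $w$ is essential: it guarantees the coefficients $w(n) - w(n+1)$ are nonnegative with a telescoping bounded total mass $w(1) - w(N)$.

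For the second term I would fix $\varepsilon > 0$, pick $N_0$ with $|S_n| \le \varepsilon n$ for $n \ge N_0$, and split the sum at $N_0$. The head $\sum_{n < N_0} S_n(w(n)-w(n+1))$ is bounded by a constant depending only on $N_0$ (using the telescoping bound), hence contributes $o(1)$ after dividing by $W(N)$, which tends to infinity. For the tail one bounds $|S_n| \le \varepsilon n$ and applies a second summation by parts:
\begin{equation*}
\sum_{n=N_0}^{N-1} n \bigl(w(n) - w(n+1)\bigr) \;=\; N_0 w(N_0) - (N-1) w(N) + \sum_{n=N_0+1}^{N-1} w(n) \;\le\; N_0 w(N_0) + W(N).
\end{equation*}
Dividing by $W(N)$ yields a bound of $\varepsilon + o(1)$; since $\varepsilon$ is arbitrary, the whole expression tends to $0$.

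The concluding sentence of the theorem is then immediate: if $(x_n)$ is uniformly distributed $\bmod 1$ and $I = [a,b) \subset [0,1)$, apply the previous part to $a_n := 1_{[a,b)}(\{x_n\})$ to obtain $\frac{1}{W(N)}\sum w(n) 1_{[a,b)}(\{x_n\}) \to b - a$, which is exactly $w(n)$-uniform distribution. The main obstacle, as usual in summation-by-parts arguments, is keeping track of the telescoping identity and verifying that the non-increasing hypothesis on $w$ (which makes $w(n) - w(n+1) \ge 0$) is used in exactly the right place to make the bounds cleanly absorbable into $W(N)$; otherwise the argument is routine.
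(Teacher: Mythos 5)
Your proof is correct, and it follows exactly the summation-by-parts route the paper indicates (and which Kuipers--Niederreiter and Tsuji use for this lemma); the paper itself only cites the result without reproducing the argument. The key estimates---the bound $N w(N) \le W(N)$ from monotonicity of $w$, the telescoping bound for the head, and the second Abel summation for the tail---are all carried out correctly.
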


Similarly to the classical case, the following result and its proof are totally analogous to the classical case.
\begin{Theorem}[Theorems 5 and 6 in \cite{Tsu}]
\label{lem:Tsu:Equ}
 Let $(w(n))_{n \in \mathbb{N}}$ be a non-increasing, positive sequence such that $\lim\limits_{N \rightarrow \infty} W(N) = \infty$, where $W(N) = \sum\limits_{n=1}^N w(n)$. 
The following statements are equivalent.
\begin{enumerate}
\item  A sequence $(x_n)_{n \in \mathbb{N}}$ is $w(n)$-uniformly distributed $\bmod \, 1$.
\item For any continuous function $f$ on $[0,1]$, 
\begin{equation}
\lim_{N \rightarrow \infty} \frac{1}{ W(N)} \sum_{n=1}^N w(n) f ( x_n ) = \int_0^1 f(t) \, dt.
\end{equation}
\item For any non-zero integer $m$,
\begin{equation}
\lim_{N \rightarrow \infty} \frac{1}{ W(N)} \sum_{n=1}^N w(n) e^{ 2 \pi i m  x_n } =0.
\end{equation}
\end{enumerate}
\end{Theorem}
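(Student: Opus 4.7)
The plan is to prove the circular chain $(1) \Rightarrow (2) \Rightarrow (3) \Rightarrow (1)$, mimicking the classical Weyl equidistribution proof. The key observation is that positivity of the weights $w(n)$, together with the normalization by $W(N)$, makes the linear functional $f \mapsto \frac{1}{W(N)} \sum_{n=1}^N w(n) f(\{x_n\})$ monotone (order-preserving) and of total mass $1$, so the usual sandwich arguments carry over without modification.

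For $(1) \Rightarrow (2)$, I would first extend (1) by linearity to step functions on $[0,1)$. Given a continuous $f$ and $\epsilon > 0$, sandwich $f$ between step functions $\phi \leq f \leq \psi$ with $\int_0^1 (\psi - \phi) \, dt < \epsilon$. Monotonicity of the weighted averaging operator yields
\[ \frac{1}{W(N)} \sum_{n=1}^N w(n) \phi(\{x_n\}) \leq \frac{1}{W(N)} \sum_{n=1}^N w(n) f(\{x_n\}) \leq \frac{1}{W(N)} \sum_{n=1}^N w(n) \psi(\{x_n\}), \]
so letting $N \to \infty$ and then $\epsilon \to 0$ produces (2). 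The implication $(2) \Rightarrow (3)$ is immediate upon applying (2) to the continuous $1$-periodic functions $\cos(2\pi m t)$ and $\sin(2\pi m t)$, since $\int_0^1 e^{2\pi i m t}\, dt = 0$ whenever $m \neq 0$.

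For $(3) \Rightarrow (1)$, combine (3) with the trivial $m=0$ case (where the weighted average equals $1$ by choice of normalization) and linearity, to conclude that $\frac{1}{W(N)} \sum_{n=1}^N w(n) T(\{x_n\}) \to \int_0^1 T(t)\, dt$ for every trigonometric polynomial $T$ of period $1$. Then, given $I = [a,b) \subset [0,1)$ and $\epsilon > 0$, use the classical Fejér-kernel construction to produce $1$-periodic trigonometric polynomials $T^{\pm}_\epsilon$ with $T^-_\epsilon \leq 1_I \leq T^+_\epsilon$ on $[0,1)$ and $\int_0^1 (T^+_\epsilon - T^-_\epsilon)\, dt < \epsilon$; the monotone sandwich followed by $N \to \infty$ and $\epsilon \to 0$ yields (1).

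No step presents genuine difficulty: $(1) \Rightarrow (2)$ and $(3) \Rightarrow (1)$ each rest on two-sided approximation (of a continuous function by step functions, respectively of an indicator by trigonometric polynomials) combined with positivity of the weights, while $(2) \Rightarrow (3)$ is a one-line application. The only ingredient where one might pause is the two-sided trigonometric approximation of $1_I$ in $(3) \Rightarrow (1)$, but this step is entirely independent of the sequence $(x_n)$ and the weights $(w(n))$, and is standard material from the classical Weyl proof.
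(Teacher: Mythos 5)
Your proposal is correct, and it is exactly what the paper has in mind: the paper does not write out a proof but instead cites Tsuji and remarks that the argument is ``totally analogous to the classical case,'' which is precisely the classical Weyl-criterion proof you reproduce (sandwich by step functions for $(1)\Rightarrow(2)$, evaluate at exponentials for $(2)\Rightarrow(3)$, and a two-sided trigonometric approximation of $1_{[a,b)}$ for $(3)\Rightarrow(1)$), with the only role of the weights being that positivity and the exact normalization $\sum_{n\le N} w(n)=W(N)$ preserve the monotone, mass-one character of the averaging operator.
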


If a Hardy field is not a proper subfield of a larger Hardy field, then it is called a maximal Hardy field. 
Let $\mathbf{E}$ be the intersection of all maximal Hardy fields. 
It is known that $\mathbf{E}$ is closed under integration, composition and contains the Hardy class $\mathbf{L}$. 
In fact, $\mathbf{E}$ is strictly larger than $\mathbf{L}$, as $\sin (1/x) \in \mathbf{E} \setminus \mathbf{L}$. Moreover, while $\mathbf{L}$ is not closed under integration, $\mathbf{E}$ is closed under integration. 
 (See Corollary 8.3 in \cite{Bos1981} and Introduction in \cite{Bos1982}.)  
 
The following result is an extension of Theorem \ref{BKS:prev}. 
Note that $(1), (2), (3)$ in Theorem \ref{BKS:prev} correspond to $(1), (4), (7)$ in Theorem \ref{BKS} for the case when $W(x) = x$.
Notice also that the condition $W(x) \in \mathbf{E}$ guarantees that the limit in formula \eqref{eq:cond:wud} exists. 

\begin{Theorem}
\label{BKS}
Let $W(x) \in \mathbf{E}$ be such that $\lim\limits_{x \rightarrow \infty} W(x) = \infty$ and $w(x) := W'(x)$ is a non-increasing positive function.  
For a subpolynomial function $u(x) \in \mathbf{U}$, the following statements are equivalent:
\begin{enumerate}
\item $(u(n))_{n \in \mathbb{N}}$ is $w(n)$-uniformly distributed $\bmod \, 1$.
\item For any $a, d \in \mathbb{N}$, $(u(an + d))_{n \in \mathbb{N}}$ is $w(n)$-uniformly distributed $\bmod \, 1$.
\item For some $a, d \in \mathbb{N}$, $(u(an + d))_{n \in \mathbb{N}}$ is $w(n)$-uniformly distributed $\bmod \, 1$.
\item $(u(p_n))_{n \in \mathbb{N}}$ is $w(n)$-uniformly distributed $\bmod \, 1$, where $p_n$ is the $n$-th prime. 
\item For any $a, d \in \mathbb{N}$ with $\gcd (a,d)=1$, $(u(p_n^{a,d}))_{n \in \mathbb{N}}$ is $w(n)$-uniformly distributed $\bmod \, 1$, where $p_n^{a,d}$ is the $n$-th prime in the arithmetic progression $\{am + d: m = 0, 1, 2, \dots\}$. 
\item For some $a, d \in \mathbb{N}$ with $\gcd (a,d)=1$, $(u(p_n^{a,d}))_{n \in \mathbb{N}}$ is $w(n)$-uniformly distributed $\bmod \, 1$.
\item For any polynomial $q(x) \in \mathbb{Q}[x]$,
\begin{equation}
\label{eq:cond:wud}
\lim_{x \rightarrow \infty} \frac{|u(x) - q(x)|}{\log W(x)} = \infty.
\end{equation}
\end{enumerate}
\end{Theorem}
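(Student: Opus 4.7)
The plan is to apply the weighted Weyl criterion of Theorem \ref{lem:Tsu:Equ} to reduce each of (1)--(6) to an exponential-sum estimate of the form
\begin{equation*}
\sum_{n=1}^N w(n)\, e^{2\pi i m u(\phi(n))} = o(W(N)) \qquad (m \in \mathbb{Z}\setminus\{0\}),
\end{equation*}
with $\phi(n) \in \{n,\ an+d,\ p_n,\ p_n^{a,d}\}$, and then to establish the equivalences by a short implication chain. The trivial implications $(2)\Rightarrow(3)$ and $(5)\Rightarrow(6)$, together with the invariance of $w(n)$-uniform distribution under index shifts (valid because $w$ is non-increasing and $w(n-1)/w(n)\to 1$), give $(2)\Rightarrow(1)$ and $(5)\Rightarrow(4)$. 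Thus it suffices to prove $(7)\Rightarrow(2)$, $(7)\Rightarrow(5)$, and the four converse implications $(1),(3),(4),(6)\Rightarrow(7)$.

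For $(7)\Rightarrow$ the u.d.\ statements, I would split into two regimes according to how fast $u$ grows. In the fast-growth regime, where $|u(x)-q(x)|/\log x \to \infty$ for every $q \in \mathbb{Q}[x]$, condition (3) of Theorem \ref{BKS:prev} holds, so the unweighted sum $\sum_{n\leq N} e^{2\pi i m u(\phi(n))}$ is $o(N)$; an Abel summation against the non-increasing weight $w = W'$, as in Theorem \ref{lem:Tsu1}, upgrades this to $o(W(N))$. The genuinely new case is the slow-growth regime, where $W(x) = o(x)$ and $u$ satisfies (7) only on the $W$-scale (the motivating example being $u(x) = \log x$ with $W(x) = \log x$). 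Here I would adapt the Hardy-field exponential-sum technology of \cite{BKS}: decompose $u = q + r$ with $r \in \mathbf{U}$ satisfying $|r(x)|/\log W(x) \to \infty$, use the Hardy-field differentiation calculus to extract derivative bounds on $r',r''$ sharp enough that a weighted van der Corput / second-derivative estimate gives $\int_1^N w(x)\, e^{2\pi i m r(x)}\,dx = o(W(N))$, and then compare with the discrete sum by summation by parts. For $\phi = p_n$ and $\phi = p_n^{a,d}$, combine this with a Vaughan-type decomposition of the von Mangoldt function (as in \cite{BKS}) and estimate the resulting type I and type II bilinear sums by the same weighted Hardy-field derivative bounds, using the prime number theorem in arithmetic progressions to pass back to the indicator of $\phi(\mathbb{N})$.

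For the converse direction, I argue contrapositively. Because $u \in \mathbf{U}$ and $W \in \mathbf{E}$, the ratio $|u(x)-q(x)|/\log W(x)$ has a well-defined limit in $[0,+\infty]$; the failure of (7) therefore yields $q(x) = p(x)/D$ with $p \in \mathbb{Z}[x]$, $D \in \mathbb{N}$, and $C > 0$ such that $|u(x)-q(x)| \leq C \log W(x)$ for all large $x$. Testing Theorem \ref{lem:Tsu:Equ} at $m = D$ gives $e^{2\pi i D u(n)} = e^{2\pi i D(u(n)-q(n))}$, and the phase $D(u(n)-q(n))$ varies too slowly on the $W$-scale to cancel: I would partition $[1,N]$ into blocks on which $W$ doubles, note that the phase changes by $O(1)$ on each block, and then explicitly produce a nonzero limit point of the normalized sum (as one can verify in the model case $u(x) = \log\log x$, $W(x) = \log x$). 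This contradicts (1), and the same analysis handles (3); for (4) and (6), one first uses the Siegel--Walfisz theorem to replace the prime sum by its $\Lambda$-weighted analogue, whose error is absorbed by $o(W(N))$. The main obstacle is the slow-growth prime case in the positive direction: the Vinogradov bilinear estimates of \cite{BKS} are calibrated to measure errors on the $x$-scale, whereas here they must be recalibrated on the $W$-scale, which requires a careful re-examination of the admissible exponents in the type I and type II sums together with pointwise Hardy-field bounds on $u', u''$ tailored to the weight.
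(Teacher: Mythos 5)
There is a genuine gap in your treatment of the slow-growth prime case, and it is the central new ingredient of the theorem. You propose to recalibrate the Vaughan / type I--II bilinear sums from \cite{BKS} to the $W$-scale, and you flag this as your ``main obstacle,'' noting that the admissible exponents would need careful re-examination. This route is not only technically problematic (in the slow-growth regime the phase $u$ oscillates too slowly for bilinear-sum cancellation on a scale of $o(W(N))$, and $W(N)$ may be as small as $\log N$), it is also unnecessary. The paper's key observation in the slow-growth case is elementary: if $|u(x)|/\log x$ stays bounded, then $|u'(x)| \ll 1/x$ for large $x$, and since $p_n^{a,d} = \phi(a) n\log n(1+o(1))$ by the prime number theorem in arithmetic progressions, the mean value theorem gives
\begin{equation*}
|u(p_n^{a,d}) - u(\phi(a) n\log n)| \ll \frac{|p_n^{a,d} - \phi(a) n\log n|}{n\log n} \longrightarrow 0,
\end{equation*}
which is Theorem \ref{lem2:sec2}. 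Thus $e^{2\pi i m u(p_n)}$ and $e^{2\pi i m u(n\log n)}$ differ by $o(1)$ termwise, and the prime question reduces to $w(n)$-u.d.\ of $(u(n\log n))_n$ --- a deterministic sequence to which the Fej\'er-type criterion of Tsuji (Theorem \ref{lem:Tsu2}) applies directly, using only first-derivative monotonicity, not second-derivative van der Corput estimates. No Vinogradov machinery is needed outside the fast-growth regime, where Theorem \ref{BKS:prev} is quoted verbatim.

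Two secondary remarks. First, your slow-growth treatment of the integer case also overshoots: you ask for bounds on $r''$ to feed a van der Corput estimate, when the paper uses the monotonicity of $r'$ and the hypothesis $r'(x)W(x)/w(x) = r(x)/\log W(x) \to \infty$ to invoke Tsuji's weighted Fej\'er theorem directly. Second, in the converse direction you sketch a block-doubling argument and verify it only in a model case; the paper instead invokes Niederreiter's theorem (Theorem \ref{lem:Nie}), which states exactly that any \emph{monotone} $w(n)$-u.d.\ sequence must satisfy $\limsup_n |x_n|/\log W(n) = \infty$. Your ad-hoc argument is morally the same but would need to be carried out in generality; for the prime variants (4) and (6), the paper again avoids Siegel--Walfisz by using the same comparison $u(p_n^{a,d}) \approx u(\phi(a) n \log n)$ (Corollary \ref{cor:prime:replace} together with Theorem \ref{lem3}) to transfer the failure of (7) to a failure of the discrete sequence.
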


While Theorem \ref{BKS} is formulated for general subpolynomial functions in Hardy fields, in practice many natural examples arise from the class $\mathbf{E}$, the intersection of all maximal Hardy fields.  
In particular, for $u(x) \in \mathbf{E}$ with $1 \prec u(x) \prec x$, Theorem \ref{BKS} implies that both $(u(n))$ and $(u(p_n))$ are $u'(n)$-uniformly distributed $\bmod \, 1$.  
Here are some examples illustrating Theorem \ref{BKS}.
\begin{Example}
\label{ex:int}
\begin{enumerate}
\item For $0 < c \leq 1$,  both sequences $((\log n)^c)_{n \in \mathbb{N}}$ and $((\log p_n)^c)_{n \in \mathbb{N}}$ are  ${1}/{n}$-uniformly distributed $\bmod \, 1$. 
\item  More generally, for any $k \in \mathbb{N}$, write 
$$\log^{(k+1)} x = \log \circ \log^{(k)} x = \log \log \cdots \log x \quad ((k+1)-\text{times}).$$ 
Let $w(x) = \frac{d}{dx} \log^{(k)} x = \frac{1}{ x \log x \log^{(2)} x \cdots \log^{(k-1)} x}$.
It is known (see \cite{Tsu}) that for $0 < c \leq 1$, $((\log^{(k)} n)^c)_{n \in \mathbb{N}}$ is $w(n)$-uniformly distributed $\bmod \, 1$. 
It follows from Theorem \ref{BKS} that
for $0 < c \leq 1$,  $((\log^{(k)} p_n)^c)_{n \in \mathbb{N}}$ also is $w(n)$-uniformly distributed $\bmod \, 1$.  
\item $(\log n + \log (n \log n))_{n \in \mathbb{N}}$ is $1/n$-uniformly distributed $\bmod \, 1$. 
It follows from Theorem \ref{lem2:sec2} below that  
$(\log n + \log p_n)_{n \in \mathbb{N}}$ is also $1/n$-uniformly distributed $\bmod \, 1$. 
\item $(\log n - \log (n \log n) )_{n \in \mathbb{N}}$ is not $1/n$-uniformly distributed $\bmod \, 1$, but $1/n \log n$-uniformly distributed $\bmod \, 1$. Similarly to (3), it follows from Theorem \ref{lem2:sec2} that $(\log n - \log p_n)_{n \in \mathbb{N}}$ is also  $1/n \log n$-uniformly distributed $\bmod \, 1$. 
\end{enumerate}
\end{Example}

In Subsection \ref{subsec:3.1}, we obtain a multidimensional version of Theorem \ref{BKS} (see Theorem \ref{ud:d-dim}), from which we derive in subsequent subsections diverse applications  which deal with weighted ergodic averages, properties of sets of differences of sets of positive upper density in $\mathbb{Z}^k$, and new results on Benford law.

We will formulate now some results obtained in Section \ref{sec:app}. 
Given functions $u_1(x), \dots, u_k(x)$, we define 
\begin{equation}
\label{eq:def:span1}
\text{span}_{\mathbb{Z}}^* (u_1, \dots, u_k) := \left\{ \sum_{i=1}^k c_i u_i(x) : (c_1, \dots, c_k) \in \mathbb{Z}^k \setminus \{(0,0, \dots,0) \} \right\}
\end{equation}
and 
\begin{equation}
\label{eq:def:span2}
\text{span}_{\mathbb{R}}^* (u_1, \dots, u_k) := \left\{ \sum_{i=1}^k c_i u_i(x) : (c_1, \dots, c_k) \in \mathbb{R}^k \setminus \{(0,0, \dots,0) \} \right\}.
\end{equation}

The following theorem establishes convergence of weighted ergodic averages along functions from a Hardy field.  
\begin{Theorem}[Theorem \ref{thm:ergthm}]
Let $U_1, \dots, U_k$ be commuting unitary operators on a Hilbert space $\mathcal{H}$.
Let $P$ denote the orthogonal projection onto the invariant space $\mathcal{H}_{inv} = \{ f \in \mathcal{H} :  U_i f = f \,\, \textrm{for all} \,\, i = 1,2, \dots, k \}$. 
Let $W(x) \in {\mathbf E}$ be such that $\lim\limits_{x \rightarrow \infty} W(x) = \infty$ and $w(x) := W'(x)$ is positive, non-increasing.  
Let $u_1(x), \dots, u_k(x)$ be subpolynomial functions belonging to some Hardy field ${\mathbf H}$ and assume for any $u(x) \in \text{span}_{\mathbb{R}}^* (u_1(x), \dots, u_k(x))$,  
\[ \lim_{x \rightarrow \infty} \frac{|u (x) - q(x)|}{\log W(x)} = \infty \quad \text{for any } q(x) \in \mathbb{Q}[x].\]
Then, for any $f \in \mathcal{H}$,
\begin{equation}
\lim_{N \rightarrow \infty} \frac{1}{W(N)} \sum_{n=1}^N w(n) U_1^{\lfloor u_1(n) \rfloor} \cdots U_k^{ \lfloor u_k(n)\rfloor} f = Pf,
\end{equation}
and 
\begin{equation}
\lim_{N \rightarrow \infty} \frac{1}{W(N)} \sum_{n=1}^N w(n) U_1^{\lfloor u_1(p_n) \rfloor} \cdots U_k^{ \lfloor u_k(p_n) \rfloor} f = Pf.
\end{equation}
\end{Theorem}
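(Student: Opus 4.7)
The plan is to use the spectral theorem for commuting unitaries to reduce to a scalar exponential sum, then approximate via Fejér sums, and invoke the one-dimensional Theorem~\ref{BKS} on the resulting linear combinations of $u_1,\dots,u_k$. Writing $f = Pf + g$ with $g := f - Pf \in \mathcal{H}_{inv}^\perp$, the averages act as the identity on $Pf$ (so the $Pf$-contribution converges to $Pf$ since $\sum_{n=1}^N w(n)/W(N)\to 1$), and it suffices to show the $g$-part has norm tending to zero. The joint spectral theorem for the commuting unitaries supplies a finite positive measure $\mu_g$ on $\mathbb{T}^k$ with $\mu_g(\{\mathbf{0}\}) = 0$ (since $g \perp \mathcal{H}_{inv}$) such that
\[
\left\|\frac{1}{W(N)}\sum_{n=1}^{N} w(n)\, U_1^{\lfloor u_1(n)\rfloor}\cdots U_k^{\lfloor u_k(n)\rfloor} g \right\|^2 = \int_{\mathbb{T}^k\setminus\{\mathbf{0}\}} |S_N(\boldsymbol\theta)|^2\, d\mu_g(\boldsymbol\theta),
\]
where $S_N(\boldsymbol\theta) := \frac{1}{W(N)}\sum_{n} w(n)\exp\!\bigl(2\pi i\sum_j \theta_j \lfloor u_j(n)\rfloor\bigr)$. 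As $|S_N|$ is uniformly bounded, dominated convergence reduces the problem to showing $S_N(\boldsymbol\theta)\to 0$ for each $\boldsymbol\theta \in \mathbb{T}^k\setminus\{\mathbf{0}\}$.

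Fix such $\boldsymbol\theta$ with representatives $\theta_j\in [0,1)$ and use the identity $e^{2\pi i \theta_j \lfloor u_j(n)\rfloor} = e^{2\pi i \theta_j u_j(n)} e^{-2\pi i \theta_j \{u_j(n)\}}$ to write
\[
S_N(\boldsymbol\theta) = \frac{1}{W(N)}\sum_n w(n)\, e^{2\pi i\langle\boldsymbol\theta,\mathbf{u}(n)\rangle}\prod_{j=1}^{k} g_j(\{u_j(n)\}),
\]
where $g_j(s) := e^{-2\pi i \theta_j \{s\}}$ is bounded, $1$-periodic, Riemann integrable. Let $\sigma^{(j)}_L$ denote its $L$-th Fejér sum, a trigonometric polynomial with $|\sigma^{(j)}_L|\le 1$ and $\|g_j - \sigma^{(j)}_L\|_{L^1([0,1])}\to 0$ as $L\to\infty$. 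Writing $T_{N,L}(\boldsymbol\theta)$ for the sum in which each $g_j$ is replaced by $\sigma^{(j)}_L$, the telescoping estimate $|\prod_j a_j - \prod_j b_j| \le \sum_j |a_j - b_j|$ for $|a_j|,|b_j|\le 1$ gives
\[
|S_N(\boldsymbol\theta) - T_{N,L}(\boldsymbol\theta)| \le \sum_{j=1}^{k}\frac{1}{W(N)}\sum_n w(n)\bigl|g_j(\{u_j(n)\}) - \sigma^{(j)}_L(\{u_j(n)\})\bigr|.
\]
Applying Theorem~\ref{BKS} to the single function $u_j$ (the hypothesis with $c_i=\delta_{ij}$) shows $(u_j(n))$ is $w(n)$-uniformly distributed $\bmod\,1$, so each term on the right side converges as $N\to\infty$ to $\|g_j - \sigma^{(j)}_L\|_{L^1}$; therefore $\limsup_N |S_N-T_{N,L}| \le \varepsilon_L \to 0$ as $L\to\infty$.

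To handle $T_{N,L}$, expand $\sigma^{(j)}_L(s) = \sum_{|l|\le L}\alpha^{(j)}_l e^{2\pi i l s}$ and use the identity $e^{2\pi i l \{u_j(n)\}} = e^{2\pi i l u_j(n)}$ for $l\in\mathbb{Z}$ to obtain
\[
T_{N,L}(\boldsymbol\theta) = \sum_{\mathbf{l}\in\{-L,\dots,L\}^k}\Bigl(\prod_j \alpha^{(j)}_{l_j}\Bigr)\cdot\frac{1}{W(N)}\sum_n w(n)\, e^{2\pi i v_{\mathbf{l}}(n)},
\]
with $v_{\mathbf{l}}(x) := \sum_j (\theta_j + l_j) u_j(x)$. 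Since $\boldsymbol\theta \neq \mathbf{0}$ in $\mathbb{T}^k$, the vector $\boldsymbol\theta+\mathbf{l}\in\mathbb{R}^k$ is nonzero for every $\mathbf{l}\in\mathbb{Z}^k$; hence $v_{\mathbf{l}} \in \text{span}^*_{\mathbb{R}}(u_1,\dots,u_k)$, and the hypothesis together with Theorem~\ref{BKS}\,(7)$\Rightarrow$(1) and Theorem~\ref{lem:Tsu:Equ} force each inner sum to tend to zero. Summed over the finite index set, $T_{N,L}(\boldsymbol\theta)\to 0$ for each fixed $L$, completing the argument for the integer case. The prime case is identical, invoking (7)$\Rightarrow$(4) of Theorem~\ref{BKS} for the main term and the $w(n)$-uniform distribution of $(u_j(p_n))$ for the error bound. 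The chief technical point is that $e^{2\pi i \theta\lfloor x\rfloor}$ is not a function of $\{x\}$ alone; the Fejér truncation circumvents this by exploiting $e^{2\pi i l\{x\}}=e^{2\pi i l x}$ for $l\in\mathbb{Z}$, turning the truncated product into a finite linear combination of exponentials of elements of $\text{span}^*_{\mathbb{R}}(u_1,\dots,u_k)$, to which Theorem~\ref{BKS} applies directly.
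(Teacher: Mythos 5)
Your proof is correct. The spectral reduction---decomposing $f=Pf+g$, invoking the Bochner--Herglotz/spectral theorem, and reducing the $g$-part to pointwise vanishing of the scalar Weyl sum $S_N(\boldsymbol\theta)$ for $\boldsymbol\theta\neq\mathbf{0}$ by dominated convergence---is exactly the paper's argument. Where you diverge is in the core step of proving $S_N(\boldsymbol\theta)\to 0$: the paper delegates this to Proposition~\ref{prop:app:ud-ave}, whose proof lifts $(\{\alpha_1 u_1(n)\},\{u_1(n)\},\dots)$ into a product group $\mathbb{T}^{2l+1}\times\mathbb{Z}_b^{k-l}$ after splitting the coefficients into irrational and rational classes; you instead factor out the exponential $e^{2\pi i\theta_j u_j(n)}$ and approximate the bounded $1$-periodic functions $g_j(s)=e^{-2\pi i\theta_j\{s\}}$ by Fej\'er sums, turning the truncated expression $T_{N,L}$ into a finite linear combination of Weyl sums over elements $v_{\mathbf{l}}\in\text{span}_{\mathbb{R}}^*(u_1,\dots,u_k)$, each of which vanishes by Theorem~\ref{BKS}~(7)$\Rightarrow$(1) together with Theorem~\ref{lem:Tsu:Equ}. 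This is a clean, in-line alternative to the case $Q\equiv 0$ of Proposition~\ref{prop:app:ud-ave}: it avoids the rational/irrational coefficient case split, at the cost of an extra limit $L\to\infty$. One step is worth making explicit: passing from the $w(n)$-uniform distribution of $(u_j(n))$ to $\limsup_N \frac{1}{W(N)}\sum_n w(n)\,\bigl|g_j-\sigma^{(j)}_L\bigr|\bigl(\{u_j(n)\}\bigr)\le\|g_j-\sigma^{(j)}_L\|_{L^1}$ requires Theorem~\ref{lem:Tsu:Equ} for a Riemann-integrable (not continuous) test function, since $g_j$ has a jump at $0$; this follows from the continuous case by the standard squeeze between continuous minorants and majorants, but you should say so. The use of $|\sigma^{(j)}_L|\le 1$ (the Fej\'er kernel being a probability density) is what licenses the telescoping bound and is correct.
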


Next, we formulate one of the results from Subsection \ref{subsec:setofrec} which deals with sets of differences of sets of positive upper Banach density in $\mathbb{Z}^k$. 
Recall that the upper Banach density of a set $E \subset \mathbb{Z}^k$ is defined by
$$d^*(E) = \sup_{\{\Pi_n\}_{n \in \mathbb{N}}} \limsup_{n \rightarrow \infty} \frac{|E \cap \Pi_n|}{| \Pi_n|},$$
where the supremum is taken over all sequences of parallelepipeds
$$ \Pi_n = [a_n^{(1)}, b_n^{(1)}] \times \cdots \times [a_n^{(k)}, b_n^{(k)} ] \subset \mathbb{Z}^k, \,\, n \in \mathbb{N},$$
with $b_n^{(j)} - a_n^{(j)} \rightarrow \infty$ for each $1 \leq j \leq k$.

\begin{Theorem}[Corollary \ref{cor:setofrec:H}]
Assume that $q_1(x), \dots, q_m(x) \in \mathbb{Z}[x]$ satisfy $q_i(0) = 0$ for all $i = 1, 2, \dots, m$, and let $u_1(x), \dots, u_k(x)$ be subpolynomial functions belonging to some Hardy field $\mathbf{H}$. 
Suppose that there exists a function $W(x) \in \mathbf{E}$ such that 
\begin{enumerate}[(i)]
\item $\lim\limits_{x \rightarrow \infty} W(x) = \infty$ and $w(x) := W'(x)$ is non-increasing and positive; 
\item for any $u(x) \in \text{span}_{\mathbb{R}}^* (u_1(x), \dots, u_k(x))$ and any $q(x) \in \mathbb{R}[x]$, 
\begin{equation}
\lim_{x \rightarrow \infty} \frac{|u(x) - q(x)|}{\log W(x)} = \infty.
\end{equation}
\end{enumerate}

Define
\begin{enumerate}
\item $D_1= \{  (q_1(n), \dots, q_m(n), \lfloor u_1(n) \rfloor, \dots, \lfloor u_k(n) \rfloor ) : n \in \mathbb{N} \}$,
\item $D_2 = \{  (q_1(p-1), \dots, q_m(p-1), \lfloor u_1(p) \rfloor, \dots, \lfloor u_k(p) \rfloor ) : p \in \mathcal{P} \}$,
\item $D_3 = \{  (q_1(p+1), \dots, q_m(p+1), \lfloor u_1(p) \rfloor, \dots, \lfloor u_k(p) \rfloor ) : p \in \mathcal{P} \}$,
\end{enumerate}
where  $\mathcal{P}$ denote the set of prime numbers.
Then, for any set $E \subset \mathbb{Z}^{m+k}$ with $d^{*} (E) > 0$, $$(E - E) \cap D_i \ne \emptyset \quad  \text{for } i = 1, 2, 3.$$
\end{Theorem}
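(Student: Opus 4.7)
The plan is to reduce the combinatorial statement to an ergodic-theoretic one via Furstenberg's correspondence principle, then to apply a suitable joint weighted mean ergodic theorem that handles simultaneously the polynomial iterates $q_i(n)$ and the Hardy-field iterates $\lfloor u_i(n) \rfloor$. Concretely, given $E \subset \mathbb{Z}^{m+k}$ with $d^*(E) > 0$, the correspondence principle produces a measure-preserving system $(X,\mathcal{B},\mu,T_1,\dots,T_{m+k})$ with commuting transformations and a set $A \in \mathcal{B}$ with $\mu(A) \geq d^*(E)$ such that for every $\mathbf{v} = (v_1,\dots,v_{m+k}) \in \mathbb{Z}^{m+k}$,
\[
d^{*}\bigl(E \cap (E-\mathbf{v})\bigr) \geq \mu\bigl(A \cap T_1^{-v_1}\cdots T_{m+k}^{-v_{m+k}} A\bigr).
\]
Thus it suffices to produce, for each $i \in \{1,2,3\}$, some $n$ (or some prime $p$) whose associated vector $\mathbf{v}_n \in D_i$ satisfies $\mu(A \cap T^{-\mathbf{v}_n} A) > 0$.

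To obtain such an $n$, I will argue that the $w(n)$-weighted average of $\mu(A \cap T^{-\mathbf{v}_n} A)$ is strictly positive, so at least one term must be positive. Writing $f = \mathbf{1}_A$ and using unitarity of the Koopman operators $U_j$ associated to $T_j$, the quantity of interest is
\[
\frac{1}{W(N)} \sum_{n=1}^N w(n) \bigl\langle f,\; U_1^{q_1(n)} \cdots U_m^{q_m(n)} U_{m+1}^{\lfloor u_1(n)\rfloor} \cdots U_{m+k}^{\lfloor u_k(n)\rfloor} f \bigr\rangle.
\]
The key step is to invoke a joint version of Theorem \ref{thm:ergthm}, extended to allow polynomial exponents $q_i$ with $q_i(0)=0$ alongside the Hardy-field exponents, to conclude that this average converges to $\langle f, P f \rangle = \|Pf\|_2^2 \geq \mu(A)^2 > 0$, where $P$ is the projection onto the space of vectors fixed by all $U_j$. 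The hypothesis on $\mathrm{span}_{\mathbb{R}}^{*}(u_1,\dots,u_k)$ guarantees that the Hardy-field coordinates have no nontrivial rational-polynomial approximation relative to $\log W$, which is exactly what Theorem \ref{BKS} requires for $w(n)$-equidistribution; combined with the classical polynomial Szemerédi / PET machinery applied to the $q_i$-coordinates (using $q_i(0)=0$ to keep the invariant projection nontrivial), this yields the claimed limit. For case $D_1$, this directly furnishes the required $n$.

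For the prime cases $D_2$ and $D_3$, I replace the averaging set $\mathbb{N}$ by primes and use the prime counterpart of the same ergodic theorem, whose validity rests on item (4) of Theorem \ref{BKS}. The inputs $q_i(p\pm 1)$ are themselves polynomials in $p$ with vanishing constant term (after a shift that is absorbed into the vector), so that the polynomial components remain accessible to the polynomial ergodic theorem along primes (Wooley–Ziegler / Frantzikinakis–Host–Kra-type results), while the Hardy-field components are handled by the prime equidistribution statement in Theorem \ref{BKS}. The main obstacle is in fact this fusion: establishing the joint $w(n)$-weighted mean convergence for tuples mixing commuting polynomial and Hardy-field iterates, evaluated along primes, with the correct limit $Pf$. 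The polynomial part alone requires PET-type reductions to the Kronecker/nilfactor, the Hardy-field part requires the equidistribution machinery of Theorem \ref{BKS}, and one needs to show these two blocks decouple asymptotically — essentially because the Hardy-field exponents grow strictly subpolynomially yet satisfy the $\log W$-separation condition, so they contribute only through the trivial (constant) character in any joint spectral decomposition. Once this joint convergence is in hand, strict positivity of the weighted average forces $(E-E) \cap D_i \neq \emptyset$ in each case.
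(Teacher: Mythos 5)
Your overall framework (Furstenberg correspondence reducing to a weighted-average positivity statement, then a spectral argument) is the right shape, but the central step you propose is false, and as a result the proof would break at its crux. You assert that
\[
\frac{1}{W(N)} \sum_{n=1}^N w(n) \,
U_1^{q_1(n)} \cdots U_m^{q_m(n)}
U_{m+1}^{\lfloor u_1(n)\rfloor} \cdots U_{m+k}^{\lfloor u_k(n)\rfloor} f
\;\longrightarrow\; Pf,
\]
by invoking a hypothetical ``joint version of Theorem~\ref{thm:ergthm} extended to polynomial exponents.'' No such extension exists, and the claimed limit is simply wrong already in the purely polynomial, unweighted case. For instance, take $m=1$, $q_1(n)=n^2$, $U_1$ the Koopman operator of rotation by $1/3$ on $\mathbb{Z}_3$, and $f$ an eigenfunction with $U_1 f = e^{2\pi i/3}f$. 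Since $n^2 \bmod 3 \in \{0,1\}$ with frequencies $1/3$ and $2/3$, the average $\frac{1}{N}\sum_n U_1^{n^2} f$ converges to $\frac{1}{3}(1 + 2e^{2\pi i/3})f \neq 0$, whereas $Pf = 0$. Polynomial iterates retain a ``rational spectrum'' component that does not vanish under averaging, so the limit is not the projection onto the jointly invariant space. The condition $q_i(0)=0$ does not salvage this; it is relevant for a different reason (see below).

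The paper circumvents exactly this obstruction by \emph{not} trying to make the full average converge to $\langle Pf,f\rangle$. Instead, in the proof of Theorem~\ref{thm:setofrec:H}, one restricts to $n \in h!\,\mathbb{Z}$ (respectively $p \equiv 1 \bmod h!$ in the prime cases), which forces every $q_i(n) \equiv 0 \pmod{h!}$ (this is where $q_i(0)=0$ is used), and then analyzes the restricted weighted average directly via the Bochner--Herglotz representation. The torus $\mathbb{T}^{m+k}$ is split into three pieces $I \cup J_h \cup K_h$: on $I$ the restricted exponential averages vanish by Proposition~\ref{prop:app:ud-ave} (which handles mixed polynomial and Hardy-field phases), on $J_h$ they tend to $1$, and $K_h$ is made small by choosing $h$ large, giving a lower bound of $\frac{1}{2}\mu(A)^2$. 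There is no PET, no nilfactor, no multiple-recurrence theorem along primes (Wooley--Ziegler, Frantzikinakis--Host--Kra) in the argument; those tools address multiple recurrence, whereas the statement here is single recurrence and the correct machinery is the equidistribution estimate of Proposition~\ref{prop:app:ud-ave} combined with a torus decomposition. Your remarks about ``decoupling'' of the polynomial and Hardy-field blocks are in the right spirit --- it is indeed the $\log W$ separation condition that kills the cross terms --- but that observation has to feed into the restricted-average/torus-splitting argument, not into an unconditional mean ergodic theorem that does not hold.
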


Finally, we present a result related to Benford law.  
It is known (\cite{Dia}) that the sequence $(n!)_{n \in \mathbb{N}}$ obeys the first digit law: for any integer $a \in \{1, 2, \dots, 9\}$,
\[ \lim_{N \rightarrow \infty} \frac{1}{N} |\{1 \leq n \leq N: \text{ the first digit of } n! \text{ is } a \} | = \log_{10} (1+1/a). \]
Similarly, Mass\'{e} and Schneider \cite{MaSch} proved that the primorial sequence $(p_n \#)_{n \in \mathbb{N}}$, where $p_n\# = p_1 p_2 \dots p_n$ denotes the product of the first $n$ primes, also obeys the first digit law. 
The following result obtained in Subsection \ref{subsec:3.4}  deals with the joint behavior of the first digits of $n!$ and $p_n\#$.
\begin{Theorem} [cf. Corollary \ref{thm:primorial:factorial}]
For any  $a, b \in \{1, 2, \dots, 9\}$, the logarithmic density of the set 
$$A_{a,b} = \{n \in \mathbb{N}:  n! \text{ starts with } a \text{ and } p_n\# \text{ starts with } b \}$$  is given by the product of the corresponding Benford probabilities:
\[ \lim_{N \rightarrow \infty} \frac{1}{\log N} \sum_{n=1}^N 1_{A_{a,b}} (n) = \log_{10} (1+ 1/a) \cdot \log_{10} (1+ 1/b).\]
\end{Theorem}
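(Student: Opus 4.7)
The plan is to rewrite the leading-digit conditions as fractional-part conditions and reduce to a two-dimensional weighted equidistribution statement. Since an integer $N$ starts with digit $c$ precisely when $\{\log_{10} N\} \in I_c := [\log_{10} c, \log_{10}(c+1))$, one has $\mathbf{1}_{A_{a,b}}(n) = \mathbf{1}_{I_a}(\{\log_{10}(n!)\})\,\mathbf{1}_{I_b}(\{\log_{10}(p_n\#)\})$. With $w(n) = 1/n$ and $W(N) = \log N$, the claimed limit is the $w(n)$-weighted mean of this indicator; by the two-dimensional version of Theorem \ref{lem:Tsu:Equ}, applied via continuous-function approximation of $\mathbf{1}_{I_a \times I_b}$, it suffices to prove joint $1/n$-uniform distribution modulo $1$ of the pair $(\log_{10}(n!),\log_{10}(p_n\#))$ in $[0,1)^2$, since then the weighted mean evaluates to $|I_a||I_b| = \log_{10}(1+1/a)\log_{10}(1+1/b)$.

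By the 2D Weyl criterion this joint equidistribution reduces to showing, for every $(k,l)\in\mathbb{Z}^2\setminus\{(0,0)\}$,
\[
\frac{1}{\log N}\sum_{n=1}^N \frac{1}{n}\,e^{2\pi i(k\log_{10}(n!)+l\log_{10}(p_n\#))}\longrightarrow 0.
\]
The marginal cases are routine. For $l=0, k\neq 0$, apply Theorem \ref{BKS} with $W(x)=\log x$ to the Hardy-field function $u(x)=k\log\Gamma(x+1)/\ln 10 \in \mathbf{E}$: since $u(x)\sim kx\log x/\ln 10$ dominates every $q\in\mathbb{Q}[x]$ by factors far exceeding $\log\log x$, condition (7) is immediate. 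For $k=0, l\neq 0$, the bound follows from Mass\'e--Schneider's equidistribution of $(\log_{10}(p_n\#))_{n\in\mathbb{N}}$ combined with Theorem \ref{lem:Tsu1}.

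The joint case $k,l\neq 0$ is the main obstacle. My plan is to apply the multidimensional extension of Theorem \ref{BKS} (the paper's Theorem \ref{ud:d-dim}) to the pair $u_1(x)=\log\Gamma(x+1)/\ln 10$ and $v(x)=x/\ln 10$, both in $\mathbf{E}$. Every nonzero $\mathbb{R}$-linear combination $\alpha u_1+\beta v$ grows at least linearly and satisfies condition (7) with $W(x)=\log x$, so the theorem yields joint $1/n$-UD of $(u_1(n),v(p_n))=(\log_{10}(n!),p_n/\ln 10)$ in $[0,1)^2$. It remains to exchange $p_n/\ln 10$ for $\log_{10}(p_n\#)=\vartheta(p_n)/\ln 10$, where $\vartheta$ is Chebyshev's function.

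This exchange is the hard part of the argument: a pointwise Lipschitz substitution fails, because the PNT error $\vartheta(p_n)-p_n$ is not $o(1)$ and the induced fractional-part shifts do not shrink. The route I would pursue is an Abel-summation argument using an effective zero-free region for $\zeta$, in the spirit of the paper's transfer Theorem \ref{lem2:sec2}: factor $e^{2\pi il\,\vartheta(p_n)/\ln 10}=e^{2\pi ilp_n/\ln 10}\cdot e^{2\pi il(\vartheta(p_n)-p_n)/\ln 10}$, and use summation by parts over $n$ against the smooth Hardy-field phase $e^{2\pi i k\log_{10}(n!)}\cdot n^{-1}\cdot e^{2\pi ilp_n/\ln 10}$ to show that the slowly-varying prime-error factor averages into the $o(\log N)$ bound. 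The technical heart is verifying that this Abel argument really does absorb the PNT error against the joint oscillation supplied by Theorem \ref{ud:d-dim}; this is where analytic number theory enters and where the construction is genuinely delicate.
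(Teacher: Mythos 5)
Your reduction to joint $1/n$-uniform distribution of $(\log_{10} n!,\ \log_{10} p_n\#)$ in $\mathbb{T}^2$ is the right first step, and matches the paper's Corollary \ref{thm:primorial:factorial}, which derives the statement from Theorem \ref{thm:prod:Benford} in exactly this way. Your handling of the two marginal Weyl sums is also fine. But the treatment of the joint case $k,l\neq 0$ has a genuine gap, and it occurs one step earlier than you flag.

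You propose to apply Theorem \ref{ud:d-dim} to $u_1(x)=\log\Gamma(x+1)/\ln 10$ and $v(x)=x/\ln 10$ to conclude that $(u_1(n),\ v(p_n))$ is jointly $1/n$-uniformly distributed. But Theorem \ref{ud:d-dim} only covers sequences where \emph{all} coordinates take the same argument, either $(u_1(n),\dots,u_k(n))$ or $(u_1(p_n),\dots,u_k(p_n))$; it says nothing about the mixed sequence $(u_1(n),\ v(p_n))$. The paper's mixed-argument result, Theorem \ref{thm:w-ud:mixedindex}, requires the coordinates evaluated at $p_n$ to be slowly growing in the sense $\lim_{x\to\infty}|v_j(x)|/\log x < \infty$, a condition that $v(x)=x/\ln 10$ grossly violates. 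Joint equidistribution of a pair like $(\lfloor\alpha n\rfloor,\ p_n\beta)$ amounts to controlling $(p_{an+d}\,\beta)$ along arithmetic progressions in $n$, which is a hard problem that none of the theorems in this paper address. So your intermediate claim that $(\log_{10} n!,\ p_n/\ln 10)$ is jointly $1/n$-UD is not established, and the subsequent ``exchange'' of $p_n$ for $\vartheta(p_n)$ via an Abel-summation argument with zero-free regions is speculative and left incomplete even by your own account.

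The paper avoids the joint-UD-with-prime-argument problem altogether. It proves directly that for any real $(a,b)\neq(0,0)$ the weighted Weyl sum of $a\log_{10}n!+b\log_{10}p_n\#$ tends to zero, by applying the weighted van der Corput lemma (Theorem \ref{vdC}) to the sequence $y_n=a\log_{10}n!+b\log_{10}p_n\#$. The key device is the deterministic comparison product $Q_n=\prod_{m=2}^n m\log m$: the difference $y_{n+h}-y_n$ is decomposed into a term controlled by Rosser's explicit bound $n\log n \le p_n \le n\log n + Cn\log\log n$ (which makes the prime-error contribution $O(h\log\log N/\log N)\to 0$ in Cesàro average, and $O(h(\log\log N)^2/\log N)\to 0$ in $1/n$-weighted average), plus a smooth Hardy-field phase handled by the Euler-summation estimate of Theorem \ref{lem:estimate:expsum:Eulersum}. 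This is elementary and self-contained; no zero-free region or analytic input beyond Rosser's bound is needed. Your proposed route would require significantly more, and in particular would need a result that the paper flags (in a removed remark) as likely open.
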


The structure of this paper is as follows. 
In Section 2, we collect various auxiliary results which are needed for the proof of the main theorem and its applications. 
In Section 3, we prove Theorem \ref{BKS}, the main result of this paper, and also establish several further results related to weighted uniform distribution. 
Section 4 is devoted to various applications, including the results formulated in this introduction.

\section*{Acknowledgement}
Younghwan Son is supported by Basic Science Research Institute Fund (2021R1A6A1A10042944) and the National Research Foundation of Korea (NRF) grant funded by the Korea government (MSIT) (RS-2025-24523340).

\section{Preliminaries}

In this section, we collect some useful results which will be used in the subsequent sections.
Throughout this section, unless otherwise stated, we assume that $(w(n))_{n \in \mathbb{N}}$ is a non-increasing sequence of positive real numbers, and that $W(N) := \sum\limits_{n=1}^N w(n)$ satisfies $\lim\limits_{N \rightarrow \infty} W(N) = \infty$. 


\subsection{Weighted uniform distribution}

In this section, we formulate two results concerning weighted uniform distribution $\bmod \, 1$. 
The first, Theorem \ref{lem:Tsu2} due to Tsuji (\cite{Tsu}), provides a sufficient condition for a sequence of the form $(u(n))_{n \in \mathbb{N}}$ to be $w(n)$-uniformly distributed $\bmod \, 1$. 
The second, Theorem \ref{lem:Nie}, is  a special case of a theorem due to Niederreiter (\cite{N}) and highlights a necessary growth condition for monotone sequences that are \(w(n)\)-uniformly distributed $\bmod \,1$. 

\begin{Theorem} [Theorem 8 in \cite{Tsu}]
\label{lem:Tsu2}
Let $W(x)$ be a real-valued function on $[1, \infty)$. 
Assume that $W(x)$ is twice continuously differentiable, satisfies $\lim\limits_{x \rightarrow \infty} W(x) = \infty$ and that its derivative $w(x) := W'(x)$ is positive and non-increasing.
Let $u(x)$ be a positive continuous increasing function with a continuous derivative $u'(x)$ for $x \geq 1$, and assume the following conditions are satisfied:
\begin{enumerate}[(i)]
\item $\lim\limits_{x \rightarrow \infty} u(x) = \infty$,
\item $u'(x) \rightarrow 0$ monotonically as $x \rightarrow \infty$,
\item $u'(x)/w (x)$ is eventually monotone,
\item $\lim\limits_{x \rightarrow \infty} \frac{u'(x)}{w (x)} W(x) = \infty$.
\end{enumerate} 
Then $(u(n))_{n \in \mathbb{N}}$ is $w(n)$-uniformly distributed $\bmod \, 1$.
\end{Theorem}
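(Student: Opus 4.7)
The plan is to verify the weighted Weyl-type criterion provided by Theorem~\ref{lem:Tsu:Equ}(3): for every nonzero integer $m$, it suffices to show
\[
\frac{1}{W(N)} \sum_{n=1}^N w(n)\, e^{2\pi i m u(n)} \longrightarrow 0 \quad \text{as } N \to \infty.
\]
The core idea is to approximate this discrete exponential sum by the continuous integral $I_N := \int_1^N w(x)\, e^{2\pi i m u(x)}\, dx$, then exploit hypotheses (iii) and (iv) via integration by parts to show $I_N = o(W(N))$, while separately controlling the sum-to-integral discrepancy.

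First I would handle the sum-to-integral error. On each interval $[n, n+1]$, the monotonicity in the hypothesis gives $|w(x) - w(n)| \leq w(n) - w(n+1)$, and (ii) gives $|u(x) - u(n)| \leq u'(n)$ since $u'$ decreases monotonically to $0$. Bounding the integrand of
\[
\sum_{n=1}^{N-1} \int_n^{n+1} \bigl(w(n) e^{2\pi i m u(n)} - w(x) e^{2\pi i m u(x)}\bigr)\, dx
\]
pointwise by $|w(n) - w(x)| + 2\pi|m|\, w(n)\, |u(x) - u(n)|$ and telescoping yields an overall discrepancy of size $O(w(1)) + 2\pi |m| \sum_{n=1}^N w(n) u'(n)$. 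A standard $\varepsilon$-splitting (using $u'(n) \to 0$ and $W(N) \to \infty$) shows $\sum_{n \leq N} w(n) u'(n) = o(W(N))$, so the whole discrepancy between the sum and $I_N$ is $o(W(N))$.

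To estimate $I_N$ itself, I would multiply and divide by $u'(x)$ and integrate by parts against the function $g(x) := w(x)/u'(x)$:
\[
2\pi i m\, I_N = \bigl[\, g(x)\, e^{2\pi i m u(x)}\, \bigr]_1^N - \int_1^N g'(x)\, e^{2\pi i m u(x)}\, dx.
\]
Hypothesis (iii) guarantees that $g$ is eventually monotone, so $g'$ has constant sign on a tail $[x_0, N]$, and hence $\int_1^N |g'(x)|\, dx = |g(N) - g(x_0)| + O(1) \leq g(N) + O(1)$. Together with the boundary term, this yields $|I_N| \ll w(N)/u'(N) + O(1)$. But hypothesis (iv) is precisely the assertion that $w(N)/u'(N) = o(W(N))$, so $I_N = o(W(N))$, completing the verification of Weyl's criterion.

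The main technical obstacle is the sum-to-integral step: both the oscillation of the weight on unit intervals and the drift of the phase must be shown to contribute only $o(W(N))$ to the error, using (i) and (ii) respectively. Once past that, the rest is a clean integration-by-parts estimate, where (iii) provides the bounded-variation control needed to dispose of the error integral and (iv) supplies the key decay $w/u' = o(W)$ that forces the oscillatory integral to be negligible compared to $W(N)$.
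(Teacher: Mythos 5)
Your argument is correct, and it is the standard weighted Fej\'{e}r-type proof that Tsuji's original argument follows; the paper itself cites Theorem~8 of \cite{Tsu} without reproducing the proof, so there is no in-text argument to compare against. All three ingredients are in the right place: the sum-to-integral discrepancy is $o(W(N))$ via telescoping of $w$ and the $\varepsilon$-splitting of $\sum w(n)u'(n)$ using hypotheses (i)--(ii), integration by parts against $g=w/u'$ uses (iii) to bound the total variation on a tail by $g(N)+O(1)$, and (iv) supplies the decisive $g(N)=o(W(N))$. One small technicality worth flagging: the hypotheses only make $u'$ continuous, not differentiable, so $g'$ need not exist pointwise; the clean way to run the integration-by-parts step is via Riemann--Stieltjes and bound $\int_1^N e^{2\pi i m u}\,dg$ by the total variation of $g$, which is exactly $|g(N)-g(x_0)|+O(1)$ by eventual monotonicity --- this is what your estimate is really using anyway.
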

It is worth noting that Theorem \ref{lem:Tsu2} is a generalization of the classical Fej\'er theorem (\cite{KN}, Section 1.2, Corollary 2.1) which corresponds to the case $w(x) = 1$.

\begin{Theorem}[cf.  Theorem 2 in \cite{N}]
\label{lem:Nie}
If $(x_n)_{n \in \mathbb{N}}$ is a monotone $w(n)$-uniformly distributed $\bmod \, 1$ sequence, then
\[ \limsup_{n \rightarrow \infty} \frac{|x_n|}{\log W(n)} = \infty.\] 
\end{Theorem}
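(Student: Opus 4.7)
The plan is to argue by contradiction. Suppose $\limsup_{n \to \infty} |x_n|/\log W(n) = C < \infty$, so that $|x_n| \leq C \log W(n)$ for all sufficiently large $n$. Monotonicity of $(x_n)$ implies either convergence to a finite limit --- in which case $\{x_n\}$ accumulates at a single point, immediately contradicting $w(n)$-u.d.\ mod $1$ --- or $|x_n| \to \infty$. Since replacing $x_n$ by $-x_n$ preserves both monotonicity and $w(n)$-u.d.\ mod $1$, I may assume $x_n$ is non-decreasing with $x_n \to +\infty$.

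The aim is to contradict Weyl's criterion (Theorem~\ref{lem:Tsu:Equ}) by showing that the exponential sum $S_N := \sum_{n=1}^N w(n)\, e^{2\pi i x_n}$ is not $o(W(N))$. Abel summation using $w(n) = W(n) - W(n-1)$ gives
\[
S_N = W(N)\, e^{2\pi i x_N} \;-\; \sum_{n=1}^{N-1} W(n)\bigl(e^{2\pi i x_{n+1}} - e^{2\pi i x_n}\bigr),
\]
and the remainder is well approximated by the Stieltjes integral $2\pi i \int_1^N W(t)\, x'(t)\, e^{2\pi i x(t)}\, dt$. For the extremal profile $x(t) = C \log W(t)$ one has $x'(t) = C\, w(t)/W(t)$, and substitution of $u = W(t)$ evaluates the integral as $2\pi i C \cdot W(N)\, e^{2\pi i x_N}/(1 + 2\pi i C) + O(1)$, leading after cancellation to
\[
S_N \sim \frac{W(N)\, e^{2\pi i x_N}}{1 + 2\pi i C}, \qquad \text{so} \qquad \frac{|S_N|}{W(N)} \longrightarrow \frac{1}{\sqrt{1 + 4\pi^2 C^2}} > 0,
\]
contradicting $w(n)$-u.d. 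For a general $x_n \leq C \log W(n)$, the same Stieltjes strategy, combined with an integration-by-parts bound on $\int W(t)\, x'(t)\, e^{2\pi i x(t)}\, dt$, still yields $|S_N| \geq c(C)\, W(N)$ for some positive $c(C)$, reaching the same contradiction.

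The main obstacle is controlling the discretization error when passing between the discrete Abel sum and the continuous Stieltjes integral, i.e.\ showing
\[
\sum_{n=1}^{N-1} W(n)\bigl(e^{2\pi i x_{n+1}} - e^{2\pi i x_n}\bigr) \;-\; 2\pi i \int_1^N W(t)\, x'(t)\, e^{2\pi i x(t)}\, dt \;=\; o(W(N)).
\]
Since $W$ and $x$ arise from slowly varying (Hardy-field-like) sources, with small increments at the scale $\Delta n = 1$, this error can be absorbed by a Koksma or van der Corput type bounded-variation estimate. A purely discrete alternative is to compare $S_N$ with $S_{N'}$, where $N' > N$ is chosen so that $W(N')/W(N) \geq 1 + \delta$ while $x_{N'} - x_N < \varepsilon$ remains small (possible because $x_n \leq C \log W(n)$ keeps the phase change bounded), producing a contribution to $S_{N'} - S_N$ of size $\gtrsim \delta\, W(N)$ that is incompatible with the requirement $S_N, S_{N'} = o(W(N))$.
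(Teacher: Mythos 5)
The paper does not supply its own argument here; it cites Niederreiter's Theorem~2 in~\cite{N} and leaves the statement unproved, so the comparison must be against your proposal on its own terms. Your outline has the right flavor, but there are two genuine gaps.

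First, the reduction and the Abel summation identity are fine, and your ``extremal profile'' computation for $x(t) = C\log W(t)$ is correct --- it reproduces exactly the value $\limsup_N |S_N|/W(N) = 1/|1 + 2\pi i C|$ that the paper obtains in Theorem~\ref{lem:estimate:expsum:Eulersum}. However, the assertion that ``the same Stieltjes strategy, combined with an integration-by-parts bound, still yields $|S_N| \geq c(C)\, W(N)$'' for a \emph{general} monotone $x_n \leq C\log W(n)$ is not justified. The integration by parts that you invoke requires $W(t)x'(t)/w(t)$ to converge, which is true when $x(t)$ is Hardy-field-smooth but has no reason to hold when $(x_n)$ is an arbitrary monotone sequence; and a monotone sequence need not even admit a differentiable interpolant whose derivative controls the discrete increments. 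This is not a bookkeeping ``discretization error'': it is the main issue, and you have only verified the extremal case.

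Second, your discrete alternative is closer to a complete argument but has a hidden gap in the step ``$N' > N$ is chosen so that $W(N')/W(N) \geq 1+\delta$ while $x_{N'}-x_N < \varepsilon$ remains small (possible because $x_n \leq C\log W(n)$).'' This is not automatic. For instance, if $x_n = (C/2)\log W(n)$, then whenever $W(N') \geq (1+\delta)W(N)$ the increment is $x_{N'}-x_N \geq \tfrac{C}{2}\log(1+\delta)$, which does not tend to zero for fixed $\delta$; the bound $x_n \leq C\log W(n)$ alone does not pin down $x_{N'}-x_N$, because $x_N$ may be well below $C\log W(N)$. What actually closes the gap is a dichotomy: fix $\varepsilon < 1/(2\pi)$ and then $\delta$ with $\varepsilon/\log(1+\delta) > C$. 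If for \emph{every} large $N$ the minimal $N'$ with $W(N') \geq (1+\delta)W(N)$ satisfies $x_{N'}-x_N \geq \varepsilon$, then iterating gives $x_N \geq \big(\varepsilon/\log(1+\delta)\big)\log W(N) - O(1)$, forcing $\limsup x_n/\log W(n) \geq \varepsilon/\log(1+\delta) > C$, a contradiction. Hence for infinitely many $N$ such an $N'$ exists with $x_{N'}-x_N < \varepsilon$; along that sequence $\big|S_{N'}-S_N\big| \geq (1-2\pi\varepsilon)\big(W(N')-W(N)\big) \geq \tfrac12\delta\,W(N)$, while weighted uniform distribution forces $S_{N}=o(W(N))$ and $S_{N'}=o(W(N'))=o(W(N))$ (using $W(N')\leq(1+2\delta)W(N)$ for $N$ large). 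That is the contradiction. You should state the dichotomy explicitly rather than asserting the existence of $N'$; as written, that step is a claim, not a proof.
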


\subsection{Behavior of slowly growing Hardy field functions along the prime numbers}
In this subsection, we present two useful results (Theorem \ref{lem2:sec2} and Theorem \ref{lem3}) which will be utilized in the proofs of various averaging results in Sections \ref{sec2} and \ref{sec:app} for sequences of the form $u(p_n)$, where $u(x)$ is a slowly growing function in the sense that $\lim\limits_{x \rightarrow \infty} u(x) = \infty$ and  $\lim\limits_{x \rightarrow \infty} \frac{u(x)}{\log x} < \infty$.


The proof of Theorem \ref{lem2:sec2} relies on the following classical result. We include the proof for convenience of the reader.
\begin{Theorem}
\label{lem:PNT}
For $a, d \in \mathbb{N}$ with $\gcd(a,d) =1$, let $(p_n^{a,d})_{n \in \mathbb{N}}$ be the $n$-th prime in the arithmetic progression $\{ am +d: m = 0, 1, 2, \dots \}$. 
Then
\begin{equation*} 
\lim_{n \rightarrow \infty} \frac{p_n^{a,d}}{ \phi(a) n \log n} =1,
\end{equation*}
where $\phi$ is the Euler totient function.
\end{Theorem}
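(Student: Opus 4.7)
The plan is to invert the prime number theorem for arithmetic progressions (PNT-AP), which in its standard form states that whenever $\gcd(a,d) = 1$,
$$\pi(x; a, d) := |\{p \le x : p \text{ prime}, \, p \equiv d \pmod{a}\}| \sim \frac{x}{\phi(a) \log x} \quad \text{as } x \to \infty.$$
I take this as the known input; the theorem is purely a matter of extracting the asymptotic size of the $n$-th term $p_n^{a,d}$ from the asymptotic count $\pi(x; a, d)$.

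First I would set $x = p_n^{a,d}$, so that $\pi(p_n^{a,d}; a, d) = n$. PNT-AP immediately gives
$$n \sim \frac{p_n^{a,d}}{\phi(a)\,\log p_n^{a,d}}, \qquad \text{equivalently}\qquad p_n^{a,d} \sim \phi(a)\, n \log p_n^{a,d}. \qquad (\star)$$
The remaining task is the standard bootstrap: convert $\log p_n^{a,d}$ on the right of $(\star)$ into $\log n$. Taking logarithms in $(\star)$ yields
$$\log p_n^{a,d} = \log n + \log\log p_n^{a,d} + \log \phi(a) + o(1).$$
Since $p_n^{a,d} \ge n$ trivially and $(\star)$ forces $p_n^{a,d} \le C n \log p_n^{a,d}$, a routine iteration gives $\log p_n^{a,d} = O(\log n)$; feeding this back shows $\log\log p_n^{a,d} = o(\log n)$, and so $\log p_n^{a,d} \sim \log n$. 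Substituting back into $(\star)$ delivers $p_n^{a,d} \sim \phi(a)\, n \log n$, as desired.

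There is no real obstacle here: the only point requiring a touch of care is the bootstrap step that upgrades $(\star)$ to $\log p_n^{a,d} \sim \log n$, but this is an entirely standard manipulation (the same one used to deduce $p_n \sim n \log n$ from PNT in the case $a=1$, $d=1$, $\phi(a)=1$).
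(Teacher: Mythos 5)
Your proof is correct and takes essentially the same route as the paper: both invert the prime number theorem for arithmetic progressions at $x = p_n^{a,d}$ (so $\pi_{a,d}(x)=n$) and then perform the standard bootstrap to replace $\log p_n^{a,d}$ by $\log n$. The paper merely packages the bootstrap on the other side, first deriving $\pi_{a,d}(x)\,\log \pi_{a,d}(x) \sim x/\phi(a)$ before substituting, but the underlying manipulation is identical.
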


\begin{proof}
Let $$\pi(x) := \sum_{p \leq x} 1 \quad \text{ and } \quad \pi_{a,d}(x) = \sum_{\substack{ p \leq x \\ p \equiv d  \bmod a} } 1.$$
By the prime number theorem and its generalization to arithmetic progressions (Theorems 4.5 and 7.10 in \cite{Ap}), we have
\begin{equation}
\label{eq:PNT} 
\lim_{x \rightarrow \infty} \frac{\pi (x)}{ x / \log x } =1, \quad \lim_{x \rightarrow \infty} \frac{\pi_{a,d} (x)}{ x / \phi(a) \log x } =1.
\end{equation}
It follows from \eqref{eq:PNT} that 
\begin{equation}
\label{eq:cons:pnt}
 \lim_{x \rightarrow \infty} \frac{\pi_{a,d} (x) \log \pi_{a,d} (x) }{x} = \lim_{x \rightarrow \infty} \frac{\pi(x) \log \pi(x)}{\phi(a) x} = \frac{1}{\phi(a)}.
 \end{equation}
If $x = p_{n}^{a,d}$, then $\pi_{a,d} (x) = n$. Substituting this into \eqref{eq:cons:pnt}, we get
\begin{equation*} 
\lim_{n \rightarrow \infty} \frac{p_n^{a,d}}{ \phi(a) n \log n} =1. 
\end{equation*}
\end{proof}

\begin{Theorem}
\label{lem2:sec2}
Let $u(x) \in {\mathbf U}$ be such that $\lim\limits_{x \rightarrow \infty} |u(x)| = \infty$ and $\lim\limits_{x \rightarrow \infty} \frac{|u(x)|}{\log x} < \infty$. 
For $a, d \in \mathbb{N}$ with $\gcd(a,d) =1$, let $(p_n^{a,d})_{n \in \mathbb{N}}$ be the $n$-th prime in the arithmetic progression $\{ am +d: m = 0, 1, 2, \dots \}$. 
Let $(x_n)_{n \in \mathbb{N}}$ be a sequence of real numbers. 
Then,
 ${(x_n + u(p_n^{a,d}))_{n \in \mathbb{N}}}$ is $w(n)$-uniformly distributed $\bmod \, 1$ if and only if $ (x_n + u( \phi(a) n \log n))_{n \in \mathbb{N}}$ is $w(n)$-uniformly distributed $\bmod \, 1$, where $\phi$ is the Euler totient function.
\end{Theorem}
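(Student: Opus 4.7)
The plan is to reduce the equivalence to the single asymptotic
$$u(p_n^{a,d}) - u(\phi(a) n \log n) \longrightarrow 0 \quad \text{as } n \to \infty,$$
after which Weyl's criterion for weighted uniform distribution (Theorem \ref{lem:Tsu:Equ}) transfers the $w(n)$-u.d.\ property between the two sequences. Writing $P_n := p_n^{a,d}$ and $M_n := \phi(a)n\log n$, Theorem \ref{lem:PNT} gives $P_n - M_n = o(M_n)$, so the question is whether the slow variation of $u$ absorbs this relative error.

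For the derivative control, I extend the Hardy field containing $u$ to a maximal one, which contains $\mathbf{L}$ and hence $\log x$; then $xu'(x) = u'(x)/(1/x)$ has a limit in $\mathbb{R}\cup\{\pm\infty\}$. If this limit were $+\infty$, then for every $K>0$ one would eventually have $u'(x) \geq K/x$, and integration would give $u(x)/\log x \to +\infty$, contradicting the hypothesis that $|u(x)|/\log x$ has a finite limit; the case $-\infty$ is excluded symmetrically. Hence $u'(x) = O(1/x)$. By the mean value theorem, for $\xi_n$ between $M_n$ and $P_n$,
$$|u(P_n) - u(M_n)| = |u'(\xi_n)|\,|P_n - M_n| \leq \frac{C}{\xi_n}\,|P_n - M_n|,$$
and since $\xi_n \sim M_n$ while $|P_n - M_n| = o(M_n)$, the right-hand side is $o(1)$, establishing the displayed asymptotic.

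Finally, by Theorem \ref{lem:Tsu:Equ}, a real sequence $(y_n)$ is $w(n)$-uniformly distributed $\bmod \, 1$ iff $\frac{1}{W(N)} \sum_{n=1}^N w(n) e^{2\pi i m y_n} \to 0$ for every nonzero integer $m$. Setting $y_n := x_n + u(P_n)$ and $z_n := x_n + u(M_n)$, the elementary bound $|e^{i\alpha} - e^{i\beta}| \leq |\alpha - \beta|$ yields $|e^{2\pi i m y_n} - e^{2\pi i m z_n}| \leq 2\pi|m|\,|u(P_n)-u(M_n)| =: \epsilon_n$, with $\epsilon_n \to 0$ by the previous paragraph. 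A standard weighted Ces\`aro argument (using $w(n)>0$ and $W(N)\to\infty$) then gives $\frac{1}{W(N)}\sum_{n=1}^N w(n)\epsilon_n \to 0$, so the two weighted exponential sums share the same asymptotic behavior and the claimed equivalence follows. I expect the main obstacle to be the Hardy-field derivative estimate $u'(x) = O(1/x)$; the remaining steps — invoking the prime number theorem in arithmetic progressions, applying the mean value theorem, and concluding via Weyl's criterion together with weighted Ces\`aro summation — are routine.
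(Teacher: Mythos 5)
Your proof is correct and follows essentially the same approach as the paper: you establish $u'(x) = O(1/x)$ from the Hardy-field structure, combine this with the mean value theorem and the prime number theorem in arithmetic progressions (Theorem \ref{lem:PNT}) to deduce $u(p_n^{a,d}) - u(\phi(a)\,n\log n) \to 0$, and then transfer $w(n)$-uniform distribution via the Weyl-type criterion of Theorem \ref{lem:Tsu:Equ}. The only minor difference is that you obtain the derivative bound by a contradiction/integration argument rather than the paper's invocation of L'Hospital's rule, but both hinge on the same Hardy-field fact that $\lim_{x\to\infty} xu'(x)$ exists.
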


\begin{proof}

By L'Hospital's rule, 
\[ \lim_{x \rightarrow \infty} x u'(x) = \lim_{x \rightarrow \infty}  \frac{u(x)}{\log x}.\]
Since $\lim_{x \rightarrow \infty} \frac{|u(x)|}{\log x} < \infty$, we have that $\lim_{x \rightarrow \infty} x |u'(x)| < \infty$, and so there exists $x_0$ and $A> 0$ such that for all $t \geq x_0$, $|u'(t)| \leq \frac{A}{t}$. 

For any real numbers $x$ and $y$ with $x \geq 2 x_0$ and $|x-y| \leq x/2$, we have, by the Mean Value Theorem,
\begin{equation}
\label{eq:meanvalue} 
 |u(x) - u(y)| = \sup_{t \in [x/2, 3x/2]} |u'(t)| \cdot |x-y| \leq \frac{ 2A}{x} |x-y|. 
 \end{equation}
In view of Theorem \ref{lem:Tsu:Equ}, in order to complete the proof, it suffices to show that for any non-zero integer $m$,
\begin{equation} 
\label{eq2.4}
\lim_{N \rightarrow \infty} \left|  \frac{1}{W(N)} \sum_{n=1}^N w(n) e^{ 2 \pi i m (x_n+ u(p_n^{a,d}))}  -  \frac{1}{W(N)} \sum_{n=1}^N w(n) e^{2 \pi i m (x_n+ u( \phi(a) n \log n)} \right| =0. 
\end{equation}

By \eqref{eq:meanvalue} with $x= \phi(a) n\log n$ and $y = p_n^{a,d}$ and Theorem \ref{lem:PNT}, we obtain
\begin{equation}
\label{eqn:comp:prime-log}
 |u(p_n^{a,d}) -  u(\phi(a) n \log n)| \leq 2A \left( \frac{|p_n^{a,d} - \phi(a) n \log n|}{\phi(a) n \log n } \right) \rightarrow 0 \quad \text{as } n \rightarrow \infty.
 \end{equation}
Thus,
\begin{equation*}
 \left| e^{ 2 \pi i m (x_n + u(p_n^{a,d}))} - e^{ 2 \pi i m (x_n + u(\phi(a) n \log n))} \right| 
 \leq 2 \pi m \left( \left|u(p_n^{a,d}) - u(\phi(a) n \log n) \right| \right) \rightarrow 0 \quad \text{as } n \rightarrow \infty. 
 \end{equation*} 
This establishes \eqref{eq2.4}, completing the proof.
\end{proof}

Putting $x_n =0$ in the formulation of Theorem \ref{lem2:sec2} gives us the following useful corollary.
\begin{Corollary}
\label{cor:prime:replace}
Let $u(x) \in {\mathbf U}$ be such that $\lim\limits_{x \rightarrow \infty} |u(x)| = \infty$ and $\lim\limits_{x \rightarrow \infty} \frac{|u(x)|}{\log x} < \infty$. 
Then ${( u(p_n^{a,d}))_{n \in \mathbb{N}}}$ is $w(n)$-uniformly distributed $\bmod \, 1$ if and only if $ ( u( \phi(a) n \log n))_{n \in \mathbb{N}}$ is $w(n)$-uniformly distributed $\bmod \, 1$.
In particular, ${( u(p_n))_{n \in \mathbb{N}}}$ is $w(n)$-uniformly distributed $\bmod \, 1$ if and only if $ ( u( n \log n))_{n \in \mathbb{N}}$ is $w(n)$-uniformly distributed $\bmod \, 1$.
\end{Corollary}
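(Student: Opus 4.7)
The plan is to deduce the corollary directly from Theorem \ref{lem2:sec2}, which is the main content that has just been established; the corollary merely records two convenient specializations.

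For the first equivalence, I would simply invoke Theorem \ref{lem2:sec2} with the constant sequence $x_n = 0$ for every $n \in \mathbb{N}$. The hypotheses on $u(x)$ required by Theorem \ref{lem2:sec2} (namely, $\lim_{x\to\infty} |u(x)|=\infty$ and $\lim_{x\to\infty}|u(x)|/\log x <\infty$) are identical to those assumed in the corollary, so the substitution is legitimate. With $x_n=0$, the two sequences $(x_n+u(p_n^{a,d}))_{n\in\mathbb{N}}$ and $(x_n+u(\phi(a) n \log n))_{n\in\mathbb{N}}$ reduce to $(u(p_n^{a,d}))_{n\in\mathbb{N}}$ and $(u(\phi(a) n \log n))_{n\in\mathbb{N}}$, respectively, and the first claim follows verbatim.

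For the ``in particular'' clause, I would specialize further by taking $a=d=1$. Then $\gcd(a,d)=1$, the Euler totient satisfies $\phi(1)=1$, and the arithmetic progression $\{am+d : m=0,1,2,\dots\}$ becomes $\{1,2,3,\dots\}$, which contains every positive integer; consequently $p_n^{1,1}$ coincides with the $n$-th prime $p_n$, and $\phi(a) n\log n = n\log n$. Substituting these identifications into the first equivalence yields the second.

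There is no real obstacle here, since both assertions are immediate consequences of Theorem \ref{lem2:sec2} and the identification $p_n^{1,1} = p_n$. If one wished to avoid appealing to the more general Theorem \ref{lem2:sec2}, one could reproduce its short proof in the simpler $x_n\equiv 0$ setting: combine the mean value estimate $|u(p_n^{a,d}) - u(\phi(a) n\log n)| \ll |p_n^{a,d} - \phi(a) n\log n|/(\phi(a) n\log n)$, which tends to $0$ by Theorem \ref{lem:PNT}, with the Weyl-type criterion in Theorem \ref{lem:Tsu:Equ}(3) to conclude that the two weighted exponential sums differ by $o(1)$ for every nonzero integer $m$.
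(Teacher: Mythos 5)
Your proof is correct and is exactly the argument the paper gives: the paper states that the corollary follows by putting $x_n = 0$ in Theorem \ref{lem2:sec2}, and the ``in particular'' clause is the specialization $a=d=1$ (so $\phi(a)=1$ and $p_n^{1,1}=p_n$), just as you observe.
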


We conclude this section by showing that if $u(x) \in \mathbf{U}$ satisfies $\lim\limits_{x \rightarrow \infty} u(x) = \infty$ and $\lim\limits_{x \rightarrow \infty} \frac{u(x)}{\log x} < \infty$, then $u (x \log x)$ has the same asymptotic growth rate as $u(x)$, so 
\begin{equation} 
\lim_{x \rightarrow \infty} \frac{u( x \log x)}{\log W(x)}  = \lim_{x \rightarrow \infty} \frac{u(x)}{\log W(x)}. 
\end{equation}
This fact will be utilized in the proof of the equivalence of conditions (1) and (2) in Theorem \ref{BKS}. (See Theorem \ref{BKS2} in Section \ref{sec2}.)

Recall that if $u(x) \in {\mathbf U}$, then both $u(ax+d)$ ($a, d \in \mathbb{R}$ with $a>0$)  and $u(x \log x) \in {\mathbf U}$ as well. This follows from the following two observations:
\begin{itemize}
\item the compositional inverses of $ax+d$ and $x \log x$ belong to $\mathbf{E}$ (Theorem 8.4 in \cite{Bos1981}) and 
\item if $f \in \mathbf{E}$ with $\lim\limits_{x \rightarrow \infty} f(x) = \infty$ and $g \in \mathbf{U}$, then $g \circ f^{-1} (x) \in \mathbf{U}$ (Lemma 2.4 in \cite{Bos}).
\end{itemize}

\begin{Theorem}
\label{lem3}
Let $u(x) \in {\mathbf U}$ be such that $\lim\limits_{x \rightarrow \infty} u(x) = \infty$ and $\lim\limits_{x \rightarrow \infty} \frac{u(x)}{\log x} < \infty$. 
Then 
\begin{equation}
\label{eq1:Lem2.8}
\lim_{x \rightarrow \infty} \frac{u(x \log x)}{u(x)} = 1.
\end{equation}
In addition, for any integers $a, d$ with $a \geq 1$, 
\begin{equation}
\label{eq2:Lem2.8}
\lim_{x \rightarrow \infty} \frac{u(ax +d)}{u(x)} = 1 
\end{equation}
and 
\begin{equation}
\label{eq3:Lem2.8}
 \lim_{x \rightarrow \infty} \frac{u(a x \log x)}{u(x)} = 1.
\end{equation}
Therefore, 
\begin{equation} 
\lim_{x \rightarrow \infty} \frac{u(a x \log x)}{\log W(x)} = \lim_{x \rightarrow \infty} \frac{u(a x + d)}{\log W(x)}  = \lim_{x \rightarrow \infty} \frac{u(x)}{\log W(x)}. 
\end{equation}
\end{Theorem}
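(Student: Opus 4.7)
My plan is to establish the three asymptotic ratios \eqref{eq1:Lem2.8}, \eqref{eq2:Lem2.8}, \eqref{eq3:Lem2.8} via pointwise estimates on $u'$ and $u'/u$, and then read off the final identity as an immediate consequence. Repeating the opening step of the proof of Theorem \ref{lem2:sec2}, L'Hospital's rule inside the Hardy field gives $\lim_{x \to \infty} x u'(x) = \lim_{x \to \infty} u(x)/\log x$, which is finite by hypothesis, so $|u'(x)| \leq A/x$ for some $A > 0$ and all sufficiently large $x$. This single bound already suffices for \eqref{eq2:Lem2.8}: for integers $a \geq 1$ and $d$, integration of $u'$ between $x$ and $ax + d$ gives $|u(ax+d) - u(x)| \leq O(1)$, which becomes negligible after dividing by $u(x) \to \infty$.

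To handle \eqref{eq1:Lem2.8} and \eqref{eq3:Lem2.8} I need a sharper estimate, since the naive bound $u'(t) = O(1/t)$ only yields $u(a x \log x) - u(x) = O(\log \log x)$, which is too weak when $u$ itself is of order $\log \log x$ (e.g.\ $u = \log \log x$). The key technical step will be to prove
\begin{equation*}
\frac{u'(x)}{u(x)} = O\!\left( \frac{1}{x \log x} \right).
\end{equation*}
Since $u = O(\log x)$, we have $\log u(x) \leq \log \log x + O(1)$, so the Hardy field element $\log u$ satisfies $\lim \log u(x)/\log \log x \in [0, 1]$. A second application of L'Hospital's rule identifies this limit with $\lim x \log x \cdot u'(x)/u(x)$, giving the displayed bound. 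Using that $u' > 0$ eventually (a Hardy field element has a definite sign at infinity, and $u \to \infty$ rules out $u' \leq 0$), integration between $x$ and $ax \log x$ then yields
\begin{equation*}
0 \leq \log u(a x \log x) - \log u(x) = \int_x^{ax \log x} \frac{u'(t)}{u(t)} \, dt \leq C \log\!\left( 1 + \frac{\log a + \log \log x}{\log x} \right) \longrightarrow 0,
\end{equation*}
which gives \eqref{eq3:Lem2.8}, with \eqref{eq1:Lem2.8} being the case $a = 1$.

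The closing identity about $\log W(x)$ follows mechanically: for $\psi(x) \in \{ax + d,\, ax \log x\}$, write
\begin{equation*}
\frac{u(\psi(x))}{\log W(x)} \;=\; \frac{u(\psi(x))}{u(x)} \cdot \frac{u(x)}{\log W(x)},
\end{equation*}
and note that the first factor tends to $1$ by \eqref{eq2:Lem2.8} and \eqref{eq3:Lem2.8}, which preserves the limit of the second factor in $[0, \infty]$. The main obstacle is the refined bound $u'/u = O(1/(x \log x))$: the first-order estimate $u' = O(1/x)$ is genuinely insufficient for \eqref{eq1:Lem2.8} and \eqref{eq3:Lem2.8}, and the fix is to transfer the analysis from $u$ to $\log u$ via closure of $\mathbf{U}$ under $\log$ of positive divergent elements, so that L'Hospital's rule can be applied a second time against $\log \log x$.
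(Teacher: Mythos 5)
Your proof is correct, and it takes a genuinely different route from the paper's. The paper proves \eqref{eq1:Lem2.8} by a two-case analysis according to whether $\lim u(x)/\log\log x$ is finite or infinite: in the first case it uses $\lim (x\log x)u'(x)<\infty$ together with eventual monotonicity of $u'$ to bound $\int_x^{x\log x}u'(t)\,dt$ by $(x\log x)u'(x)$; in the second case it bounds that integral by $C\log\log x$ and divides by $u(x)\gg\log\log x$. It then gets \eqref{eq2:Lem2.8} and \eqref{eq3:Lem2.8} from \eqref{eq1:Lem2.8} by sandwiching $u(x)\le u(ax+d)\le u(x\log x)$ using eventual monotonicity of $u$. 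Your argument instead passes to the logarithmic derivative: a second L'Hospital comparison of $\log u(x)$ against $\log\log x$ yields the uniform estimate $u'(x)/u(x)=O\bigl(1/(x\log x)\bigr)$, and integrating this over $[x,ax\log x]$ gives $\log u(ax\log x)-\log u(x)\to 0$ directly, with no case split. What this buys is a unified treatment of \eqref{eq1:Lem2.8} and \eqref{eq3:Lem2.8} (the former is just $a=1$) and avoidance of the dichotomy on the size of $u$ versus $\log\log x$; the small cost is that you must invoke closure of $\mathbf U$ under taking $\log$ of positive divergent elements (so that $\log u$, $\log\log x$, and $x\log x\cdot u'/u$ live in a common Hardy field and L'Hospital applies), a point the paper does not need. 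Your separate, cruder treatment of \eqref{eq2:Lem2.8} via $u'=O(1/x)$ is also sound and actually more elementary than the paper's sandwich, though the paper's sandwich would have given it to you for free once \eqref{eq1:Lem2.8} was in hand.
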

\begin{proof}
We first prove \eqref{eq1:Lem2.8}. 
We will show that 
\begin{equation}\label{eq:Lem2.8:goal}
 \lim_{x \rightarrow \infty} \frac{u(x \log x) - u(x)}{u(x)} = 0.
 \end{equation}
Let us consider two cases: 
\[ (i) \lim_{x \rightarrow \infty} \frac{u(x)}{ \log \log x} < \infty  \quad \text{and} \quad (ii)   \lim_{x \rightarrow \infty} \frac{u(x)}{ \log \log x} = \infty. \]
Case $(i)$. 
Applying L'Hospital's rule, we get that case (i) is equivalent to 
\begin{equation}
\label{eq:lem2.8:case(i)}
\lim_{x \rightarrow \infty} (x \log x) u'(x)  < \infty. 
\end{equation}
Since $u'(x) {\in \mathbf U}$, \eqref{eq:lem2.8:case(i)} implies that there exists $x_0$ such that for $x \geq x_0$, $u'(x)$ is decreasing.
Thus, for $x \geq x_0$,
\begin{equation}
\label{eq2.9}
\frac{u(x \log x) - u(x)}{u(x)} = \frac{\int_x^{x \log x} u'(t) \, dt}{u(x)} \leq \frac{(x \log x - x) u'(x)}{u(x)} \leq \frac{(x \log x) u'(x)}{u(x)}.
\end{equation}
Now notice that \eqref{eq:lem2.8:case(i)} implies that the right hand side (and hence the left hand side) in formula \eqref{eq2.9} converges to $0$ as $x \rightarrow \infty$,  which proves formula  \eqref{eq:Lem2.8:goal}.

Case $(ii)$. Applying L'Hospital's rule, we see that  $\lim\limits_{x \rightarrow \infty} \frac{u(x)}{\log x} < \infty$ implies
$$\lim\limits_{x \rightarrow \infty} x u'(x) = A < \infty.$$ 
Thus, for some $C$ with $C \geq A +1$, there exist $x_0$ such that for $x \geq x_0$,  $u'(x) \leq \frac{C}{x}$.
For $x \geq x_0$,
\begin{align} 
\label{eq2.12}
\frac{u(x \log x) - u(x)}{u(x)} &=   \frac{\int_x^{x \log x} u'(t) \, dt }{u(x)} 
\leq  \frac{\int_x^{x \log x} C/t \, dt }{u(x)} \notag \\
&= \frac{C \log (x\log x) - C \log x}{u(x)} =  C \frac{  \log \log x}{u(x)}.
\end{align}
Formula  \eqref{eq:Lem2.8:goal} follows by taking $x \rightarrow \infty$ in \eqref{eq2.12}.

Now it remains to prove \eqref{eq2:Lem2.8} and \eqref{eq3:Lem2.8}.
Since $u(x)$ is eventually increasing, for sufficiently large $x$ we have 
$$u(x) \leq u (ax+d) \leq u(x \log x),$$ 
which can be rewritten as
\begin{equation} 
\label{eq:2.13:lem2.6}
1 \leq \frac{u (ax+d)}{u(x)} \leq \frac{u(x \log x)}{u(x)}.
\end{equation}
Now formulas \eqref{eq1:Lem2.8} and \eqref{eq:2.13:lem2.6} give \eqref{eq2:Lem2.8}:
\begin{equation}
\label{eq:2.14}
\lim_{x \rightarrow \infty} \frac{ u(ax+d)}{u(x)} =1.
\end{equation}
Applying \eqref{eq1:Lem2.8} and \eqref{eq:2.14}, we further obtain  \eqref{eq3:Lem2.8}: 
\begin{equation}
\label{eq:2.15}
\lim_{x \rightarrow \infty} \frac{ u(ax \log x)}{u(x)} = \lim_{x \rightarrow \infty} \frac{ u(ax \log x)}{u(x \log x)} \frac{ u(x \log x)}{u(x)} =1.
\end{equation}
\end{proof}



\subsection{Preservation of weighted uniform distribution under perturbations by slowly growing sequences}
The following result will be used in Subsection \ref{subsec:3.4}, where we obtain some results pertaining to Benford's law. 

\begin{Theorem}
\label{thm:slowper}
Let $W(x) \in \mathbf{E}$ be such that $\lim\limits_{x \rightarrow \infty} W(x) = \infty$ and $w(x) := W'(x)$ is a non-increasing positive function.
Assume that $u(x) \in \mathbf{U}$ such that 
\begin{equation} 
\label{cond:Cor:slowper}
\lim\limits_{x \rightarrow \infty} \frac{|u(x)|}{\log W(x)} < \infty.
\end{equation}
Then, if $(x_n)_{n \in \mathbb{N}}$ is $w(n)$-uniformly distributed $\bmod \, 1$, then so are both $(x_n + u(n))_{n \in \mathbb{N}}$ and $(x_n + u(p_n))_{n \in \mathbb{N}}$.  
In particular, if $(x_n)_{n \in \mathbb{N}}$ is uniformly distributed $\bmod \, 1$, then so are $(x_n + \log n)_{n \in \mathbb{N}}$ and $(x_n + \log p_n)_{n \in \mathbb{N}}$. 
\end{Theorem}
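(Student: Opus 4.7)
The plan is to verify both claims via Weyl's criterion (Theorem~\ref{lem:Tsu:Equ}) combined with summation by parts. Fix a nonzero integer $m$ and set $a_n := w(n)\,e^{2\pi i m x_n}$, $A_N := \sum_{n=1}^N a_n$; the $w(n)$-uniform distribution of $(x_n)$ together with Weyl's criterion yields $A_N = o(W(N))$. It suffices to prove $\sum_{n=1}^N a_n\, e^{2\pi i m u(n)} = o(W(N))$, and similarly with $u(p_n)$ in place of $u(n)$. If $u$ has a finite limit $L$ at infinity, then $|e^{2\pi i m(u(n)-L)} - 1| \le 2\pi|m|\,|u(n)-L| \to 0$, so the sum differs from $e^{2\pi i m L}\,A_N$ by $o(W(N))$; the same argument handles the prime variant. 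So I may assume $|u(x)| \to \infty$ from here on.

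The prime case then reduces to the integer case. Since $w$ is positive and non-increasing, $W(x) \le w(1)\,x$, hence $\log W(x) = \log x + O(1)$, and hypothesis~\eqref{cond:Cor:slowper} forces $\lim|u(x)|/\log x < \infty$. Theorem~\ref{lem2:sec2} (applied with $a = d = 1$) therefore shows that $(x_n + u(p_n))$ is $w(n)$-u.d.\ if and only if $(x_n + u(n\log n))$ is. Setting $\tilde u(x) := u(x\log x)$, which lies in $\mathbf{U}$ by the remarks preceding Theorem~\ref{lem3}, the asymptotic $\tilde u(x)/u(x) \to 1$ from~\eqref{eq1:Lem2.8} preserves the condition $\lim|\tilde u(x)|/\log W(x) < \infty$. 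Hence the prime assertion follows by applying the integer assertion to $\tilde u$ in place of $u$.

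For the integer case, the key estimate is $|u'(x)| \le C\, w(x)/W(x)$ for large $x$. Since $\mathbf{E}$ lies in every maximal Hardy field, $u$ and $\log W$ can be embedded in a common Hardy field, so $u'W/w$ is Hardy-field-valued and therefore convergent at infinity; L'Hopital's rule applied to the finite limit $\lim u(x)/\log W(x)$ identifies this limit and yields the bound. As $w/W$ is non-increasing,
\[ |e^{2\pi i m u(n+1)} - e^{2\pi i m u(n)}| \le 2\pi|m|\int_n^{n+1}|u'(t)|\,dt \le 2\pi|m|C\, w(n)/W(n). \]
Abel summation then gives
\[ \sum_{n=1}^N a_n\, e^{2\pi i m u(n)} = A_N\, e^{2\pi i m u(N)} - \sum_{n=1}^{N-1} A_n\bigl(e^{2\pi i m u(n+1)} - e^{2\pi i m u(n)}\bigr), \]
and given $\varepsilon > 0$, choosing $N_0$ with $|A_n| \le \varepsilon W(n)$ for $n \ge N_0$ bounds the tail of the second sum by $2\pi|m|C\varepsilon\,W(N)$, while the head contributes $O(1)$. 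Dividing by $W(N)$ and letting $\varepsilon \to 0$ finishes the argument.

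I expect the main technical obstacle to be the Hardy-field L'Hopital step: one must carefully verify that $u$ and $\log W$ admit a common Hardy field so that $u'W/w$ has a finite limit at infinity, which is what legitimates the L'Hopital conclusion. Once this is in place, the remainder is essentially a weighted version of the classical perturbation argument for uniform distribution modulo~$1$.
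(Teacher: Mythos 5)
Your proof is correct and follows essentially the same route as the paper: you establish $|u'(x)| \ll w(x)/W(x)$ via the common-Hardy-field L'Hospital argument (the condition $W \in \mathbf{E}$ is exactly what guarantees the relevant limits exist), apply Abel summation (which the paper packages as Lemma~\ref{lem:slowper}), and reduce the prime case to the integer case via Theorems~\ref{lem2:sec2} and~\ref{lem3}, with your upfront treatment of the bounded-$u$ case being a modest streamlining. One small overclaim: $\log W(x) = \log x + O(1)$ can fail when $W$ grows slowly (e.g.\ $W(x) = \log x$ gives $\log W(x) = \log\log x$), but you only need the one-sided bound $\log W(x) \leq \log x + O(1)$ coming from $W(x) \ll x$, which suffices to conclude $\lim |u(x)|/\log x < \infty$.
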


\begin{Remark}
By Theorem \ref{BKS}, condition \eqref{cond:Cor:slowper} implies that both $(u(n))_{n \in \mathbb{N}}$ and $(u(p_n))_{n \in \mathbb{N}}$ are not $w(n)$-uniformly distributed $\bmod \, 1$.
\end{Remark}

To prove Theorem \ref{thm:slowper}, we need the following lemma.
\begin{Lemma}
\label{lem:slowper}
Suppose that $(y_n)_{n \in \mathbb{N}}$ is a sequence of real numbers such that there exists $C > 0$ for which
\begin{equation}
|y_{n} - y_{n-1}| \leq C \frac{w(n)}{W(n)} \quad \text{for all } n \geq 2.
\end{equation}
If $(x_n)_{n \in \mathbb{N}}$ is $w(n)$-uniformly distributed $\bmod \, 1$, then the sequence $(x_n + y_n)_{n \in \mathbb{N}}$ is also $w(n)$-uniformly distributed $\bmod \, 1$.  
In particular, if $(x_n)_{n \in \mathbb{N}}$ is uniformly distributed $\bmod \, 1$ and $|y_n - y_{n-1}| \leq \frac{C}{n}$ for all $n$, then the sequence $(x_n + y_n)_{n \in \mathbb{N}}$ is uniformly distributed $\bmod \, 1$.  
\end{Lemma}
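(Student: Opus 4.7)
The plan is to invoke the exponential-sum form of Weyl's criterion (Theorem \ref{lem:Tsu:Equ}(3)) and then apply Abel summation to transfer the $w(n)$-uniform distribution of $(x_n)$ to $(x_n + y_n)$. Specifically, for a fixed non-zero integer $m$, I would write
\[
e^{2\pi i m (x_n + y_n)} = a_n \cdot b_n, \qquad a_n := w(n)\, e^{2\pi i m x_n}, \quad b_n := e^{2\pi i m y_n},
\]
and set $S_N := \sum_{n=1}^N a_n$. The $w(n)$-uniform distribution of $(x_n)$ gives $S_N / W(N) \to 0$, while $|b_n| = 1$.

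Next, summation by parts yields
\[
\sum_{n=1}^N a_n b_n = S_N b_N + \sum_{n=1}^{N-1} S_n (b_n - b_{n+1}),
\]
so I need to show both terms divided by $W(N)$ vanish. The boundary term is immediate: $|S_N b_N|/W(N) \le |S_N|/W(N) \to 0$. For the main term, the hypothesis $|y_{n+1} - y_n| \le C\, w(n+1)/W(n+1)$ combined with $|e^{i\alpha} - e^{i\beta}| \le |\alpha - \beta|$ gives
\[
|b_n - b_{n+1}| \le 2\pi |m|\, |y_{n+1} - y_n| \le 2\pi |m|\, C\, \frac{w(n+1)}{W(n+1)}.
\]

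The main step is then the estimate
\[
\frac{1}{W(N)} \sum_{n=1}^{N-1} |S_n|\, \frac{w(n+1)}{W(n+1)} \longrightarrow 0.
\]
Given $\varepsilon > 0$, choose $N_0$ so that $|S_n| \le \varepsilon W(n)$ for $n \ge N_0$; since $W$ is non-decreasing, $W(n)/W(n+1) \le 1$, and the tail is bounded by
\[
\varepsilon \sum_{n=N_0}^{N-1} w(n+1) \le \varepsilon\, W(N).
\]
The finite head sum $\sum_{n=1}^{N_0 - 1} |S_n| w(n+1)/W(n+1)$ is a constant (depending on $N_0$) and hence vanishes after division by $W(N) \to \infty$. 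Letting $\varepsilon \to 0$ completes the Weyl-sum estimate, so $(x_n + y_n)$ is $w(n)$-uniformly distributed $\bmod\, 1$. The classical case follows by taking $w(n) \equiv 1$, $W(n) = n$, giving the criterion $|y_n - y_{n-1}| \le C/n$.

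The only delicate point is the Abel-summation bookkeeping: one must ensure the telescoping estimate really reduces to $\sum w(n+1)$ and does not pick up an extra divergent factor. The monotone and positive character of $w$ (so that $W$ is increasing and $w(n+1)/W(n+1) \le w(n+1)/W(n)$ as needed for any index-shift) makes this straightforward, but it is the place where the precise hypothesis $|y_n - y_{n-1}| \le C\, w(n)/W(n)$ is used in an essential way.
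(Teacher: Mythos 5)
Your proof is correct and proceeds by essentially the same argument as the paper: both reduce via the Weyl criterion (Theorem \ref{lem:Tsu:Equ}) to an exponential-sum estimate and then use Abel summation against the partial sums $S_n = \sum_{l=1}^n w(l)e^{2\pi i m x_l}$, bounding $|b_n - b_{n+1}|$ by the hypothesis on $y_n$ and finishing with a Ces\`aro-type argument driven by $|S_n|/W(n)\to 0$. The only cosmetic difference is the direction of the index shift in the summation-by-parts formula and that you spell out the $\varepsilon$--$N_0$ tail estimate that the paper states in one line; note also that the only structural fact genuinely used is that $W$ is nondecreasing (i.e.\ $w\ge 0$), not the monotonicity of $w$ itself.
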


\begin{proof}
In light of Theorem \ref{lem:Tsu:Equ}, it suffices to show that for any non-zero integer $m$, 
\begin{equation}
\label{eq:thm:slowper}
\lim_{N \rightarrow \infty} \frac{1}{W(N)} \sum_{n=1}^N w(n) e^{2 \pi i m(x_n + y_n)} = 0.
\end{equation}
Summation by parts gives
\begin{align*}
\sum_{n=1}^N w(n) e^{2 \pi i m(x_n + y_n)} &= \sum_{n=1}^N e^{2 \pi i m y_n} w(n) e^{2 \pi m x_n} \\
&= \sum_{n=1}^{N-1} (e^{2 \pi i m y_n} - e^{2 \pi i m y_{n-1}}) \sum_{l=1}^n w(l) e^{ 2 \pi i m x_l} + e^{2 \pi i m y_N} \sum_{l=1}^N w(l) e^{2 \pi i m x_l}.
\end{align*}
Since $|e^{ 2 \pi i my_n} - e^{2 \pi i m y_{n-1}}| \leq 2 \pi m  |y_n - y_{n-1}| \leq 2 \pi mC \frac{w(n)}{W(n)}$, 
\[ \frac{1}{W(N)} \left| \sum_{n=1}^N w(n) e^{2 \pi i m(x_n + y_n)} \right| \leq \frac{2 \pi m C }{W(N)} \sum_{n=1}^{N-1} w(n) \left| \frac{1}{W(n)} \sum_{l=1}^n w(l) e^{2 \pi i m x_l} \right| + \frac{1}{W(N)} \left|  \sum_{l=1}^N w(l) e^{2 \pi i m x_l} \right|. \]
Now \eqref{eq:thm:slowper} follows from the fact that $\lim\limits_{n \rightarrow \infty} \frac{1}{W(n)}\left| \sum\limits_{l=1}^n w(l) e^{2 \pi i m x_l} \right| = 0$.   
\end{proof}

\begin{proof}[Proof of Theorem \ref{thm:slowper}]
Let us first show that $(x_n + u(n))_{n \in \mathbb{N}}$ is $w(n)$-uniformly distributed 
$\bmod \, 1$.
By L'Hospital's rule and Theorem \ref{lem3}, $$\lim_{x \rightarrow \infty} \frac{w(x)/ W(x)}{w(x+1) /W(x+1)} = \lim_{x \rightarrow \infty} \frac{\log W(x)}{\log W(x+1)} =1.$$
Thus, 
 $$\lim\limits_{x \rightarrow \infty} u'(x) \frac{W(x+1)}{ w(x+1)} =  \lim\limits_{x \rightarrow \infty} u'(x) \frac{W(x)}{ w(x)} = \lim\limits_{x \rightarrow \infty} \frac{u(x)}{\log W(x)} < \infty.$$ 
It follows that there exists $C > 0$ such that $|u'(x)| \leq C \frac{w(x+1)}{W(x+1)}$. 
Moreover, as $\lim\limits_{x \rightarrow \infty} \frac{w(x)}{W(x)} =0$, $|u'(x)|$ is eventually decreasing. 
This, in turn, implies
\[ |u(n) - u(n-1)| \leq |u'(n-1)| \leq C \frac{w(n)}{W(n)}. \] 
Therefore, by Lemma \ref{lem:slowper}, $(x_n + u(n))_{n \in \mathbb{N}}$ is $w(n)$-uniformly distributed $\bmod \, 1$.
We also note that  
\[
\lim_{x \rightarrow \infty} \frac{|u(x \log x)|}{\log W(x)} < \infty.
\]  
This is immediate if \( \lim\limits_{x \rightarrow \infty} u(x) < \infty \), and it follows from Theorem \ref{lem3} when \( \lim\limits_{x \rightarrow \infty} u(x) = \infty \).  
Consequently, the sequence \( (x_n + u(n \log n))_{n \in \mathbb{N}} \) is also \( w(n) \)-uniformly distributed $\bmod \, 1$.  
Finally, note that the condition \( \lim\limits_{x \rightarrow \infty} \frac{u(x)}{\log W(x)} < \infty \) implies  
\[
\lim_{x \rightarrow \infty} \frac{u(x)}{\log x} < \infty.
\]  
This observation allows us to apply Theorem \ref{lem2:sec2}, from which we conclude that the sequence \( (x_n + u(p_n))_{n \in \mathbb{N}} \) is also \( w(n) \)-uniformly distributed $\bmod \, 1$.
\end{proof}

\subsection{A weighted difference theorem and an estimate for exponential sums for slowly growing functions}
In this section we collect some auxiliary results, which will be utilized in subsection \ref{subsec:3.4}. 
The first result formulated in this section, Theorem \ref{vdC}, is a special case of Lemma 4.6 in \cite{BMR} and is given without proof.
We also present a corollary of Theorem \ref{vdC} (Corollary \ref{Cor:vdCdiff}), which can be viewed as a weighted version of the classical van der Corput difference theorem. 
We conclude this section with Theorem \ref{lem:estimate:expsum:Eulersum}, which provides a useful auxiliary estimate for exponential sums involving slowly growing functions.

\begin{Theorem}[cf. Lemma 4.6 in \cite{BMR}]
\label{vdC}
Let $(x_n)_{n \in \mathbb{N}}$ be a sequence of real numbers. 
Suppose that 
\begin{equation} \lim_{h \rightarrow \infty} \limsup_{N \rightarrow \infty} \left| \frac{1}{W(N)} \sum_{n=1}^N w(n) \, e^{2 \pi i (x_{n+h} - x_n)}  \right| = 0.\end{equation}
Then 
\begin{equation} \lim_{N \rightarrow \infty} \frac{1}{W(N)} \sum_{n=1}^N  w(n) \, e^{2 \pi i x_n} = 0.\end{equation}
\end{Theorem}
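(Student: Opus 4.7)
The plan is to adapt the classical Cauchy--Schwarz (van der Corput) trick to the weighted setting. Writing $z_n := e^{2\pi i x_n}$ and fixing an integer $H \geq 1$, I would introduce the shifted block sums and their weighted accumulation,
\[
S_n := \sum_{h=0}^{H-1} z_{n+h}, \qquad B_N := \sum_{n=1}^N w(n) S_n = \sum_{h=0}^{H-1}\sum_{n=1}^N w(n) z_{n+h}.
\]
The whole argument will rest on two estimates: (i) a \emph{comparison step} showing that $B_N$ is close to $H \sum_{n=1}^N w(n) z_n$ up to a constant depending only on $H$, and (ii) a \emph{Cauchy--Schwarz step} bounding $|B_N|^2$ by a quantity controlled by the hypothesis. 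A final Ces\`aro argument in $H$ then closes the proof.

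For step (i), I would swap the order of summation in $B_N$ and substitute $m=n+h$ to rewrite $B_N=\sum_m V_m z_m$, where $V_m := \sum_{h\colon 0\le h\le H-1,\ 1\le m-h\le N} w(m-h)$. In the bulk range $H\le m\le N$,
\[
V_m - H w(m) \;=\; \sum_{\ell=1}^{H-1}\bigl(w(m-\ell)-w(m)\bigr) \;\ge\; 0,
\]
by the non-increasing property of $w$, and telescoping this identity over $m$ yields $\sum_{m=H}^{N}(V_m - Hw(m)) \le HW(H)$, a bound independent of $N$. The two boundary ranges $1\le m<H$ and $N<m\le N+H-1$ each contribute an error of size $O_H(1)$. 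Together this gives $|B_N - H\sum_n w(n)z_n|\le c_H$ for a constant $c_H$ independent of $N$, hence $o(W(N))$ as $N\to\infty$.

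For step (ii), Cauchy--Schwarz yields $|B_N|^2\le W(N)\sum_n w(n)|S_n|^2$. Expanding $|S_n|^2 = H + 2\operatorname{Re}\sum_{0\le h<h'\le H-1} z_{n+h}\overline{z_{n+h'}}$ and summing against $w(n)$, each off-diagonal term with $k:=h'-h$ becomes, after $m=n+h$, the sum $\sum_{m=h+1}^{N+h} w(m-h)e^{-2\pi i(x_{m+k}-x_m)}$. A second telescoping based on the monotonicity of $w$ shows that this differs from the natural hypothesis sum $\sum_{m=1}^{N} w(m)e^{-2\pi i(x_{m+k}-x_m)}$ by at most $3hw(1)$, a bound depending only on $h$. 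Setting $\varepsilon(k) := \limsup_N W(N)^{-1}\bigl|\sum_n w(n)e^{2\pi i(x_{n+k}-x_n)}\bigr|$, the hypothesis gives $\varepsilon(k)\to 0$ as $k\to\infty$, and I would thereby obtain
\[
\limsup_{N\to\infty}\frac{1}{W(N)}\sum_{n=1}^N w(n)|S_n|^2 \;\le\; H + 2\sum_{k=1}^{H-1}(H-k)\varepsilon(k).
\]

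Combining both steps, with $L := \limsup_N W(N)^{-1}\bigl|\sum_n w(n)z_n\bigr|$, I would get $H^2 L^2 \le H + 2\sum_{k=1}^{H-1}(H-k)\varepsilon(k)$, whence
\[
L^2 \;\le\; \frac{1}{H} + \frac{2}{H}\sum_{k=1}^{H-1}\varepsilon(k).
\]
Letting $H\to\infty$ and using the Ces\`aro convergence of $\varepsilon(k)\to 0$ forces $L=0$, which is the desired conclusion. The main obstacle throughout is the careful bookkeeping of the two telescoping arguments: both rely essentially on $w$ being non-increasing, which allows differences $w(m-h)-w(m)$ to collapse into a bounded error independent of $N$ (though possibly growing with $h$ and $H$). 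This is precisely what lets the hypothesis---stated only with the weight $w(n)$ at index $n$---transfer to the shifted sums $\sum_n w(n)e^{2\pi i(x_{n+h+k}-x_{n+h})}$ that arise naturally after the substitutions; without monotonicity, neither comparison would close.
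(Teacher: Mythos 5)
Your proof is correct. Note, however, that the paper does not actually prove this theorem: it states it ``without proof'' as a special case of Lemma~4.6 in \cite{BMR}, which is a general Hilbert-space weighted van der Corput inequality. What you have done is extract the scalar case and prove it from scratch by the classical mechanism: weighted Cauchy--Schwarz on the block sums $S_n$, plus two telescoping arguments exploiting the monotonicity of $w$ to control (a) the discrepancy $B_N - H\sum_n w(n)z_n$ and (b) the shift from $\sum_n w(n) z_{n+h}\bar z_{n+h'}$ to the hypothesis sum indexed at $m$. Both telescoping bounds are right: for the first, each $\ell$ contributes $\sum_{j=H-\ell}^{H-1}w(j)-\sum_{j=N-\ell+1}^{N}w(j)\le W(H)$, giving the claimed $O_H(1)$ bound; for the second, the bulk term $\sum_{m=h+1}^{N}(w(m-h)-w(m))$ telescopes to at most $\sum_{j=1}^{h}w(j)\le h\,w(1)$, and the two boundary pieces each add at most $h\,w(1)$, giving the $3h\,w(1)$ bound. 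The Ces\`aro-in-$H$ closing step is also correct. The payoff of your route is self-containedness and transparency about exactly where non-increasing $w$ is used; what the paper's route buys is generality (the BMR lemma yields the Hilbert-space form, which is needed elsewhere in their setting) at the cost of relying on an external reference. Either is a legitimate way to justify the statement.
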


The following corollary of Theorem \ref{vdC} is a natural weighted version of van der Corput difference theorem (which corresponds to the case $w(n) =1, n \in \mathbb{N}$). 

\begin{Corollary}
\label{Cor:vdCdiff}
Let $(x_n)_{n \in \mathbb{N}}$ be a sequence of real numbers. 
If for every positive integer $h$, $x_{n+h} - x_n, n = 1, 2, \dots,$ is $w(n)$-uniformly distributed $\bmod \, 1$, then so is $(x_n)_{n \in \mathbb{N}}$.
\end{Corollary}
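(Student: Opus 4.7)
The plan is to deduce this as a direct consequence of Theorem \ref{vdC} combined with the Weyl-type criterion for weighted uniform distribution given in Theorem \ref{lem:Tsu:Equ}. The key observation is that both hypotheses are expressed most conveniently through exponential sums, and the weighted van der Corput trick is perfectly tailored to bridge the gap between sums involving $(x_n)$ and sums involving the differences $(x_{n+h}-x_n)$.

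First I would fix a nonzero integer $m$ and apply Theorem \ref{lem:Tsu:Equ} to the hypothesis: since $(x_{n+h}-x_n)_{n \in \mathbb{N}}$ is $w(n)$-uniformly distributed $\bmod \, 1$ for each positive integer $h$, we have
\begin{equation*}
\lim_{N \to \infty} \frac{1}{W(N)} \sum_{n=1}^N w(n)\, e^{2\pi i m (x_{n+h} - x_n)} = 0
\end{equation*}
for every positive integer $h$. Consequently,
\begin{equation*}
\lim_{h \to \infty} \limsup_{N \to \infty} \left| \frac{1}{W(N)} \sum_{n=1}^N w(n)\, e^{2\pi i\, ((m x_{n+h}) - (m x_n))} \right| = 0,
\end{equation*}
since the inner $\limsup$ is in fact zero for each fixed $h$.

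Next I would apply Theorem \ref{vdC} to the sequence $(m x_n)_{n \in \mathbb{N}}$. The hypothesis of that theorem is exactly the displayed equation above, so its conclusion yields
\begin{equation*}
\lim_{N \to \infty} \frac{1}{W(N)} \sum_{n=1}^N w(n)\, e^{2\pi i m x_n} = 0.
\end{equation*}
Since $m$ was an arbitrary nonzero integer, invoking the equivalence (1)$\Leftrightarrow$(3) in Theorem \ref{lem:Tsu:Equ} finishes the proof that $(x_n)_{n \in \mathbb{N}}$ is $w(n)$-uniformly distributed $\bmod \, 1$.

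There is really no substantive obstacle here: the content of the corollary is packaged entirely inside Theorem \ref{vdC}, and the only work is the bookkeeping that translates ``$w(n)$-uniform distribution of the differences'' into the vanishing exponential-sum hypothesis of Theorem \ref{vdC} via Weyl's criterion, and then translates the conclusion back the same way. The one small point to be careful about is applying Theorem \ref{vdC} not to $(x_n)$ itself but to $(m x_n)$ for each fixed nonzero $m$, which is what makes the Weyl criterion applicable in both directions.
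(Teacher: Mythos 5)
Your proof is correct and follows essentially the same route as the paper: apply the Weyl criterion of Theorem \ref{lem:Tsu:Equ} to turn the hypothesis into vanishing of the exponential sums $\frac{1}{W(N)}\sum w(n)e^{2\pi i m(x_{n+h}-x_n)}$, invoke Theorem \ref{vdC} (applied, as you correctly note, to $(mx_n)$ for each fixed nonzero $m$), and translate back with the Weyl criterion. The only difference is that you make explicit the application of Theorem \ref{vdC} to $(mx_n)$ rather than $(x_n)$, a point the paper leaves implicit.
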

\begin{proof}
Assume that for every positive integer $h$, $x_{n+h} - x_n, n = 1, 2, \dots,$ is $w(n)$-uniformly distributed $\bmod \, 1$. 
Then applying Theorem \ref{lem:Tsu:Equ} (3), we have for every nonzero integer $m$, 
\begin{equation} 
 \lim_{N \rightarrow \infty}  \frac{1}{W(N)} \sum_{n=1}^N w(n) \, e^{2 \pi i m(x_{n+h} - x_n)}  = 0.
\end{equation}
Applying Theorem \ref{vdC}, we obtain
\begin{equation} 
\lim_{N \rightarrow \infty} \frac{1}{W(N)} \sum_{n=1}^N  w(n) \, e^{2 \pi i m x_n} = 0 \quad \text{for all } m \in \mathbb{Z} \setminus \{ 0 \}.
\end{equation}
Invoking Theorem \ref{lem:Tsu:Equ} again, we obtain that $(x_n)_{n \in \mathbb{N}}$  is $w(n)$-uniformly distributed $\bmod \, 1$.
\end{proof}

\begin{Theorem}
\label{lem:estimate:expsum:Eulersum}
Let $W(x) \in \mathbf{E}$ be such that $\lim\limits_{x \rightarrow \infty} W(x) = \infty$ and $w(x) := W'(x)$ is a non-increasing positive function.  
Let $u(x) \in \mathbf{U}$. Suppose that there exists a nonzero real number $a$ such that
\[ \lim_{x \rightarrow \infty} \frac{u(x)}{\log W(x)} = a.\]
Then,
\[ \lim_{|h| \rightarrow \infty} \limsup_{N \rightarrow \infty} \left| \frac{1}{W(N)} \sum_{n=1}^N w(n) e^{2 \pi i h u(n)}  \right| =0.\] 
\end{Theorem}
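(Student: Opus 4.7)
The plan is to reduce the weighted exponential sum to a continuous integral and then extract a factor of $1/|h|$ via integration by parts, exploiting that the hypothesis $u(x)/\log W(x)\to a$ makes $e^{2\pi i h u(x)}$ behave like $W(x)^{2\pi i h a}$, whose antiderivative against $w(x)\,dx$ carries the factor $(1+2\pi i h a)^{-1}$. Concretely, I would set $v(x):=u(x)-a\log W(x)$, so that $v(x)=o(\log W(x))$. Since $W\in\mathbf{E}$ lies in every maximal Hardy field while $u\in\mathbf{U}$ lies in some Hardy field, $u$ and $\log W$ (hence $v$) belong to a common Hardy field, and L'H\^opital's rule in that field turns the given asymptotics into the quantitative derivative estimates $u'(x)=(a+o(1))\,w(x)/W(x)$ and $W(x)\,v'(x)=o(w(x))$.

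Next I would replace the sum by $\int_1^N w(x)e^{2\pi i h u(x)}\,dx$. On each interval $[n,n+1]$ the integrand differs from $w(n)e^{2\pi i h u(n)}$ by at most $(w(n)-w(n+1))+2\pi|h|\,w(n)\sup_{[n,n+1]}|u'|$; the first part telescopes to $O(1)$, and $|u'(t)|=O(w(n)/W(n))$ makes the second sum to $O(|h|)\sum_{n\le N}w(n)^2/W(n)$. Since $w$ is non-increasing, $W(n)\ge n\,w(n)$ yields $w(n)^2/W(n)\le w(n)/n$; a two-range split at a fixed large $M$ (giving $w(1)\log M + W(N)/M$) shows $\sum_{n\le N}w(n)/n=o(W(N))$, so for each fixed $h$ the sum equals the integral plus $o(W(N))$.

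For the integral I would use the factorization $e^{2\pi i h u(x)}=W(x)^{2\pi i h a}e^{2\pi i h v(x)}$ together with $w(x)W(x)^{2\pi i h a}=\frac{d}{dx}\bigl(W(x)^{1+2\pi i h a}/(1+2\pi i h a)\bigr)$. Integration by parts produces a boundary term of size $O(W(N)/|1+2\pi i h a|)$ and a remainder bounded by $\frac{2\pi|h|}{|1+2\pi i h a|}\int_1^N W(x)|v'(x)|\,dx$; the prefactor is $O(1)$ in $h$, and $\int_1^N W(x)|v'(x)|\,dx=o(W(N))$ because $W(x)|v'(x)|=o(w(x))$. Dividing by $W(N)$ and letting $N\to\infty$, $\limsup_N$ of the normalized sum is bounded by $1/|1+2\pi i h a|$, which tends to $0$ as $|h|\to\infty$.

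The main obstacle I expect is the L'H\^opital step: transforming the bare asymptotic $u(x)/\log W(x)\to a$ into the pointwise bound $u'(x)\sim a\,w(x)/W(x)$ (and the analogue for $v$) requires placing $u$ and $\log W$ in a common Hardy field and invoking the Hardy-field version of L'H\^opital together with the eventual monotonicity of $u'/(w/W)$. Once those derivative estimates are in hand, the remaining sum-to-integral approximation and integration-by-parts bookkeeping are routine.
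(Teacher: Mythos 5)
Your proof is correct and follows essentially the same strategy as the paper's: replace the weighted sum by $\int_1^N w(x)e^{2\pi i h u(x)}\,dx$ with an $o(W(N))$ error, integrate by parts to extract a boundary term of modulus $W(N)/|1+2\pi i h a|$ plus an $o(W(N))$ remainder, and let $|h|\to\infty$. The only differences are technical packaging: the paper uses the Euler summation formula for the sum-to-integral step where you use an elementary interval-by-interval comparison (with the bound $\sum_{n\le N}w(n)^2/W(n)=o(W(N))$), and the paper integrates $\int w e^{2\pi i h u}$ by parts directly with $w=W'$ and then rearranges using $Wu'/w=a+o(1)$, whereas you substitute $v=u-a\log W$, factor $e^{2\pi i h u}=W^{2\pi i h a}e^{2\pi i h v}$, and use the explicit antiderivative $W^{1+2\pi i h a}/(1+2\pi i h a)$ of $wW^{2\pi i h a}$; both manipulations isolate the same $(1+2\pi i h a)^{-1}$ factor.
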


\begin{proof}
Let $p(x) = x - \lfloor x \rfloor + \frac{1}{2}$. 
Then, by the Euler summation formula, we have that  
\begin{equation}
\label{eq:Eulersum:sec2}
 \sum_{n=1}^N w(n) e^{2 \pi i h u(n)} = \int_1^N w(x) e^{2 \pi i h u(x)} \, dx + \frac{1}{2} \left(w(N) e^{2 \pi i h u(N)} + w(1) e^{2 \pi i h u(1)} \right) - \int_1^N p(x) \frac{d}{dx} (w(x) e^{2 \pi i h u(x)}) \, dx.
 \end{equation}
Notice that the middle term on the right-hand side of \eqref{eq:Eulersum:sec2}, when divided by $W(N)$, tends to zero. Note also that 
\[ \left| \int_1^N p(x) \frac{d}{dx} (w(x) e^{2 \pi i h u(x)}) \, dx \right| \leq \int_1^N (- w'(x) + u'(x)) \, dx = w(1) - w(N) + u(N) - u(1). \]
Since, $\lim\limits_{N \rightarrow \infty} \frac{u(N)}{\log W(N)} = a$, the last term on the right-hand side of \eqref{eq:Eulersum:sec2}, when divided by $W(N)$, also tends to zero.
Therefore, we have that 
\begin{equation}
\label{eq2.27:conv:sumtoint}
 \lim_{N \rightarrow \infty} \left| \frac{1}{W(N)} \sum_{n=1}^N w(n) e^{2 \pi i h r(n)} - \frac{1}{W(N)} \int_1^N w(x) e^{2 \pi i h r(x)} \, dx \right| =0 
\end{equation}
To deal with the term $\frac{1}{W(N)} \int_1^N w(x) e^{2 \pi i h r(x)} \, dx$, we proceed as follows:
\begin{align*} 
 \int_1^N w(x) e^{2 \pi i h u(x)} \, dx  &=  \left[  W(x) e^{2 \pi i h u(x)} \right]_1^N - 2 \pi i h \int_1^N W(x) u'(x) e^{2 \pi i h u(x)} dx \\
 & =  \left[  W(x) e^{2 \pi i h u(x)} \right]_1^N - 2 \pi i h \int_1^N \left(\frac{W(x) u'(x)}{w(x)} - a + a \right) w(x) e^{2 \pi i h u(x)} dx  \\
 &=  \left[  W(x) e^{2 \pi i h u(x)} \right]_1^N - 2 \pi i h \int_1^N \left(\frac{W(x) u'(x)}{w(x)} - a \right) w(x) e^{2 \pi i h u(x)} dx \\
 &\quad -  2 \pi i h a \int_1^N w(x) e^{2 \pi i h u(x)} dx
\end{align*}
Rearranging the terms and dividing by $W(N)$, we obtain
\begin{equation}
\label{eq2.10:sec2}
\frac{1+ 2 \pi i h a}{W(N)} \int_1^N w(x) e^{2 \pi i h u(x)} \, dx = \frac{1}{W(N)}  \left[  W(x) e^{2 \pi i h u(x)} \right]_1^N 
- \frac{2 \pi i h}{W(N)} \int_1^N \left(\frac{W(x) u'(x)}{w(x)} - a \right) w(x) e^{2 \pi i h u(x)} dx 
\end{equation}
Note that 
\begin{equation}
\label{eq1:est1:thm2.15}
\limsup_{N \rightarrow \infty}  \left| \frac{1}{W(N)}  \left[  W(x) e^{2 \pi i h u(x)} \right]_1^N \right| =1. 
\end{equation}
In addition, since
\[
\lim_{x \rightarrow \infty} \frac{W(x) u'(x)}{w(x)} = \lim_{x \rightarrow \infty} \frac{u(x)}{\log W(x)} = a,
\]
we have
\begin{equation} 
\label{eq2:est2:thm2.15}
\lim_{N \rightarrow \infty}  \frac{1}{W(N)} \int_1^N \left(\frac{W(x) u'(x)}{w(x)} - a \right) w(x) e^{2 \pi i h u(x)} dx =0. 
\end{equation}
Dividing \eqref{eq2.10:sec2} by $1+2 \pi i ha$ and substituting \eqref{eq1:est1:thm2.15} and \eqref{eq2:est2:thm2.15} into the two terms of the right hand side of \eqref{eq2.10:sec2}, we obtain
\[  \limsup_{N \rightarrow \infty} \left| \frac{1}{W(N)} \int_{1}^N w(x) e^{2 \pi i h u(x)} \, dx  \right| = \frac{1}{|1+ 2 \pi i ha|}.  \]
Then, using \eqref{eq2.27:conv:sumtoint},  we have
\[  \limsup_{N \rightarrow \infty} \left| \frac{1}{W(N)} \sum_{n=1}^N w(n) e^{2 \pi i h u(n)}  \right| =  \limsup_{N \rightarrow \infty} \left| \frac{1}{W(N)} \int_{1}^N w(x) e^{2 \pi i h u(x)} \, dx  \right| = \frac{1}{|1+ 2 \pi i ha|}.  \]
As the right-hand side tends to zero as $|h| \rightarrow \infty$, the conclusion follows.
\end{proof}

\section{Weighted uniform distribution of subpolynomials in Hardy fields}
\label{sec2}
In this section we prove our main theorem - Theorem \ref{BKS} - and then derive several further results concerning weighted uniform distribution. 
Throughout the section, we assume that $W(x) \in \mathbf{E}$, $\lim\limits_{x \rightarrow \infty} W(x) = \infty$, and that $w(x) := W'(x)$ is a non-increasing positive function.  

\begin{Theorem}[Theorem \ref{BKS} in Introduction]
\label{BKS2}
For a subpolynomial function $u(x) \in \mathbf{U}$, the following are equivalent:
\begin{enumerate}
\item $(u(n))_{n \in \mathbb{N}}$ is $w(n)$-uniformly distributed $\bmod \, 1$.
\item For any $a, d \in \mathbb{N}$, $(u(an + d))_{n \in \mathbb{N}}$ is $w(n)$-uniformly distributed $\bmod \, 1$.
\item For some $a, d \in \mathbb{N}$, $(u(an + d))_{n \in \mathbb{N}}$ is $w(n)$-uniformly distributed $\bmod \, 1$.
\item $(u(p_n))_{n \in \mathbb{N}}$ is $w(n)$-uniformly distributed $\bmod \, 1$, where $p_n$ is the $n$-th prime. 
\item For any $a, d \in \mathbb{N}$ with $\gcd (a,d)=1$, $(u(p_n^{a,d}))_{n \in \mathbb{N}}$ is $w(n)$-uniformly distributed $\bmod \, 1$, where $p_n^{a,d}$ is the $n$-th prime in the arithmetic progression $\{am + d: m = 0, 1, 2, \dots\}$. 
\item For some $a, d \in \mathbb{N}$ with $\gcd (a,d)=1$, $(u(p_n^{a,d}))_{n \in \mathbb{N}}$ is $w(n)$-uniformly distributed $\bmod \, 1$.
\item For any polynomial $q(x) \in \mathbb{Q}[x]$,
\[ \lim_{x \rightarrow \infty} \frac{|u(x) - q(x)|}{\log W(x)} = \infty.\]
\end{enumerate}
\end{Theorem}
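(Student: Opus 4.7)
The plan is to establish the equivalence by closing two cycles: one among the non-prime conditions $(1),(2),(3)$ together with $(7)$, and one among the prime conditions $(4),(5),(6)$ together with $(7)$. The trivial links $(2)\Rightarrow(3)$ and $(5)\Rightarrow(6)$ are immediate, and $(2)\Rightarrow(1)$, $(5)\Rightarrow(4)$ (and $(1)\Rightarrow(3)$, $(4)\Rightarrow(6)$ via the case $a=d=1$) follow from a shift-invariance argument: replacing $n$ by $n+1$ in a weighted sum with non-increasing $w(n)$ and $w(n)/W(n)\to 0$ changes the averages only by quantities of order $(w(1)-w(N))/W(N)\to 0$ after summation by parts, so unit shifts preserve $w(n)$-uniform distribution. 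It therefore suffices to show $(3)\Rightarrow(7)\Rightarrow(2)$ and $(6)\Rightarrow(7)\Rightarrow(5)$.

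For $(3)\Rightarrow(7)$ I would proceed by contrapositive, invoking Niederreiter's Theorem \ref{lem:Nie}. If (7) fails, there exist $q\in\mathbb{Q}[x]$ and $C>0$ with $|u(x)-q(x)|\le C\log W(x)$ for all $x$ large. Let $M$ be a common denominator of the coefficients of $q$. Along the arithmetic progression $aMn+d$, the sequence $q(aMn+d)$ takes integer values, hence vanishes mod $1$, so $\{u(aMn+d)\}=\{r(aMn+d)\}$ where $r:=u-q\in\mathbf{U}$ and $|r(x)|\le C\log W(x)$. Because $r$ is an element of a Hardy field, it is eventually monotone; Niederreiter's theorem then forces a $w(n)$-uniformly distributed monotone sequence to satisfy $\limsup|r(aMn+d)|/\log W(n)=\infty$, contradicting the bound. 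The parallel implication $(6)\Rightarrow(7)$ uses the same mechanism after first translating the prime statement into one about $u(\phi(a)n\log n)$ via Corollary \ref{cor:prime:replace} and controlling $\log W(\phi(a)n\log n)/\log W(n)$ via Theorem \ref{lem3}.

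For the central sufficiency $(7)\Rightarrow(2)$, I would apply Tsuji's sufficient criterion, Theorem \ref{lem:Tsu2}, to $u(ax+d)$, after first absorbing any rational polynomial part of $u$ into an arithmetic-progression decomposition. More precisely, starting from (7) one writes $u=q_0+r$ where $q_0\in\mathbb{Q}[x]$ captures the ``rational-polynomial part'' of $u$ and $r\in\mathbf{U}$ still satisfies $|r(x)|/\log W(x)\to\infty$. Passing to an AP on which $q_0$ is integer-valued reduces matters to showing that $(r(a'n+d'))$ is $w(n)$-uniformly distributed; hypotheses (i)--(iii) of Tsuji follow from Hardy-field monotonicity of $r$ and its derivatives, while hypothesis (iv), namely $r'(x)W(x)/w(x)\to\infty$, is equivalent by L'Hospital's rule to $r(x)/\log W(x)\to\infty$, which is exactly what remains of (7). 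For the prime version $(7)\Rightarrow(5)$, I would then apply Corollary \ref{cor:prime:replace} to reduce to the sequence $u(\phi(a)n\log n)$ and invoke Theorem \ref{lem3}, which guarantees that (7) passes from $u(x)$ to $u(\phi(a)x\log x)$, placing us back in the non-prime situation handled above.

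The main obstacle will be the polynomial-absorption step in the sufficiency direction. Tsuji's theorem applies cleanly only to $r$, but (7) merely asserts that $u$ is far from \emph{every} rational polynomial; constructing a canonical decomposition $u=q_0+r$ in which $r$ simultaneously has a monotone derivative and grows faster than $\log W$, while ensuring that the resulting AP structure covers condition (2) for every pair $(a,d)\in\mathbb{N}^2$ (and condition (5) for every coprime pair after the Corollary \ref{cor:prime:replace} reduction), is the delicate part. This is where the hypothesis $W\in\mathbf{E}$ is used essentially: since $\mathbf{E}$ is closed under integration, composition, and contains the compositional inverses needed in the prime reductions, both $r$ and its derivatives remain inside a common Hardy field throughout the argument, and the monotonicity and asymptotic-comparison facts collected in Section \ref{sec2} can be applied without leaving $\mathbf{U}$.
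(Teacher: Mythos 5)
Your overall cycle structure ($(7)$ as a hub, with the non-prime and prime chains each passing through it) matches the paper's, and your shift-invariance observation is sound in itself, but two of your key implications contain genuine gaps.

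\textbf{Gap in $(3)\Rightarrow(7)$.} The claim that $q(aMn+d)$ takes integer values along the progression $aMn+d$ (with $M$ a common denominator of the coefficients of $q$) is false; take $q(x)=x/2$, $M=2$, $a=d=1$, so $q(2n+1)=n+1/2$. What is true is that for a suitable modulus the fractional part $\{q(an+d)\}$ is \emph{constant}, not zero. More seriously, hypothesis (3) supplies $w(n)$-uniform distribution of $(u(an+d))_n$ only for the given $a,d$; you then pass to the sub-progression $aMn+d$, but $w(n)$-uniform distribution of a sequence does not transfer to a sub-progression indexed by $Mn$ without something like Corollary~\ref{thm:equiv:new} — which is itself a corollary of the theorem being proved. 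The paper avoids arithmetic progressions altogether in this direction: it multiplies by an integer $k$ with $kq(n)\equiv 0 \pmod{\mathbb{Z}}$, so that $kr(n)\equiv ku(n)\pmod{\mathbb{Z}}$, and then applies Theorem~\ref{lem:Nie} to the monotone sequence $kr(n)$. To deduce $(3)\Rightarrow(7)$, you should apply exactly this argument to the function $v(x)=u(ax+d)$ and use Theorem~\ref{lem3} to transfer the growth bound $|r(ax+d)|\ll\log W(ax+d)\sim\log W(x)$.

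\textbf{Gap in $(7)\Rightarrow(2)$.} Condition (7) does not guarantee that $u$ admits a decomposition $u=q_0+r$ with $q_0\in\mathbb{Q}[x]$ and $r$ \emph{slowly growing} (so that Tsuji's hypothesis (ii), $r'(x)\to0$, holds). For example, if $u(x)=x^{3/2}$ the only choice is $q_0=0$, $r=u$, and then $r'(x)\to\infty$, so Theorem~\ref{lem:Tsu2} does not apply. This is not a technicality that closure properties of $\mathbf{E}$ fix; the Tsuji route simply does not reach such functions. The paper's proof handles this by a case split you have omitted: if $|u(x)-q(x)|/\log x\to\infty$ for every $q\in\mathbb{Q}[x]$, then $(u(n))$ is (ordinary) uniformly distributed by Theorem~\ref{BKS:prev} and hence $w(n)$-uniformly distributed by Theorem~\ref{lem:Tsu1}; only in the complementary case, where some $q$ makes $r=u-q$ satisfy $|r(x)|/\log x<\infty$ while (7) still gives $|r(x)|/\log W(x)\to\infty$, does one invoke Tsuji via Lemma~\ref{lem:slow:ud}. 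The same dichotomy is needed for $(7)\Rightarrow(5)$, where the fast-growth case additionally requires the orthogonality argument with the prime number theorem in arithmetic progressions rather than Corollary~\ref{cor:prime:replace} (which is only available for slowly growing $u$).
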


To facilitate the proof of Theorem \ref{BKS2}, we first establish the following lemma. 
\begin{Lemma}
\label{lem:slow:ud}
Let $r(x) \in {\mathbf U}$ satisfy
\begin{equation}
\label{cond:lem:slow:ud}
 \lim_{x \rightarrow \infty} \frac{|r(x)|}{\log x} < \infty \quad \text{and} \quad \lim_{x \rightarrow \infty} \frac{|r(x)|}{\log W(x)} = \infty.
 \end{equation}
Then, for any $a, d \in \mathbb{N}$, the sequence $(r(an +d))_{n \in \mathbb{N}}$ is $w(n)$-uniformly distributed $\bmod \, 1$. In addition, for any positive integer $a$, the sequence $(r(a n \log n))_{n \in \mathbb{N}}$ is $w(n)$-uniformly distributed $\bmod \, 1$.
\end{Lemma}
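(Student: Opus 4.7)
The plan is to verify the hypotheses of Tsuji's Theorem (Theorem \ref{lem:Tsu2}) for $u(x) := r(ax+d)$ and then for $u(x) := r(ax\log x)$, so that the desired $w(n)$-uniform distribution follows directly from its conclusion. Since $w(n)$-uniform distribution $\bmod\,1$ is invariant under negating the sequence, and since any $r \in \mathbf{U}$ is eventually of constant sign and eventually monotone, we may assume without loss of generality that $r$ is eventually positive and increasing (the hypothesis $|r(x)|/\log W(x) \to \infty$ forces $|r(x)| \to \infty$). Since $r(ax+d), r(ax\log x) \in \mathbf{U}$ (as noted before Theorem \ref{lem3}), $u(x)$ is positive and increasing for large $x$ in both cases, giving condition (i).

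For condition (ii), the bound $|r(x)|/\log x = O(1)$ together with L'Hospital's rule yields $x r'(x) = O(1)$, so $r'(x) = O(1/x)$. Hence $u'(x) = a r'(ax+d) \to 0$ in the first case, and $u'(x) = a(\log x + 1) r'(ax\log x) = O(1/x) \to 0$ in the second. Because $u' \in \mathbf{U}$ is eventually of constant sign (necessarily positive, since $u$ is increasing) and eventually monotone while tending to $0$, the convergence is monotone decreasing. For condition (iv), L'Hospital's rule gives
\[ \lim_{x \to \infty} \frac{u'(x)\,W(x)}{w(x)} = \lim_{x \to \infty} \frac{u(x)}{\log W(x)}, \]
and Theorem \ref{lem3} shows that in both cases this ratio equals $\lim_{x \to \infty} r(x)/\log W(x) = \infty$.

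The subtlest point, and the one that will require the most care, is condition (iii): that $u'(x)/w(x)$ is eventually monotone. Here $u'$ and $w$ are each Hardy field functions, but they a priori live in different Hardy fields, and a ratio of Hardy field functions from different Hardy fields need not itself be a Hardy field function. I would resolve this by embedding the Hardy field containing $u'$ into a maximal Hardy field $\mathcal{M}$; since $w = W' \in \mathbf{E}$ and $\mathbf{E}$ sits inside every maximal Hardy field, $w \in \mathcal{M}$ as well, so the ratio $u'(x)/w(x)$ belongs to $\mathcal{M}$ and is therefore eventually of constant sign and eventually monotone. With (i)--(iv) verified, Theorem \ref{lem:Tsu2} delivers the $w(n)$-uniform distribution $\bmod\,1$ of $(r(an+d))_{n \in \mathbb{N}}$ and of $(r(an\log n))_{n \in \mathbb{N}}$, completing the proof.
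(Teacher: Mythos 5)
Your proof is correct and follows the paper's route through Tsuji's Theorem \ref{lem:Tsu2}; the paper checks conditions (i)--(iv) for $r(x)$ itself and then transfers hypothesis \eqref{cond:lem:slow:ud} to $r(ax+d)$ and $r(ax\log x)$ via Theorem \ref{lem3}, while you check the conditions directly for the composite functions --- the same computation either way. Your care with condition (iii) is a genuine improvement in exposition: the paper dismisses the eventual monotonicity of $u'(x)/w(x)$ as ``straightforward,'' while you correctly observe that this rests on $w = W' \in \mathbf{E}$ lying in every maximal Hardy field, hence in one containing $u'$, so that the quotient is itself a Hardy field function.
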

\begin{proof}
Replacing $r(x)$ by $-r(x)$,  if necessary, we may assume that $r(x)$  is eventually non-negative.
First, we prove that $(r(n))_{n \in \mathbb{N}}$ is $w(n)$-uniformly distributed $\bmod \, 1$. 
It suffices to show that $r(x)$ satisfies conditions $(i)-(iv)$ of Theorem \ref{lem:Tsu2}. 
Conditions $(i)$ and $(iii)$ are straightforward. 
Condition $(ii)$ holds since 
\[ \lim_{x \rightarrow \infty} x r'(x) = \lim_{x \rightarrow \infty} \frac{r(x)}{\log x} < \infty.\]
Finally, condition $(iv)$ follows from 
\begin{equation} 
\lim_{x \rightarrow \infty} \frac{r'(x)}{w(x)} W(x) =  \lim_{x \rightarrow \infty} \frac{r(x)}{\log W(x)} = \infty.
\end{equation}
To complete the proof, we note that by  Theorem \ref{lem3}, both  $r(ax+d)$ and $r(ax \log x)$ satisfy \eqref{cond:lem:slow:ud}. 
Consequently, $((r(an +d))_{n \in \mathbb{N}}$ and $(r(a n \log n))_{n \in \mathbb{N}}$ are $w(n)$-uniformly distributed $\bmod \, 1$. 
\end{proof}

\begin{proof}[Proof of Theorem \ref{BKS2}]
Let us first show that $(1), (2), (3)$ and $(7)$ are equivalent. 

$(7) \Rightarrow (1)$: Consider first the case when for any $q(x) \in \mathbb{Q}[x]$,
\begin{equation}
 \lim_{x \rightarrow \infty} \frac{|u(x) - q(x)|}{\log x} = \infty.
\end{equation}
By Theorem \ref{BKS:prev}, $(u(n))_{n \in \mathbb{N}}$ is uniformly distributed $\bmod \, 1$ and so it follows from Theorem \ref{lem:Tsu1} that $(u(n))_{n \in \mathbb{N}}$ is also $w(n)$-uniformly distributed $\bmod \, 1$.
Now consider the case when for some $q(x) \in \mathbb{Q}[x]$,
\begin{equation}
\label{eq2:proof:main}
 \lim_{x \rightarrow \infty} \frac{|u(x) - q(x)|}{\log x} < \infty.
\end{equation}
Let $r(x) = u(x) - q(x)$. 
Then,
 \begin{equation*}
\lim_{x \rightarrow \infty} \frac{|r(x)|}{\log x} < \infty, \quad \lim_{x \rightarrow \infty} \frac{|r(x)|}{\log W(x)} = \infty.
 \end{equation*}
Since $q(x) \in \mathbb{Q}[x]$, for a suitably chosen $a \in \mathbb{N}$, there exists a partition $\mathbb{Z} = \bigcup\limits_{d=1}^{a} (a \mathbb{Z} + d)$ such that $q(n)$ is constant on each element of the partition.  
By Lemma \ref{lem:slow:ud}, $(r(an+ d))_{n \in \mathbb{N}}$  is $w(n)$-uniformly distributed $\bmod \, 1$, so $(u(n))_{n \in \mathbb{N}}$ is $w(n)$-uniformly distributed $\bmod \, 1$. 
This proves the implication $(7) \Rightarrow (1)$.

$(1) \Rightarrow (7)$: We  prove this by contradiction. Suppose that for some $q(x) \in \mathbb{Q}[x]$,
\begin{equation}
\label{eq:(5)}
 \lim_{x \rightarrow \infty} \frac{|u(x) - q(x)|}{\log W(x)} < \infty.
\end{equation}
Let $r(x) = u(x) - q(x)$.  
Find $k \in \mathbb{N}$ such $k q(n) \equiv 0 \bmod \mathbb{Z}$, so that $k r(n) \equiv k u(n) \bmod \mathbb{Z}$. Since $(u(n))_{n \in \mathbb{N}}$ is $w(n)$-uniformly distributed $\bmod \, 1$, so is $(ku(n))_{n \in \mathbb{N}}$. However, in view of \eqref{eq:(5)}, Theorem \ref{lem:Nie} implies that $(k r(n))_{n \in \mathbb{N}}$ is not $w(n)$-uniformly distributed $\bmod \, 1$, leading to a contradiction. 

$(1) \Leftrightarrow (2) \Leftrightarrow (3)$: 
Since $u(x) \in \mathbf{U}$ is a subpolynomial, it follows that $u(ax + d)$ is also subpolynomial and belongs to $\mathbf{U}$ (see the discussion preceding Theorem \ref{lem3}).
Since $(1) \Leftrightarrow (7)$, to verify $(1) \Rightarrow (2)$ , it suffices to show that for every $q(x) \in \mathbb{Q}[x]$,
\begin{equation}
\label{eq:4.68:sec2}
\lim_{x \to \infty} \frac{|u(ax + d) - q(x)|}{\log W(x)} = \infty.
\end{equation}
Note that
\begin{equation}
\label{eq:4.69:sec2}
 \lim_{x \rightarrow \infty} \frac{|u(ax+d) - q(x)|}{\log W(x)} = \lim\limits_{x \rightarrow \infty} \frac{|u(x) - q ((x-d)/a)|}{W ((x-d)/a)}
 \end{equation}
Since $\lim_{x \to \infty} \frac{W(x)}{W((x - d)/a)} = 1$ by Theorem \ref{lem3}, the limit in \eqref{eq:4.69:sec2} is infinite.
Thus, \eqref{eq:4.68:sec2} follows.

$(2) \Rightarrow (3)$ is immediate. 

$(3) \Rightarrow (1)$: Since $(1) \Leftrightarrow (7)$, the implication in question follows from the fact that for any $q(x) \in \mathbb{Q}[x]$,
\[ \lim_{x \rightarrow \infty} \frac{|u(x) -q (x)|}{\log W(x)} =  \lim_{x \rightarrow \infty} \frac{|u(ax +d) -q (ax+d)|}{\log W(ax+d)} =  \lim_{x \rightarrow \infty} \frac{|u(ax +d) -q (ax+d)|}{\log W(x)} = \infty, \]
where the second equality follows from $\lim_{x \to \infty} \frac{W(ax+d)}{W(x)} = 1$, by Theorem \ref{lem3}.

Now we prove that $(4), (5), (6)$ and $(7)$ are equivalent.
Let us start with showing that $(7)$ implies  $(4), (5), (6)$. 
Consider first the case when for any $q(x) \in \mathbb{Q}[x]$,
\begin{equation}
\label{eq:proof:main:prime}
 \lim_{x \rightarrow \infty} \frac{|u(x) - q(x)|}{\log x} = \infty.
\end{equation}
By Theorem \ref{BKS:prev},  $(u(p_n))_{n \in \mathbb{N}}$ is uniformly distributed $\bmod \, 1$ and so by Theorem \ref{lem:Tsu1}, it is also $w(n)$-uniformly distributed $\bmod \, 1$.
Additionally, since $ku(x) + \frac{(x-d)j}{a}$ satisfies \eqref{eq:proof:main:prime} for any positive integers $a, d$, any nonzero integer $k$ and for any $j = 1, 2 \dots, a$, the sequence $\left(k  u(p_n) + \frac{(p_n - d)j}{a} \right)_{n \in \mathbb{N}}$ is uniformly distributed $\bmod \, 1$.  

We will use the classical identity (see p.34 in \cite{Mo}), which states that for any integer $n$ and any positive integer $q \geq 2$, one has
\begin{equation}
\label{ex:orthogoanl}
\frac{1}{q} \sum_{j=1}^q e^{2 \pi i  \frac{(n-t)j}{q}} = 
\begin{cases}
	1, & \text{if $n \equiv t \, \bmod \, q$}\\
         0, & \text{otherwise}.
		 \end{cases}
\end{equation}
Thus we have that for any real sequence $(x_n)_{n \in \mathbb{N}}$, 
\begin{equation}
\label{eq:2.20:primetrick:new}
\sum_{\substack{ p \leq N \\ p \equiv t \, \bmod \, q} } e^{2 \pi i  x_p} 
= \sum_{ p \leq N  } e^{2 \pi i x_p}  \frac{1}{q} \sum_{j=1}^q e^{2 \pi i \frac{(p-t)j}{q}}
=\frac{1}{q} \sum_{j=1}^q \sum_{p \leq N} e^{2 \pi i \left(x_p + \frac{(p-t)j}{q} \right)}.
\end{equation}
Applying \eqref{eq:2.20:primetrick:new}, we have
\begin{align*}
\lim_{N \rightarrow \infty} \frac{1}{\pi_{a,d} (N)} \sum_{\substack{p \leq N \\ p \equiv d \bmod a}} e^{2 \pi i k u(p)}
&= \lim_{N \rightarrow \infty} \frac{1}{\pi_{a,d} (N)} \sum_{p \leq N } e(2 \pi i k u(p)) \frac{1}{a} \sum_{j=1}^a e^{2 \pi i \frac{(p - d)j}{a}} \\
&= \lim_{N \rightarrow \infty} \frac{\pi(N)}{\pi_{a,d} (N)} \frac{1}{a} \sum_{j=1}^a \frac{1}{\pi(N)} \sum_{p \leq N } e^{ 2 \pi i \left(k  u(p) + \frac{(p - d)j}{a} \right) } =0,
\end{align*}
since $\lim_{N \rightarrow \infty} \frac{\pi(N)}{\pi_{a,d}(N)} = \phi(a)$.   
Thus, $(u (p_n^{a,d}))_{n \in \mathbb{N}}$ is uniformly distributed $\bmod \, 1$ for any $a, d \in \mathbb{N}$ with $\gcd (a,d)=1$ and $1 \leq d \leq a$, and so by Theorem \ref{lem:Tsu1}, it is also $w(n)$-uniformly distributed $\bmod \, 1$.

Now consider the case when for some $q(x) \in \mathbb{Q}[x]$,
\begin{equation}
\label{eq2:proof:main}
 \lim_{x \rightarrow \infty} \frac{|u(x) - q(x)|}{\log x} < \infty.
\end{equation}
Let $r(x) = u(x) - q(x)$. 
Then,
 \begin{equation*}
 \lim_{x \rightarrow \infty} \frac{|r(x)|}{\log x} < \infty, \quad  \lim_{x \rightarrow \infty} \frac{|r(x)|}{\log W(x)} = \infty.
 \end{equation*}
By Corollary \ref{cor:prime:replace} and Lemma \ref{lem:slow:ud}, for any $a, d \in \mathbb{N}$ with  $(a,d) =1$, $(r(p_n^{a,d}))_{n \in \mathbb{N}}$ is $w(n)$-uniformly distributed $\bmod \, 1$ and so is $(u(p_n^{a,d}))_{n \in \mathbb{N}}$. This implies that $(7) \Rightarrow (5)$. 
Since $(5) \Rightarrow (4)$ and $(5) \Rightarrow (6)$, we conclude that (7) also implies (4) and (6).

It remains to show that $(4) \Rightarrow (7)$, $(5) \Rightarrow (7)$ and $(6) \Rightarrow (7)$. 
First we show, by way of contradiction, that $(6) \Rightarrow (7)$.
Indeed, if, for some  $q(x) \in \mathbb{Q}[x]$,
\[ \lim_{x \rightarrow \infty} \frac{|u(x) - q(x)|}{\log W(x)} < \infty, \]
then for any $a, d \in \mathbb{N}$ with $\gcd (a,d)=1$, $(u (p_n^{a,d}))_{n \in \mathbb{N}}$ is not $w(n)$-uniformly distributed $\bmod \, 1$. 
Let $r(x) = u(x) - q(x)$. Take a non-zero integer $k$ such that $kq(n) \equiv 0 \bmod \mathbb{Z}$ for all $n$. 
Then, by Theorem \ref{lem3},
\[ \lim_{n \rightarrow \infty} \frac{|k r (\phi(a) n \log n)|}{ \log W(n)} = \lim_{x \rightarrow \infty} \frac{|k r( \phi(a) x \log x)|}{|r(x)|} \frac{|u(x) - q(x)|}{\log W(x)} < \infty.\]
By Theorem \ref{lem:Nie}, this implies that $(k r(\phi(a) n \log n))_{n \in \mathbb{N}}$ is not $w(n)$-uniformly distributed $\bmod \, 1$. 
Applying Corollary \ref{cor:prime:replace}, we conclude that $(k r(p_n^{a,d}))_{n \in \mathbb{N}}$ not $w(n)$-uniformly distributed $\bmod \, 1$, which further implies that $( u(p_n^{a,d}))_{n \in \mathbb{N}}$  is also not $w(n)$-uniformly distributed $\bmod \, 1$. This proves the implication $(6) \Rightarrow (7)$.

Now note that when $a=1, d=1$, $p_n^{a,d} = p_n$.  This gives the implication $(4) \Rightarrow (7)$.
Finally, since $(5) \Rightarrow (6)$, the implication $(5) \Rightarrow (7)$ also follows.
\end{proof}

For a subpolynomial function $u(x) \in \mathbf{U}$, Theorem \ref{BKS2} shows that $(u(n))_{n \in \mathbb{N}}$ is $w(n)$-uniformly distributed if and only if $(u(an+d))_{n \in \mathbb{N}}$ is $w(n)$-uniformly distributed. 
The following corollary of Theorem \ref{BKS2} demonstrates that $(u(n))_{n \in \mathbb{N}}$ is $w(n)$-uniformly distributed if and only if it is $w(an+d)$-uniformly distributed.

\begin{Corollary}
\label{thm:equiv:new}
Let $u(x) \in \mathbf{U}$ be a subpolynomial function. Then the following are equivalent.
\begin{enumerate}
\item $(u(n))_{n \in \mathbb{N}}$ is $w(n)$-uniformly distributed.
\item $(u(n))_{n \in \mathbb{N}}$ is $w(an+d)$-uniformly distributed for any $a \in \mathbb{N}$ and any $d \in \{0, 1, \dots, a-1\}$.
\item $(u(p_n))_{n \in \mathbb{N}}$ is $w(an+d)$-uniformly distributed for any $a \in \mathbb{N}$ and any $d \in \{0, 1, \dots, a-1\}$.
\end{enumerate}
\end{Corollary}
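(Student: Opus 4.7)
The plan is to reduce Corollary \ref{thm:equiv:new} to Theorem \ref{BKS2} by realizing $w(an+d)$-uniform distribution as uniform distribution with respect to a rescaled weight. Fix $a \in \mathbb{N}$ and $d \in \{0, 1, \ldots, a-1\}$, and set $\tilde W(x) := W(ax + d)$, so that $\tilde w(x) := \tilde W'(x) = a\, w(ax+d)$. Since $\mathbf{E}$ contains $\mathbf{L}$ and is closed under composition, $\tilde W \in \mathbf{E}$; clearly $\tilde W(x) \to \infty$, while $\tilde w$ is positive and non-increasing. Because the constant factor $a$ cancels between numerator and denominator in the definition of weighted uniform distribution (and we may normalize by $\tilde W(N)$ rather than $\sum_{n \le N} \tilde w(n)$, per the Remark following Definition \ref{def:weighted-u.d.}), the notion of $\tilde w(n)$-uniform distribution is identical to that of $w(an+d)$-uniform distribution.

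Applying Theorem \ref{BKS2} with the weight $\tilde w$ to the subpolynomial $u \in \mathbf{U}$, conditions (1) and (4) of that theorem yield that both ``$(u(n))_{n \in \mathbb{N}}$ is $w(an+d)$-uniformly distributed'' and ``$(u(p_n))_{n \in \mathbb{N}}$ is $w(an+d)$-uniformly distributed'' are equivalent to
\[
\lim_{x \to \infty} \frac{|u(x) - q(x)|}{\log W(ax+d)} = \infty \qquad \text{for every } q(x) \in \mathbb{Q}[x].
\]
On the other hand, applying Theorem \ref{BKS2} directly with the weight $w$ shows that condition (1) of the corollary is equivalent to the same growth condition with $\log W(x)$ in the denominator. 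Thus the entire corollary reduces to the claim that, for every $a \in \mathbb{N}$ and every $d \in \{0, 1, \ldots, a-1\}$,
\[
\lim_{x \to \infty} \frac{\log W(ax+d)}{\log W(x)} = 1.
\]

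To establish this last asymptotic, I would apply Theorem \ref{lem3} with $\log W$ in place of $u$. Since $w$ is positive and non-increasing, $W(x) = O(x)$ and hence $\limsup_{x \to \infty} \frac{\log W(x)}{\log x} \leq 1 < \infty$; at the same time $\log W(x) \to \infty$, and $\log W \in \mathbf{E} \subset \mathbf{U}$ because $\log \in \mathbf{L} \subset \mathbf{E}$ and $\mathbf{E}$ is closed under composition. Theorem \ref{lem3} then delivers the desired limit. The only mildly subtle point is precisely this verification that $\log W$ lies in the scope of Theorem \ref{lem3}; once that is in hand, the equivalences (1) $\Leftrightarrow$ (2) $\Leftrightarrow$ (3) follow immediately from the two applications of Theorem \ref{BKS2} outlined above.
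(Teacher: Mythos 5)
Your proof is correct and follows essentially the same route as the paper's. The paper defines $V(x) = \tfrac{1}{a}W(ax+d)$ so that $V' = w(ax+d)$ exactly, whereas you take $\tilde W(x) = W(ax+d)$ and observe the extraneous factor $a$ cancels; this is a cosmetic difference. In both cases the heart of the argument is applying Theorem \ref{BKS2} for the rescaled weight and invoking Theorem \ref{lem3} for $\log W$ to get $\log W(ax+d) \sim \log W(x)$; your explicit verification that $W(x) = O(x)$ (hence $\log W(x)/\log x$ is bounded) and that $\log W \in \mathbf{E}$ supplies details the paper leaves implicit, but does not change the argument.
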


\begin{proof}
Let $V(x) = \frac{1}{a} W (ax+d)$, so that $v(x) = V'(x) = w (ax+d)$. 
Since $\lim_{x \rightarrow \infty} \frac{\log W(x)}{\log x} < \infty$, Theorem \ref{lem3} implies that 
\[ \lim_{x \rightarrow \infty} \frac{\log W(x)}{\log V(x)} = 1.\]
Thus, the equivalence of (1) and (2) follows from the equivalence of (1) and (7) in Theorem \ref{BKS2}. 
Also,  the equivalence of (2) and (3) follows from the equivalence of (1) and (4) in Theorem \ref{BKS2}. 
\end{proof}

In the following theorem, we consider distribution of the sequence $(u_n)_{n \in \mathbb{N}}$, where $u_n = \sum_{k=1}^n f(k)$ and $f(x) \in \mathbf{U}$ is a subpolynomial.
This result will be utilized in Subsection \ref{subsec:3.4}. 
\begin{Theorem}
\label{thm:sum:ud:new}
Let $f(x) \in \mathbf{U}$ be a subpolynomial. Assume that $f(x)$ is defined on $x \geq 0$. 
Define 
\[F(x) = \int_0^x f(t) \, dt\]
and
\[ u_n = \sum_{k = 1}^n f (k).  \]
Then the following are equivalent.
\begin{enumerate}
\item $(u_n)_{n \in \mathbb{N}}$ is $w(n)$-uniformly distributed $\bmod \, 1$. 
\item $(F(n))_{n \in \mathbb{N}}$ is $w(n)$-uniformly distributed $\bmod \, 1$. 
\item For any polynomial $q(x) \in \mathbb{Q}[x]$,
\begin{equation}
\label{cond:sum:ud:new}
 \lim_{x \rightarrow \infty} \frac{W(x)}{w(x)} |f(x) - q(x)| = \infty.
 \end{equation}
\end{enumerate}
\end{Theorem}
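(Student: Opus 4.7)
The plan is to prove $(2)\Leftrightarrow(3)$ first via Theorem~\ref{BKS2} applied to the antiderivative $F$, and then $(1)\Leftrightarrow(2)$ via the Euler summation formula relating $u_n$ to $F(n)$.

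For $(2)\Leftrightarrow(3)$: Since $\mathbf{U}$ is closed under integration and $f$ is subpolynomial, $F(x)=\int_0^x f(t)\,dt$ also lies in $\mathbf{U}$ and is subpolynomial. Applying Theorem~\ref{BKS2} to $F$, condition (2) is equivalent to $|F(x)-q(x)|/\log W(x)\to\infty$ for every $q\in\mathbb{Q}[x]$. Because $\log W(x)\to\infty$, L'Hopital's rule in the Hardy field converts this condition into
\[\lim_{x\to\infty}\frac{W(x)\,|f(x)-q'(x)|}{w(x)}=\infty\quad\text{for all } q\in\mathbb{Q}[x].\]
Since the differentiation map is a surjection $\mathbb{Q}[x]\to\mathbb{Q}[x]$ (antidifferentiate coefficientwise), this is precisely condition (3).

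For $(1)\Leftrightarrow(2)$, apply the Euler summation formula to obtain
\[u_n=F(n)+c+\tfrac{1}{2}f(n)+R(n),\qquad R(n):=\int_1^n\!\bigl(\{t\}-\tfrac12\bigr)f'(t)\,dt,\]
with $c=-F(1)+\tfrac12 f(1)$. Two sub-claims then need to be verified. Sub-claim (a): the Hardy-field sequence $\bigl(F(n)+f(n)/2\bigr)_{n\in\mathbb{N}}$ is $w(n)$-uniformly distributed if and only if $(F(n))_{n\in\mathbb{N}}$ is. This follows from Theorem~\ref{BKS2} applied to $F+f/2\in\mathbf{U}$: the extra term $W(x)f'(x)/(2w(x))$ produced by L'Hopital has the same limit as $f(x)/(2\log W(x))$ (by a second application of L'Hopital), which is negligible compared to $W(x)|f(x)-q'(x)|/w(x)\to\infty$ whenever (3) holds. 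Sub-claim (b): the residual $R(n)$ is a slow perturbation in the sense of Lemma~\ref{lem:slowper}, so adding it to $F(n)+f(n)/2$ preserves $w(n)$-uniform distribution.

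The main obstacle is sub-claim (b). Integration by parts on $[n-1,n]$ gives
\[R(n)-R(n-1)=\tfrac12\bigl(f(n)+f(n-1)\bigr)-\int_{n-1}^n f(t)\,dt,\]
which is $O(|f''(\xi_n)|)$ for some $\xi_n\in[n-1,n]$. Iterated L'Hopital shows that $|f''(x)|\le Cw(x)/W(x)$ holds whenever $f/\log W$ is bounded, which handles the easy case. In the regime $f/\log W\to\infty$ this bound fails, and I would instead iterate the integration by parts (a truncated Euler--Maclaurin expansion) to rewrite $R(n)$ as a sum of boundary terms proportional to $f^{(2j-1)}(n)$ plus a smaller integral involving $f^{(2J)}$. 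Each boundary contribution is absorbed into the Hardy-field piece $F(n)+f(n)/2+\cdots$, with L'Hopital verifying that the additional derivative terms do not disturb the equivalence with (3); the iteration terminates because $f$ is subpolynomial, so its high-order derivatives eventually decay fast enough to satisfy the Lemma~\ref{lem:slowper} bound.
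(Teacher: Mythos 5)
Your route is genuinely different from the paper's, and for $(2)\Leftrightarrow(3)$ you reproduce exactly the paper's L'Hopital argument. But the Euler--Maclaurin strategy for $(1)\Leftrightarrow(2)$ has concrete gaps that the paper's own case-split sidesteps cleanly.

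The most serious omission is the direction $(1)\Rightarrow(2)$ (equivalently $(1)\Rightarrow(3)$). Your entire discussion of sub-claims (a) and (b) works under the standing hypothesis that (3) holds: you use (3) to argue that the boundary terms and the residual $R(n)$ are harmless. But if you assume only $(1)$, you have no a priori control on $f$, and the perturbation $F(n)-u_n=-c-\tfrac12 f(n)-R(n)$ is not known to satisfy the hypotheses of Lemma~\ref{lem:slowper}, so you cannot transfer $w(n)$-uniform distribution from $(u_n)$ to $(F(n))$. The paper does this direction by contrapositive: if some $q\in\mathbb{Q}[x]$ makes $\frac{W(x)}{w(x)}|f(x)-q(x)|$ bounded, then $\sum_{k\le n}|f(k)-q(k)|\ll\log W(n)$, so $\bigl(\sum_{k\le n}(f(k)-q(k))\bigr)$ fails Niederreiter's necessary growth criterion (Theorem~\ref{lem:Nie}), and Lemma~\ref{lem:ap-unif} pushes this to $(u_n)$. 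That argument is short and does not pass through Euler--Maclaurin at all; your proposal has no substitute for it.

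The second gap is in the forward direction $(3)\Rightarrow(1)$. Your sub-claim (b) correctly notes that the trapezoidal-error bound $|R(n)-R(n-1)|\ll|f''|$ is useless once $f''$ does not decay, which happens whenever $f$ contains a quadratic term $\alpha x^2$ (here $f''=2\alpha$ is a nonzero constant while $w/W\to0$). Your remedy --- a truncated Euler--Maclaurin expansion --- can indeed be made to terminate, because for a subpolynomial Hardy-field $f$ the derivatives $f^{(k)}$ eventually decay faster than $w/W$, but it creates a new obligation: you must extend sub-claim (a) to show that $\bigl(F(n)+\tfrac12 f(n)+\sum_{j\le J}c_jf^{(2j-1)}(n)\bigr)$ is $w(n)$-uniformly distributed iff $(F(n))$ is. This requires verifying, for each added term and for every $q\in\mathbb{Q}[x]$, that $\frac{W(x)}{w(x)}f^{(2j)}(x)$ is negligible against $\frac{W(x)}{w(x)}|f(x)-q'(x)|$, and the justification ``by a second application of L'Hopital'' is not enough; when $f/\log W\to\infty$, the ratio $\frac{Wf'}{w}\sim\frac{f}{\log W}$ is itself unbounded, so the negligibility requires comparing orders of growth of $f'$, $f''$, etc.\ against $f-q'$, which is precisely the kind of fine Hardy-field bookkeeping the paper avoids.

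The paper's proof of $(3)\Rightarrow(1)$ instead splits according to whether some $q_0\in\mathbb{Q}[x]$ gives $f-q_0\ll\log W$. In the slowly-growing case it separates the polynomial part exactly: $x_n=F(n)+\sum_{k\le n}q_0(k)$ is handled by Lemma~\ref{lem:ap-unif} and Theorem~\ref{BKS2}, and the residual $y_n=\sum_{k\le n}\bigl(r(k)-\int_{k-1}^kr\bigr)$ has increments of size $|r'|\ll w/W$ (note: this applies the Euler-summation-style estimate only to the bounded piece $r=f-q_0$, never to $f$ itself, so there is no $f''$ problem and no iteration). In the fast-growing case the paper does not touch Euler--Maclaurin at all: it observes that $u_{n+h}-u_n=S_hf(n)=f(n+1)+\cdots+f(n+h)$ is again a Hardy-field function satisfying the hypothesis of Theorem~\ref{BKS:prev}, so $(S_hf(n))_n$ is uniformly distributed for each $h$, and the classical van der Corput difference theorem yields the conclusion. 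This is the step for which your Euler--Maclaurin iteration is an attempted replacement, and the van der Corput route is both shorter and does not need the delicate equivalence of u.d.\ criteria for $F$ versus $F$ plus a truncated tail of its derivatives.

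In summary: your outline identifies the right difficulties but does not resolve them, and it is missing the reverse implication entirely. Look at how the paper uses Theorem~\ref{lem:Nie} for $(1)\Rightarrow(3)$ and van der Corput for the fast-growth case of $(3)\Rightarrow(1)$; both ingredients bypass the issues your iteration proposal would have to confront head-on.
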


To prove Theorem \ref{thm:sum:ud:new}, we need the following lemma.

\begin{Lemma}
\label{lem:ap-unif}
Let $u(x) \in \mathbf{U}$ be subpolynomial, and suppose that $(u(n))_{n \in \mathbb{N}}$ is $w(n)$-uniformly distributed $\bmod \, 1$. 
Then, for any $q(x) \in \mathbb{Q}[x]$, both $(u(n) + q(n))_{n \in \mathbb{N}}$ and $(u(n) + \sum_{k=1}^n q(k))_{n \in \mathbb{N}}$ are $w(n)$-uniformly distributed $\bmod \, 1$.
\end{Lemma}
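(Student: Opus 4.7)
The plan is to deduce both parts of the lemma directly from the characterization $(1)\Leftrightarrow(7)$ of Theorem \ref{BKS2}, reducing each statement to an elementary algebraic closure property of $\mathbb{Q}[x]$.

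For the first assertion, I would first verify that $u(x)+q(x)$ is again a subpolynomial function belonging to $\mathbf{U}$. Since $u$ lies in some Hardy field $\mathbf{H}$, which by Zorn's lemma embeds in a maximal Hardy field $\mathbf{H}^{*}$, and every maximal Hardy field contains $\mathbf{E}\supset\mathbf{L}\supset\mathbb{Q}[x]$, we have $u+q\in\mathbf{H}^{*}\subset\mathbf{U}$; subpolynomiality is then immediate from that of $u$ and the polynomiality of $q$. Next, I would invoke condition (7) of Theorem \ref{BKS2} applied to $u$: the $w(n)$-uniform distribution of $(u(n))$ gives
\[
\lim_{x\to\infty}\frac{|u(x)-r(x)|}{\log W(x)}=\infty \quad \text{for every } r\in\mathbb{Q}[x].
\]
Given an arbitrary $p\in\mathbb{Q}[x]$, set $r:=p-q\in\mathbb{Q}[x]$; then $|u(x)+q(x)-p(x)|=|u(x)-r(x)|$, so the same growth condition holds for $u+q$ against every element of $\mathbb{Q}[x]$. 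The direction $(7)\Rightarrow(1)$ of Theorem \ref{BKS2}, applied to $u+q$, then yields that $(u(n)+q(n))_{n\in\mathbb{N}}$ is $w(n)$-uniformly distributed $\bmod\,1$.

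For the second assertion, the key observation is that $S(n):=\sum_{k=1}^{n}q(k)$ is itself a polynomial in $n$ with rational coefficients. This is Faulhaber's identity: for each $j\ge 0$, $\sum_{k=1}^{n}k^{j}$ is a degree-$(j+1)$ polynomial in $n$ with rational coefficients (expressible via Bernoulli numbers), and extending by linearity over $\mathbb{Q}$ to $q\in\mathbb{Q}[x]$ shows that $S\in\mathbb{Q}[x]$. The second claim is therefore immediate from the first applied with $q$ replaced by $S$.

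I do not foresee any real obstacle; the entire content sits inside the equivalence $(1)\Leftrightarrow(7)$ of Theorem \ref{BKS2}, together with the trivial stability of $\mathbb{Q}[x]$ under addition and under the discrete antidifference $q\mapsto\sum_{k=1}^{n}q(k)$. The only mildly delicate point is ensuring that $u+q$ lies in a common Hardy field, which the remark above about maximal Hardy fields containing $\mathbf{L}$ dispatches.
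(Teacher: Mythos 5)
Your proof is correct and routes through Theorem \ref{BKS2} in a genuinely different way from the paper's. The paper's own argument uses the equivalences $(1)\Leftrightarrow(2)\Leftrightarrow(3)$: it chooses $a$ so that $\{q(an+d)\}$ is constant in $n$ for each residue $d$, applies $(1)\Rightarrow(2)$ to get that $(u(an+d))_{n}$ is $w(n)$-u.d., notes that adding a constant preserves weighted uniform distribution (so $(u(an+d)+q(an+d))_n$ is $w(n)$-u.d. for each $d$), and then reassembles via $(3)\Rightarrow(1)$ applied to $u+q$. You instead pass through $(1)\Leftrightarrow(7)$: the growth condition $|u-r|\succ\log W$ for all $r\in\mathbb{Q}[x]$ transfers verbatim to $u+q$ by the shift $r\mapsto r-q$, which is trivial because $\mathbb{Q}[x]$ is closed under addition. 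Your version is arguably tighter: it reduces the whole lemma to an algebraic closure property of $\mathbb{Q}[x]$ and avoids the arithmetic-progression restriction/reassembly step (which in the paper implicitly needs $(3)\Rightarrow(1)$ applied to $u+q$, hence also needs $u+q$ to be a subpolynomial element of $\mathbf{U}$ — a point you verify explicitly via maximal Hardy fields and the paper leaves tacit). Both proofs handle the second assertion identically, via Faulhaber's formula to place $\sum_{k=1}^n q(k)$ in $\mathbb{Q}[n]$ and then invoke the first part.
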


\begin{proof}
For $q(x) \in \mathbb{Q}[x]$, there exists $a \in \mathbb{N}$ such that for each $d = 0, 1, \dots, a-1 $, $\{q(an+d)\}$ is constant for all $n \in \mathbb{N}$. 
Since $(u(an+d))_{n \in \mathbb{N}}$ is  $w(n)$-uniformly distributed $\bmod \,1$ by Theorem \ref{BKS2}, and adding a constant does not affect $w(n)$-uniform distribution. 
Hence, $(u(an + d) + q(an + d))_{n \in \mathbb{N}}$ is $w(n)$-uniformly distributed $\bmod \, 1$ for each $d$. 
And so $(u(n) + q(n))_{n \in \mathbb{N}}$  is $w(n)$-uniformly distributed $\bmod \,1$.

Finally, for $q(x) \in \mathbb{Q}[x]$,  $\sum_{k=1}^n q(k) = Q(n)$ for some $Q(x) \in \mathbb{Q}[x]$, by a classical formula due to Bernoulli (cf. Section 7.9 of \cite{HarWri})
\[ \sum_{k=1}^n k^m =  \sum_{r=0}^m \frac{1}{m+1-r} \binom{m}{r} \beta_r (n+1)^{m+1-r}, \]
where $\beta_r$ (for $r = 0, 1, 2, \dots)$ is a sequence of rational numbers given by the generating function 
\[ \frac{x}{e^x -1} = \beta_0 + \frac{\beta_1}{1!} x + \frac{\beta_2}{2!} x^2  + \frac{\beta_3}{3!} x^3 + \cdots .\]
Then the result follows from the previous case applied to $Q(x) \in \mathbb{Q}[x]$.
\end{proof}

\begin{proof}[Proof of Theorem \ref{thm:sum:ud:new}]
Let $q(x) \in \mathbb{Q}[x]$. Let $q'(x) \in \mathbb{Q}[x]$ denote the derivative of $q(x)$.  
By L'Hospital's rule,
\[ \lim_{x \rightarrow \infty} \frac{|F(x) - q(x)|}{\log W(x)} = \lim_{x \rightarrow \infty} \frac{W(x)}{w(x)} |f(x) - q'(x)|.\]
Thus, equivalence of (2) and (3) follows from Theorem \ref{BKS2}. 
 
Let us  prove $(1) \Rightarrow (3)$. Suppose that there exists $q(x) \in \mathbb{Q}[x]$ such that $ \lim_{x \rightarrow \infty} \frac{W(x)}{w(x)} |f(x) - q(x)| < \infty$. 
Let $r(x) = f(x) - q(x)$. Then there exists $A > 0$ such that  $\sum_{k=1}^n |r(k)| \leq A \log W(n)$ for all $n$. Thus $(\sum_{k=1}^n r(k))_{n \in \mathbb{N}}$ is not $w(n)$-uniformly distributed $\bmod \,1$ by Theorem \ref{lem:Nie}. By Lemma \ref{lem:ap-unif}, $(x_n)_{n \in \mathbb{N}}$ is not $w(n)$-uniformly distributed $\bmod \,1$.

The proof for $(3) \Rightarrow (1)$ proceeds by considering separately the following two cases:
\begin{enumerate}[(i)]
\item There exists a polynomial $q_0(x) \in \mathbb{Q}[x]$ such that $f(x) - q_0(x) \ll \log W(x)$.
\item For any $q(x) \in \mathbb{Q}[x]$, 
\begin{equation}
\label{eq:cond2:sum-ud-new}
 \lim_{x \rightarrow \infty} \frac{|f(x) - q(x)|}{\log W(x)} = \infty.
 \end{equation}
\end{enumerate}

Let us first consider case (i).  
Let $r(x) = f(x) - q_0(x)$. Without loss of generality, we may assume that $r(x)$ is eventually non-negative, replacing $r(x)$ with $-r(x)$ if necessary. 
Now write 
\[ u_n = x_n + y_n, \]
where $x_n = F(n) + \sum_{k=1}^n q_0(k)$ and $y_n := \sum\limits_{k=1}^n \left( r(k) - \int_{k-1}^{k} r(t) \right) \, dt $. 
By Lemma \ref{lem:ap-unif}, $(x_n)_{n \in \mathbb{N}}$ is $w(n)$-uniformly distributed $\bmod \, 1$.
Since $r(x) \ll \log W(x)$, there exist $C > 0$ and $x_0 > 0$ such that $r'(x) \leq C \frac{w(x)}{W(x)}$ for all $x \geq x_0$. 
Then, for all $n \geq x_0$,
$$|y_n - y_{n+1}| = \left| \int_n^{n+1} r(n+1) - r(t) \, dt \right| \leq C \frac{w(n)}{W(n)}.$$ 
Thus, by Theorem \ref{thm:slowper},  $(u_n)_{n \in \mathbb{N}}$ is $w(n)$-uniformly distributed $\bmod \, 1$.

We now consider the second case. 
By Lemma 3.11 in \cite{Bos}, if  $f(x) \in \mathbf{U}$ is a subpolynomial, then there exists $p(x) \in \mathbb{Q}[x]$ such that one of the following holds:
\begin{enumerate}[(i)]
\item $\lim\limits_{x \rightarrow \infty} \frac{f(x) - p(x)}{x^n}$ is an irrational number for some $n \in \mathbb{N}$.
\item $\lim\limits_{x \rightarrow \infty} \frac{f(x) - p(x)}{x^{n+1}} = \lim\limits_{x \rightarrow \infty} \frac{x^n}{f(x) - p(x)} = 0$ for some $n \in \mathbb{N}$.
\item $\lim\limits_{x \rightarrow \infty} (f(x) - p(x))$ is a finite number.
\item $\lim\limits_{x \rightarrow \infty} (f(x) - p(x)) = \lim\limits_{x \rightarrow \infty} \frac{x}{f(x) - p(x)} = \pm \infty$
\end{enumerate}
Since case (iii) does not occur under the condition \eqref{eq:cond2:sum-ud-new}, we only need to consider cases (i), (ii) and (iv).

Suppose (i) or (ii) holds. For any $h \in \mathbb{N}$, let $S_h f (x) := f(x +1) + f(x+2) + \cdots + f(x+h)$. 
The conditions (i) or (ii) guarantee that for any $q(x)\in \mathbb{Q}[x]$,
\begin{equation} \lim_{x \rightarrow \infty} \frac{|S_h f (x) - q(x)|}{\log x} = \infty.\end{equation}
Therefore, by Theorem \ref{BKS:prev},  $(S_h f(n))_{n \in \mathbb{N}}$ is uniformly distributed $\bmod \, 1$ for any $h \in \mathbb{N}$. Note that $u_{n+h} - u_n = S_h f(n)$. 
Applying the classical van der Corput theorem (which is a special case of Corollary \ref{Cor:vdCdiff} for $w(n)=1$), we have that the sequence $(u_n)_{n \in \mathbb{N}}$ is uniformly distributed $\bmod \,1$ and so it is $w(n)$-uniformly distributed $\bmod \, 1$.  

It remains to consider the case (iv). In this case $|f(x) - p(x)| \prec x$. In addition, by \eqref{eq:cond2:sum-ud-new}, $\log W(x) \prec |f(x) - p(x)|$
Let $$S_h f(x) := f(x+1) + f(x+2) + \cdots + f(x+h) = \sum_{j=1}^h (f(x+j) - p(x+j)) + S_h p(x),$$
where $S_h p(x) = \sum_{j=1}^h p(x+j) \in \mathbb{Q}[x]$. 
Since $\log x \prec |f(x) - p(x)| \prec x$,  for any \( q(x) \in \mathbb{Q}[x] \),
\[
\lim_{x \rightarrow \infty} \frac{|S_h f(x) - q(x)|}{\log x} = \infty.
\]
Theorem \ref{BKS:prev} implies that $(S_h f(n))_{n \in \mathbb{N}}$ is uniformly distributed $\bmod \, 1$ for any $h \in \mathbb{N}$.
By the van der Corput difference theorem, $(u_n)_{n \in \mathbb{N}}$ is uniformly distributed $\bmod \, 1$ and so it is $w(n)$-uniformly distributed $\bmod \, 1$. 
\end{proof}

We conclude this section by discussing the notion of {\em $(w(n), \mu)$-uniform distribution} for  a Borel probability measure $\mu$ on $[0,1]$, as introduced in \cite{N}. 
A sequence $(x_n)_{n \in \mathbb{N}}$ is said to be {\em $(w(n), \mu)$-uniformly distributed $\bmod \, 1$} if, for any interval $I = [a,b) \subset [0,1)$, 
\begin{equation*}
\lim_{N \rightarrow \infty} \frac{1}{ W(N)} \sum_{n=1}^N w(n) 1_{[a,b)} (\{ x_n \}) = \mu([a,b)).
\end{equation*}
This implies that for any continuous function $f$, 
\[ \lim_{N \rightarrow \infty} \frac{1}{W(N)} \sum_{n=1}^N w(n) f(x_n ) = \int f \, d \mu. \]
(See Theorem 1.8.3.1 in \cite{SP}. See also Theorem  1.2 of Chapter 3 in \cite{KN}.)
The next result demonstrates that for a subpolynomial Hardy field function $u(x)$, the probability measure $\mu$ is either the Lebesgue measure or a discrete measure supported on a finite set.

\begin{Theorem}
\label{Cor:mainthmcor}
Let $u(x) \in \mathbf{U}$ be a subpolynomial function.
Suppose that $(u(n))_{n \in \mathbb{N}}$ is $(w(n), \mu)$-uniformly distributed $\bmod \, 1$ for some Borel probability measure $\mu$. 
Then $\mu$ is either the Lebesgue measure or a discrete measure supported on a finite set.
\end{Theorem}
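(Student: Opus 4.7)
The plan is to study the Fourier coefficients of $\mu$ and split the argument into two cases. By Theorem \ref{lem:Tsu:Equ}, applied to $\mu$ in place of the Lebesgue measure, $\mu$ is the Lebesgue measure on $[0,1)$ precisely when
$$\hat\mu(m) \;:=\; \lim_{N \to \infty} \frac{1}{W(N)} \sum_{n=1}^N w(n)\, e^{2\pi i m u(n)} \;=\; 0$$
for every nonzero integer $m$ (these limits exist by the hypothesis that $(u(n))$ is $(w(n),\mu)$-uniformly distributed). The task therefore reduces to showing that if $\hat\mu(h_0) \neq 0$ for some $h_0 \neq 0$, then $\mu$ is discrete and supported on a finite set.

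Assume $\hat\mu(h_0) \neq 0$. Then $(h_0 u(n))_{n \in \mathbb{N}}$ fails to be $w(n)$-uniformly distributed $\bmod\, 1$, so the equivalence of (1) and (7) in Theorem \ref{BKS2}, applied to the subpolynomial function $h_0 u \in \mathbf{U}$, supplies a polynomial $q \in \mathbb{Q}[x]$ for which $|h_0 u(x) - q(x)|/\log W(x)$ does not tend to infinity. Since numerator and denominator lie in a common maximal Hardy field, this ratio has a finite limit $M \in [0,\infty)$. Setting $p := q/h_0 \in \mathbb{Q}[x]$ and $r := u - p$ yields a decomposition $u = p + r$ with $r \in \mathbf{U}$ and $\lim_{x \to \infty} |r(x)|/\log W(x) = M/|h_0| =: M' < \infty$. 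The argument then dichotomizes on whether $r$ is bounded.

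If $r$ is bounded, then, being in a Hardy field, $r$ tends to some constant $c$, and $u(n) = p(n) + c + o(1)$. Since $p \in \mathbb{Q}[x]$, I may pick $a \in \mathbb{N}$ such that $p(x+a) - p(x) \in \mathbb{Z}[x]$ (for instance, any common multiple of the denominators of the coefficients of $p$), so that $\{p(n)\}$ is periodic of period $a$ and takes value $t_d := \{p(d) + c\}$ on $n \equiv d \bmod a$. A standard comparison using the monotonicity of $w$ and $W(N) \to \infty$ shows that $S_d(N) := \sum_{n \leq N,\, n \equiv d \bmod a} w(n)$ satisfies $S_d(N) - W(N)/a = O(1)$, and hence $S_d(N)/W(N) \to 1/a$. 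Combined with the uniform continuity of any $f \in C([0,1])$ and the fact that $\{u(am+d)\} \to t_d$ as $m \to \infty$, this gives
$$\int_0^1 f\, d\mu = \sum_{d=0}^{a-1} \frac{1}{a}\, f(t_d),$$
so $\mu = \frac{1}{a}\sum_{d=0}^{a-1} \delta_{t_d}$ is discrete and supported on at most $a$ atoms.

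The hard part is the remaining sub-case, in which $r$ is unbounded; here I will derive a contradiction with the assumed existence of $\mu$. Being unbounded and in a Hardy field, $r$ tends monotonically to $\pm\infty$, and I may assume $r \to +\infty$. L'H\^{o}pital's rule applied to $r/\log W$ gives $r'(x) W(x)/w(x) \to M'$, whence $r'(x) \to 0$. Therefore $r(N+1) - r(N) \to 0$ while $r(N) \to \infty$, so $\{k r(N)\}$ densely traverses $[0,1)$ for every nonzero integer $k$, and $e^{2\pi i k r(N)}$ has no limit as $N \to \infty$. Now let $k \in \mathbb{N}$ be a common multiple of the denominators of the coefficients of $p$, so that $kp \in \mathbb{Z}[x]$ and $e^{2\pi i k u(n)} = e^{2\pi i k r(n)}$. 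The Euler-summation and integration-by-parts calculation from the proof of Theorem \ref{lem:estimate:expsum:Eulersum}, applied to $r$ with exponent $k$, goes through for any $M' \geq 0$ (when $M' = 0$ the factor $1 + 2\pi i k M'$ is simply $1$) and produces
$$\frac{1}{W(N)} \sum_{n=1}^N w(n)\, e^{2\pi i k r(n)} = \frac{e^{2\pi i k r(N)}}{1 + 2\pi i k M'} + o(1),$$
whose limit does not exist. This contradicts the existence of $\hat\mu(k)$, rules out the unbounded sub-case, and completes the classification.
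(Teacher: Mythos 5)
Your proof is correct and rests on the same core ingredients as the paper's: the equivalence of items (1) and (7) in Theorem \ref{BKS2} to extract a polynomial $p\in\mathbb{Q}[x]$ with $|u-p|/\log W$ finite, the eventual-periodicity argument when $r=u-p$ has a finite limit, and the Euler-summation computation from the proof of Theorem \ref{lem:estimate:expsum:Eulersum} (with the constant $a$ allowed to be zero) to rule out the unbounded case. The only genuine difference is organizational: by leading with the Fourier coefficients $\hat\mu(m)$ and splitting on whether they all vanish, you jump directly to the $\log W$ criterion, whereas the paper first cases on the $\log x$ condition of Theorem \ref{BKS:prev}, then on boundedness of $r$, and then again on $|r|/\log W$ within the unbounded sub-case (invoking Theorem \ref{BKS} to handle the sub-sub-case $|r|/\log W=\infty$). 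Your route collapses the paper's Cases 1 and 2b-i into the single clean statement ``all nontrivial Fourier coefficients of $\mu$ vanish,'' which buys a tidier case structure and makes it explicit that one only needs the contrapositive $(1)\Rightarrow(7)$ of Theorem \ref{BKS2} rather than both directions. The technical content, in particular the non-convergence of $W(N)^{-1}\sum_{n\le N}w(n)e^{2\pi i k r(n)}$ obtained from the formula $\tfrac{e^{2\pi i k r(N)}}{1+2\pi i k M'}+o(1)$, is identical to the paper's.
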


\begin{proof}
If $\lim\limits_{x \rightarrow \infty} \frac{|u(x) - q(x)|}{\log x} = \infty$ for any $q(x) \in \mathbb{Q}[x]$, then $(u(n))_{n \in \mathbb{N}}$ is uniformly distributed $\bmod \, 1$ by Theorem \ref{BKS:prev}. 
Then, by Theorem \ref{lem:Tsu1}, $(u(n))_{n \in \mathbb{N}}$ is $w(n)$-uniformly distributed $\bmod \, 1$, and hence in this case the measure $\mu$ is the Lebesgue measure.

Now suppose that 
\[ \lim_{x \rightarrow \infty} \frac{|u(x) - q(x)|}{\log x} < \infty\]
for some $q(x) \in \mathbb{Q}[x]$. 
Then $u(x)$ can be expressed as $u(x) = q(x) + r(x)$, where $q(x) \in \mathbb{Q}[x]$ and 
\begin{equation}
\label{eq:r:slow}
\lim\limits_{x \rightarrow \infty} \frac{|r(x)|}{\log x} < \infty.
\end{equation}

We will consider two cases: $ \lim\limits_{x \rightarrow \infty} r(x)  \in \mathbb{R}$ and $\lim\limits_{x \rightarrow \infty} r(x) = \pm \infty$.
In the first case, since $q(n) \bmod \, 1$ is eventually periodic, the measure $\mu$ is a discrete measure supported on a finite set. 

Now consider the case when $\lim\limits_{x \rightarrow \infty} r(x) = \pm \infty$.  
We will show that $\lim\limits_{x \rightarrow \infty} \frac{|r(x)|}{\log W(x)} = \infty$. 
In view of \eqref{eq:r:slow}, this will imply that for any $q(x) \in \mathbb{Q}[x]$, 
$\lim\limits_{x \rightarrow \infty} \frac{|r(x) - q(x)|}{\log W(x)} = \infty$.
Then, applying Theorem \ref{BKS}, we will get that the measure $\mu$ is the Lebesgue measure. 

It remains to prove that if $\lim\limits_{x \rightarrow \infty} \frac{|r(x)|}{\log W(x)} < \infty$, then for some non-zero integer $m$, $\frac{1}{W(N)}  \sum\limits_{n=1}^N w(n) e^{2 \pi i m u(n)}$ fails to converge, and so $(u(n))_{n \in \mathbb{N}}$ is not $(w(n), \mu)$-uniformly distributed $\bmod \, 1$ for any Borel probability measure $\mu$.

Suppose, by way of contradiction, that $\lim\limits_{x \rightarrow \infty} \frac{|r(x)|}{\log W(x)} < \infty$. 
Let us assume for convenience that $r(x)$ is eventually non-negative. (In the case when $r(x)$ is eventually negative, the argument is analogous and is omitted.)
Then there exists $a \in [0, \infty)$ such that 
\begin{equation}
\label{eq:Cor2.8:cond:last}
 \lim_{x \rightarrow \infty} W(x) \frac{r'(x)}{w(x)} = \lim_{x \rightarrow \infty} \frac{r(x)}{\log W(x)} = a.
 \end{equation}

Let $m \in \mathbb{Z}$. 
By formulas \eqref{eq2.27:conv:sumtoint} and \eqref{eq2.10:sec2} which were obtained in the course of the proof of Theorem \ref{lem:estimate:expsum:Eulersum}, we have
\[  \lim_{N \rightarrow \infty} \left| \frac{1}{W(N)} \sum_{n=1}^N w(n) e^{2 \pi i m r(n)} - \frac{1}{W(N)} \int_1^N w(x) e^{2 \pi i m r(x)} \, dx \right| =0 \]
and 
\begin{align}
\label{eq2.10}
\frac{1+ 2 \pi i m a}{W(N)} \int_1^N w(x) e^{2 \pi i m r(x)} \, dx &= 
 \frac{1}{W(N)}  \left[  W(x) e^{2 \pi i m r(x)} \right]_1^N 
 - \frac{2 \pi i m}{W(N)} \int_1^N \left(\frac{W(x) r'(x)}{w(x)} - a \right) w(x) e^{2 \pi i m r(x)} dx \notag \\
&=\Sigma_1 - \Sigma_2,
\end{align}
where 
\[  \Sigma_1 = \frac{1}{W(N)}  \left[  W(x) e^{2 \pi i m r(x)} \right]_1^N  \quad \text{and} \quad  \Sigma_2 = \frac{2 \pi i m}{W(N)} \int_1^N \left(\frac{W(x) r'(x)}{w(x)} - a \right) w(x) e^{2 \pi i m r(x)} dx. \]
Note that for $m \ne 0$, $\Sigma_2$ tends to zero as $N \rightarrow \infty$ due to \eqref{eq:Cor2.8:cond:last} and 
$$\Sigma_1 = \frac{1}{W(N)}  \left[  W(x) e^{2 \pi i m r(x)} \right]_1^N  = e^{2 \pi i m r(N)} - \frac{W(1)}{W(N)} e^{2 \pi i m r(1)}$$ 
does not  converge as $N \rightarrow \infty$.
Thus, $\frac{1}{W(N)}  \sum\limits_{n=1}^N w(n) e^{2 \pi i m r(n)}$ fails to converge as $N \rightarrow \infty$.

Since $u(x) = q(x) + r(x)$ for some $q(x) \in \mathbb{Q}[x]$, there exists a non-zero integer $m$ such that $m u(n) \equiv m r(n) \bmod \, 1$ for all $n \in \mathbb{N}$. 
Thus, $\frac{1}{W(N)}  \sum\limits_{n=1}^N w(n) e^{2 \pi i m u(n)}$ also fails to converge. 
\end{proof}

\section{Applications}
\label{sec:app}
The main goal of this section is to present applications of results established in previous sections. 
In Subsection \ref{subsec:3.1},  we introduce the notion of weighted uniform distribution for sequences in $\mathbb{T}^k$, obtain multidimensional generalization of Theorem \ref{BKS} and derive some of its corollaries. These results are then used to derive ergodic and combinatorial applications in Subsections \ref{ergodicaverages} - \ref{subsec:3.3}. 
Finally, in Subsection \ref{subsec:3.4}, we obtain some new results related to Benford law. 
Throughout this section, we write $e(x) = e^{2 \pi i x}$ for simplicity. 

\subsection{Uniform distribution in $\mathbb{T}^k$}\label{subsec:3.1} 
Observe that Definition \ref{def:weighted-u.d.} deals, up to an obvious change of language, with weighted uniform distribution in $1$-dimensional torus $\mathbb{T}$.
The notion of the weighted uniform distribution can be further extended to compact groups as follow.
Let $(w(n))_{n \in \mathbb{N}}$ be a non-increasing, positive sequence such that $\lim\limits_{N \rightarrow \infty} W(N) = \infty$, where $W(N) = \sum\limits_{n=1}^N w(n)$. 
Let $G$ be a compact topological group and let $\mu$ be a Borel probability measure on $G$. (In this paper, we are mainly interested in considering $G = \mathbb{T}^k$ or $G = \mathbb{T}^k \times \mathbb{Z}_b^m$, where $\mathbb{T} = \mathbb{R}/\mathbb{Z}$ and $\mathbb{Z}_b = \mathbb{Z}/ b\mathbb{Z}$ for some $b \in \mathbb{N}$.)
A sequence $(x_n)_{n \in \mathbb{N}} \subset G$ is said to be {\em $(w(n), \mu)$-uniformly distributed in $G$} if for any continuous function $F$ on $G$, 
\begin{equation}
\lim_{N \rightarrow \infty} \frac{1}{ W(N)} \sum_{n=1}^N w(n) F ( x_n ) = \int F \, d \mu.
\end{equation}
In particular, if $\mu$ is the Haar probability measure on $G$, then we say that $(x_n)_{n \in \mathbb{N}} \subset G$ is {\em $w(n)$-uniformly distributed in $G$}. 
We also say that a sequence $(x_{1,n}, \dots, x_{k,n})_{n \in \mathbb{N}}$ in $\mathbb{R}^k$ is $w(n)$-uniformly distributed $\bmod \,1 $ if 
$( \{ x_{1,n} \}, \dots, \{ x_{k,n}) \})_{n \in \mathbb{N}}$ is $w(n)$-uniformly distributed in $\mathbb{T}^k$.

The proof of the following theorem can be obtained by the total analogy with that of the classical results (see Theorems 6.2 and 6.3 of Section 1.6 in \cite{KN}), and is omitted.
\begin{Theorem}
\label{WeylCriterion}
Let $(w(n))_{n \in \mathbb{N}}$ be a non-increasing, positive sequence such that $\lim\limits_{N \rightarrow \infty} W(N) = \infty$, where $W(N) = \sum\limits_{n=1}^N w(n)$. 
Let  $(x_n)_{n \in \mathbb{N}}$ be a sequence in $\mathbb{T}^k$, where  $x_n = (x_{1,n}, \dots, x_{k,n})$. Then the following are equivalent:
\begin{enumerate}
\item $(x_n)_{n \in \mathbb{N}}$ is $w(n)$-uniformly distributed in $\mathbb{T}^k$.
\item For any nonzero vector $h = (h_1, \dots, h_k) \in \mathbb{Z}^k$, 
\[ \lim_{N \rightarrow \infty} \frac{1}{N} \sum_{n=1}^N e \left(\langle h, x_n \rangle \right) =0, \]
where $\langle h, x_n \rangle = \sum_{j=1}^k h_j x_{j, n}$.
\item For any nonzero vector $h=(h_1, \dots, h_k) \in \mathbb{Z}^k$, the sequence $\left( \langle h, x_n \rangle\right)_{n \in \mathbb{N}} = \left( \sum_{j=1}^k h_j x_{j, n}  \right)_{n \in \mathbb{N}} $ is $w(n)$-uniformly distributed $\bmod \, 1$.
\end{enumerate}
\end{Theorem}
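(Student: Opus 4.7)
The plan is to follow the standard multidimensional Weyl-criterion argument, adapted to the weighted setting, with Theorem \ref{lem:Tsu:Equ} serving as the one-dimensional base case. I would organize the proof as a cycle $(1) \Rightarrow (3) \Rightarrow (2) \Rightarrow (1)$, keeping in mind that the definition of $w(n)$-uniform distribution in $\mathbb{T}^k$ given above is formulated directly in terms of continuous test functions, so the step from indicator functions to continuous functions (the usual awkward step in the classical proof) is already built into the definition.

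For $(1) \Rightarrow (3)$: given a nonzero $h \in \mathbb{Z}^k$, the function $F(x) = e(\langle h, x \rangle)$ is continuous on $\mathbb{T}^k$ with Haar integral $\int F \, d\mu = 0$. Since $(x_n)$ is $w(n)$-uniformly distributed in $\mathbb{T}^k$, testing against $F$ and its complex conjugate gives
\[ \lim_{N \rightarrow \infty} \frac{1}{W(N)} \sum_{n=1}^N w(n) e(\langle h, x_n \rangle) = 0, \]
and applying Theorem \ref{lem:Tsu:Equ}(3) to the real sequence $y_n = \langle h, x_n \rangle$ (with arbitrary nonzero integer multiplier $m$, which just replaces $h$ by $mh$) shows that $(\langle h, x_n \rangle)_{n \in \mathbb{N}}$ is $w(n)$-uniformly distributed $\bmod \, 1$. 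The implication $(3) \Rightarrow (2)$ is immediate from Theorem \ref{lem:Tsu:Equ}(3) applied with $m=1$ (interpreting (2) with the correct normalization $1/W(N)$ and weight $w(n)$ in the summand, which is the natural weighted analogue; the displayed normalization $1/N$ without weights is a typographical slip).

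For $(2) \Rightarrow (1)$: I would first verify the conclusion for trigonometric polynomials and then approximate. For a trigonometric polynomial $P(x) = \sum_{h \in F} c_h \, e(\langle h, x \rangle)$ with $F \subset \mathbb{Z}^k$ finite,
\[ \frac{1}{W(N)} \sum_{n=1}^N w(n) P(x_n) = c_0 + \sum_{\substack{h \in F \\ h \neq 0}} c_h \cdot \frac{1}{W(N)} \sum_{n=1}^N w(n) e(\langle h, x_n \rangle), \]
since $\frac{1}{W(N)} \sum_{n=1}^N w(n) = 1$ handles the $h=0$ contribution. Hypothesis (2) makes every nonzero-$h$ term vanish in the limit, so $\frac{1}{W(N)} \sum w(n) P(x_n) \to c_0 = \int P \, d\mu_{\text{Haar}}$. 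Now given an arbitrary continuous $F$ on $\mathbb{T}^k$ and $\varepsilon > 0$, the Stone--Weierstrass (equivalently multidimensional Fej\'er) theorem produces a trigonometric polynomial $P$ with $\|F - P\|_\infty < \varepsilon$; the triangle inequality together with $\frac{1}{W(N)} \sum w(n) |F - P|(x_n) \leq \varepsilon$ and $|\int (F-P) d\mu| \leq \varepsilon$ gives $\limsup_N |\frac{1}{W(N)} \sum w(n) F(x_n) - \int F \, d\mu| \leq 2\varepsilon$, proving (1).

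There is no genuine obstacle here: the only point at which the weighted setting could cause trouble is in the handling of the constant term in the trigonometric-polynomial step, and this is immediately resolved by the identity $\sum_{n=1}^N w(n) = W(N)$. The rest is a verbatim transcription of the classical argument, which is precisely why the authors can invoke it without writing it out.
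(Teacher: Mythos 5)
Your proof is correct and is precisely the argument the authors have in mind when they write that Theorem \ref{WeylCriterion} ``can be obtained by the total analogy with that of the classical results'' and omit the proof: the cycle $(1)\Rightarrow(3)\Rightarrow(2)\Rightarrow(1)$, with Theorem \ref{lem:Tsu:Equ} as the $k=1$ base case, trigonometric polynomials plus Stone--Weierstrass for $(2)\Rightarrow(1)$, and the identity $\frac{1}{W(N)}\sum_{n=1}^N w(n)=1$ handling the constant term. You are also right that the displayed average in item (2) of the statement is a typo — as written (normalization $1/N$, no weight $w(n)$) it would not be equivalent to (1) and (3), since a $w(n)$-uniformly distributed sequence need not be uniformly distributed; the intended reading is $\frac{1}{W(N)}\sum_{n=1}^N w(n)\,e(\langle h,x_n\rangle)\to 0$, which is what your proof establishes.
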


For the remainder of this section, unless otherwise specified, we assume that $W(x)$ is a Hardy field function in $\mathbf{E}$ such that  $\lim\limits_{x \rightarrow \infty} W(x) = \infty$ and  $w(x) := W'(x)$ is a non-increasing positive function, as in Section \ref{sec2}. 
The next theorem is an immediate consequence of Theorem \ref{BKS} and Theorem \ref{WeylCriterion}. 
\begin{Theorem}
\label{ud:d-dim}
Let  $u_1(x), \dots, u_k(x)$ be subpolynomial functions belonging to some Hardy field $\mathbf{H}$.
Then the following are equivalent. 
\begin{enumerate}
\item $(u_1(n), \dots, u_k(n))_{n \in \mathbb{N}}$ is $w(n)$-uniformly distributed $\bmod \,1$.
\item  For any $a, d \in \mathbb{N}$, $(u_1(an+d), \dots, u_k(an+d))_{n \in \mathbb{N}}$ is $w(n)$-uniformly distributed $\bmod \,1$.
\item  For some $a, d \in \mathbb{N}$, $(u_1(an+d), \dots, u_k(an+d))_{n \in \mathbb{N}}$ is $w(n)$-uniformly distributed $\bmod \,1$.
\item $(u_1(p_n), \dots, u_k(p_n))_{n \in \mathbb{N}}$ is $w(n)$-uniformly distributed $\bmod \,1$.
\item For any $a, d \in \mathbb{N}$ with $\gcd (a,d)=1$, $(u_1(p_n^{a,d}), \dots, u_k(p_n^{a,d}) )_{n \in \mathbb{N}}$ is $w(n)$-uniformly distributed $\bmod \,1$, where $p_n^{a,d}$ is the $n$-th prime in the arithmetic progression $\{am + d: m = 0, 1, 2, \dots\}$. 
\item For some $a, d \in \mathbb{N}$ with $\gcd (a,d)=1$, $(u_1(p_n^{a,d}), \dots, u_k(p_n^{a,d}) )_{n \in \mathbb{N}}$ is $w(n)$-uniformly distributed $\bmod \,1$.
\item For any $u(x) \in \text{span}_{\mathbb{Z}}^* \{ u_1(x), \dots, u_k(x) \}$,
\[ \lim_{x \rightarrow \infty} \frac{|u(x) - q(x)|}{\log W(x)} = \infty \quad \text{ for any } q(x) \in \mathbb{Q}[x].\]
\end{enumerate}
\end{Theorem}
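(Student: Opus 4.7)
The plan is to reduce the multidimensional statement to the one-dimensional Theorem \ref{BKS2} by means of the Weyl-type criterion given in Theorem \ref{WeylCriterion}. The key observation is that for $u_1, \dots, u_k$ lying in a common Hardy field $\mathbf{H}$, every integer linear combination $u(x) = \sum_{j=1}^k h_j u_j(x)$ again lies in $\mathbf{H}$ and is subpolynomial, so Theorem \ref{BKS2} applies to it. This immediately packages the seven multidimensional conditions into seven one-dimensional conditions that are known to be equivalent.

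Concretely, I would proceed as follows. First, using the equivalence of (1) and (3) in Theorem \ref{WeylCriterion}, item (1) of Theorem \ref{ud:d-dim} is equivalent to the assertion that $\bigl(\langle h, (u_1(n), \dots, u_k(n))\rangle\bigr)_{n\in\mathbb{N}}$ is $w(n)$-uniformly distributed $\bmod\, 1$ for every nonzero $h \in \mathbb{Z}^k$. The analogous reformulation applies to each of items (2)--(6), giving one-dimensional statements about the sequences
\begin{equation*}
\Bigl(\sum_{j=1}^k h_j u_j(an+d)\Bigr)_{n\in\mathbb{N}}, \quad \Bigl(\sum_{j=1}^k h_j u_j(p_n)\Bigr)_{n\in\mathbb{N}}, \quad \Bigl(\sum_{j=1}^k h_j u_j(p_n^{a,d})\Bigr)_{n\in\mathbb{N}}.
\end{equation*}

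Next, fix a nonzero $h \in \mathbb{Z}^k$ and set $u(x) := \sum_{j=1}^k h_j u_j(x) \in \text{span}_{\mathbb{Z}}^*(u_1, \dots, u_k)$. Since $\mathbf{H}$ is a field closed under addition and scalar multiplication, $u \in \mathbf{H}$; and since each $u_j$ is subpolynomial, so is $u$. Therefore Theorem \ref{BKS2} is directly applicable to $u$, and its seven equivalent conditions translate exactly into the $h$-th components of (1)--(7) in the present theorem. Taking the conjunction over all nonzero $h \in \mathbb{Z}^k$ and combining with the Weyl criterion, each of items (1)--(6) of Theorem \ref{ud:d-dim} becomes equivalent to item (7).

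I do not anticipate a substantial obstacle: the argument is essentially a book-keeping exercise showing that the Hardy field hypothesis is preserved under integer linear combinations, plus an application of Theorem \ref{WeylCriterion} to pass between multidimensional weighted uniform distribution and its one-dimensional projections. The one minor point to verify is that the quantifier ``for some $a,d$'' in (3) and (6) still yields (7) via the one-dimensional statement; this is fine because the one-dimensional Theorem \ref{BKS2} contains exactly the same ``for some $a,d$'' clauses, so applying it separately to each $u = \sum h_j u_j$ recovers the required growth condition for each element of $\text{span}_{\mathbb{Z}}^*(u_1, \dots, u_k)$.
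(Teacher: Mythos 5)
Your proposal is correct and follows the same route the paper takes: the paper simply states that Theorem \ref{ud:d-dim} is ``an immediate consequence of Theorem \ref{BKS} and Theorem \ref{WeylCriterion},'' which is exactly the Weyl-criterion reduction to the one-dimensional case for each integer linear combination $u=\sum_j h_j u_j\in\text{span}_{\mathbb{Z}}^*(u_1,\dots,u_k)$ that you spell out.
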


Let us write $Q(x) \in \mathbb{Q}[x] + \mathbb{R}$ if all coefficients of the polynomial $Q(x) \in \mathbb{R}[x]$, except for the constant term, are rational. In other words, $Q(x) \in \mathbb{Q}[x] + \mathbb{R}$ if
$$Q(x) = a_m x^m + \cdots + a_1 x + a_0, \text{ where } a_m, \dots, a_1 \in \mathbb{Q} \text{ and } a_0 \in \mathbb{R}.$$
If $Q(x) \in \mathbb{Q}[x] + \mathbb{R}$, then there exists $M \in \mathbb{N}$ such that for any $n \in \mathbb{N}$, the fractional part $\{ Q(n) \}$ depends only on the residue class of $n \, \bmod \, M$. 
Consequently, there exists a probability measure $\nu_{Q}^{(1)}$ supported on a finite set in $\mathbb{T}$ such that $(\{Q(n)\})_{n \in \mathbb{N}}$ is  $(1, \nu_{Q}^{(1)})$-uniformly distributed: for any $1$-periodic continuous function $f$ on $\mathbb{R}$, 
\[\lim_{N \rightarrow \infty} \frac{1}{N} \sum_{n=1}^N f (Q(n)) = \int f \, d \nu_{Q}^{(1)}.\]
Similarly, by the prime number theorem for arithmetic progressions, there exists a probability measure $\nu_{Q}^{(2)}$ supported on a finite set in $\mathbb{T}$ such that $(\{Q(p_n)\})_{n \in \mathbb{N}}$ is  $(1, \nu_{Q}^{(2)})$-uniformly distributed: for any $1$-periodic continuous function $f$ on $\mathbb{R}$, 
\[\lim_{N \rightarrow \infty} \frac{1}{N} \sum_{n=1}^N f (Q(p_n)) = \int f \, d \nu_{Q}^{(2)}.\]

\begin{Corollary}
\label{cor:ud-multi}
Let 
\[x_n = (\{Q(n)\}, \{u_1(n)\}, \{u_2(n)\}, \dots, \{u_k(n)\}), n \in \mathbb{N},\]
where $Q(x) \in \mathbb{R}[x]$ and $u_1(x), \dots, u_k(x)$ are subpolynomial functions belonging to some Hardy field $\mathbf{H}$. 
\begin{enumerate}
\item Suppose $Q(x) \in \mathbb{Q}[x] + \mathbb{R}$. Assume that for any $u(x) \in \text{span}_{\mathbb{Z}}^* \{ u_1(x), \dots, u_k(x) \}$,\footnote{See \eqref{eq:def:span1} for the definition of $\text{span}_{\mathbb{Z}}^*$.}
\begin{equation}
\label{eq:cond-proof-cor}
 \lim_{x \rightarrow \infty} \frac{|u(x) - q(x)|}{\log W(x)} = \infty \quad \text{ for any } q(x) \in \mathbb{Q}[x].
\end{equation}
Then, $(x_n)_{n \in \mathbb{N}}$ is $(w(n), \nu_{Q}^{(1)} \times \lambda^k)$-uniformly distributed in $\mathbb{T}^{k+1}$ and $(x_{p_n})_{n \in \mathbb{N}}$ is $(w(n), \nu_{Q}^{(2)}\times \lambda^k)$-uniformly distributed in $\mathbb{T}^{k+1}$. (Here, $\lambda$ denotes the Lebesgue measure on $\mathbb{T}$.)
\item Suppose $Q(x) \notin \mathbb{Q}[x] + \mathbb{R}$. Assume that for any $u(x) \in \text{span}_{\mathbb{Z}}^* \{ Q(x), u_1(x), \dots, u_k(x) \}$,
\[ \lim_{x \rightarrow \infty} \frac{|u(x) - q(x)|}{\log W(x)} = \infty \quad \text{ for any } q(x) \in \mathbb{Q}[x].\]
Then both $(x_n)_{n \in \mathbb{N}}$ and $(x_{p_n})_{n \in \mathbb{N}}$ are $w(n)$-uniformly distributed in $\mathbb{T}^{k+1}$.
\end{enumerate}
\end{Corollary}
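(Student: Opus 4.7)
The plan is to analyze the weighted exponential sums at arbitrary characters of $\mathbb{T}^{k+1}$ and deduce the claimed weak convergence by Stone--Weierstrass approximation on the torus (for the Haar case this is Theorem \ref{WeylCriterion}; for Part (1), the same argument with an arbitrary probability measure $\mu$ shows that $(x_n)_{n \in \mathbb{N}}$ is $(w(n),\mu)$-uniformly distributed if and only if $\frac{1}{W(N)} \sum_{n=1}^N w(n) \, e(\langle h,x_n\rangle)$ converges to the Fourier coefficient $\int e(\langle h,t\rangle)\,d\mu(t)$ for every $h \in \mathbb{Z}^{k+1}$). Accordingly, for each $h = (h_0, h_1, \dots, h_k) \in \mathbb{Z}^{k+1}$ I would study
$$S_N(h) := \frac{1}{W(N)} \sum_{n=1}^N w(n)\, e\bigl(h_0 Q(n) + h_1 u_1(n) + \cdots + h_k u_k(n)\bigr)$$
and its prime counterpart $S_N^{\mathcal P}(h)$ obtained by replacing $n$ with $p_n$.

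For Part (1), I would split on whether $h' := (h_1, \dots, h_k)$ vanishes. If $h' = 0$, then $S_N(h) = \frac{1}{W(N)} \sum w(n)\, e(h_0 Q(n))$; since $Q \in \mathbb{Q}[x] + \mathbb{R}$ the sequence $(Q(n))$ is $(1, \nu_Q^{(1)})$-uniformly distributed by the very definition of $\nu_Q^{(1)}$, so $\frac{1}{N}\sum_{n=1}^N e(h_0 Q(n)) \to \int e(h_0 t)\,d\nu_Q^{(1)}(t)$, and Theorem \ref{lem:Tsu1} upgrades this to the same limit for the weighted average, which is precisely the Fourier coefficient of $\nu_Q^{(1)} \times \lambda^k$ at $h$. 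If $h' \neq 0$, write $Q = q_0 + c_0$ with $q_0 \in \mathbb{Q}[x]$ and $c_0 \in \mathbb{R}$, and set $u := h_1 u_1 + \cdots + h_k u_k$, which is a nonzero element of $\text{span}_{\mathbb{Z}}^*(u_1, \dots, u_k)$; for every $q \in \mathbb{Q}[x]$ the polynomial $q - h_0 q_0$ also lies in $\mathbb{Q}[x]$, so hypothesis \eqref{eq:cond-proof-cor} together with boundedness of $h_0 c_0$ yields $|u(x) + h_0 Q(x) - q(x)|/\log W(x) \to \infty$. Then Theorem \ref{BKS2} (equivalence (1) $\Leftrightarrow$ (7)) shows that $(u(n) + h_0 Q(n))_{n \in \mathbb{N}}$ is $w(n)$-uniformly distributed $\bmod 1$, so $S_N(h) \to 0$, matching the vanishing Fourier coefficient of $\nu_Q^{(1)} \times \lambda^k$ at such $h$ (the Lebesgue factor kills it). The prime version uses the equivalence (1) $\Leftrightarrow$ (4) of Theorem \ref{BKS2}, together with the defining Cesaro statement for $\nu_Q^{(2)}$ and Theorem \ref{lem:Tsu1} in the case $h' = 0$.

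For Part (2), the hypothesis applies to every nonzero integer combination of $Q, u_1, \dots, u_k$. For any nonzero $h \in \mathbb{Z}^{k+1}$, the function $h_0 Q + h_1 u_1 + \cdots + h_k u_k$ is a nonzero element of $\text{span}_{\mathbb{Z}}^*(Q, u_1, \dots, u_k)$: this is immediate when $h_0 \neq 0$, and if $h_0 = 0$ it reduces to the nonzero combination $h_1 u_1 + \cdots + h_k u_k$. Then Theorem \ref{BKS2} (resp.\ its prime equivalence (4)) yields $w(n)$-uniform distribution $\bmod 1$ of the corresponding sequence, so $S_N(h) \to 0$ and $S_N^{\mathcal P}(h) \to 0$ for every nonzero $h \in \mathbb{Z}^{k+1}$, which by Theorem \ref{WeylCriterion} is exactly the desired conclusion. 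The main technical subtlety worth flagging is that all functions involved must lie in a common Hardy field for Theorem \ref{BKS2} to apply; this is no restriction, since polynomials lie in any Hardy field containing $\mathbb{R}(x)$, so one may work in a Hardy field enlargement of $\mathbf{H}$ containing $Q$.
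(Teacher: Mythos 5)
Your proof is correct and follows essentially the same route as the paper: verify the weighted exponential sum at each character of $\mathbb{T}^{k+1}$, split on whether the coefficients at the Hardy-field coordinates vanish, and invoke Theorem \ref{BKS2} (together with Theorem \ref{lem:Tsu1} in the degenerate case). You spell out two points the paper leaves implicit — that in Part (1) one absorbs $h_0 Q$ into the rational polynomial plus constant before applying the hypothesis, and that all functions involved must be placed in a common (e.g.\ maximal) Hardy field — both of which are accurate and worth noting, but neither changes the underlying argument.
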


\begin{proof}
The claim in part (2) follows directly from Theorem \ref{ud:d-dim}.
Let us prove (1) for the sequence $(x_n)_{n \in \mathbb{N}}$. 
(The proof of (1) for the sequence $(x_{p_n})_{n \in \mathbb{N}}$ is analogous and therefore is omitted.)
It is enough to show that for any $(m_0, m_1, \dots, m_k) \in \mathbb{Z}^{k+1}$, 
\begin{equation} 
\label{eq:4.2-proof}
\lim_{N \rightarrow \infty} \frac{1}{W(N)} \sum_{n=1}^N w(n) e (m_0 Q(n) + \sum_{i=1}^k m_i u_i(n)) = \int e(m_0 x) \,  d \nu_{Q}^{(1)}  \cdot \prod_{i=1}^k \int e (m_i x) \, 
d \lambda.   
\end{equation}

If for some $i \in \{1, 2, \dots, k\}$, $m_i \ne 0$, then condition \eqref{eq:cond-proof-cor} implies that the sequence $\left(m_0 Q(n) + \sum\limits_{i=1}^k m_i u_i(n)\right)_{n \in \mathbb{N}}$ is $w(n)$-uniformly distributed $\bmod \, 1$, hence \eqref{eq:4.2-proof} holds. 

If $m_1 = m_2 = \cdots = m_k = 0$, then by Theorem \ref{lem:Tsu1}, 
 \begin{align*} 
&\lim_{N \rightarrow \infty} \frac{1}{W(N)} \sum_{n=1}^N w(n) e (m_0 Q(n) + \sum_{i=1}^k m_i u_i(n)) \\
&\quad = \lim_{N \rightarrow \infty} \frac{1}{W(N)} \sum_{n=1}^N w(n) e (m_0 Q(n) ) 
= \lim_{N \rightarrow \infty} \frac{1}{N} \sum_{n=1}^N  e (m_0 Q(n) )  = \int e(m_0 x) \,  d \nu_{Q}^{(1)}.
\end{align*}
Thus, \eqref{eq:4.2-proof} holds in this case as well. 
\end{proof}

The following proposition will be used in Subsections \ref{ergodicaverages} and \ref{subsec:setofrec}.
\begin{Proposition}
\label{prop:app:ud-ave}
Let 
\[ R(x) = Q(x) + \sum_{i=1}^k \alpha_i \lfloor u_i(x) \rfloor,\]
where $Q(x) \in \mathbb{R}[x]$, $\alpha_1, \dots, \alpha_k \in \mathbb{R}$ and $u_1(x), \dots, u_k(x)$ are subpolynomial functions belonging to some Hardy field $\mathbf{H}$.
\begin{enumerate}
\item  Suppose $Q(x) \in \mathbb{Q}[x] + \mathbb{R}$. Assume that for any $u(x) \in \text{span}_{\mathbb{R}}^* \{ u_1(x), \dots, u_k(x) \}$,
\begin{equation}
\label{eq:cond1-prop}
 \lim_{x \rightarrow \infty} \frac{|u(x) - q(x)|}{\log W(x)} = \infty \quad \text{ for any } q(x) \in \mathbb{Q}[x].
\end{equation}
Then, 
\begin{equation}
\label{eq1:prop:app:ud-ave}
\lim_{N \rightarrow \infty} \frac{1}{W(N)} \sum_{n=1}^N w(n) e(R(n)) = 
\begin{cases}
\int e(x) \, d \nu_{Q}^{(1)}, & \text{ if } \alpha_1, \dots, \alpha_k \in \mathbb{Z}, \\
0, & \text{ otherwise}
\end{cases}
\end{equation}
and 
\begin{equation}
\label{eq2:prop:app:ud-ave}
\lim_{N \rightarrow \infty} \frac{1}{W(N)} \sum_{n=1}^N w(n) e(R(p_n)) = 
\begin{cases}
\int e(x) \, d \nu_{Q}^{(2)}, & \text{ if } \alpha_1, \dots, \alpha_k \in \mathbb{Z}, \\
0, & \text{ otherwise.}
\end{cases}
\end{equation}
\item Suppose $Q(x) \notin \mathbb{Q}[x] + \mathbb{R}$. Assume that for any $u(x) \in \text{span}_{\mathbb{R}}^* \{ u_1(x), \dots, u_k(x) \}$ and $a \in \mathbb{Z}$,
\begin{equation}
\label{eq:cond2-prop}
 \lim_{x \rightarrow \infty} \frac{|u(x) + a Q(x) - q(x)|}{\log W(x)} = \infty \quad \text{ for any } q(x) \in \mathbb{Q}[x].
\end{equation}
\end{enumerate}
Then, 
\begin{equation}
\label{eq3:prop:app:ud-ave}
\lim_{N \rightarrow \infty} \frac{1}{W(N)} \sum_{n=1}^N w(n) e(R(n)) = \lim_{N \rightarrow \infty} \frac{1}{W(N)} \sum_{n=1}^N w(n) e(R(p_n)) = 0
\end{equation}
\end{Proposition}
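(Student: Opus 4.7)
The strategy is to rewrite $e(R(n))$ as a Riemann-integrable function of the fractional parts of an augmented collection of Hardy-field sequences and to compute the limit via multidimensional joint weighted equidistribution on a torus. In the trivial subcase of~(1) where all $\alpha_i\in\mathbb Z$, the integers $\alpha_i\lfloor u_i(n)\rfloor$ contribute trivially, so $e(R(n))=e(Q(n))$; both \eqref{eq1:prop:app:ud-ave} and \eqref{eq2:prop:app:ud-ave} then follow at once from the defining properties of $\nu_Q^{(1)}$ and $\nu_Q^{(2)}$ combined with Theorem~\ref{lem:Tsu1}.

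For the remaining cases I use two identities: $e(\alpha\lfloor u\rfloor)=e(\alpha u)e(-\alpha\{u\})$ for any real $\alpha$, and, for $\alpha=p/q\in\mathbb Q$ in lowest terms with $q\ge1$,
\[
e(\alpha\lfloor u\rfloor)=g_{p,q}(\{u/q\}),\qquad g_{p,q}(y):=e\bigl((p/q)\lfloor qy\rfloor\bigr),
\]
a step function on $[0,1)$ with $\int_0^1 g_{p,q}=1$ when $q=1$ and $\int_0^1 g_{p,q}=0$ when $q>1$ (sum of non-trivial $q$-th roots of unity). Partition $\{1,\dots,k\}=I_{rat}\sqcup I_{irr}$ according to whether $\alpha_i\in\mathbb Q$, set $d_i:=q_i$ (denominator of $\alpha_i$) for $i\in I_{rat}$ and $d_i:=1$ for $i\in I_{irr}$, and form
\[
X_n:=\bigl(\{Q(n)\},\,\{u_1(n)/d_1\},\dots,\{u_k(n)/d_k\},\,(\{\alpha_i u_i(n)\})_{i\in I_{irr}}\bigr)\in\mathbb T^{1+k+|I_{irr}|}.
\]
A direct check using the two identities gives $e(R(n))=F(X_n)$, where
\[
F(x_0,(y_i)_i,(z_i)_{i\in I_{irr}}):=e(x_0)\prod_{i\in I_{rat}}g_{p_i,q_i}(y_i)\prod_{i\in I_{irr}}e(z_i)\,e(-\alpha_i y_i)
\]
is bounded and continuous off a Lebesgue-null set.

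The crux is to show that $X_n$ is $(w(n),\mu)$-uniformly distributed with $\mu=\nu_Q^{(1)}\times\lambda^{k+|I_{irr}|}$ in case~(1) and $\mu=\lambda^{1+k+|I_{irr}|}$ in case~(2). By the Fourier criterion for weighted equidistribution to a Borel probability measure (extending Theorem~\ref{WeylCriterion}), this amounts to evaluating, for each non-zero integer character $(m_0,(\ell_i)_i,(c_i)_{i\in I_{irr}})$, the Weyl sum of $m_0 Q(n)+\sum_i\beta_i u_i(n)$, where $\beta_i=\ell_i/d_i$ for $i\in I_{rat}$ and $\beta_i=\ell_i+c_i\alpha_i$ for $i\in I_{irr}$. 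Either all $\beta_i=0$ (in which case the irrationality of $\alpha_i$ on $I_{irr}$ forces $\ell_i=c_i=0$ for every $i$, the exponent collapses to $m_0 Q(n)$ with $m_0\ne 0$, and the Weyl sum tends to $\int e(m_0 x)\,d\mu_0$, using in case~(2) that $(Q(n))$ is $w(n)$-u.d.\ by Weyl's classical polynomial theorem and Theorem~\ref{lem:Tsu1}), or some $\beta_i\ne 0$ (in which case $\sum_i\beta_i u_i$ lies in $\text{span}_{\mathbb R}^*\{u_1,\dots,u_k\}$, the hypothesis of~(1) or~(2) with $a=m_0$ together with Theorem~\ref{BKS2} yields $w(n)$-equidistribution of $m_0 Q+\sum\beta_i u_i$, and the Weyl sum vanishes). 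The analysis along primes is identical, invoking part~(4) of Theorem~\ref{ud:d-dim} and the defining property of $\nu_Q^{(2)}$. Passing to the limit,
\[
\lim_{N\to\infty}\frac{1}{W(N)}\sum_{n=1}^N w(n)e(R(n))=\int F\,d\mu,
\]
which factors as a product of one-dimensional integrals. This product vanishes in case~(2) via $\int_0^1 e(x_0)\,dx_0=0$; in case~(1) with some $\alpha_i\notin\mathbb Z$ it vanishes because either some $q_i>1$ gives $\int_0^1 g_{p_i,q_i}=0$ or $I_{irr}\ne\emptyset$ gives $\int_0^1 e(z_i)\,dz_i=0$, yielding all of \eqref{eq1:prop:app:ud-ave}, \eqref{eq2:prop:app:ud-ave}, and \eqref{eq3:prop:app:ud-ave}. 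I expect the subtlest step to be the joint equidistribution when some $\alpha_i\in\mathbb Q\setminus\mathbb Z$: using $\{u_i(n)\}$ in place of $\{u_i(n)/q_i\}$ would produce integer relations such as $q_i v(n)-p_i u_i(n)\in\mathbb Q[x]+\mathbb R$ (with $v(n)=Q(n)+\sum_j\alpha_j u_j(n)$), which yield non-vanishing Weyl sums; rescaling by $d_i=q_i$ is precisely what removes these obstructions.
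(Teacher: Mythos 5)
Your proposal is correct and follows essentially the same route as the paper: rewrite $e(R(n))$ as a fixed function of the fractional parts of an augmented system of Hardy-field sequences, verify joint weighted equidistribution of that system via the Weyl criterion and Theorem~\ref{BKS2}, and then factor the limiting integral, with the rational-coefficient factors and the irrational-coefficient factors each vanishing. The only differences are cosmetic: the paper clears the rational $\alpha_i$ to a common denominator $b$ and passes to $\mathbb{Z}_b$-valued coordinates via $\lfloor u_i(n)\rfloor \bmod b$, whereas you keep individual denominators $q_i$ and remain on the torus via the step functions $g_{p_i,q_i}$ — both yield the same product-of-integrals computation.
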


\begin{proof}
We first prove (1).
Let us establish \eqref{eq1:prop:app:ud-ave}. (The proof for \eqref{eq2:prop:app:ud-ave} is analogous and will be omitted.)

The statement is immediate when all $\alpha_1,\dots,\alpha_k \in \mathbb{Z}$, so it suffices to consider the case where at least one $\alpha_i \notin \mathbb{Z}$. 
Note that if $\alpha_i \in \mathbb{Z}$, then the term $\alpha_i \lfloor u_i(n) \rfloor$ is integer-valued. 
By discarding such trivial terms and relabeling, we may assume without loss of generality that $\alpha_i \notin \mathbb{Z}$ for every $1 \le i \le k$.

Rearranging if necessary, we may further assume that there exists an integer $l$ with $0 \leq l \leq k$ such that $\alpha_1, \dots, \alpha_l \notin \mathbb{Q}$ and $\alpha_{l+1}, \dots, \alpha_k \in \mathbb{Q}$. 
For $i = l+1, \dots, k$, we write $\alpha_i = \frac{c_i}{b}$, where $c_{i} \in \mathbb{Z}$ and $b \in \mathbb{N}$.
Define $(y_n)_{n \in \mathbb{N}}$ by
\begin{equation}\label{eq3.5:prop}
 y_n = \left( \{ Q(n) \}, \{ \alpha_1 u_1(n) \}, \{u_1(n) \}, \dots, \{ \alpha_l u_l(n) \}, \{u_l(n)\}, \left\{ \frac{u_{l+1}(n)}{b} \right\}, \dots, \left\{ \frac{u_k(n)}{b} \right\} \right),
 \end{equation}
In view of Proposition \ref{prop:app:ud-ave}, the condition \eqref{eq:cond1-prop} implies that the sequence $(y_n)_{n \in \mathbb{N}}$ is $(w(n),  \nu_{Q}^{(1)} \times \lambda^{l +k})$-uniformly distributed $\bmod \,1$.  

Let $\mu_b$ denote the normalized counting measure on $\mathbb{Z}_b$. 
For any intervals $I_j \subset [0,1)$, $j = 0, 1, 2, \dots, 2l$, and for any integers $c_1, \dots c_{k-l}$ in $\{0, 1, 2, \dots, b-1\}$, 
define
$$A := \prod_{i=0}^{2l} I_i \subset \mathbb{T}^{2l} \quad \text{and} \quad  B := A \times \prod_{j=1}^{k-l} \left[ \frac{c_j}{b}, \frac{c_j +1}{b} \right) \subset \mathbb{T}^{l+k}.$$
Let $$ x_n = (  \{Q(n)\}, \{\alpha_1 u_1(n)\}, \{u_1(n)\}, \dots, \{\alpha_l u_l (n)\}, \{u_l(n)\}) \in \mathbb{T}^{2l +1}. $$ 
Note that
$$x_n \in A \text{ and } \lfloor u_{l+1}(n) \rfloor = c_1, \dots, \lfloor u_k(n) \rfloor =c_k  \quad \text{if and only if} \quad y_n \in B.$$ 
Since $(y_n)_{n \in \mathbb{N}}$ is $(w(n),  \nu_{Q}^{(1)} \times \lambda^{l +k})$-uniformly distributed $\bmod \,1$,
\begin{equation}
\label{eq:seq:group:prop} 
(\{Q(n)\}, \{\alpha_1 u_1(n)\}, \{u_1(n)\}, \dots, \{\alpha_l u_l(n)\}, \{u_l(n)\}, \lfloor u_{l+1}(n) \rfloor, \dots, \lfloor u_k(n) \rfloor )
\end{equation}
is $(w(n),  \nu_{Q}^{(1)} \times \lambda^{2l} \times \mu_b^{k-l})$-uniformly distributed in $\mathbb{T} \times \mathbb{T}^{2l} \times \mathbb{Z}_b^{k - l}$.

Now write 
$$e ( R(n)) =  f_0 (Q(n)) \cdot \prod_{i=1}^l f_i ( \alpha_i u_i(n),  u_i(n)) \cdot \prod_{i=l+1}^k f_i( \lfloor u_i(n) \rfloor),$$
where $f_0(x) = e(x)$, $f_i(x,y) = e(x - \{y\} \alpha_i)$ for $i \leq l$ and $f_i(x) = e (\alpha_i x)$ for $i \geq l+1$. 
Since the sequence in \eqref{eq:seq:group:prop}  is  $(w(n),  \nu_{Q}^{(1)} \times \lambda^{2l} \times \mu_b^{k-l})$-uniformly distributed in $\mathbb{T}^{2l} \times \mathbb{Z}_b^{k - l}$, we have that
\begin{equation}
\label{eq:eval:average:R:prop} 
\lim_{N \rightarrow \infty} \frac{1}{W(N)} \sum_{n=1}^N w(n) e ( R(n)) = \int f_0 d \nu_{Q}^{(1)} \prod_{i=1}^l \int_{\mathbb{T}^2} e (m x - m \alpha_i \{y\}) dx dy \prod_{i=l+1}^k \frac{1}{b} \sum_{t=0}^{b-1} e (m \alpha_i t).
\end{equation}

First, assume that some $\alpha_i \notin \mathbb{Q}$, so $\alpha_1 \notin \mathbb{Q}$. Then, we have 
$$\int_{\mathbb{T}^2} e (m x - m \alpha_1 \{y\}) dx dy = \int_0^1 \int_0^1 e(m x) dx \, e(- m\alpha_1 \{y\}) dy = 0.$$
Therefore, the limit in \eqref{eq:eval:average:R:prop} equals zero. 

Now assume that all $\alpha_i \in \mathbb{Q}$. 
Define  $f_i(x) = e (\alpha_i x)$ for $i = 1, 2, \dots, k$. 
Then we obtain
\begin{equation}
\label{cor3.4:lasteq:prop}
\lim_{N \rightarrow \infty} \frac{1}{W(N)} \sum_{n=1}^N w(n) e(R(n)) 
=  \lim_{N \rightarrow \infty} \frac{1}{W(N)} \sum_{n=1}^N w(n) \prod_{i=1}^k f_i(\lfloor u_i(n) \rfloor ) = \prod_{i=1}^k \frac{1}{b} \sum_{t=0}^{b-1} e (\alpha_i t).
\end{equation}
Since some   $\alpha_i \notin \mathbb{Z}$, we have $\frac{1}{b} \sum\limits_{t=0}^{b-1} e (\alpha_i t) = 0$. 
So, the right-hand side, and hence the left-hand side, of formula \eqref{cor3.4:lasteq:prop} is zero.

We now prove (2). As in the proof of (1), condition \eqref{eq:cond2-prop} implies that 
\begin{equation}
\label{eq2:seq:group:prop} 
(\{Q(n)\}, \{\alpha_1 u_1(n)\}, \{u_1(n)\}, \dots, \{\alpha_l u_l(n)\}, \{u_l(n)\}, \lfloor u_{l+1}(n) \rfloor, \dots, \lfloor u_k(n) \rfloor )
\end{equation}
is $(w(n),  \lambda^{2l+1} \times \mu_b^{k-l})$-uniformly distributed in $\mathbb{T} \times \mathbb{T}^{2l} \times \mathbb{Z}_b^{k - l}$. 
Applying the same reasoning as in the proof of (1), we obtain \eqref{eq3:prop:app:ud-ave}. 
\end{proof}

The following theorem deals with the distribution $\bmod \, 1$ of the sequences of the form 
\begin{equation}
(u_1(n), \dots, u_k(n), v_1(p_n), \dots v_l(p_n))_{n \in \mathbb{N}},
\end{equation}
and 
\begin{equation}
(u_1(p_n), \dots, u_k(p_n), v_1(n), \dots v_l(n))_{n \in \mathbb{N}},
\end{equation}
where $u_i(x)$, $1 \leq i \leq k$, are subpolynomial Hardy functions and $v_j(x)$, $1 \leq j \leq l$, are slowly growing Hardy field functions.

\begin{Theorem}
\label{thm:w-ud:mixedindex}
Let $u_1(x), \dots, u_k(x) \in \mathbf{H}$  for some Hardy field $\mathbf{H}$ and let $v_1(x), \dots, v_l(x) \in {\mathbf U}$.   
Assume that $u_1(x), \dots, u_k(x)$ are subpolynomial functions  $\lim\limits_{x \rightarrow \infty} \frac{|v_j(x)|}{\log x} < \infty$ for $j = 1, 2, \dots, l$. 
\begin{enumerate}
\item Suppose that $v_i(x \log x) \in {\mathbf H}$ for $1 \leq i \leq l$. Then the following are equivalent:
\begin{enumerate}
\item $(u_1(n), \dots, u_k(n), v_1(n \log n), \dots, v_l(n \log n) )_{n \in \mathbb{N}}$ is $w(n)$-uniformly distributed $\bmod \, 1$ in $\mathbb{R}^{k+l}$.
\item $(u_1(n), \dots, u_k(n), v_1(p_n), \dots, v_l(p_n))_{n \in \mathbb{N}}$ is $w(n)$-uniformly distributed $\bmod \, 1$ in $\mathbb{R}^{k+l}$.
\item For any $u(x) \in \text{span}_{\mathbb{Z}}^* \{ u_1(x), \dots, u_k(x), v_1(x \log x), \dots, v_l(x \log x) \}$,
\[ \lim_{x \rightarrow \infty} \frac{|u(x) -q(x)|}{\log W(x)} = \infty \quad \text{for any } q(x) \in \mathbb{Q}[x].\] 
\end{enumerate}
\item  Suppose that $v_i \circ g (x) \in {\mathbf H}$ for $1 \leq i \leq b$, where $g(x)$ is the compositional inverse of $f(x)= x \log x$, that is, $g(x \log x) = x$. 
Then the following are equivalent:
\begin{enumerate}
\item $(u_1(n), \dots, u_k(n), v_1 \circ g (n), \dots, v_l \circ g (n) )_{n \in \mathbb{N}}$ is $w(n)$-uniformly distributed $\bmod \, 1$ in $\mathbb{R}^{k+l}$.
\item $(u_1(p_n), \dots, u_k(p_n), v_1(n), \dots, v_l(n))_{n \in \mathbb{N}}$ is $w(n)$-uniformly distributed $\bmod \, 1$ in $\mathbb{R}^{k+l}$.
\item For any $u(x) \in \text{span}_{\mathbb{Z}}^* \{ u_1(x), \dots, u_k(x), v_1 \circ g (x), \dots, v_l \circ g (x) \}$,
\[ \lim_{x \rightarrow \infty} \frac{|u(x) -q(x)|}{\log W(x)} = \infty \quad \text{for any } q(x) \in \mathbb{Q}[x].\] 
\end{enumerate}
\end{enumerate}
\end{Theorem}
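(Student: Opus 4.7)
The plan is to establish each equivalence by combining the multidimensional uniform distribution result (Theorem \ref{ud:d-dim}) with the replacement theorem relating $u(p_n)$ to $u(n \log n)$ for slowly growing Hardy field functions (Theorem \ref{lem2:sec2}). In both parts, the equivalence (a)$\Leftrightarrow$(c) will follow from a direct application of Theorem \ref{ud:d-dim} to a suitably composed tuple, while (a)$\Leftrightarrow$(b) will be proved via the Weyl criterion (Theorem \ref{WeylCriterion}), reducing the multidimensional claim to a one-dimensional exponential sum comparison and then invoking Theorem \ref{lem2:sec2}.

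For part (1), I would first note that each composition $v_j(x \log x)$ is subpolynomial: since $|v_j(x)|/\log x$ has a finite limit, $v_j(x \log x) \ll \log(x \log x) \ll \log x$. Together with the hypotheses $v_j(x \log x) \in \mathbf{H}$ and $u_i \in \mathbf{H}$, the tuple $(u_1, \ldots, u_k, v_1(x \log x), \ldots, v_l(x \log x))$ lies entirely in $\mathbf{H}$ and consists of subpolynomial functions, so (a)$\Leftrightarrow$(c) is immediate from the equivalence of (1) and (7) in Theorem \ref{ud:d-dim}. For (a)$\Leftrightarrow$(b), the Weyl criterion reduces matters to showing that for each nonzero $(h,k) \in \mathbb{Z}^{k+l}$, the weighted exponential averages of $\sum_i h_i u_i(n) + \sum_j k_j v_j(n \log n)$ and of $\sum_i h_i u_i(n) + \sum_j k_j v_j(p_n)$ vanish simultaneously. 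Setting $x_n := \sum_i h_i u_i(n)$ and $v := \sum_j k_j v_j$, I would verify that $v \in \mathbf{U}$ (since $v \circ (x \log x) \in \mathbf{H}$ and composition with the inverse of $x \log x \in \mathbf{E}$ keeps one in $\mathbf{U}$, by the remarks preceding Theorem \ref{lem3}) and that $|v(x)|/\log x$ has finite limit. Theorem \ref{lem2:sec2}, applied with $a = d = 1$ (so that $p_n^{1,1} = p_n$ and $\phi(1) = 1$), then gives the desired equivalence whenever $|v(x)| \to \infty$; the residual case where $v$ has a finite limit is trivial because $v(p_n) - v(n \log n) \to 0$.

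For part (2), I would apply Theorem \ref{ud:d-dim} to the tuple $(u_1, \ldots, u_k, v_1 \circ g, \ldots, v_l \circ g)$, where $g$ denotes the compositional inverse of $x \log x$; by hypothesis each $v_i \circ g \in \mathbf{H}$, and an argument similar to part (1) shows that each $v_i \circ g$ is subpolynomial. The equivalence of (1) and (7) in Theorem \ref{ud:d-dim} then yields (a)$\Leftrightarrow$(c). For (a)$\Leftrightarrow$(b), the equivalence (1)$\Leftrightarrow$(4) in Theorem \ref{ud:d-dim} applied to this same tuple shows that (a) is equivalent to $(u_i(p_n), v_j(g(p_n)))_n$ being $w(n)$-uniformly distributed $\bmod\, 1$. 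To replace $v_j(g(p_n))$ by $v_j(n) = v_j(g(n \log n))$, I would apply Theorem \ref{lem2:sec2} to $u := \sum_j k_j (v_j \circ g)$ and $x_n := \sum_i h_i u_i(p_n)$ via Weyl's criterion, exactly as in part (1).

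The step I expect to require the most care is the verification that the linear combinations $\sum_j k_j v_j$ (in part (1)) and $\sum_j k_j v_j \circ g$ (in part (2)) indeed lie in $\mathbf{U}$ and satisfy the slow-growth hypothesis $\lim |u(x)|/\log x < \infty$ required for the application of Theorem \ref{lem2:sec2}. This rests on the assumption that all $v_j(x \log x)$, respectively all $v_j \circ g$, lie in the common Hardy field $\mathbf{H}$, combined with the closure properties of Hardy fields under composition with appropriate elements of $\mathbf{E}$ recalled in the discussion preceding Theorem \ref{lem3}.
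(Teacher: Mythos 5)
Your proposal is correct and follows essentially the same route as the paper, which reduces via the Weyl criterion to a one-dimensional statement (encapsulated in Lemma \ref{lem:sum}) and then applies Theorem \ref{BKS} together with the replacement result Theorem \ref{lem2:sec2}. You are slightly more explicit than the paper in one respect, namely the residual case where the combination $\sum_j k_j v_j$ has a finite limit so that Theorem \ref{lem2:sec2} does not apply directly; your observation that the difference still tends to zero handles this cleanly.
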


The proof of Theorem \ref{thm:w-ud:mixedindex} follows immediately from the next lemma combined with Theorem \ref{WeylCriterion}.
 
\begin{Lemma}
\label{lem:sum}
Let $u(x) \in {\mathbf H}$ for some Hardy field ${\mathbf H}$ and let $v(x) \in \mathbf{U}$.
Assume that $u(x)$ is a subpolynomial function and $\lim\limits_{x \rightarrow \infty} \frac{|v(x)|}{\log x} < \infty$. 
\begin{enumerate}
\item Suppose that $v(x \log x) \in {\mathbf H}$. Then the following are equivalent:
\begin{enumerate}
\item $(u(n) + v (n \log n)_{n \in \mathbb{N}}$ is $w(n)$-uniformly distributed $\bmod \, 1$. 
\item $(u(n) + v (p_n))_{n \in \mathbb{N}}$ is $w(n)$-uniformly distributed $\bmod \, 1$.
\item For any $q(x) \in \mathbb{Q}[x]$, 
\[\lim_{x \rightarrow \infty} \frac{|u(x) + v (x \log x) - q(x)|}{\log W(x)} = \infty.\]
\end{enumerate}
\item Suppose that $v \circ g (x) \in {\mathbf H}$, where $g(x)$ is the compositional inverse of $f(x) = x \log x$.. Then the following are equivalent: 
\begin{enumerate}
\item $(u(n) + v \circ g (n))_{n \in \mathbb{N}}$ is $w(n)$-uniformly distributed $\bmod \, 1$. 
\item $(u(p_n) + v (n))_{n \in \mathbb{N}}$ is $w(n)$-uniformly distributed $\bmod \, 1$.  
\item For any $q(x) \in \mathbb{Q}[x]$, 
\[\lim_{x \rightarrow \infty} \frac{|u(x) + v \circ g (x) - q(x)|}{\log W(x)} = \infty.\]
\end{enumerate}
\end{enumerate}
\end{Lemma}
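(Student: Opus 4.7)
The plan is to reduce both parts to Theorem \ref{BKS2} applied to a well-chosen composite Hardy field function, combined with a prime-to-$n\log n$ substitution in the spirit of Theorem \ref{lem2:sec2}. In Part (1), I set $\tilde u(x) := u(x) + v(x \log x)$; in Part (2), I set $\tilde u(x) := u(x) + v \circ g(x)$. The hypotheses in each case ensure $\tilde u \in \mathbf{H}$, and a short check (using $v(y) \ll \log y$, and for Part (2) also $\log g(x) \sim \log x$, so that $v\circ g(x) \ll \log x$) shows that $\tilde u$ is subpolynomial. Theorem \ref{BKS2} then yields (a) $\Leftrightarrow$ (c) immediately.

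For Part (1), it remains to compare $(u(n) + v(n \log n))$ with $(u(n) + v(p_n))$. When $\lim_{x\to\infty} v(x) \in \mathbb{R}$, both sequences differ by a sequence tending to $0$, so the equivalence is trivial. When $\lim_{x\to\infty} |v(x)| = \infty$, it is a direct application of Theorem \ref{lem2:sec2} with $a = d = 1$ and $x_n := u(n)$, giving (a) $\Leftrightarrow$ (b).

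For Part (2), Theorem \ref{BKS2} applied to $\tilde u$ also gives that $(\tilde u(n))$ is $w(n)$-uniformly distributed iff $(\tilde u(p_n)) = (u(p_n) + v(g(p_n)))$ is, so it remains to show that $(u(p_n) + v(g(p_n)))$ and $(u(p_n) + v(n))$ have the same $w(n)$-uniform distribution behavior. By Theorem \ref{lem:Tsu:Equ}, this follows once I establish that $v(g(p_n)) - v(n) \to 0$. I will do so via the mean value theorem: the hypothesis $\lim |v(x)|/\log x < \infty$ combined with L'Hospital gives $v'(x) = O(1/x)$; the prime number theorem $p_n = n\log n(1+o(1))$ together with $g'(y) = 1/(\log g(y)+1) \sim 1/\log y$ yields $|g(p_n) - n| = |g(p_n)-g(n\log n)| = o(n)$; combining these gives
\[
|v(g(p_n)) - v(n)| \leq \sup_{t \in [n, g(p_n)]} |v'(t)| \cdot |g(p_n) - n| = O(1/n) \cdot o(n) = o(1).
\]

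The main obstacle is precisely this estimate $v(g(p_n)) - v(n) \to 0$: unlike the direct prime-vs-$n\log n$ comparison carried out in the proof of Theorem \ref{lem2:sec2}, here I must propagate the PNT estimate through the compositional inverse $g$, which requires uniform control of $g'$ on the relevant interval around $n$. Once this estimate is secured, the remainder of the argument assembles cleanly from Theorem \ref{BKS2} and Theorem \ref{lem:Tsu:Equ}.
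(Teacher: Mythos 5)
Your proposal is correct, and the overall route is the same as the paper's: reduce $(a)\Leftrightarrow(c)$ to Theorem~\ref{BKS2} applied to $\tilde u$, and handle $(a)\Leftrightarrow(b)$ by a prime-to-$n\log n$ substitution. For Part~(1) this matches the paper verbatim (your explicit handling of the case $\lim_{x\to\infty} v(x)\in\mathbb{R}$, which formally falls outside the stated hypotheses of Theorem~\ref{lem2:sec2}, is a small but welcome precaution). For Part~(2), however, you take a more laborious path than necessary: after replacing $(u(n)+v\circ g(n))$ by $(u(p_n)+v\circ g(p_n))$ via Theorem~\ref{BKS2}, the paper simply applies Theorem~\ref{lem2:sec2} to the slowly-growing function $v\circ g$ (with $x_n=u(p_n)$), since $v\circ g(n\log n)=v(n)$ \emph{exactly}. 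This sidesteps entirely your ``propagate the PNT estimate through $g$'' step: there is no need to estimate $g'$ or to bound $|g(p_n)-n|$, because the comparison that needs to be made is between $v\circ g$ evaluated at $p_n$ and at $n\log n$, which is precisely the form Theorem~\ref{lem2:sec2} handles. Your direct MVT computation does arrive at the same conclusion $v(g(p_n))-v(n)\to 0$, so the argument is sound, but the extra control of $g'$ on the interval $[n,g(p_n)]$ is an avoidable complication that the algebraic identity $v\circ g(n\log n)=v(n)$ removes at a stroke.
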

\begin{proof}
Let us first prove (1). By Theorem \ref{BKS}, items (a) and (c) in (1) are equivalent and by Theorem \ref{lem2:sec2}, items (a) and (b) in (1) are equivalent. So, (1) follows.

Now we prove (2).
Note that by Theorem \ref{BKS}, $(u(n) + v \circ g (n))_{n \in \mathbb{N}}$  is $w(n)$-uniformly distributed $\bmod \, 1$ if and only if $(u(p_n) + v \circ g (p_n))_{n \in \mathbb{N}}$ is $w(n)$-uniformly distributed $\bmod \, 1$.  
Now, Theorem \ref{lem2:sec2} implies that $(u(p_n) + v \circ g (p_n))_{n \in \mathbb{N}}$ is $w(n)$-uniformly distributed $\bmod \, 1$
if and only if $(u(p_n) + v \circ g (n \log n))_{n \in \mathbb{N}}$ is $w(n)$-uniformly distributed $\bmod \, 1$. 
Since $v(n) = v \circ g (n \log n)$, items (a) and (b) in (2) are equivalent. 
Furthermore, equivalence of items (a) and (c) in (2) follows from Theorem \ref{BKS}.
\end{proof}

The following generalization of  Proposition \ref{prop:app:ud-ave} will be used in Subsection \ref{ergodicaverages}. 
\begin{Proposition}
\label{prop:app:ud-ave2}
Let $u_1(x), \dots, u_k(x) \in \mathbf{H}$ for some Hardy field $\mathbf{H}$ and let $v_1(x), \dots, v_l(x) \in {\mathbf U}$. 
Assume that  $u_1(x), \dots, u_k(x)$ are subpolynomial functions  and  $\lim\limits_{x \rightarrow \infty} \frac{|v_j(x)|}{\log x} < \infty$ for $j = 1, 2, \dots, l$.

\begin{enumerate}
\item Suppose that $v_1(x \log x), \dots, v_l(x \log x) \in \mathbf{H}$.
Let 
\[ R_1(n) = Q(n) + \sum_{i=1}^k \alpha_i \lfloor u_i(n) \rfloor + \sum\limits_{j=1}^l \beta_j \lfloor v_j(p_n) \rfloor,\]
where $\alpha_1, \dots, \alpha_k, \beta_1, \dots, \beta_l \in \mathbb{R}$ and $Q(x) \in \mathbb{R}[x]$. 
\begin{enumerate}
\item  Suppose $Q(x) \in \mathbb{Q}[x] + \mathbb{R}$. Assume that for any $u(x) \in \text{span}_{\mathbb{R}}^* \{ u_1(x), \dots, u_k(x), v_1(x \log x), \dots, v_l(x \log x)\}$,
\begin{equation}
\label{eq:cond1-prop2}
 \lim_{x \rightarrow \infty} \frac{|u(x) - q(x)|}{\log W(x)} = \infty \quad \text{ for any } q(x) \in \mathbb{Q}[x].
\end{equation}
Then, 
\begin{equation}
\label{eq1:prop:app:ud-ave2}
\lim_{N \rightarrow \infty} \frac{1}{W(N)} \sum_{n=1}^N w(n) e(R_1(n)) = 
\begin{cases}
\int e(x) \, d \nu_{Q}^{(1)}, & \text{ if } \alpha_1, \dots, \alpha_k \in \mathbb{Z}, \\
0, & \text{ otherwise}.
\end{cases}
\end{equation}
\item Suppose $Q(x) \notin \mathbb{Q}[x] + \mathbb{R}$. Assume that for any $u(x) \in \text{span}_{\mathbb{R}}^* \{ u_1(x), \dots, u_k(x), v_1(x \log x), \dots, v_l(x \log x) \}$ and $a \in \mathbb{Z}$,
\begin{equation}
\label{eq:cond2-prop2}
 \lim_{x \rightarrow \infty} \frac{|u(x) + a Q(x) - q(x)|}{\log W(x)} = \infty \quad \text{ for any } q(x) \in \mathbb{Q}[x].
\end{equation}
\end{enumerate}
Then, 
\begin{equation}
\label{eq3:prop:app:ud-ave2}
\lim_{N \rightarrow \infty} \frac{1}{W(N)} \sum_{n=1}^N w(n) e(R_1(n)) = 0.
\end{equation}
\item Suppose that $ v_1 \circ g (x), \dots, v_l \circ g (x) \in \mathbf{H}$, where $g(x)$ is the compositional inverse of $f(x) = x \log x$.
Let 
\[ R_2(n) = Q(p_n) + \sum_{i=1}^k \alpha_i \lfloor u_i(p_n) \rfloor + \sum\limits_{j=1}^l \beta_j \lfloor v_j(n) \rfloor,\]
where $\alpha_1, \dots, \alpha_k, \beta_1, \dots, \beta_l \in \mathbb{R}$ and $Q(x) \in \mathbb{R}[x]$. 
\begin{enumerate}
\item  Suppose $Q(x) \in \mathbb{Q}[x] + \mathbb{R}$. Assume that for any $u(x) \in \text{span}_{\mathbb{R}}^* \{ u_1(x), \dots, u_k(x), v_1 \circ g(x), \dots, v_l \circ g (x)\}$,
\begin{equation}
\label{eq:cond1-prop2}
 \lim_{x \rightarrow \infty} \frac{|u(x) - q(x)|}{\log W(x)} = \infty \quad \text{ for any } q(x) \in \mathbb{Q}[x].
\end{equation}
Then, 
\begin{equation}
\label{eq1:prop:app:ud-ave3}
\lim_{N \rightarrow \infty} \frac{1}{W(N)} \sum_{n=1}^N w(n) e(R_1(n)) = 
\begin{cases}
\int e(x) \, d \nu_{Q}^{(2)}, & \text{ if } \alpha_1, \dots, \alpha_k \in \mathbb{Z}, \\
0, & \text{ otherwise}.
\end{cases}
\end{equation}
\item Suppose $Q(x) \notin \mathbb{Q}[x] + \mathbb{R}$. Assume that for any $u(x) \in \text{span}_{\mathbb{R}}^* \{ u_1(x), \dots, u_k(x), v_1 \circ g(x), \dots, v_l \circ g (x) \}$ and $a \in \mathbb{Z}$,
\begin{equation}
\label{eq:cond2-prop2}
 \lim_{x \rightarrow \infty} \frac{|u(x) + a Q(x) - q(x)|}{\log W(x)} = \infty \quad \text{ for any } q(x) \in \mathbb{Q}[x].
\end{equation}
\end{enumerate}
Then, 
\begin{equation}
\label{eq3:prop:app:ud-ave3}
\lim_{N \rightarrow \infty} \frac{1}{W(N)} \sum_{n=1}^N w(n) e(R_2(n))  = 0.
\end{equation}
\end{enumerate}
\end{Proposition}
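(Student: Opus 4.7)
The plan is to imitate the proof of Proposition \ref{prop:app:ud-ave} almost verbatim, bridging the difference by Theorem \ref{thm:w-ud:mixedindex}, which is the mixed-index analog of the multidimensional uniform distribution theorem that was used in Proposition \ref{prop:app:ud-ave}. I will treat part (1) first; part (2) is entirely analogous after interchanging the roles of $n$ and $p_n$.

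First I would reduce to the case where every $\alpha_i \notin \mathbb{Z}$ and every $\beta_j \notin \mathbb{Z}$ (the integer coefficients contribute trivially to $e(R_1(n))$ and can be absorbed), and then split the remaining coefficients into irrational versus merely rational ones. Write $\alpha_i = c_i / b$ and $\beta_j = d_j / b$ for a common denominator $b$ on the rational side, and use the identities
\[ e(\alpha_i \lfloor u_i(n) \rfloor) = e(\alpha_i u_i(n))\, e(-\alpha_i \{u_i(n)\}), \qquad e(\beta_j \lfloor v_j(p_n) \rfloor) = e(\beta_j v_j(p_n))\, e(-\beta_j \{v_j(p_n)\}) \]
to express $e(R_1(n))$ as a continuous function on a compact group applied to the joint sequence
\[ y_n = \bigl(\{Q(n)\}, \{\alpha_1 u_1(n)\}, \{u_1(n)\}, \ldots, \{\beta_1 v_1(p_n)\}, \{v_1(p_n)\}, \ldots, \lfloor u_{\cdot}(n)\rfloor \bmod b, \lfloor v_{\cdot}(p_n)\rfloor \bmod b\bigr). \]

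Second I would establish that $(y_n)_{n \in \mathbb{N}}$ is $(w(n), \nu_Q^{(1)} \times \lambda^{\text{cts}} \times \mu_b^{\text{disc}})$-uniformly distributed on the product of tori and cyclic groups in case (a), and $(w(n), \lambda^{\text{cts}} \times \mu_b^{\text{disc}})$-uniformly distributed in case (b). By Weyl's criterion (Theorem \ref{WeylCriterion}) this reduces to showing that the exponential sum $\frac{1}{W(N)} \sum_{n=1}^N w(n) e(m_0 Q(n) + \sum_i c_i' u_i(n) + \sum_j d_j' v_j(p_n))$ tends to $0$ whenever $(c_1', \ldots, c_k', d_1', \ldots, d_l') \neq 0$, and equals $\int e(m_0 x) \, d\nu_Q^{(1)}$ (respectively $0$ in case (b)) when all $c_i', d_j'$ vanish. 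The nonvanishing-coefficient cases follow directly from Theorem \ref{thm:w-ud:mixedindex}(1), whose hypothesis \eqref{eq:cond1-prop2} (respectively \eqref{eq:cond2-prop2}) supplies exactly the required growth condition on every element of $\text{span}_{\mathbb{Z}}^*\{u_1, \ldots, u_k, v_1(x \log x), \ldots, v_l(x \log x)\}$ (or with $Q$ adjoined in case (b)); the all-zero case is handled by Theorem \ref{lem:Tsu1}, which converts the ordinary limit along $Q(n)$ (governed by the prime number theorem for arithmetic progressions) into a $w(n)$-weighted limit.

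Third, integrating the continuous test function against the limit measure factorizes the integral: the $\nu_Q^{(1)}$-factor contributes $\int e(m x) \, d\nu_Q^{(1)}$ in case (a) (and $0$ in case (b)); each pair $(\{\alpha u(n)\}, \{u(n)\})$ or $(\{\beta v(p_n)\}, \{v(p_n)\})$ with irrational coefficient gives $\int_{\mathbb{T}^2} e(m x - m\alpha \{y\})\, dx\, dy = 0$; each discrete factor with rational non-integer coefficient gives $\frac{1}{b}\sum_{t=0}^{b-1} e(\alpha t) = 0$. Hence the product vanishes unless every $\alpha_i$ and every $\beta_j$ lies in $\mathbb{Z}$, in which case only the $\nu_Q^{(1)}$-factor survives, yielding \eqref{eq1:prop:app:ud-ave2}, \eqref{eq3:prop:app:ud-ave2} in part (1) and \eqref{eq1:prop:app:ud-ave3}, \eqref{eq3:prop:app:ud-ave3} in part (2).

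The main technical point is the second step: one must check that the ``mixed'' uniform distribution of $y_n$ with respect to the measure $\nu_Q^{(1)}$ on the $Q$-coordinate (rather than the Lebesgue measure) still follows from Theorem \ref{thm:w-ud:mixedindex} — and this is precisely the role played by separating the Weyl sums into the case where the auxiliary integer vector on the $\vec u, \vec v$ coordinates is nonzero (handled by Theorem \ref{thm:w-ud:mixedindex}) and the case where it is zero (handled by Theorem \ref{lem:Tsu1} together with the prime number theorem in arithmetic progressions). Part (2) is then settled by the symmetric argument, invoking Theorem \ref{thm:w-ud:mixedindex}(2) in place of (1) and $\nu_Q^{(2)}$ in place of $\nu_Q^{(1)}$.
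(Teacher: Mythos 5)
Your proposal is correct in outline, but it takes a genuinely different and considerably longer route than the paper's own proof. The paper handles this proposition with a very short reduction: it defines $\tilde{R}_1(n)$ by replacing every $\lfloor v_j(p_n)\rfloor$ with $\lfloor v_j(n\log n)\rfloor$, invokes Lemma \ref{lem:comp:slow:prime} to show that the set of $n$ where the two floor values disagree has zero $w$-weighted density, concludes that the weighted exponential sums for $R_1$ and $\tilde{R}_1$ have the same limit, and then simply applies the already-proved Proposition \ref{prop:app:ud-ave} to $\tilde{R}_1$ (whose arguments $u_1(x),\dots,u_k(x),v_1(x\log x),\dots,v_l(x\log x)$ all lie in the Hardy field $\mathbf{H}$). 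Your approach instead re-runs the entire multidimensional uniform-distribution machinery of Proposition \ref{prop:app:ud-ave} from scratch in the mixed-index setting, substituting the mixed-index distribution statement for the same-index one. Both work; the paper's reduction is cleaner because, once the floor values are swapped, nothing in the original proof needs to change, whereas your route forces you to rebuild the joint distribution of the enlarged sequence $y_n$ on $\mathbb{T}\times\mathbb{T}^{\cdot}\times\mathbb{Z}_b^{\cdot}$ with the non-Lebesgue marginal $\nu_Q^{(1)}$, and there is no stated mixed-index analog of Corollary \ref{cor:ud-multi} to appeal to. One small imprecision worth fixing in your write-up: for a nonzero Weyl vector $(c',d')$ the exponential sum carries the extra term $e(m_0Q(n))$, and this is not covered by Theorem \ref{thm:w-ud:mixedindex}(1), which says nothing about a polynomial coordinate; what you actually want is Lemma \ref{lem:sum}(1) applied to $u(x)=m_0Q(x)+\sum c_i'u_i(x)$ and $v(x)=\sum d_j'v_j(x)$, noting that since $Q\in\mathbb{Q}[x]+\mathbb{R}$ the required growth condition on $u(x)+v(x\log x)$ minus a rational polynomial reduces to the hypothesis \eqref{eq:cond1-prop2} on $\text{span}_{\mathbb{R}}^*$.
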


To prove Proposition \ref{prop:app:ud-ave2}, we need the following lemma. It shows that if $u(x)$ is a Hardy field function that satisfies suitable growth conditions, then the values $\lfloor u(p_n) \rfloor$ and $\lfloor u(n \log n) \rfloor $ differ only on a set of indices whose contribution to the weighted average is asymptotically negligible.
\begin{Lemma}
\label{lem:comp:slow:prime}
Suppose $u(x) \in {\mathbf U}$ satisfies
\begin{equation}
\label{eq:lem3.8}
(1) \lim\limits_{x \rightarrow \infty} \frac{|u(x)|}{\log x} < \infty \quad \text{ and } \quad (2) \lim\limits_{x \rightarrow \infty} \frac{|u(x)|}{ \log W(x)} = \infty.
\end{equation}
Define $D := \{ n: \lfloor u(n \log n) \rfloor \ne \lfloor u(p_n) \rfloor \}.$
Then, 
\[ \lim_{N \rightarrow \infty} \frac{1}{W(N)} \sum_{n=1}^N w(n) 1_{D} (n) = 0.\]
\end{Lemma}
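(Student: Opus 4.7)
The plan is to exploit the fact that $u(p_n)$ and $u(n \log n)$ are asymptotically very close, so the set $D$ consists of indices $n$ for which $u(n \log n)$ is very near an integer, a phenomenon controlled by the $w(n)$-uniform distribution of $(u(n \log n))_{n \in \mathbb{N}}$ furnished by Lemma \ref{lem:slow:ud}.

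First I would quantify the closeness of $u(p_n)$ and $u(n \log n)$. By L'Hospital's rule applied to condition (1), $\lim_{x \to \infty} x|u'(x)| = \lim_{x \to \infty} \frac{|u(x)|}{\log x} < \infty$, so there exist $A, x_0 > 0$ with $|u'(t)| \leq A/t$ for $t \geq x_0$. By the Prime Number Theorem, $p_n / (n \log n) \to 1$, so eventually $|p_n - n\log n| \leq n \log n / 2$ and the Mean Value Theorem (applied exactly as in \eqref{eq:meanvalue}) gives
\begin{equation*}
\varepsilon_n := |u(p_n) - u(n \log n)| \leq \frac{2A}{n \log n} |p_n - n \log n| \longrightarrow 0.
\end{equation*}

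Next I would observe the pointwise inclusion: if $n \in D$, then $\lfloor u(p_n) \rfloor \ne \lfloor u(n \log n) \rfloor$, so the closed interval joining $u(p_n)$ and $u(n \log n)$ contains an integer. Hence $u(n \log n)$ lies within $\varepsilon_n$ of an integer, i.e., $\{u(n \log n)\} \in [0, \varepsilon_n) \cup (1 - \varepsilon_n, 1)$. Given $\varepsilon > 0$, choose $N_0$ so that $\varepsilon_n < \varepsilon$ for all $n \geq N_0$; then for such $n$,
\begin{equation*}
1_D(n) \leq 1_{[0,\varepsilon) \cup (1-\varepsilon, 1)} (\{u(n \log n)\}).
\end{equation*}

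Now I would invoke Lemma \ref{lem:slow:ud}, which applies since conditions (1) and (2) on $u$ are precisely the hypotheses of that lemma (with $a = 1$); it yields that $(u(n \log n))_{n \in \mathbb{N}}$ is $w(n)$-uniformly distributed $\bmod\, 1$. Splitting the sum at $N_0$ and noting that $\frac{1}{W(N)} \sum_{n=1}^{N_0} w(n) \to 0$, we obtain
\begin{equation*}
\limsup_{N \to \infty} \frac{1}{W(N)} \sum_{n=1}^N w(n) 1_D(n) \leq \limsup_{N \to \infty} \frac{1}{W(N)} \sum_{n=1}^N w(n) 1_{[0,\varepsilon) \cup (1-\varepsilon,1)}(\{u(n \log n)\}) = 2\varepsilon.
\end{equation*}
Since $\varepsilon > 0$ is arbitrary, the $\limsup$ is zero, finishing the proof. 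There is no real obstacle here; the only subtlety is ensuring the inclusion argument is applied only for $n \geq N_0$ and that the initial segment is absorbed by $W(N) \to \infty$.
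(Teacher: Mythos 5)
Your proof is correct and follows essentially the same argument as the paper: both use $|u(p_n)-u(n\log n)|\to 0$ (via the Mean Value Theorem and the prime number theorem) to show that indices in $D$ force the fractional part of the relevant sequence into an $\varepsilon$-neighborhood of $\{0,1\}$, and then invoke a weighted equidistribution result to bound the weighted density of such indices by $2\varepsilon$. The only cosmetic difference is that you trap $\{u(n\log n)\}$ and apply Lemma~\ref{lem:slow:ud} directly, whereas the paper traps $\{u(p_n)\}$ and invokes Theorem~\ref{BKS}; both routes are equivalent here.
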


\begin{proof}
Observe that it follows from formula \eqref{eqn:comp:prime-log} that condition (1) in \eqref{eq:lem3.8} implies that $\lim\limits_{n \rightarrow \infty} |u(p_n) - u (n \log n)| = 0$. 
Therefore, for any $\epsilon > 0$, there exists $N_0$ such that for all $n \geq N_0$, $| u(p_n) - u(n \log n) | < \epsilon$.
Let $$B(\epsilon) = \{ n \geq N_0: \{ u(p_n) \} \in (1 - \epsilon, 1) \cup [0, \epsilon) \}.$$
Note that   conditions (1) and (2) in \eqref{eq:lem3.8} imply
\[ \lim_{x \rightarrow \infty} \frac{|u(x) - q(x)|}{\log W(x)} = \infty \quad \text{ for any } q(x) \in \mathbb{Q}[x],\]
and so  $(u(p_n))_{n \in \mathbb{N}}$ is $w(n)$-uniformly distributed $\bmod \, 1$ by Theorem \ref{BKS}. 
Therefore,  
\[ \lim_{N \rightarrow \infty} \frac{1}{W(N)} \sum_{n=1}^N w(n) 1_{B(\epsilon)} (n) \leq  2 \epsilon.\]
Since $D \cap [N_0, \infty) \subset B(\epsilon)$, we obtain
\begin{equation} 
\label{eq:4.25}
\limsup_{N \rightarrow \infty} \frac{1}{W(N)} \sum_{n=1}^N w(n) 1_{D} (n) \leq 2 \epsilon. \end{equation}
As \eqref{eq:4.25} holds for any $\epsilon > 0$, the result follows. 
\end{proof}

\begin{proof}[Proof of Proposition \ref{prop:app:ud-ave2}]
We will prove (1). (The proof for (2) is analogous and omitted.)
Define 
$$\tilde{R_1} (n) :=  Q(n) + \sum_{i=1}^k \alpha_i \lfloor u_i(n) \rfloor + \sum_{j=1}^l \beta_j \lfloor v_j(n \log n) \rfloor.$$
Let $D = \{n: \lfloor v_j(p_n) \rfloor \ne \lfloor v_j(n \log n) \rfloor \text{ for some } j \in \{1, 2, \dots, l\} \}$.
Since $\lim\limits_{x \rightarrow \infty} \frac{|v_j(x)|}{\log x} < \infty$ and $\lim\limits_{x \rightarrow \infty} \frac{|v_j(x)|}{\log W(x)} = \infty$, by Lemma \ref{lem:comp:slow:prime},  
 \[ \lim_{N \rightarrow \infty} \frac{1}{W(N)} \sum_{n=1}^N w(n) 1_{D} (n) = 0.\]
 So, we have that 
 \begin{equation}
\label{eq:comp:pfcorudmix:prop}
\lim_{N \rightarrow \infty} \left| \frac{1}{W(N)} \sum_{n=1}^N w(n) e(m {R_1}(n)) - \lim_{N \rightarrow \infty} \frac{1}{W(N)} \sum_{n=1}^N w(n) e(m \tilde{R_1}(n)) \right| =0.
\end{equation}
Then applying Proposition \ref{prop:app:ud-ave} to $\tilde{R}_1(n)$, we obtain \eqref{eq1:prop:app:ud-ave2} and \eqref{eq3:prop:app:ud-ave2}.
\end{proof}

\subsection{Ergodic averages for unitary $\mathbb{Z}^k$-actions}
\label{ergodicaverages}
In this subsection, we utilize the uniform distribution results from Subsection \ref{subsec:3.1}  to obtain new ergodic theorems for unitary $\mathbb{Z}^k$ actions along sequences of the form $\lfloor u (n) \rfloor$ and $\lfloor u (p_n) \rfloor$.
An instrumental role in the proofs of two theorems presented in this subsection will be played by the following variant Bochner-Heglotz theorem. (See, for example, Section 1.4 in \cite{Ru}.)

\begin{Theorem}
\label{thm:BochnerHeglotz}
Let $U_1, \dots, U_k$ be commuting unitary operators on a Hilbert space $\mathcal{H}$. 
For any $f \in \mathcal{H}$, there is a measure $\nu_f$ on $\mathbb{T}^k$ such that 
$$  \langle U_1^{n_1} U_2^{n_2} \cdots U_k^{n_k} f , f \rangle  \, \, = \int_{\mathbb{T}^k} e (n_1 \gamma_1 + \cdots + n_k \gamma_k) \, d \nu_f (\gamma_1, \dots , \gamma_k), $$
for any $(n_1, n_2, \dots , n_k) \in \mathbb{Z}^k$. 
\end{Theorem}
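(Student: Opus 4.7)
The plan is to reduce the statement to the classical Bochner theorem on the locally compact abelian group $\mathbb{Z}^k$. I would first define $\phi : \mathbb{Z}^k \to \mathbb{C}$ by
\[ \phi(n_1, \dots, n_k) = \langle U_1^{n_1} \cdots U_k^{n_k} f, f \rangle. \]
The commutativity of the $U_i$ ensures that the monomial $U_1^{n_1} \cdots U_k^{n_k}$ is unambiguous regardless of the order of multiplication, and together with unitarity ($U_i^* = U_i^{-1}$) this makes $\phi$ a well-defined continuous (in fact arbitrary) function on $\mathbb{Z}^k$ with $\phi(0) = \|f\|^2$.

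The main step is to verify that $\phi$ is positive definite on $\mathbb{Z}^k$: for any finite collection $n^{(1)}, \dots, n^{(m)} \in \mathbb{Z}^k$ and scalars $c_1, \dots, c_m \in \mathbb{C}$, one has
\[ \sum_{i,j=1}^m c_i \overline{c_j} \, \phi(n^{(i)} - n^{(j)}) = \sum_{i,j=1}^m c_i \overline{c_j} \, \bigl\langle U_1^{n^{(i)}_1 - n^{(j)}_1} \cdots U_k^{n^{(i)}_k - n^{(j)}_k} f, f \bigr\rangle = \Bigl\| \sum_{i=1}^m c_i \, U_1^{n^{(i)}_1} \cdots U_k^{n^{(i)}_k} f \Bigr\|^2 \geq 0, \]
where I use commutativity to combine exponents and unitarity to move exponents across the inner product.

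Once positive definiteness is established, I would invoke Bochner's theorem for the discrete abelian group $\mathbb{Z}^k$, which asserts that every positive definite function on $\mathbb{Z}^k$ is the Fourier--Stieltjes transform of a unique positive finite Borel measure on the dual group $\widehat{\mathbb{Z}^k} = \mathbb{T}^k$. Applying this to $\phi$ produces the required measure $\nu_f$ with total mass $\phi(0) = \|f\|^2$ and with the stated integral representation.

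There is no serious obstacle here: the essence of the proof is a direct verification of positive definiteness, with the nontrivial content delegated to the standard Bochner theorem. The only mild care needed is in manipulating the noncommutative-looking products — all smoothed over by the hypothesis that the $U_i$ commute pairwise.
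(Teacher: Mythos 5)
Your proposal is correct. The paper does not prove this result at all --- it simply cites Rudin's \emph{Fourier Analysis on Groups}, Section 1.4 --- and the standard argument behind that citation is exactly the one you give: verify that $(n_1,\dots,n_k)\mapsto \langle U_1^{n_1}\cdots U_k^{n_k}f,f\rangle$ is positive definite on $\mathbb{Z}^k$ (using commutativity to rewrite $\phi(n^{(i)}-n^{(j)})$ as $\langle V_i f, V_j f\rangle$ with $V_i=U_1^{n^{(i)}_1}\cdots U_k^{n^{(i)}_k}$) and then invoke Bochner's theorem for the discrete abelian group $\mathbb{Z}^k$ with dual $\mathbb{T}^k$.
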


Our first theorem in this subsection deals with weighted ergodic averages of expressions of the form 
$U_1^{\lfloor u_1(n) \rfloor} \cdots U_k^{ \lfloor u_k(n)\rfloor}$ and
$U_1^{\lfloor u_1(p_n) \rfloor} \cdots U_k^{ \lfloor u_k(p_n) \rfloor}$, where $U_1, \dots, U_k$ are commuting unitary operators. Note the formula \eqref{eq2.2:ergseq:prime} provides a generalization of Theorem 4.2 in \cite{BKS}, which corresponds to $w(x) = 1$.
\begin{Theorem}
\label{thm:ergthm}
Let $U_1, \dots, U_k$ be commuting unitary operators on a Hilbert space $\mathcal{H}$.
Let $P$ denote the orthogonal projection onto the invariant space $\mathcal{H}_{inv} = \{ f \in \mathcal{H} :  U_i f = f \,\, \textrm{for all} \,\, i = 1,2, \dots, k \}$. 
Let $u_1(x), \dots, u_k(x) $ be subpolynomial functions belonging to some Hardy field ${\mathbf H}$ and assume that for any $u(x) \in \text{span}_{\mathbb{R}}^* (u_1(x), \dots, u_k(x))$,  
\[ \lim_{x \rightarrow \infty} \frac{|u (x) - q(x)|}{\log W(x)} = \infty \quad \text{for any } q(x) \in \mathbb{Q}[x].\]
Then, for any $f \in \mathcal{H}$,
\begin{equation}
\label{eq2.2:ergseq}
\lim_{N \rightarrow \infty} \frac{1}{W(N)} \sum_{n=1}^N w(n) U_1^{\lfloor u_1(n) \rfloor} \cdots U_k^{ \lfloor u_k(n)\rfloor} f = Pf,
\end{equation}
and 
\begin{equation}
\label{eq2.2:ergseq:prime}
\lim_{N \rightarrow \infty} \frac{1}{W(N)} \sum_{n=1}^N w(n) U_1^{\lfloor u_1(p_n) \rfloor} \cdots U_k^{ \lfloor u_k(p_n) \rfloor} f = Pf.
\end{equation}
\end{Theorem}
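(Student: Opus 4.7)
The plan is a standard spectral-theoretic reduction, combined with the exponential-sum estimates already proved in the paper.

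\textbf{Step 1: Reduction to the invariant-free part.} Write $A_N f := \frac{1}{W(N)} \sum_{n=1}^N w(n) U_1^{\lfloor u_1(n) \rfloor} \cdots U_k^{\lfloor u_k(n)\rfloor} f$, and let $A_N^{\mathcal{P}} f$ denote the analogous prime version. Since every $U_i$ fixes $\mathcal{H}_{inv}$ pointwise, $A_N(Pf) = Pf$ and $A_N^{\mathcal{P}}(Pf) = Pf$. Decomposing $f = Pf + g$ with $g = (I-P)f \in \mathcal{H}_{inv}^{\perp}$, the theorem reduces to showing $\|A_N g\| \to 0$ and $\|A_N^{\mathcal{P}} g\| \to 0$.

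\textbf{Step 2: Spectral expansion.} By Theorem \ref{thm:BochnerHeglotz}, there is a finite positive measure $\nu_g$ on $\mathbb{T}^k$ with $\langle U_1^{n_1} \cdots U_k^{n_k} g, g \rangle = \int_{\mathbb{T}^k} e(n_1 \gamma_1 + \cdots + n_k \gamma_k)\, d\nu_g(\gamma)$ for every $(n_1,\dots,n_k) \in \mathbb{Z}^k$. Expanding the inner product $\|A_N g\|^2 = \langle A_N g, A_N g \rangle$ and using Fubini gives
\[
\|A_N g\|^2 = \int_{\mathbb{T}^k} \left| \frac{1}{W(N)} \sum_{n=1}^N w(n) \, e\Bigl( \sum_{i=1}^k \gamma_i \lfloor u_i(n) \rfloor \Bigr) \right|^2 d\nu_g(\gamma),
\]
and the corresponding identity with $p_n$ in place of $n$ for $\|A_N^{\mathcal{P}} g\|^2$. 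The integrand is bounded by $1$ uniformly in $N$ and $\gamma$.

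\textbf{Step 3: Pointwise vanishing via Proposition \ref{prop:app:ud-ave}.} For each fixed $\gamma = (\gamma_1,\dots,\gamma_k) \in \mathbb{T}^k$ represented by $\gamma_i \in [0,1)$, apply Proposition \ref{prop:app:ud-ave} to the expression $R(n) = \sum_{i=1}^k \gamma_i \lfloor u_i(n) \rfloor$, i.e.\ with $Q \equiv 0$ (which belongs to $\mathbb{Q}[x]+\mathbb{R}$ with $\nu_Q^{(j)} = \delta_0$) and $\alpha_i = \gamma_i$. The hypothesis on $\text{span}_{\mathbb{R}}^*(u_1,\dots,u_k)$ in our theorem is precisely condition \eqref{eq:cond1-prop} of that proposition. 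When $\gamma \ne 0$ in $\mathbb{T}^k$, some $\gamma_i \in (0,1)$ is not an integer, so formulas \eqref{eq1:prop:app:ud-ave} and \eqref{eq2:prop:app:ud-ave} yield that the inner exponential averages along $n$ and along $p_n$ both tend to $0$.

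\textbf{Step 4: Conclusion via dominated convergence.} The mean ergodic theorem for commuting unitary $\mathbb{Z}^k$-actions, combined with the Bochner--Herglotz representation, gives $\nu_g(\{0\}) = \langle Pg, g \rangle$; since $Pg = P(I-P)f = 0$, the atom at the origin has mass zero. By Step 3, the integrand in Step 2 tends to $0$ pointwise on $\mathbb{T}^k \setminus \{0\}$, hence $\nu_g$-almost everywhere. Dominated convergence produces $\|A_N g\|^2 \to 0$ and $\|A_N^{\mathcal{P}} g\|^2 \to 0$, which combined with Step 1 proves both \eqref{eq2.2:ergseq} and \eqref{eq2.2:ergseq:prime}. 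The only substantive ingredient is the pointwise exponential-sum estimate of Step 3, which is exactly what Proposition \ref{prop:app:ud-ave} was designed to furnish; everything else is routine spectral theory.
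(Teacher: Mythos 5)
Your proof is correct and follows essentially the same route as the paper's: split off $\mathcal{H}_{inv}$, pass to the Bochner--Herglotz spectral measure, invoke Proposition \ref{prop:app:ud-ave} to get pointwise vanishing of the exponential averages off the origin, and conclude by dominated convergence using that the spectral measure of the orthogonal-complement part has no atom at $0$. The only cosmetic difference is that the paper phrases the decomposition as $\mathcal{H} = \mathcal{H}_{inv} \oplus \mathcal{H}_{erg}$ and simply asserts $\nu_f(\{0\}) = 0$ for $f \in \mathcal{H}_{erg}$, whereas you make explicit that $\nu_g(\{0\}) = \langle Pg, g\rangle = 0$ via the mean ergodic theorem; these are the same fact since $\mathcal{H}_{erg} = \mathcal{H}_{inv}^\perp$.
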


\begin{proof}
Let us prove \eqref{eq2.2:ergseq}. 
We will use a Hilbert space splitting $\mathcal{H} = \mathcal{H}_{inv} \oplus \mathcal{H}_{erg}$, where 
\begin{align*}
\mathcal{H}_{inv} &= \{ f \in \mathcal{H} :  U_i f = f \,\, \textrm{for all} \,\, i = 1,2, \dots, k \}, \\
\mathcal{H}_{erg} &= \left\{ f \in \mathcal{H} : \lim_{N_1, \dots, N_k \rightarrow \infty} \left|\left|\frac{1}{N_1 \cdots N_k} \sum_{n_1=1}^{N_1} \cdots  \sum_{n_k=1}^{N_k}U_1^{n_1} \cdots U_k^{n_k} f \right|\right|_{\mathcal{H}} = 0  \right\}.
\end{align*}

Since ${U_1^{[u_1(n)]} \cdots U_k^{[u_k(n)]} f  = f}$ for $f \in \mathcal{H}_{inv}$, it suffices to prove that for $f \in \mathcal{H}_{erg}$, the left-hand side in \eqref{eq2.2:ergseq} converges to $0$. 
By the Bochner-Heglotz theorem, there is a measure $\nu_f$ on $\mathbb{T}^k$ such that for any $(n_1, n_2, \dots , n_k) \in \mathbb{Z}^k$,
$$  \langle U_1^{n_1} U_2^{n_2} \cdots U_k^{n_k} f , f \rangle  \, \, 
= \int_{\mathbb{T}^k} e (n_1 \gamma_1 + \cdots + n_k \gamma_k) \, d \nu_f (\gamma_1, \dots , \gamma_k) 
= \int_{\mathbb{T}^k} e ((n_1, \cdots, n_k) \cdot \gamma) \, d \nu_f (\gamma), $$
where $\gamma = (\gamma_1, \dots, \gamma_k)$.  
Note that $\nu_f (\{0,0 \dots, 0\})=0$, since $f \in \mathcal{H}_{erg}$.
Then, by Proposition \ref{prop:app:ud-ave} (see formula \eqref{eq1:prop:app:ud-ave}), we have 
\begin{align*}
&\lim_{N \rightarrow \infty}  \left|\left| \frac{1}{W(N)} \sum_{n=1}^N w(n) U_1^{\lfloor u_1(n) \rfloor } \cdots U_k^{ \lfloor u_k(n) \rfloor}  f \right|\right|_{\mathcal{H}}^2 \\
&=\lim_{N \rightarrow \infty}  \frac{1}{W(N)^2} \sum_{m,n=1}^N w(m) w(n) \langle U_1^{\lfloor u_1(m) \rfloor  - \lfloor u_1(n) \rfloor}  \cdots U_k^{\lfloor u_k(m) \rfloor -  \lfloor u_k(n) \rfloor } f, f \rangle \\
&=\lim_{N \rightarrow \infty}  \frac{1}{W(N)^2} \sum_{m,n=1}^N w(m) w(n) \int_{\mathbb{T}^k} e((\lfloor u_1(m) \rfloor -  \lfloor u_1(n) \rfloor , \dots ,  \lfloor u_k(m) \rfloor -  \lfloor u_k(n)\rfloor ) \cdot \bold{\gamma}) \, d \nu_f (\bold{\gamma}) \\
&= \lim_{N \rightarrow \infty}  \int_{\mathbb{T}^k} \left| \frac{1}{W(N)} \sum_{n=1}^N w(n) e ( (\lfloor u_1(n) \rfloor,  \dots , \lfloor u_k(n) \rfloor) \cdot \bold{\gamma}) \right|^2 \, d \nu_f(\bold{\gamma}) = \nu_f (\{0,0 \dots, 0\}) = 0.
\end{align*}
Thus, the proof of \eqref{eq2.2:ergseq} is complete. The proof of \eqref{eq2.2:ergseq:prime} is totally analogous, relying on formula  \eqref{eq2:prop:app:ud-ave} from Proposition \ref{prop:app:ud-ave} in place of formula \eqref{eq1:prop:app:ud-ave}. 
\end{proof}


The following theorem deals with mixed expressions of the form $U_1^{\lfloor u_1(n)\rfloor} \cdots U_k^{\lfloor u_k(n)\rfloor}U_{k+1}^{\lfloor v_1(p_n)\rfloor} \cdots U_{k+l}^{\lfloor v_l(p_n) \rfloor}$, as well as $U_1^{\lfloor u_1(p_n)\rfloor} \cdots U_k^{\lfloor u_k(p_n)\rfloor} U_{k+1}^{\lfloor v_1(n)\rfloor} \cdots U_{k+l}^{\lfloor v_l(n) \rfloor}$ and is a natural generalization of Theorem \ref{thm:ergthm}.
The proof is based on formulas \eqref{eq1:prop:app:ud-ave3} and \eqref{eq3:prop:app:ud-ave3} in Proposition \ref{prop:app:ud-ave2} and is omitted since it is similar to that of Theorem \ref{thm:ergthm}.

\begin{Theorem}
\label{thm:ergthm:mixed}
Let $U_1, \dots, U_{k+l}$ be commuting unitary operators on a Hilbert space $\mathcal{H}$. 
Let $P$ denote the orthogonal projection onto the invariant space $\mathcal{H}_{inv} = \{ f \in \mathcal{H} :  U_i f = f \,\, \textrm{for all} \,\, i = 1,2, \dots, k+l \}$.
Assume that $u_1(x), \dots, u_k(x), v_1(x), \dots, v_l(x) \in \mathbf{U}$ are such that $u_1(x), \dots, u_k(x)$ are subpolynomial functions and $\lim\limits_{x \rightarrow \infty} \frac{|v_j(x)|}{\log x} < \infty$ for $j = 1, 2, \dots, l$. 
\begin{enumerate}
\item Suppose that  $u_1(x), \dots, u_k(x), v_1(x \log x), \dots, v_l(x \log x)$  belong to some Hardy field $\mathbf H$ and assume that for any $u(x) \in \text{span}_{\mathbb{R}}^* \{ u_1(x), \dots, u_k(x), v_1(x \log x), \dots, v_l(x \log x) \}$,
\[ \lim_{x \rightarrow \infty} \frac{|u(x) -q(x)|}{\log W(x)} = \infty \quad \text{ for any } q(x) \in \mathbb{Q}[x].\] 
Then for any $f \in \mathcal{H}$,
\begin{equation}
\lim_{N \rightarrow \infty} \frac{1}{W(N)} \sum_{n=1}^N w(n) U_1^{\lfloor u_1(n)\rfloor } \cdots U_k^{\lfloor u_k(n)\rfloor} \cdot U_{k+1}^{\lfloor v_1(p_n)\rfloor} \cdots U_{k+l}^{\lfloor v_{l} (p_n) \rfloor}  f = Pf.
\end{equation}
\item Suppose that $u_1(x), \dots, u_k(x), v_1 \circ g (x), \dots, v_l \circ g (x) $ belong to some Hardy field $\mathbf H$
and assume that for any $u(x) \in \text{span}_{\mathbb{R}}^* \{ u_1(x), \dots, u_k(x), v_1 \circ g (x), \dots, v_l \circ g (x) \}$, 
\[ \lim_{x \rightarrow \infty} \frac{|u(x) -q(x)|}{\log W(x)} = \infty \quad \text{ for any } q(x) \in \mathbb{Q}[x].\] 
Then for any $f \in \mathcal{H}$,
\begin{equation}
\lim_{N \rightarrow \infty} \frac{1}{W(N)} \sum_{n=1}^N w(n) U_1^{\lfloor u_1(p_n)\rfloor } \cdots U_k^{\lfloor u_k(p_n)\rfloor} \cdot U_{k+1}^{\lfloor v_1(n)\rfloor} \cdots U_{k+l}^{\lfloor v_{l} (n) \rfloor}  f = Pf.
\end{equation}
\end{enumerate}
\end{Theorem}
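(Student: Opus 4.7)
The plan is to mirror the proof of Theorem \ref{thm:ergthm} verbatim, with Proposition \ref{prop:app:ud-ave} replaced by the more general Proposition \ref{prop:app:ud-ave2}. I will describe only part (1); part (2) follows by symmetric substitutions, swapping $v_j(x \log x) \leftrightarrow v_j \circ g(x)$, $v_j(p_n) \leftrightarrow v_j(n)$, and $u_i(n) \leftrightarrow u_i(p_n)$ throughout. As the first step, decompose $\mathcal{H} = \mathcal{H}_{inv} \oplus \mathcal{H}_{erg}$, where $\mathcal{H}_{erg}$ is the kernel of the $(k+l)$-parameter Ces\`aro average of $U_1^{n_1} \cdots U_{k+l}^{n_{k+l}}$. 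For $f \in \mathcal{H}_{inv}$, every $U_i$ fixes $f$, so the weighted average equals $Pf = f$; it therefore suffices to show that for $f \in \mathcal{H}_{erg}$, the average tends to zero in norm.

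For such an $f$, set
\begin{equation*}
A_N f := \frac{1}{W(N)} \sum_{n=1}^N w(n) \, U_1^{\lfloor u_1(n)\rfloor} \cdots U_k^{\lfloor u_k(n)\rfloor} U_{k+1}^{\lfloor v_1(p_n)\rfloor} \cdots U_{k+l}^{\lfloor v_l(p_n)\rfloor} f.
\end{equation*}
Expanding $\|A_N f\|_{\mathcal{H}}^2$ as a double sum and invoking the Bochner-Herglotz theorem (Theorem \ref{thm:BochnerHeglotz}) produces a spectral measure $\nu_f$ on $\mathbb{T}^{k+l}$ with $\nu_f(\{0\}) = 0$ such that
\begin{equation*}
\|A_N f\|_{\mathcal{H}}^2 = \int_{\mathbb{T}^{k+l}} \left| \frac{1}{W(N)} \sum_{n=1}^N w(n) \, e\bigl(R_{1,\gamma}(n)\bigr) \right|^2 d\nu_f(\gamma),
\end{equation*}
where $R_{1,\gamma}(n) = \sum_{i=1}^k \gamma_i \lfloor u_i(n) \rfloor + \sum_{j=1}^l \gamma_{k+j} \lfloor v_j(p_n) \rfloor$ and $\gamma = (\gamma_1,\dots,\gamma_{k+l})$ is identified with its representative in $[0,1)^{k+l}$.

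For each $\gamma \neq 0$, I apply Proposition \ref{prop:app:ud-ave2}(1) with $Q \equiv 0$, $\alpha_i = \gamma_i$, and $\beta_j = \gamma_{k+j}$: the span hypothesis of the proposition is precisely the hypothesis assumed in the theorem, and since at least one of the coefficients $\gamma_1, \dots, \gamma_{k+l}$ lies in $(0,1)$, the inner exponential sum converges to $0$ as $N \to \infty$. Dominated convergence with the uniform integrand bound $1$, together with $\nu_f(\{0\}) = 0$, yields $\|A_N f\|_{\mathcal{H}}^2 \to 0$. The main obstacle—essentially the only non-bookkeeping step—is verifying the span hypothesis at each non-zero $\gamma$; but this is immediate from linearity, since for $\gamma \neq 0$ the combination $\sum_i \gamma_i u_i(x) + \sum_j \gamma_{k+j} v_j(x \log x)$ lies in $\mathrm{span}_{\mathbb{R}}^* \{u_1, \dots, u_k, v_1(x \log x), \dots, v_l(x \log x)\}$, and the required growth condition then holds by hypothesis. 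All genuine analytic content has already been absorbed into Proposition \ref{prop:app:ud-ave2} (and hence into the uniform distribution results of Section \ref{sec2}).
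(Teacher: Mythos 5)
Your proof is correct and takes exactly the route the paper intends (and leaves implicit): mimic the proof of Theorem \ref{thm:ergthm} via the $\mathcal{H}_{inv}\oplus\mathcal{H}_{erg}$ splitting and the Bochner--Herglotz spectral representation, replacing Proposition \ref{prop:app:ud-ave} by Proposition \ref{prop:app:ud-ave2} to kill the integrand off the origin. One small remark: as literally stated, formula \eqref{eq1:prop:app:ud-ave2} in Proposition \ref{prop:app:ud-ave2} conditions only on $\alpha_1,\dots,\alpha_k\in\mathbb{Z}$, omitting the $\beta_j$; the correct dichotomy (as the reduction to Proposition \ref{prop:app:ud-ave} via $\tilde R_1$ actually yields) requires all of $\alpha_1,\dots,\alpha_k,\beta_1,\dots,\beta_l\in\mathbb{Z}$, and you are implicitly --- and correctly --- using that corrected form when you argue that a single component $\gamma_m\in(0,1)$ forces the inner exponential sum to vanish.
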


\subsection{Sets of recurrence in $\mathbb{Z}^k$}
\label{subsec:setofrec}
In this subsection, we obtain new results on sets of recurrence for measure preserving $\mathbb{Z}^k$-actions and derive some corollaries of combinatorial nature.
But first we introduce a pertinent definition.
\begin{Definition}
A set $D \subset \mathbb{Z}^k$ is a {\em set of recurrence} if for any commuting invertible measure preserving transformations $T_1, \dots, T_k$ on a probability space $(X, \mathcal{B}, \mu)$ and for any set $A \in \mathcal{B}$ with $\mu(A) > 0$, there exists $(d_1, \dots, d_k) \in D$, where not all of $d_i$ equal $0$, such that 
\[ \mu (A \cap T_1^{-d_1} \cdots T_k^{-d_k} A) > 0.\]
\end{Definition}

\begin{Theorem}
\label{thm:setofrec:H}
Let $q_1(x), \dots, q_m(x) \in \mathbb{Z}[x]$ be polynomials with $q_i(0) = 0$ for all $i = 1, 2, \dots, m$ and let $u_1(x), \dots, u_k(x) $ be subpolynomial functions belong to some Hardy field $\mathbf{H}$. 
Suppose that there exists a function $W(x) \in \mathbf{E}$ be such that 
\begin{enumerate}[(i)]
\item $\lim\limits_{x \rightarrow \infty} W(x) = \infty$ and $w(x) := W'(x)$ is non-increasing and positive; 
\item for any $u(x) \in \text{span}_{\mathbb{R}}^* (u_1(x), \dots, u_k(x))$ and any $q(x) \in \mathbb{R}[x]$, 
\begin{equation}
\label{cond:eq:thm:setofrec:H}
\lim_{x \rightarrow \infty} \frac{|u(x) - q(x)|}{\log W(x)} = \infty.
\end{equation}
\end{enumerate}
Then the following sets are sets of recurrence in $\mathbb{Z}^{m+k}$.
\begin{enumerate}
\item $D_1= \{  (q_1(n), \dots, q_m(n), \lfloor u_1(n) \rfloor, \dots, \lfloor u_k(n) \rfloor ) : n \in \mathbb{N} \}$,
\item $D_2 = \{  (q_1(p-1), \dots, q_m(p-1), \lfloor u_1(p) \rfloor, \dots, \lfloor u_k(p) \rfloor ) : p \in \mathcal{P} \}$,
\item $D_3 = \{  (q_1(p+1), \dots, q_m(p+1), \lfloor u_1(p) \rfloor, \dots, \lfloor u_k(p) \rfloor ) : p \in \mathcal{P} \}$.
\end{enumerate}
\end{Theorem}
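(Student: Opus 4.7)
The plan is to reduce each of the three recurrence assertions to a weighted ergodic correlation estimate of the form $L_N := \frac{1}{W(N)}\sum_{n=1}^{N} w(n)\mu(A\cap T^{-v_i(n)}A)$ and then to establish the lower bound $\liminf_{N} L_N\ge\mu(A)^{2}$ using the spectral theorem, Proposition~\ref{prop:app:ud-ave}, and a Cauchy--Schwarz argument applied to a polynomial ergodic limit. Fix commuting invertible measure preserving $T_{1},\dots,T_{m+k}$ on $(X,\mathcal{B},\mu)$ with $\mu(A)>0$, write $U_{i}$ for the induced Koopman operators, and for $D_{1}$ take $v(n)=(q_{1}(n),\dots,q_{m}(n),\lfloor u_{1}(n)\rfloor,\dots,\lfloor u_{k}(n)\rfloor)$. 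Hypothesis~(ii) applied with $u=u_{j}$ forces $|u_{j}(x)|\to\infty$, so $v(n)\neq 0$ for all $n$ beyond some $N_{0}$; since the contribution of the terms $n\le N_{0}$ to $L_{N}$ is $O(W(N_{0})/W(N))\to 0$, any positive lower bound on $\liminf_{N} L_{N}$ automatically furnishes some $n\ge N_{0}$ with $\mu(A\cap T^{-v(n)}A)>0$, which is exactly the desired recurrence.

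By Theorem~\ref{thm:BochnerHeglotz}, there is a positive measure $\nu$ on $\mathbb{T}^{m+k}$ with $\mu(A\cap T^{-v(n)}A)=\int e(v(n)\cdot\gamma)\,d\nu(\gamma)$, hence $L_{N}=\int\phi_{N}\,d\nu$ where $\phi_{N}(\gamma)=\frac{1}{W(N)}\sum_{n}w(n)e(v(n)\cdot\gamma)$. Setting $Q_{\gamma}(x):=\sum_{j=1}^{m}q_{j}(x)\gamma_{j}$ and $\alpha_{i}:=\gamma_{m+i}$, Proposition~\ref{prop:app:ud-ave} applies pointwise in $\gamma$: in the non-trivial case $Q_{\gamma}\notin\mathbb{Q}[x]+\mathbb{R}$, the required non-resonance $|u(x)+aQ_{\gamma}(x)-q(x)|/\log W(x)\to\infty$ follows directly from (ii) on setting $r(x):=q(x)-aQ_{\gamma}(x)\in\mathbb{R}[x]$. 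This yields a pointwise limit $\phi_{N}(\gamma)\to F(\gamma)$ which vanishes outside
\[
E:=\{\gamma\in\mathbb{T}^{m+k}:\gamma_{m+1}=\cdots=\gamma_{m+k}=0,\;Q_{\gamma}\in\mathbb{Q}[x]\},
\]
and equals $\int e(x)\,d\nu^{(1)}_{Q_{\gamma}}=\lim_{M}\frac{1}{M}\sum_{n=1}^{M}e(Q_{\gamma}(n))$ on $E$. By dominated convergence and an exchange of limits (both integrands are bounded by $1$), if $P_{E}$ denotes the spectral projection corresponding to $1_{E}$ and $g:=P_{E}1_{A}$, one obtains
\begin{equation*}
\lim_{N\to\infty}L_{N}=\int_{E}F\,d\nu=\lim_{M\to\infty}\frac{1}{M}\sum_{n=1}^{M}\bigl\langle U_{1}^{q_{1}(n)}\cdots U_{m}^{q_{m}(n)}g,\,g\bigr\rangle.
\end{equation*}
Here one uses that $P_{E}$ commutes with every $U_{i}$ and that $0\in E$ forces $P_{E}1=1$, whence $\int g\,d\mu=\langle 1_{A},P_{E}1\rangle=\mu(A)$.

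Walsh's mean ergodic theorem for polynomial averages of commuting unitary operators supplies an orthogonal projection $\tilde{P}$ onto the subspace of functions invariant under the polynomial action, with $\tilde{P}1=1$, such that $\frac{1}{M}\sum_{n}U_{1}^{q_{1}(n)}\cdots U_{m}^{q_{m}(n)}g\to\tilde{P}g$ in $L^{2}$. Consequently $\lim_{N}L_{N}=\|\tilde{P}g\|_{2}^{2}\ge|\langle g,\tilde{P}1\rangle|^{2}=|\langle g,1\rangle|^{2}=\mu(A)^{2}$ by Cauchy--Schwarz, establishing that $D_{1}$ is a set of recurrence.

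The cases $D_{2}$ and $D_{3}$ follow the identical scheme after replacing $n$ by $p_{n}$: upon rewriting $q_{j}(p_{n}\pm 1)=\tilde{q}_{j}(p_{n})$ with $\tilde{q}_{j}(x)=q_{j}(x\pm 1)\in\mathbb{Z}[x]$, formula~\eqref{eq2:prop:app:ud-ave} of Proposition~\ref{prop:app:ud-ave} supplies the analogous pointwise limit (and the set $E$ is unchanged since $\tilde{Q}_{\gamma}\in\mathbb{Q}[x]+\mathbb{R}$ iff $Q_{\gamma}\in\mathbb{Q}[x]+\mathbb{R}$), while the polynomial ergodic theorem along the primes (Bergelson--Leibman--Ziegler, or the Frantzikinakis--Host--Kra framework) furnishes a projection $\tilde{P}^{(i)}$ with $\tilde{P}^{(i)}1=1$, so the same Cauchy--Schwarz bound yields $\liminf_{N}L_{N}^{(i)}\ge\mu(A)^{2}$. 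The main obstacle is precisely this polynomial (and prime-polynomial) mean ergodic input, which I would cite as a black box; the remainder of the argument is a direct application of the spectral identification above together with the propositions already established in the paper.
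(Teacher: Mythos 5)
Your strategy — pass to the Bochner–Herglotz spectral measure, compute $\lim_N L_N$ pointwise on the torus via Proposition~\ref{prop:app:ud-ave}, then bound the resulting spectral integral from below — is a genuinely different route from the paper, which instead restricts the weighted average to the arithmetic progression $h!\mathbb{Z}$ and splits $\mathbb{T}^{m+k}$ into three pieces $I\cup J_h\cup K_h$ chosen so that the integrand has limit $0$ on $I$, limit $1$ on $J_h$, and $\nu(K_h)$ is small for $h$ large. Most of your reduction is sound: the dominated-convergence passage to $\int_E F\,d\nu$, the identification $g:=P_E 1_A$, the commutation of $P_E$ with the $U_i$, and the fact $P_E 1=1$ are all correct. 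The problem is the final step.

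The operator $\tilde P$ defined by $\tilde P g=\lim_M\frac1M\sum_{n\le M}U_1^{q_1(n)}\cdots U_m^{q_m(n)}g$ is \emph{not} an orthogonal projection. By the spectral theorem it is multiplication by $c(\gamma)=\lim_M\frac1M\sum_{n\le M}e\bigl(\sum_j q_j(n)\gamma_j\bigr)$, and for $\gamma$ with rational coordinates this is a normalized complete exponential sum which is generically a value in the open unit disk other than $0$ or $1$; for instance with $m=1$, $q_1(n)=n^3$, $\gamma_1=4/9$ one gets $c=\tfrac13\bigl(1+2\cos(8\pi/9)\bigr)<0$. In particular $\tilde P$ is neither idempotent, nor self-adjoint, nor positive semidefinite, so $\langle\tilde P g,g\rangle=\int_E c\,d\nu$ is \emph{not} equal to $\|\tilde P g\|^2$, and the Cauchy--Schwarz chain $\|\tilde P g\|^2\ge|\langle g,\tilde P1\rangle|^2=\mu(A)^2$ cannot be run. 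Citing Walsh's theorem does not repair this: Walsh gives $L^2$-convergence of multiple polynomial ergodic averages, not that the single-function limit operator is a projection (it isn't). Your computation correctly shows $\lim_N L_N=\int_E c\,d\nu$, but since $\operatorname{Re}c$ can be negative away from $0$ while $\nu(E\setminus\{0\})$ is a priori uncontrolled, there is no obvious reason this integral should exceed $\mu(A)^2$, or even be positive, without a further device. That further device is exactly what the $h!$-restriction supplies: restricting to $n\in h!\mathbb{Z}$ replaces the Gauss sum $c(\gamma)$ on the rational set by the constant $1$, and one then discards the residual set $K_h$ by taking $h$ large so that $\nu(K_h)<\tfrac12\mu(A)^2$. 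Without such a mechanism the lower bound is left unproved, so the proof as written has a genuine gap. The same gap recurs verbatim in the $D_2$ and $D_3$ cases.
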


\begin{proof}
Let us first prove that $D_1$ is a set of recurrence. It suffices to show that for some positive integer $h$, 
\begin{equation}
\label{thm:setofrec:int}
\lim_{N \rightarrow \infty} \frac{h!}{W(N)} \sum_{n=1}^N w(n) 1_{h! \mathbb{Z}} (n) \mu (A \cap T_1^{-q_1( n)} \cdots T_m^{-q_m( n)} T_{m+1}^{- [u_1( n)]} \cdots T_{m+k}^{-[u_k(n)]} A) > 0.
\end{equation}

By Bochner-Herglotz theorem, there is a measure $\nu= \nu_f$, where $f = 1_A$, on $\mathbb{T}^{m+d}$ such that 
\begin{equation}
\label{eq:def:BHmeasure}
 \mu (A \cap T_1^{-n_1} \cdots T_{m+k}^{- n_{m+k}}  A) = \int_{\mathbb{T}^{m+k}} e (n_1 \gamma_1 + \cdots + n_{m+k} \gamma_{m+k}) \, d \nu 
 \end{equation}
for any $(n_1, \dots, n_{m+k}) \in \mathbb{Z}^{m+k}$. 
We will use below a classical result stating that  $\nu (\{(0,0, \dots, 0)\}) \geq \mu(A)^2$.
The following short proof of this fact is provided for the reader's convenience.
Applying the mean ergodic theorem for $\mathbb{Z}^{m+k}$-actions, we have
\begin{align*} 
\nu (\{(0,0, \dots, 0)\}) 
&=  \int_{\mathbb{T}^{m+k}}  \lim_{N \rightarrow \infty} \frac{1}{N^{m+k}} \sum_{n_1 = 1}^N \cdots \sum_{n_{m+k} =1}^N  e (n_1 \gamma_1 + \cdots + n_{m+k} \gamma_{m+k}) \, d \nu  \\
&=   \lim_{N \rightarrow \infty}   \frac{1}{N^{m+k}} \sum_{n_1 = 1}^N \cdots \sum_{n_{m+k} =1}^N  \mu (A \cap T_1^{-n_1} \cdots T_{m+k}^{- n_{m+k}}  A) \\
&=  \int_X 1_A \cdot \lim_{N \rightarrow \infty}   \frac{1}{N^{m+k}} \sum_{n_1 = 1}^N \cdots \sum_{n_{m+k} =1}^N   T_1^{n_1} \cdots T_{m+k}^{n_{m+k}}  1_A \,  d \mu = \int_X 1_A \cdot P 1_A \, d \mu,
\end{align*}
where $P$ is the orthogonal projection onto the invariant space $\mathcal{H}_{inv} = \{f \in L^2: T_i f = f \text{ for } i = 1, 2, \dots, m+k\}$.
Since $P$ is an orthogonal projection, we have
\begin{align*} 
\nu (\{(0,0, \dots, 0)\}) &= \int_X 1_A \cdot P 1_A \, d \mu = \int_X 1_A \cdot P^2 1_A \, d \mu = \int_X P^* 1_A \cdot P1_A \,d \mu \\ 
& =  \int_X P 1_A \cdot P1_A \,d \mu = \int_X (P1_A)^2  \, d \mu \int_X 1^2 \, d \mu \\ 
&\geq \left(\int_X P1_A \cdot 1 \, d \mu \right)^2 = \left(\int_X 1_A \cdot P^*1 \, d \mu \right)^2 = \left(\int_X 1_A  \, d \mu \right)^2 = \mu(A)^2.
\end{align*}

We will use a classical identity, that was already utilized in Section \ref{sec2}, which states  stating that for any integer $n$ and any positive integer $q \geq 2$,
\begin{equation}
\label{eq:cla-id:euler}
\frac{1}{q} \sum_{j=1}^q e \left( \frac{(n-t)j}{q}  \right) = 
\begin{cases}
	1, & \text{if $n \equiv t \, \bmod \, q$}\\
         0, & \text{otherwise}.
		 \end{cases}
\end{equation}
Using this identity with $q = h!$ and $t=0$, we rewrite the left-hand side of \eqref{thm:setofrec:int}  as 
\[ \lim_{N \rightarrow \infty}  \int F_N (\gamma_1, \dots, \gamma_{m+k}) \, d \nu, \]
where 
\begin{align*} F_N (\gamma_1, \dots, \gamma_{m+k}) &:= \frac{h!}{W(N)} \sum_{n=1}^N w(n) \, 1_{h!\mathbb{Z}} (n) e \left(\sum_{i=1}^m  q_i (n) \gamma_i + \sum_{i=1}^{k} [u_i( n)] \gamma_{m+i} \right) \\
&=  \sum_{j=1}^{h!} \frac{1}{W(N)} \sum_{n=1}^N w(n) \,  e \left(\sum_{i=1}^m  q_i (n) \gamma_i + \sum_{i=1}^{k} [u_i( n)] \gamma_{m+i}  + \frac{nj}{h!} \right).
\end{align*}
Note that Proposition \ref{prop:app:ud-ave} implies that $\lim\limits_{N \rightarrow \infty} F_N (\gamma_1, \dots, \gamma_{m+k})$ exists for every $(\gamma_1, \dots, \gamma_{m+k}) \in \mathbb{T}^{m+k}$. Hence the limit in \eqref{thm:setofrec:int} exists, and it remains to prove the inequality.

For each positive integer $h$, we consider the following partition of $\mathbb{T}^{m+k} $
\begin{equation}
\label{decomposition:torus}
 \mathbb{T}^{m+k} = I \cup J_h \cup K_h,
 \end{equation}
where 
\begin{enumerate}[(i)]
\item $(\gamma_1, \dots, \gamma_{m+k}) \in I \Leftrightarrow$  $\sum\limits_{j=1}^m \gamma_j q_j(n)$ has at least one irrational coefficient or $\gamma_j \notin \mathbb{Z}$ for some $j = m+1, m+2, \dots, m+k$,
\item $(\gamma_1, \dots, \gamma_{m+k}) \in J_h \Leftrightarrow$ $\sum\limits_{j=1}^m h! \gamma_j q_j(n)  \in \mathbb{Z}[n]$ and $ \gamma_j \in \mathbb{Z}$ for all $j = m+1, m+2, \dots, m+k$,
\item $K_h = \mathbb{T}^{m+k} \setminus (I \cup J_h)$.
\end{enumerate}
Since $K_h \supset K_{h+1}$ and $\bigcap\limits_{h=1}^{\infty} K_h = \emptyset$, one can choose $h$ large so that $\nu (K_h) < \frac{1}{2} \mu(A)^2$.

Note that if $(\gamma_1, \dots, \gamma_{m+k}) \in I$, then $\lim\limits_{N \rightarrow \infty} F_N (\gamma_1, \dots, \gamma_{m+k}) = 0$ by Proposition \ref{prop:app:ud-ave}, so 
\begin{equation}
\label{estimate:C}
\lim_{N \rightarrow \infty} \int_I F_N (\gamma_1, \dots, \gamma_{m+k}) d \nu = 0.
\end{equation}
Now consider $(\gamma_1, \dots, \gamma_{m+k}) \in J_h$. 
Then $F_N (\gamma_1, \dots, \gamma_{m+k}) =  \frac{h!}{W(N)} \sum\limits_{n=1}^N w(n) \, 1_{h!\mathbb{Z}} (n)$, so 
$$ \lim_{N \rightarrow \infty} F_N (\gamma_1, \dots, \gamma_{m+k}) =  \lim_{N \rightarrow \infty} \frac{h!}{W(N)} \sum_{n=1}^N w(n) \, 1_{h!\mathbb{Z}} (n) = \lim_{N \rightarrow \infty} \frac{h!}{N} \sum_{n=1}^N 1_{h!\mathbb{Z}} (n) =1.$$
 (Here the second equality is a consequence of Theorem \ref{lem:Tsu1}.)
Thus, we have that 
\begin{equation}
\label{estimate:A}
\lim_{N \rightarrow \infty} \int_{J_h} F_N (\gamma_1, \dots, \gamma_{m+k}) d \nu \geq \nu (\{ (0, 0, \dots, 0) \}) \geq \mu(A)^2.
\end{equation}
If $(\gamma_1, \dots, \gamma_{m+k}) \in K_h$, then $|F_N (\gamma_1, \dots, \gamma_{m+k})| \leq  \frac{h!}{W(N)} \sum\limits_{n=1}^N w(n) \, 1_{h!\mathbb{Z}} (n)$, so $\limsup\limits_{N \rightarrow \infty} |F_N (\gamma_1, \dots, \gamma_{m+k})| \leq 1$. Hence, 
\begin{equation}
\label{estimate:B}
\limsup_{N \rightarrow \infty} \int_{K_h} |F_N (\gamma_1, \dots, \gamma_{m+k})| d \nu \leq \nu (K_h) \leq \frac{1}{2} \mu(A)^2.
\end{equation}

Putting everything together from \eqref{estimate:C}, \eqref{estimate:A}, and \eqref{estimate:B}, we obtain
\begin{align*}
&\lim_{N \rightarrow \infty} \frac{h!}{W(N)} \sum_{n=1}^N w(n)  1_{h! \mathbb{Z}} (n)  \mu (A \cap T_1^{-q_1( n)} \cdots T_m^{-q_m( n)} T_{m+1}^{- [u_1( n)]} \cdots T_{m+k}^{-[u_k(  n)]} A) \\ 
& \quad \geq \lim_{N \rightarrow \infty} \left| \int_I F_N (\gamma_1, \dots, \gamma_{m+k}) d \nu \right| + \lim_{N \rightarrow \infty} \left| \int_{J_h} F_N (\gamma_1, \dots, \gamma_{m+k}) d \nu \right| - \limsup_{N \rightarrow \infty}\int_{K_h} \left |F_N (\gamma_1, \dots, \gamma_{m+k}) \right| \, d \nu \\
&\quad \geq \mu(A)^2 - \frac{1}{2} \mu(A)^2  >0,
\end{align*}
Thus we have shown that $D_1$ is a set of recurrence.


To prove that $D_2$ is a set of recurrence, we will show that for some large positive integer $h$,
\begin{equation}
\label{thm:setofrec:prime}
\lim_{N \rightarrow \infty} \frac{\phi(h!)}{W(N)} \sum_{n=1}^N w(n) 1_{h! \mathbb{Z} +1} (p_n) \mu (A \cap T_1^{-q_1(p_n-1)} \cdots T_m^{-q_m(p_n-1)} T_{m+1}^{- [u_1(p_n)]} \cdots T_{m+d}^{-[u_d(p_n)]} A)  > 0. 
\end{equation}

Let $\nu = \nu_f$, where $f = 1_A$, be the measure associated to $f$ via the Bochner-Herglotz representation as in \eqref{eq:def:BHmeasure}.

Using identity \eqref{eq:cla-id:euler} with $q = h!$ and $t=1$, we rewrite the left-hand side of \eqref{thm:setofrec:prime} as 
\[ \lim_{N \rightarrow \infty}  \int G_N (\gamma_1, \dots, \gamma_{m+d}) \, d \nu, \]
where 
\begin{align*} 
G_N (\gamma_1, \dots, \gamma_{m+d}) &:=  \frac{\phi (h!)}{W(N)} \sum_{n=1}^N w(n) \, 1_{h!\mathbb{Z} +1} (n) e \left(\sum_{i=1}^m  q_i (p_n-1) \gamma_i + \sum_{i=1}^{k} [u_i(p_n)] \gamma_{m+i} \right) \\
&= \frac{\phi (h!)}{h!} \sum_{j=1}^{h!} e\left( - \frac{j}{h!}\right) \frac{1}{W(N)} \sum_{n=1}^N w(n) \, e  \left(\sum_{i=1}^m  q_i ( p_n - 1) \gamma_i + \sum_{i=1}^{k} [u_i( p_n)] \gamma_{m+i} + \frac{p_n j}{h!}\right). 
\end{align*}

We will utilize again the partition $ \mathbb{T}^{m+k} = I \cup J_h \cup K_h$ defined in \eqref{decomposition:torus}. 
If $(\gamma_1, \dots, \gamma_{m+k}) \in I$, then $\lim\limits_{N \rightarrow \infty} G_N (\gamma_1, \dots, \gamma_{m+k}) = 0$ by Proposition \ref{prop:app:ud-ave}. 
If $(\gamma_1, \dots, \gamma_{m+k}) \in J_h$, then 
$$ \lim_{N \rightarrow \infty} G_N (\gamma_1, \dots, \gamma_{m+k}) = \lim_{N \rightarrow \infty} \frac{\phi(h!)}{W(N)} \sum_{n=1}^N w(n) \, 1_{h!\mathbb{Z} +1} (p_n) = \lim_{N \rightarrow \infty} \frac{\phi(h!)}{N} \sum_{n=1}^N 1_{h!\mathbb{Z} +1} (p_n)  = 1.$$
(The second equality follows from Theorem \ref{lem:Tsu1}, and the last one follows from the prime number theorem along arithmetic progressions.)
If $(\gamma_1, \dots, \gamma_{m+k}) \in K_h$, then 
$ |G_N (\gamma_1, \dots, \gamma_{m+k})| \leq  \frac{\phi(h!)}{W(N)} \sum_{n=1}^N w(n) \, 1_{h!\mathbb{Z} +1} (p_n)$, so
$\limsup\limits_{N \rightarrow \infty} |G_N (\gamma_1, \dots, \gamma_{m+k})|  \leq 1.$
Therefore, we obtain \eqref{thm:setofrec:prime}. 
 
The proof that $D_3$ is a set of recurrence is analogous to the proof for $D_2$ and therefore is omitted.
\end{proof}

Recall that the upper Banach density of a set $E \subset \mathbb{Z}^k$ is defined by
$$d^*(E) = \sup_{\{\Pi_n\}_{n \in \mathbb{N}}} \limsup_{n \rightarrow \infty} \frac{|E \cap \Pi_n|}{| \Pi_n|},$$
where the supremum is taken over all sequences of parallelepipeds
$$ \Pi_n = [a_n^{(1)}, b_n^{(1)}] \times \cdots \times [a_n^{(k)}, b_n^{(k)} ] \subset \mathbb{Z}^k, \,\, n \in \mathbb{N},$$
with $b_n^{(j)} - a_n^{(j)} \rightarrow \infty$ for each $1 \leq j \leq k$.

By $\mathbb{Z}^k$-version of Furstenberg's correspondence principle (see, for example, Proposition 7.2 in \cite{BMc}), 
given $E \subset \mathbb{Z}^k$ with $d^*(E)>0$, there is a probability space $(X, \mathcal{B} , \mu)$, commuting invertible measure preserving transformations $T_1, T_2, \dots , T_k$ of $X$ and $A \in \mathcal{B} $ with $d^*(E) = \mu(A)$ such that for any $\bold{n_1}, \bold{n_2}, \dots , \bold{n_m} \in \mathbb{Z}^k $ one has
$$d^*(E \cap (E- \bold{n_1}) \cap (E- \bold{n_2}) \cap \cdots \cap (E - \bold{n_m})) \geq \mu(A \cap T^{- \bold{n_1}}A \cap \cdots \cap T^{-\bold{n_m}} A),$$
where for $\bold{n} = (n_1, \dots , n_k)$, $T^{\bold{n}} = T_1^{n_1} \cdots T_k^{n_k}.$

Theorem \ref{thm:setofrec:H} and Furstenberg's correspondence principle imply now the following result.
\begin{Corollary}
\label{cor:setofrec:H}
Assume that $q_1(x), \dots, q_m(x) \in \mathbb{Z}[x]$ satisfy $q_i(0) = 0$ for all $i = 1, 2, \dots, m$, and let  $u_1(x), \dots, u_k(x)$ be subpolynomial functions belonging to some Hardy field $\mathbf{H}$.
Suppose that there exists a function $W(x) \in \mathbf{E}$  such that 
\begin{enumerate}
\item $\lim\limits_{x \rightarrow \infty} W(x) = \infty$ and $w(x) := W'(x)$ is non-increasing and positive, 
\item for any $u(x) \in \text{span}_{\mathbb{R}}^* (u_1(x), \dots, u_k(x))$ and any $q(x) \in \mathbb{R}[x]$, 
\begin{equation}
\lim_{x \rightarrow \infty} \frac{|u(x) - q(x)|}{\log W(x)} = \infty.
\end{equation}
\end{enumerate}
For any set $E \subset \mathbb{Z}^{m+d}$ with $d^*(E) > 0$,
\begin{align*}
 (E- E) \cap   \{  (q_1(n), \dots, q_m(n), \lfloor u_1(n) \rfloor, \dots, \lfloor u_k(n) \rfloor ) : n \in \mathbb{N} \} &\ne \emptyset,    \\
 (E- E) \cap  \{  (q_1(p-1), \dots, q_m(p-1), \lfloor u_1(p) \rfloor, \dots, \lfloor u_k(p) \rfloor ) : p \in \mathcal{P} \} &\ne \emptyset,    \\
 (E- E) \cap  \{  (q_1(p+1), \dots, q_m(p+1), \lfloor u_1(p) \rfloor, \dots, \lfloor u_k(p) \rfloor ) : p \in \mathcal{P} \} &\ne \emptyset.  
\end{align*}
\end{Corollary}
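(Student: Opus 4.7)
The plan is to deduce Corollary \ref{cor:setofrec:H} from Theorem \ref{thm:setofrec:H} via the $\mathbb{Z}^k$-version of Furstenberg's correspondence principle, which is stated in the paragraph immediately preceding the corollary. Since Theorem \ref{thm:setofrec:H} has already done the heavy lifting by showing that $D_1$, $D_2$, and $D_3$ are sets of recurrence for any commuting measure-preserving $\mathbb{Z}^{m+k}$-action, the task here is purely one of translating recurrence into a combinatorial statement about $E - E$.

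First I would fix a set $E \subset \mathbb{Z}^{m+k}$ with $d^*(E) > 0$ and apply Furstenberg's correspondence principle (as recalled before the corollary) to obtain a probability space $(X, \mathcal{B}, \mu)$, commuting invertible measure-preserving transformations $T_1, \ldots, T_{m+k}$ of $X$, and a set $A \in \mathcal{B}$ with $\mu(A) = d^*(E) > 0$, such that for every $\mathbf{n} \in \mathbb{Z}^{m+k}$ one has
\[
d^*\bigl(E \cap (E - \mathbf{n})\bigr) \geq \mu\bigl(A \cap T^{-\mathbf{n}} A\bigr),
\]
where $T^{\mathbf{n}} = T_1^{n_1} \cdots T_{m+k}^{n_{m+k}}$ for $\mathbf{n} = (n_1, \ldots, n_{m+k})$.

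Next, for each $i \in \{1,2,3\}$, I would apply Theorem \ref{thm:setofrec:H} to the system $(X, \mathcal{B}, \mu, T_1, \ldots, T_{m+k})$ and the set $A$ (whose hypotheses are exactly the hypotheses (i) and (ii) of the corollary). This produces an element $\mathbf{d}_i \in D_i$, not equal to the zero vector, such that
\[
\mu\bigl(A \cap T^{-\mathbf{d}_i} A\bigr) > 0.
\]
Combining this with the correspondence inequality gives $d^*(E \cap (E - \mathbf{d}_i)) > 0$, and in particular $E \cap (E - \mathbf{d}_i) \neq \emptyset$. Hence there exist $x, y \in E$ with $y - x = \mathbf{d}_i$, so $\mathbf{d}_i \in (E - E) \cap D_i$, completing the proof.

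There is essentially no serious obstacle here: the whole content of the corollary is packaged inside Theorem \ref{thm:setofrec:H}, and the reduction to a recurrence statement through Furstenberg's correspondence principle is routine. The only minor subtlety to flag is the nonzero-element convention in the definition of a set of recurrence (so that $\mathbf{d}_i \neq 0$, which ensures $\mathbf{d}_i$ is a genuine nontrivial difference of elements of $E$); this is automatic from Theorem \ref{thm:setofrec:H} since the weighted averages considered there pick up a strictly positive mass beyond the $n=0$ contribution.
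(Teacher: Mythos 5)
Your proof is correct and is exactly the argument the paper intends: the paper simply states that the corollary follows from Theorem~\ref{thm:setofrec:H} together with the $\mathbb{Z}^k$-version of Furstenberg's correspondence principle, and you have filled in that routine reduction in the standard way. Your remark about the nonzero-element convention is a sensible point of care, and it is indeed handled automatically by the definition of a set of recurrence used in Theorem~\ref{thm:setofrec:H}.
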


Let $q_1(x), \dots, q_m(x) \in \mathbb{Z}[x]$ be polynomials with $q_i(0) = 0$ for all $i = 1, 2, \dots, m$ and let $u_1(x), \dots, u_k(x)$ be subpolynomial functions belonging to some Hardy field $\mathbf{H}$. 
Suppose that there exists a function $W(x) \in \mathbf{E}$ be such that 
\begin{enumerate}[(i)]
\item $\lim\limits_{x \rightarrow \infty} W(x) = \infty$ and $w(x) := W'(x)$ is non-increasing and positive; 
\item for any $u(x) \in \text{span}_{\mathbb{R}}^* (u_1(x), \dots, u_k(x))$ and any $q(x) \in \mathbb{R}[x]$, 
\begin{equation}
\label{cond:eq:thm:setofrec:H}
\lim_{x \rightarrow \infty} \frac{|u(x) - q(x)|}{\log W(x)} = \infty.
\end{equation}
\end{enumerate}

The following theorem extends Theorem \ref{thm:setofrec:H} to sets of recurrence involving more general expressions.  
\begin{Theorem}
\label{thm:rec1}
Let $q_1(x), \dots, q_m(x) \in \mathbb{Z}[x]$ be polynomials with $q_i(0) = 0$ for all $i = 1, 2, \dots, m$.  
Let $u_1(x), \dots, u_k(x)$ be subpolynomial functions belonging to some Hardy field $\mathbf{H}$ and let $v_1(x), \dots, v_l(x)$ be subpolynomial Hardy field functions such that $\lim\limits_{x \rightarrow \infty} \frac{|v_j(x)|}{\log x} < \infty$ for $j = 1, 2, \dots, l$. 
\begin{enumerate}
\item Suppose that  $v_1(x \log x), \dots, v_l(x \log x) \in {\mathbf H}$ and there exists a function $W(x) \in \mathbf{E}$ be such that 
\begin{enumerate}[(i)]
\item $\lim\limits_{x \rightarrow \infty} W(x) = \infty$ and $w(x) := W'(x)$ is non-increasing and positive; 
\item for any $u(x) \in \text{span}_{\mathbb{R}}^* \{ u_1(x), \dots, u_k(x), v_1(x \log x), \dots, v_l(x \log x) \}$,
\[ \lim_{x \rightarrow \infty} \frac{|u(x) -q(x)|}{\log W(x)} = \infty \quad \text{ for any } q(x) \in \mathbb{Q}[x].\] 
\end{enumerate}
Then 
\begin{equation}
\label{rec:D1}
D_1 = \{ (q_1 (n), \dots, q_m(n), \lfloor u_1(n) \rfloor, \dots, \lfloor u_k (n) \rfloor, \lfloor v_1(p_n) \rfloor, \dots, \lfloor v_l(p_n) \rfloor) : n \in \mathbb{N}   \}
\end{equation}
is a set of recurrence. 
\item Suppose that $v_1 \circ g (x), \dots, v_l \circ g (x) \in {\mathbf H}$ and there exists a function $W(x) \in \mathbf{E}$ be such that 
\begin{enumerate}[(i)]
\item $\lim\limits_{x \rightarrow \infty} W(x) = \infty$ and $w(x) := W'(x)$ is non-increasing and positive; 
\item for any ${u(x) \in \text{span}_{\mathbb{R}}^* \{ u_1(x), \dots, u_k(x), v_1 \circ g (x), \dots, v_l \circ g (x) \}}$,
\[ \lim_{x \rightarrow \infty} \frac{|u(x) -q(x)|}{\log W(x)} = \infty \quad \text{ for any } q(x) \in \mathbb{Q}[x].\] 
\end{enumerate}
Then 
\begin{equation}
\label{rec:D2}
D_2 = \{ (q_1 (p_n - 1), \dots, q_m(p_n - 1), \lfloor u_1(p_n) \rfloor, \dots, \lfloor u_k (p_n) \rfloor, \lfloor v_1(n) \rfloor, \dots, \lfloor v_l(n) \rfloor) : n \in \mathbb{N}   \}
\end{equation}
and 
\begin{equation}
\label{rec:D3}
D_3 = \{ (q_1 (p_n + 1), \dots, q_m(p_n + 1), \lfloor u_1(p_n) \rfloor, \dots, \lfloor u_k (p_n) \rfloor, \lfloor v_1(n) \rfloor, \dots, \lfloor v_l(n) \rfloor) : n \in \mathbb{N}   \}
\end{equation}
are sets of recurrence. 
\end{enumerate}
\end{Theorem}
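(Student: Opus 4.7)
The plan is to adapt the argument used in the proof of Theorem \ref{thm:setofrec:H} almost verbatim, replacing Proposition \ref{prop:app:ud-ave} by Proposition \ref{prop:app:ud-ave2} at every step. I will handle $D_1$ first, treating $D_2$ and $D_3$ by the same template. So fix commuting invertible measure-preserving transformations $T_1,\ldots,T_{m+k+l}$ on $(X,\mathcal{B},\mu)$ and $A\in\mathcal{B}$ with $\mu(A)>0$; the goal is to produce, for some sufficiently large positive integer $h$, the positivity of the weighted correlation
\begin{equation*}
\lim_{N\to\infty}\frac{h!}{W(N)}\sum_{n=1}^{N}w(n)\,1_{h!\mathbb{Z}}(n)\,\mu\bigl(A\cap T_1^{-q_1(n)}\cdots T_m^{-q_m(n)}T_{m+1}^{-\lfloor u_1(n)\rfloor}\cdots T_{m+k}^{-\lfloor u_k(n)\rfloor}T_{m+k+1}^{-\lfloor v_1(p_n)\rfloor}\cdots T_{m+k+l}^{-\lfloor v_l(p_n)\rfloor}A\bigr),
\end{equation*}
which automatically yields some $n\in h!\mathbb{Z}$ contributing a nonzero element of $D_1$ with positive intersection.

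The plan is then to apply Bochner--Herglotz (Theorem \ref{thm:BochnerHeglotz}) to $f=1_A$, obtaining a spectral measure $\nu$ on $\mathbb{T}^{m+k+l}$ with $\nu(\{0\})\geq\mu(A)^2$, and to rewrite the correlation as $\int_{\mathbb{T}^{m+k+l}}F_N(\gamma)\,d\nu(\gamma)$ where $F_N(\gamma)$ is the corresponding weighted exponential sum. Next I will Fourier-expand the filter via $h!\,1_{h!\mathbb{Z}}(n)=\sum_{j=1}^{h!}e(nj/h!)$ and partition $\mathbb{T}^{m+k+l}=I\cup J_h\cup K_h$ exactly as in the proof of Theorem \ref{thm:setofrec:H}: $\gamma\in J_h$ when $\gamma_{m+1},\ldots,\gamma_{m+k+l}\equiv 0\pmod{1}$ and $h!\sum_i q_i(x)\gamma_i\in\mathbb{Z}[x]$; $\gamma\in I$ when some $\gamma_{m+i}$ or $\gamma_{m+k+s}$ is non-integer, or the polynomial piece is not in $\mathbb{Q}[x]+\mathbb{R}$ for every $j$; $K_h$ is the nested transition region with $\bigcap_h K_h=\emptyset$. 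The three estimates to verify are $F_N(\gamma)\to 0$ on $I$ (a direct application of Proposition \ref{prop:app:ud-ave2}(1), using the hypothesis on $\text{span}_{\mathbb{R}}^*\{u_1,\ldots,u_k,v_1(x\log x),\ldots,v_l(x\log x)\}$), $F_N(\gamma)\to 1$ on $J_h$ (on this region every exponential is trivially $1$ for $n\in h!\mathbb{Z}$ — using $q_i(0)=0$ — and Theorem \ref{lem:Tsu1} then gives $\lim\frac{h!}{W(N)}\sum w(n)1_{h!\mathbb{Z}}(n)=1$), and $|F_N(\gamma)|\leq 1$ on $K_h$. Choosing $h$ so large that $\nu(K_h)<\tfrac12\mu(A)^2$ and combining the three estimates yields a total limit bounded below by $\mu(A)^2-\tfrac12\mu(A)^2>0$, as desired.

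Part (2) follows the identical scheme with the filter $\phi(h!)\,1_{h!\mathbb{Z}+1}(p_n)$ for $D_2$ (respectively $\phi(h!)\,1_{h!\mathbb{Z}-1}(p_n)$ for $D_3$), Fourier-expanded as $\tfrac{\phi(h!)}{h!}\sum_{j=1}^{h!}e(\mp j/h!)e(p_n j/h!)$; now the convergence $F_N\to 1$ on $J_h$ comes from the prime number theorem in arithmetic progressions, and the vanishing on $I$ is supplied by Proposition \ref{prop:app:ud-ave2}(2) under the hypothesis on $\text{span}_{\mathbb{R}}^*\{u_1,\ldots,u_k,v_1\circ g,\ldots,v_l\circ g\}$. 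The step I anticipate to be the principal obstacle is the bookkeeping required to verify that the Proposition \ref{prop:app:ud-ave2} trichotomy (cases (a) and (b) of each part) is not disturbed by the cyclotomic polynomial perturbation $xj/h!$ or $p_n j/h!$ introduced by the Fourier expansion of the arithmetic-progression filter; but since $j/h!\in\mathbb{Q}$, the class $\mathbb{Q}[x]+\mathbb{R}$ is stable under this perturbation and the growth hypotheses on the spans transfer unchanged, so the partition $I\cup J_h\cup K_h$ genuinely corresponds to the (a)/(b)/transitional cases of Proposition \ref{prop:app:ud-ave2} and the argument closes.
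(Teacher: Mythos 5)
Your proof is correct but longer than the paper's. The paper avoids re-running the trichotomy: it first cites Theorem \ref{thm:setofrec:H} applied to the family $u_1,\dots,u_k,v_1(x\log x),\dots,v_l(x\log x)\in\mathbf{H}$, obtaining for some large $h$ the positivity of
\begin{equation*}
\lim_{N\to\infty}\frac{1}{W(N)}\sum_{n=1}^N w(n)1_{h!\mathbb{Z}}(n)\,\mu\bigl(A\cap T_1^{-q_1(n)}\cdots T_{m+k}^{-\lfloor u_k(n)\rfloor}T_{m+k+1}^{-\lfloor v_1(n\log n)\rfloor}\cdots T_{m+k+l}^{-\lfloor v_l(n\log n)\rfloor}A\bigr),
\end{equation*}
and then invokes Lemma \ref{lem:comp:slow:prime} a single time to replace each $v_s(n\log n)$ by $v_s(p_n)$ in this expression (the $n$ with some $\lfloor v_s(n\log n)\rfloor\ne\lfloor v_s(p_n)\rfloor$ form a set of vanishing $w$-density, so the weighted correlation is unchanged in the limit). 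Your route instead inlines the $I\cup J_h\cup K_h$ decomposition from the proof of Theorem \ref{thm:setofrec:H} and substitutes Proposition \ref{prop:app:ud-ave2} for Proposition \ref{prop:app:ud-ave} in the estimates on $I$. These are logically equivalent: Proposition \ref{prop:app:ud-ave2} is itself proved by applying Lemma \ref{lem:comp:slow:prime} and then Proposition \ref{prop:app:ud-ave}, so you are performing the same swap, just at the level of the exponential integrands $F_N(\gamma)$ rather than at the level of the correlation average. Your closing observation that the rational perturbation $jx/h!$ (or $jp_n/h!$) coming from the arithmetic-progression filter preserves both the $\mathbb{Q}[x]+\mathbb{R}$ dichotomy and the span hypotheses is correct and is exactly what makes both versions of the argument close. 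The paper's route is shorter because the trichotomy is already packaged into Theorem \ref{thm:setofrec:H} and the swap is done once at the end rather than inside the trichotomy.
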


\begin{proof} Let us prove that $D_1$ is a set of recurrence. (The proofs that $D_2$ and $D_3$ are sets of recurrence are analogous and are omitted.) 
Note that, as in the proof of Theorem \ref{thm:setofrec:H}], one can find $h \in \mathbb{N}$ such that 
\begin{equation}
\label{eq:2.19}
\lim_{N \rightarrow \infty} \frac{1}{W(N)} \sum_{n=1}^N w(n) 1_{h! \mathbb{Z}} (n) \mu (A \cap T_1^{-q_1( n)} \cdots T_m^{-q_m( n)} T_{m+1}^{- \lfloor u_1( n) \rfloor } \cdots T_{m+k}^{- \lfloor u_k (n) \rfloor } T_{m+k+1}^{- \lfloor v_1(n \log n) \rfloor} \cdots T_{m+k+l}^{- \lfloor v_l( n \log n) \rfloor} A) > 0.
\end{equation}
By Lemma \ref{lem:comp:slow:prime}, we can replace $v_i (n \log n)$ with $v_i (p_{n})$ in \eqref{eq:2.19}, which implies that $D_1$ is a set of recurrence.  
\end{proof}

The following result is a consequence of Theorem \ref{thm:rec1} and Furstenberg's correspondence principle. 
\begin{Corollary}
\label{cor:combi-rec}
Let $D_1, D_2, D_3$ be the sets defined in \eqref{rec:D1}, \eqref{rec:D2}, \eqref{rec:D3}.
Then, for any set $E \subset \mathbb{Z}^{m+k+l}$ with $d^*(E) > 0$,
\[ (E-E) \cap D_i \ne \emptyset \quad \text{ for } i = 1, 2, 3.\]
\end{Corollary}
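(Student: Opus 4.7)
The plan is to deduce this corollary from Theorem \ref{thm:rec1} via the $\mathbb{Z}^k$-version of Furstenberg's correspondence principle, in complete analogy with the deduction of Corollary \ref{cor:setofrec:H} from Theorem \ref{thm:setofrec:H}. Fix $i \in \{1,2,3\}$ and a set $E \subset \mathbb{Z}^{m+k+l}$ with $d^*(E) > 0$. Applying the correspondence principle as stated between Theorem \ref{thm:setofrec:H} and Corollary \ref{cor:setofrec:H}, we obtain a probability space $(X, \mathcal{B}, \mu)$, commuting invertible measure preserving transformations $T_1, T_2, \dots, T_{m+k+l}$ of $X$, and a set $A \in \mathcal{B}$ with $\mu(A) = d^*(E) > 0$, such that for every $\mathbf{n} = (n_1, \dots, n_{m+k+l}) \in \mathbb{Z}^{m+k+l}$,
\[ d^*\bigl( E \cap (E - \mathbf{n}) \bigr) \geq \mu\bigl( A \cap T^{-\mathbf{n}} A \bigr), \]
where $T^{\mathbf{n}} := T_1^{n_1} \cdots T_{m+k+l}^{n_{m+k+l}}$.

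Next I would invoke the appropriate part of Theorem \ref{thm:rec1}: depending on whether $i = 1$ or $i \in \{2,3\}$, the set $D_i$ defined in \eqref{rec:D1}--\eqref{rec:D3} is a set of recurrence in $\mathbb{Z}^{m+k+l}$ for these commuting transformations. Hence there exists a nonzero element $\mathbf{n} \in D_i$ (given by some choice of $n \in \mathbb{N}$) such that
\[ \mu\bigl( A \cap T^{-\mathbf{n}} A \bigr) > 0. \]
Combined with the preceding inequality, this yields $d^*(E \cap (E - \mathbf{n})) > 0$; in particular $E \cap (E - \mathbf{n})$ is nonempty, so $\mathbf{n} \in E - E$. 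Therefore $\mathbf{n} \in (E-E) \cap D_i$, which is the desired conclusion.

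All real work has already been done in Theorem \ref{thm:rec1}, so strictly speaking there is no obstacle here; the only point requiring minimal care is to ensure that the recurrence statement supplied by Theorem \ref{thm:rec1} produces a \emph{nonzero} element of $D_i$ (so that $\mathbf{n} \in E - E$ is genuinely a nonzero translation vector witnessed by distinct points of $E$), which is built into the definition of set of recurrence given at the start of Subsection \ref{subsec:setofrec}. Since the same reduction applies uniformly for $i = 1, 2, 3$, all three assertions follow simultaneously.
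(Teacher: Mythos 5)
Your proof is correct and takes exactly the same route the paper intends: the corollary is deduced from Theorem \ref{thm:rec1} by the $\mathbb{Z}^{m+k+l}$-version of Furstenberg's correspondence principle, in the same way Corollary \ref{cor:setofrec:H} was deduced from Theorem \ref{thm:setofrec:H}. Your attention to the nonzero-element caveat in the definition of a set of recurrence is a reasonable point of care, but there is no gap.
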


In particular, this leads to a new application to sets of recurrence:  for any $\alpha \in \mathbb{N}$, $\beta \in \mathbb{R}$ with $\beta > 0$ and  $\beta \notin \mathbb{N}$, $\gamma >0$, and $\delta \in (0, 1]$, if $E \subset \mathbb{Z}^4$ with $d^* (E) > 0$, then there exists $n \in \mathbb{N}$ such that
\begin{equation} 
(n^{\alpha}, \lfloor n^{\beta} \rfloor, \lfloor \log^{\gamma} n \rfloor, \lfloor \log^{\delta} p_n \rfloor) \in E - E. 
\end{equation}

\subsection{A multiple recurrence result and an application}
\label{subsec:3.3} 

In this subsection, we obtain a multiple recurrence result along sequences of the form \( q_j(n) + [v_j(p_n)] \), where \( q_j \) are polynomials and \( v_j \) are slowly growing functions from a Hardy field and derive a combinatorial application. 
One of the main ingredients of our proof is Theorem B from \cite{BMR}, which we will presently formulate.
Before doing so, we need to introduce some relevant notation and definitions.

Given functions $u_1(x), \dots, u_k(x)$ in a Hardy field $\mathbf{H}$, define 
\[ \text{poly } (u_1, \dots, u_k) = \{p(x) \in \mathbb{R}[x] : \exists u \in \text{span} (u_1, \dots, u_k) \text{ with } \lim\limits_{x \rightarrow \infty} |u(x) - p(x)| =0 \}\]
and
\[ \nabla\text{-span}(u_1, \dots, u_k) := \{ c_1 u_1^{(m_1)} (x) + \cdots + c_k u_k^{(m_k)} (x) : c_1, \dots, c_k \in \mathbb{R}, m_1, \dots, m_k \in \mathbb{N} \cup \{0\}  \}, \]
where $u_j^{(m_j)} (x)$ is the $m_j$-th derivative of $u_j(x)$.

\begin{Definition} Let $\mathbf{H}$ be a Hardy field. Let $W(x) \in {\mathbf E}$ be such that $\lim\limits_{x \rightarrow \infty} W(x) = \infty$ and $w(x) := W'(x)$ is non-increasing and positive. 
We say that a weight function $W(x)$ and functions $u_1(x), \dots, u_k(x) \in \mathbf{H}$ satisfy {\em property (P)} if for all $u \in  \nabla \text{-span} (u_1, \dots, u_k)$ and $p(x) \in \mathbb{R}[x]$, either $|u(x) - p(x)| \ll 1$ or $|u(x) - p(x)| \succ \log W(x)$.
\end{Definition}

\begin{Theorem}[Theorem B in \cite{BMR}]
\label{thm:summary:bmr} 
Let $u_1(x), \dots, u_k(x)$ be subpolynomial functions in a Hardy field $\mathbf H$.
Assume that $W(x)$ and $u_1(x), \dots, u_k(x)$ satisfy property (P). 
Let $(X, \mathcal{B}, \mu, T)$ be an invertible measure preserving system.
\begin{enumerate}
\item For any $f_1, \dots, f_k \in L^{\infty}(X)$, the limit 
\begin{equation}
\lim_{N \rightarrow \infty} \frac{1}{W(N)} \sum_{n=1}^N w(n) T^{[ u_1(n) ]} f_1 \cdots T^{[ u_k(n) ]} f_k
\end{equation}
exists in $L^2$.
\item Suppose that for any $q(x) \in \mathbb{Z}[x]$ and $u \in \text{span}^* (u_1, \dots, u_k)$, $\lim\limits_{x \rightarrow \infty} |u(x) -q(x)| = \infty$. 
Then, for any $A \in \mathcal{B}$, 
\begin{equation}
\lim_{N \rightarrow \infty} \frac{1}{W(N)} \sum_{n=1}^N w(n)  \mu(A \cap T^{- [ u_1(n) ]} A \cap \cdots \cap T^{- [ u_k(n) ]} A) \geq \mu (A)^{k+1}
\end{equation}
\item Suppose that there is a jointly intersective\footnote{A finite collection of polynomials $q_1(x), \dots, q_l(x) \in \mathbb{Z}[x]$ is called {\em jointly intersective} if for all $m \in \mathbb{N}$, there is $n \in \mathbb{N}$ such that $q_1(n), \dots, q_l (n)$ are divisible by $m$.} collection of polynomials $q_1(x), \dots, q_l(x) \in \mathbb{Z}[x]$ such that 
\[ \text{poly} \, (u_1, \dots, u_k) \subset \text{ span} \, (q_1, \dots, q_l).\]
Then for any $A \in \mathcal{B}$ with $\mu(A) > 0$, we have 
\begin{equation}
\lim_{N \rightarrow \infty} \frac{1}{W(N)} \sum_{n=1}^N w(n) \mu (A \cap T^{- [ u_1(n) ]} A \cap \cdots \cap T^{- [ u_k(n) ]} A) > 0
\end{equation}
\end{enumerate}
\end{Theorem}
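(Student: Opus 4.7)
The approach I would take is to combine the weighted van der Corput inequality of Theorem~\ref{vdC} with a PET induction scheme in the spirit of Bergelson--Leibman, leveraging the theory of characteristic factors (Host--Kra / Ziegler nilfactors) for the base cases. Property~(P) is the right hypothesis to keep the induction inside a well-behaved regime: at every stage of differencing, the members of the $\nabla$-span are either bounded-distance from a polynomial or grow faster than $\log W(x)$, so no intermediate function produces uncontrolled pathology. For part~(1), I would argue by induction on the PET complexity of the tuple $(u_1,\dots,u_k)$. In the base case, each $u_i$ is bounded-close to an integer polynomial, reducing the problem (via Theorem~\ref{lem:Tsu1} and summation by parts) to the classical Host--Kra convergence for polynomial iterates. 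In the inductive step, Theorem~\ref{vdC} replaces the problem by convergence of an average along the strictly simpler tuple $\{u_i(n+h)-u_j(n)\}$, available by induction, and simultaneously shows that the averages are controlled by an appropriate Host--Kra seminorm $U^s$; convergence on the corresponding nilfactor then reduces to equidistribution of polynomial orbits on nilmanifolds.

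For part~(2), the hypothesis $|u(x)-q(x)|\to\infty$ for every $u\in\text{span}^*(u_1,\dots,u_k)$ and $q\in\mathbb{Z}[x]$, combined with Theorem~\ref{ud:d-dim} and Proposition~\ref{prop:app:ud-ave}, implies that $\frac{1}{W(N)}\sum_{n=1}^N w(n) e\bigl(\sum_i h_i\lfloor u_i(n)\rfloor\bigr)\to 0$ for every nonzero integer vector $(h_i)$. Propagating this through the PET/seminorm reduction forces the relevant characteristic factor to collapse to the trivial one, so the $L^2$-limit of the multiple average equals $\prod_{i=1}^{k}\int f_i\,d\mu$; substituting $f_i=\mathbf{1}_A$ and integrating against $\mathbf{1}_A$ yields exactly $\mu(A)^{k+1}$. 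For part~(3), I would decompose $u_i(n)=p_i(n)+r_i(n)$ with $p_i$ in the $\mathbb{R}$-span of the jointly intersective family $q_1,\dots,q_l$ and $r_i$ either bounded or dominating $\log W(x)$; the ``oscillatory'' $r_i$-contributions are equidistributed on any finite torus by Theorem~\ref{BKS2}, reducing the average to a weighted polynomial expression in the $p_i$'s, to which the Bergelson--Leibman polynomial Szemer\'edi theorem for jointly intersective families applies to give a strictly positive lower bound.

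The main obstacle will be the PET induction itself: one must verify carefully that property~(P) propagates through each van der Corput differencing step and that the scheme terminates in finitely many steps with a tuple controlled by a Host--Kra seminorm. A secondary technical point is to establish the weighted seminorm estimate — the unweighted analog is standard, but rerunning the estimates with the monotone weight $w(n)$ requires careful summation by parts together with the observation that $\log W(x)$ plays the structural role in the Hardy field regime that $\log x$ plays in the classical unweighted theory.
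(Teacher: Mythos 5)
This statement is not proved in the paper at all: it is imported verbatim from Bergelson--Moreira--Richter \cite{BMR} as ``Theorem~B'' and used as a black box in Subsection~\ref{subsec:3.3}. There is no in-paper argument to compare your proposal against, so the real review is whether you correctly recognize that this is a citation rather than a result to establish.

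Your sketch does point in the general direction of the \cite{BMR} strategy (weighted van der Corput, PET-type complexity reduction, control by Host--Kra seminorms and nilfactor equidistribution, with property~(P) ensuring each differenced function stays in the Hardy-field dichotomy), but as written it is a research program rather than a proof, and several steps are asserted at a level that would need substantial justification. In particular: in part~(2) you jump from the single-frequency estimate $\frac{1}{W(N)}\sum w(n)\,e\bigl(\sum_i h_i\lfloor u_i(n)\rfloor\bigr)\to 0$ to the collapse of the characteristic factor and the $L^2$-limit $\prod_i\int f_i\,d\mu$; that implication is exactly the content of the hardest part of the \cite{BMR} argument (it appears as the commented-out fourth clause in the statement) and cannot be waved through. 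In part~(3) the claim that the ``oscillatory'' remainders $r_i$ are ``equidistributed on any finite torus by Theorem~\ref{BKS2}'' is not what that theorem gives (it concerns $w(n)$-uniform distribution $\bmod\,1$ of a single Hardy-field sequence), and combining this with the Bergelson--Leibman theorem for jointly intersective families requires a concrete reduction argument that is absent. None of this is a defect relative to the present paper, since the authors deliberately cite \cite{BMR} instead of proving the theorem; but if your goal is an actual proof of Theorem~\ref{thm:summary:bmr}, you need to read \cite{BMR} rather than reconstruct it from first principles.
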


\begin{Remark}
In statements (1) and (2) of Theorem \ref{thm:summary:bmr}, the rounding to the closest integer function $[\cdot]$ can be replaced by either the floor function $\lfloor \cdot \rfloor$ or the ceiling function $\lceil \cdot \rceil$. However, this substitution does not apply to the statement (3); see Remarks 1.8 and 1.9 in \cite{BMR}.
\end{Remark}

By combining Lemma \ref{lem:comp:slow:prime} with Theorem \ref{thm:summary:bmr}, we obtain the following result concerning multiple ergodic averages along sequences of the form \( q_j(n) + [v_j(p_n)] \), where \( q_j \) are polynomials and \( v_j \) are slowly growing functions from a Hardy field.
\begin{Theorem}
\label{thm:multiple:slow}
Let $q_1(x), \dots, q_k(x) \in \mathbb{Z}[x]$ and $v_1(x), \dots, v_k(x)$ be functions in a Hardy field such that for each $j = 1, 2, \dots, k$, 
$\lim\limits_{x \rightarrow \infty} \frac{|v_j(x)|}{\log x} < \infty$. 
Suppose that 
\begin{equation}
\label{eq:cond:thm;multi}
\lim_{x \rightarrow \infty} \frac{| c_1 v_1 (x) + \cdots + c_k v_k(x)|}{\log W(x)} = \infty \quad \text{ for any } (c_1, \dots, c_k) \in \mathbb{R}^k \setminus \{ (0, 0, \dots, 0) \}
\end{equation}
\begin{enumerate}
\item For any  measure preserving system $(X, \mathcal{B}, \mu, T)$, for any $f_1, \dots, f_k, g_1, \dots, g_k \in L^{\infty}$, 
\begin{equation}
\label{eq1:thm:multi}
\lim_{N \rightarrow \infty} \frac{1}{W(N)} \sum_{n=1}^N w(n) \prod_{j=1}^k T^{q_j(n)} f_j \cdot T^{ q_j(n) + [ v_j (p_n) ]  } g_j 
\end{equation}
exists in $L^2$.
\item For any  measure preserving system $(X, \mathcal{B}, \mu, T)$ and for any $A \in \mathcal{B}$, 
\begin{equation}
\label{eq1:thm:multi2}
\lim_{N \rightarrow \infty} \frac{1}{W(N)} \sum_{n=1}^N w(n)  \mu(A \cap T^{- q_1(n) - [ v_1(p_n) ] } A \cap \cdots \cap T^{- q_k(n) - [ v_k(p_n) ] } A) \geq \mu (A)^{k+1}.
\end{equation}
\item Suppose that polynomials $q_1(x), \dots, q_k(x)$ are jointly intersective. Then, for any  measure preserving system $(X, \mathcal{B}, \mu, T)$ and for any $A \in \mathcal{B}$, 
\begin{equation}
\label{eq1:thm:multi3}
\lim_{N \rightarrow \infty} \frac{1}{W(N)} \sum_{n=1}^N w(n)  \mu(A \cap T^{- q_1(n) - [ v_1(p_n) ]  } A \cap \cdots \cap T^{- q_k(n) - [ v_k(p_n) ]  } A) > 0.
\end{equation}
\end{enumerate}
\end{Theorem}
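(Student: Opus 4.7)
The plan is to deduce Theorem \ref{thm:multiple:slow} from Theorem \ref{thm:summary:bmr} in two steps: first, use Lemma \ref{lem:comp:slow:prime} to replace each $v_j(p_n)$ by $v_j(n \log n)$ with negligible $L^2$ error; second, verify that the resulting Hardy field functions satisfy the hypotheses of Theorem \ref{thm:summary:bmr}. For the replacement step, applying condition \eqref{eq:cond:thm;multi} to the $j$-th standard basis vector gives $|v_j(x)|/\log W(x) \to \infty$, while $|v_j(x)|/\log x$ is finite by assumption; Lemma \ref{lem:comp:slow:prime} then shows that $D_j := \{n : [v_j(p_n)] \ne [v_j(n \log n)]\}$ has weighted density zero. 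Since the operators are unitary and the $f_j, g_j$ are bounded, this allows me to replace $T^{q_j(n)+[v_j(p_n)]}g_j$ by $T^{q_j(n) + [v_j(n \log n)]}g_j$ in $L^2$ up to an error that vanishes as $N \to \infty$.

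Next, I would set $\tilde{u}_j(x) := q_j(x) + v_j(x \log x)$ for $j = 1, \dots, k$, working inside a common Hardy field (enlarging $\mathbf{H}$ if necessary). The average in part (1) then becomes asymptotic to $\frac{1}{W(N)}\sum w(n) \prod_j T^{q_j(n)} f_j \cdot T^{[\tilde{u}_j(n)]} g_j$, involving the $2k$ subpolynomial functions $q_1, \dots, q_k, \tilde{u}_1, \dots, \tilde{u}_k$. The crux is to verify property (P). A general element of the relevant $\nabla$-span has the form $u(x) = P(x) + V(x)$, where $P \in \mathbb{R}[x]$ collects the polynomial contributions (including derivatives of the $q_j$'s) and $V(x) = \sum_{j=1}^k c_j v_j(x \log x) + R(x)$, with $R(x)$ a linear combination of derivatives of order $\ge 1$ of the $v_j(x \log x)$'s. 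Standard Hardy field bounds ($v_j \ll \log x$ yields $x v_j'(x)$ bounded by L'Hospital, and inductively $v_j^{(m)}(x) \ll x^{-m}$) show $R(x) \to 0$. For any polynomial $p$, the dichotomy for $u - p = (P - p) + V$ splits into three cases: if $\deg(P - p) \ge 1$, then $|u - p|$ grows polynomially and is $\succ \log W(x)$; if $P = p$ and all $c_j = 0$, then $|u - p|$ is bounded; and if $P = p$ but some $c_j \ne 0$, then \eqref{eq:cond:thm;multi} combined with Theorem \ref{lem3} (applied to $\sum c_j v_j \in \mathbf{H}$) gives $|\sum c_j v_j(x \log x)|/\log W(x) \to \infty$, so $|u - p| \succ \log W(x)$. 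This establishes property (P) for both the $k$-function and $2k$-function setups.

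With property (P) in hand, part (1) follows from Theorem \ref{thm:summary:bmr}(1). For part (2), I would verify the additional hypothesis that $\lim_{x \to \infty} |u(x) - q(x)| = \infty$ for every $0 \ne u \in \text{span}^*(\tilde{u}_1, \dots, \tilde{u}_k)$ and every $q \in \mathbb{Z}[x]$, which is an immediate consequence of the dichotomy above (the bounded branch only occurs when $u = 0$). For part (3), I would show that $\text{poly}(\tilde{u}_1, \dots, \tilde{u}_k) = \{0\}$, hence trivially contained in $\text{span}(q_1, \dots, q_k)$: if $u = \sum c_j \tilde{u}_j$ satisfies $|u - p| \to 0$, the $v$-contribution $\sum c_j v_j(x \log x)$ must tend to zero, forcing all $c_j = 0$ (otherwise its magnitude would dominate $\log W(x)$) and hence $p = 0$. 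Theorem \ref{thm:summary:bmr}(3) then delivers the positive lower bound \eqref{eq1:thm:multi3}. The main technical obstacle is the careful verification of property (P) for the mixed $\nabla$-span, particularly establishing that all derivatives of order at least one of $v_j(x \log x)$ decay to zero; this rests on the tameness of Hardy field functions near infinity.
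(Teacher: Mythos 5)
Your proposal is correct and follows the same two-step strategy as the paper: replace $v_j(p_n)$ by $v_j(n\log n)$ via Lemma \ref{lem:comp:slow:prime} (using, as the paper does, that $[x]=\lfloor x+1/2\rfloor$), then apply Theorem \ref{thm:summary:bmr} to the functions $q_j(x)+v_j(x\log x)$ after checking property (P). The only difference is that you spell out the verification of property (P) for the $\nabla$-span (bounding derivatives of order $\ge 1$ of $v_j(x\log x)$) and you compute $\mathrm{poly}(\tilde u_1,\dots,\tilde u_k)=\{0\}$, while the paper treats these points more tersely and frames the $\mathrm{poly}$ inclusion in terms of the $2k$-function collection $q_1,\dots,q_k,q_1+v_1,\dots,q_k+v_k$; both choices satisfy the hypothesis of Theorem \ref{thm:summary:bmr}(3), so your version is a valid alternative.
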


\begin{proof}
Let $D =  \bigcup\limits_{j=1}^k D_j$, where $D_j = \{n: [v_j(n \log n)] \ne [v_j(p_n)] \}$ 
Since $[x] = \lfloor x +1/2 \rfloor$, we can write $$D= \{n: \lfloor v_j(n \log n) +1/2 \rfloor \ne \lfloor v_j(p_n) +1/2 \rfloor \}.$$ 
By Lemma \ref{lem:comp:slow:prime}, 
\[ \lim_{N \rightarrow \infty} \frac{1}{W(N)} \sum_{n=1}^N w(n) 1_D(n) = 0.\]  
Thus, it is enough to show that formulas \eqref{eq1:thm:multi} - \eqref{eq1:thm:multi3} hold when $ \lfloor v_j (p_n)  \rfloor $ is replaced by $\lfloor v_j (n \log n)  \rfloor$.

Since polynomials belong to every maximal Hardy field, there exists a Hardy field $\mathbf{H}$ such that 
$$q_1(x), \dots, q_k(x), v_1(x \log x), \dots, v_k(x \log x) \in \mathbf{H}.$$
Also, the condition $\lim\limits_{x \rightarrow \infty} \frac{|v_j(x)|}{\log x} < \infty$ implies that
$$\lim\limits_{x \rightarrow \infty} v_j' (x) = \lim_{x \rightarrow \infty} \frac{v_j(x)}{x} = 0$$ 
for all $j = 1, \dots, k $.
Thus, from condition \eqref{eq:cond:thm;multi}, we conclude that
\begin{enumerate}[(i)] 
\item $q_1(x) + v_1(x \log x), \dots, q_k(x) + v_k (x \log x)$ and $W(x)$ satisfy property $(P)$,
\item for any $q(x) \in \mathbb{Z}[x]$ and $v(x) \in \text{span}^* (q_1(x)+ v_1(x \log x), \dots, q_k(x)+v_k (x \log x))$, 
$$\lim_{x \rightarrow \infty} |v(x) - q(x)| = \infty.$$
\end{enumerate}
Therefore, by statements (1) and (2) of Theorem \ref{thm:summary:bmr}, \eqref{eq1:thm:multi} and \eqref{eq1:thm:multi2} hold when $ \lfloor v_j (p_n)  \rfloor $ is replaced with $\lfloor v_j (n \log n)  \rfloor$. 

Now suppose that $q_1(x), \dots, q_k(x)$ are jointly intersective. 
Condition \eqref{eq:cond:thm;multi} implies
\[ \text{poly } (q_1, \dots, q_k, q_1+v_1, \dots, q_k+v_k) \subset \text{ span } (q_1, \dots, q_k).\]
Hence, by statement (3) of Theorem  \ref{thm:summary:bmr}, \eqref{eq1:thm:multi3} holds when $ \lfloor v_j (p_n)  \rfloor $ is replaced with $\lfloor v_j (n \log n)  \rfloor$.
\end{proof}

The following combinatorial corollary follows now from Furstenberg's correspondence principle and items (2) and (3) in Theorem \ref{thm:multiple:slow}. 
\begin{Corollary}
\label{Cor:multrec}
Let $q_1(x), \dots, q_k(x) \in \mathbb{Z}[x]$.  
Let $v_1(x), \dots, v_k(x)$ be functions belonging to a Hardy field such that for each $j = 1, 2, \dots, k$, 
$\lim\limits_{x \rightarrow \infty} \frac{|v_j(x)|}{\log x} < \infty$.
Suppose that there exists $W(x) \in {\mathbf E}$ with the property that $\lim\limits_{x \rightarrow \infty} W(x) = \infty$, $w(x) := W'(x)$ is non-increasing and positive and for any $(c_1, \dots, c_k) \in \mathbb{R}^k \setminus \{ (0, 0, \dots, 0) \}$, 
\begin{equation*}
\lim_{x \rightarrow \infty} \frac{| c_1 v_1 (x) + \cdots + c_k v_k(x)|}{\log W(x)} = \infty.
\end{equation*}
Then, for any $E \subset \mathbb{N}$ with $d^* (E) > 0$, there exist $a$ and $n \in \mathbb{N}$ such that 
\begin{equation}
\label{eq1:corLmultrecu}
a,  a + q_1 (n) + [ v_1 (p_n) ], \cdots,  a+ q_k(n) + [ v_k(p_n) ] \in E.
\end{equation}
In addition, assume that the polynomials $q_1(x), \dots, q_k(x)$ are jointly intersective. 
Then, for any $E \subset \mathbb{N}$ with $d^* (E) > 0$, there exist $a$ and $n \in \mathbb{N}$ such that 
\begin{equation}
\label{eq2:corLmultrecu}
a, a + q_1(n), a + q_1 (n) + [ v_1 (p_n) ], \cdots, a+ q_k(n), a+ q_k(n) + [ v_k(p_n) ] \in E.
\end{equation}
\end{Corollary}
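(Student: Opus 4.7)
The plan is to deduce both assertions from the ergodic-theoretic results established earlier in this subsection, via Furstenberg's correspondence principle. Given $E \subset \mathbb{N}$ with $d^*(E) > 0$, the correspondence principle furnishes a measure preserving system $(X, \mathcal{B}, \mu, T)$ and a set $A \in \mathcal{B}$ with $\mu(A) = d^*(E) > 0$ such that, for every finite collection of integer shifts $n_1, \dots, n_m$,
\[
d^*\bigl(E \cap (E - n_1) \cap \cdots \cap (E - n_m)\bigr) \;\ge\; \mu\bigl(A \cap T^{-n_1} A \cap \cdots \cap T^{-n_m} A\bigr).
\]
Both claims therefore reduce to exhibiting an $n \in \mathbb{N}$ for which the corresponding intersection of preimages of $A$ has positive measure, since any element $a$ of the resulting non-empty translate-intersection of $E$ witnesses the conclusion.

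For \eqref{eq1:corLmultrecu}, I would apply Theorem \ref{thm:multiple:slow}(2) with the shifts $q_j(n) + [v_j(p_n)]$. That result yields
\[
\lim_{N \to \infty} \frac{1}{W(N)} \sum_{n=1}^N w(n)\, \mu\bigl(A \cap T^{-(q_1(n) + [v_1(p_n)])} A \cap \cdots \cap T^{-(q_k(n) + [v_k(p_n)])} A\bigr) \;\ge\; \mu(A)^{k+1} > 0,
\]
so the required $n$ exists.

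For \eqref{eq2:corLmultrecu}, I need a $2k$-fold intersection involving the $2k$ shifts $q_j(n)$ and $q_j(n) + [v_j(p_n)]$, $j = 1, \dots, k$. Since this configuration is outside the scope of Theorem \ref{thm:multiple:slow}(3) as stated, my strategy is first to use Lemma \ref{lem:comp:slow:prime} to replace $[v_j(p_n)]$ by $[v_j(n \log n)]$ at a cost that is asymptotically negligible in the weighted average, and then to invoke Theorem \ref{thm:summary:bmr}(3) of \cite{BMR} directly, applied to the $2k$ Hardy field functions $u_{2j-1}(x) = q_j(x)$ and $u_{2j}(x) = q_j(x) + v_j(x \log x)$ with the jointly intersective collection $\{q_1, \dots, q_k\}$ itself. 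The main obstacle will be verifying the hypotheses of that theorem for this $2k$-tuple, namely the inclusion $\mathrm{poly}(u_1, \dots, u_{2k}) \subset \mathrm{span}(q_1, \dots, q_k)$ and property (P) for $W$ and these functions. Both should reduce, via Theorem \ref{lem3} and the growth assumption on the $v_j$, to the fact that every non-trivial linear combination $\sum_j c_j v_j(x \log x)$ either is bounded or grows strictly faster than $\log W(x)$, together with the observation that derivatives of order $\ge 1$ of $v_j(x \log x)$ tend to $0$ (so they contribute nothing obstructive to the gradient-span analysis).
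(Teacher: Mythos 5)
Your proposal is correct, and for the first assertion it is exactly the paper's argument: Theorem \ref{thm:multiple:slow}(2) plus Furstenberg's correspondence principle. For the second assertion, you have in fact spotted a real imprecision in the paper's derivation. The paper simply asserts that \eqref{eq2:corLmultrecu} ``follows from item (3) in Theorem \ref{thm:multiple:slow},'' but the statement of Theorem \ref{thm:multiple:slow}(3) only provides a $k$-fold intersection involving $T^{-q_j(n)-[v_j(p_n)]}$, not the $2k$-fold intersection also involving $T^{-q_j(n)}$ that \eqref{eq2:corLmultrecu} requires. Your fix---applying Lemma \ref{lem:comp:slow:prime} (via $[x]=\lfloor x+1/2\rfloor$) to pass to $v_j(n\log n)$, and then invoking Theorem \ref{thm:summary:bmr}(3) for the $2k$ functions $q_1,\dots,q_k,q_1+v_1(x\log x),\dots,q_k+v_k(x\log x)$ with the jointly intersective family $\{q_1,\dots,q_k\}$---is exactly what the paper's own proof of Theorem \ref{thm:multiple:slow}(3) implicitly establishes: the poly-span computation there is carried out for that $2k$-tuple, even though the displayed conclusion is stated for only $k$ functions. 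Your sketch of the hypothesis verification is sound: since $v_j(x)\ll\log x$, every derivative of order $\ge 1$ of $v_j(x\log x)$ tends to $0$, so property (P) and the poly-inclusion reduce to the hypothesis that any nonzero $\sum_j c_j v_j$ grows strictly faster than $\log W$, which by Theorem \ref{lem3} carries over to the substituted functions $v_j(x\log x)$. (One small imprecision: you allow for the linear combination to be bounded, but under the corollary's hypothesis a \emph{nonzero} combination always grows faster than $\log W$, so the bounded alternative only arises when all $c_j=0$.) In short, the proposal is sound and, if anything, more careful than the paper's one-line citation.
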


\begin{Remark}
If we define $w_j(x) = v_j(x) + 1/2$ for $1 \leq j \leq k$, then $w_1, \dots, w_k$ also satisfy all the assumptions in Corollary \ref{Cor:multrec}. Since $[x] = \lfloor x+ 1/2 \rfloor$, one may replace $[v_j (p_n)]$ in \eqref{eq1:corLmultrecu} and \eqref{eq2:corLmultrecu} with $\lfloor v_j(p_n) \rfloor$.
\end{Remark}

\subsection{Benford law}\label{subsec:3.4} 
The first digit problem concerns the relative frequency distribution of leading digits of numbers in datasets.
Let $(x_n)_{n \in \mathbb{N}}$ be a sequence of positive real numbers. 
We say that $(x_n)_{n \in \mathbb{N}}$ satisfies the {\em first digit law} if, for any $a \in \{1, 2, \dots, 9\}$, 
\begin{equation}
\lim_{N \rightarrow \infty} \frac{1}{N} \left| \{1 \leq n \leq N:  \text{the first digit of } x_n \text{ is } a\} \right| = \log_{10} (1+1/a).
\end{equation} 
A classical example of a sequence satisfying this law is  $(2^n)_{n \in \mathbb{N}}$ (see, for example, \cite{AA}). 
Note that 
 the first digit of  $2^n$  is $a$ if and only if  $\{ \log_{10} 2^n \} \in [\log_{10} a, \log_{10} a+1).$ 
Therefore, the fact that the sequence $(2^n)_{n \in \mathbb{N}}$ satisfies the first digit law follows from the uniform distribution modulo $1$ of the sequence $(\log_{10} 2^n)_{n \in \mathbb{N}}$. 

One can also consider frequency distribution of digits beyond the first one.
We say that a sequence $(x_n)_{n \in \mathbb{N}}$ of positive real numbers is {\em (strong) Benford} if $(\log_{10} x_n)_{n \in \mathbb{N}}$ is uniformly distributed $\bmod \, 1$.  
It is known (see Theorem 1 in \cite{Dia}) that this condition is equivalent to the following: 
for any finite string $a_1 a_2 \dots a_k$ of digits, where $k\in \mathbb{N}$, $a_1 \in \{1, 2, \dots, 9\}$ and $a_i \in \{0, 1, 2, \dots, 9\}$ for $2 \leq i \leq k$, the frequency with which $x_n$ starts with $a_1 a_2 \dots a_k$ is given by $$\log_{10} (1+ 1/S),$$
where $S = \sum\limits_{j=1}^k a_j 10^{k-j}$.
In the remainder of this section, we omit the adjective ``strong" when referring to Benford sequences.

We now list some examples of sequences that satisfy the Benford law, which can be verified by using Theorem \ref{BKS:prev}.
\begin{enumerate}
\item The sequences $(n^n)_{n \in \mathbb{N}}$ and $(p_n^{p_n})_{n \in \mathbb{N}}$ are Benford. Indeed, by Theorem \ref{BKS:prev}, sequences $(n \log_{10} n)_{n \in \mathbb{N}}$ and  $(p_n \log_{10} p_n)_{n \in \mathbb{N}}$ are uniformly distributed $\bmod \, 1$.
\item The sequences $(n!)_{n \in \mathbb{N}}$ and  $(p_n !)_{n \in \mathbb{N}}$ are Benford. This follows from the fact that  $\log_{10} \Gamma(x)$ belongs to a Hardy field, and Theorem \ref{BKS:prev} implies that $(\log_{10} \Gamma(n)))_{n \in \mathbb{N}}$ and $(\log_{10} \Gamma (p_n))_{n \in \mathbb{N}}$ are uniformly distributed $\bmod \, 1$. (See also Theorem 3 in \cite{Dia} for a proof that $(n!)_{n \in \mathbb{N}}$ is a Benford sequence.)
\end{enumerate}

It is well known that neither the sequence of natural numbers  nor that of prime numbers satisfies the first digit law  (see, for example, \cite{MaSch11} and \cite{Wi}). 
On the other hand, Duncan \cite{Dun} showed that the logarithmic density of the set $A_a = \{n \in \mathbb{N}: \text{the first digit of } n \text{ is } a\}$ is $\log_{10} (1 + 1/a)$:
\begin{equation} 
\label{eq:4.58}
\lim_{N \rightarrow \infty} \frac{1}{\log N} \sum_{n=1}^N \frac{1}{n} 1_{A_a}(n) = \log_{10} (1 + 1/a).
\end{equation}
Also, in \cite{Wh}, Whitney proved that the sequence of primes $(p_n)_{n \in \mathbb{N}}$  also satisfies the logarithmic density law:
\begin{equation} 
\label{eq:4.59}
\lim_{N \rightarrow \infty} \frac{1}{\log N} \sum_{n=1}^N \frac{1}{n} 1_{A_a}(p_n) = \log_{10} (1 + 1/a).
\end{equation}
Note that \eqref{eq:4.58} and \eqref{eq:4.59} follow from the fact that $(\log_{10} n)_{n \in \mathbb{N}}$ and  $(\log_{10} p_n)_{n \in \mathbb{N}}$ are $(1/n)$-uniform distributed $\bmod \,1$. (See Theorem \ref{BKS}.) 

Borrowing the terminology from \cite{MaSch}, we will say that a sequence $(x_n)_{n \in \mathbb{N}}$ is {\em log-Benford} if $(\log_{10} x_n)_{n \in \mathbb{N}}$ is $(1/n)$-uniformly distributed $\bmod \, 1$.
In a similar fashion, we will say a sequence $(x_n)_{n \in \mathbb{N}}$ is loglog-Benford if the sequence $(\log_{10} x_n)_{n \in \mathbb{N}}$ is $\left( \frac{1}{n \log n} \right)$-uniformly distributed $\bmod \, 1$.
\begin{Theorem}
\label{thm:ex:simpleprod}
Let 
\[x_n = n^{t_1} p_n^{t_2} \, (n \in \mathbb{N}),\]
where $t_1, t_2 \in \mathbb{R}$ with $(t_1, t_2) \ne (0,0)$.  
\begin{enumerate}
\item If $t_1 + t_2 \ne 0$, then $(x_n)_{n \in \mathbb{N}}$ is not Benford, but it is log-Benford.
\item If $t_1 + t_2 = 0$, then $(x_n)_{n \in \mathbb{N}}$ is not log-Benford, but it is loglog-Benford.
\end{enumerate}
\end{Theorem}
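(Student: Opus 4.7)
Writing $\log_{10} x_n = t_1 \log_{10} n + t_2 \log_{10} p_n$, the whole statement reduces to checking $w(n)$-uniform distribution $\bmod\,1$ of this sequence for $w(n) = 1$, $1/n$, and $1/(n\log n)$, respectively. The approach is to first replace $p_n$ by $n \log n$ using Theorem~\ref{lem2:sec2}, obtaining a genuine Hardy-field function of $n$, and then to apply the criterion (1)$\Leftrightarrow$(7) of Theorem~\ref{BKS} with the appropriate $W$.

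\textbf{Step 1 (reduction to a Hardy field function of $n$).} Let $u(x) := t_2 \log_{10} x$. If $t_2 \ne 0$, then $u \in \mathbf{U}$ satisfies $\lim_{x\to\infty}|u(x)|=\infty$ and $\lim_{x\to\infty}|u(x)|/\log x = |t_2|/\log 10 < \infty$, so Theorem~\ref{lem2:sec2} (applied with $a=d=1$, $x_n = t_1 \log_{10} n$) gives that $(\log_{10} x_n)_{n\in\mathbb{N}}$ is $w(n)$-uniformly distributed $\bmod\,1$ if and only if the sequence
\[
f(n) \;:=\; t_1 \log_{10} n + t_2 \log_{10}(n\log n) \;=\; (t_1+t_2)\log_{10} n + t_2 \log_{10}\log n
\]
is $w(n)$-uniformly distributed $\bmod\,1$. (When $t_2 = 0$, no reduction is needed; one works directly with $f(n) = t_1 \log_{10} n$, and the remaining analysis is identical.) Note that $f(x) \in \mathbf{L} \subset \mathbf{U}$ and is subpolynomial.

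\textbf{Step 2 (applying Theorem~\ref{BKS} with condition (7)).} In each case the weight $W$ belongs to $\mathbf{E}$ and $w = W'$ is positive and non-increasing; I will verify whether for every $q \in \mathbb{Q}[x]$ one has $|f(x) - q(x)|/\log W(x) \to \infty$. For any fixed $q \in \mathbb{Q}[x]$ of positive degree, $|q(x)|$ dominates $f(x)$ polynomially and the ratio trivially tends to infinity; thus it suffices to examine constant $q$, and the critical case is typically $q=0$.

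For Part~(1), $t_1+t_2 \ne 0$: with $W(x) = x$ one has $|f(x)|/\log x \to |t_1+t_2|/\log 10 \in (0,\infty)$, so (7) fails and $(x_n)$ is not Benford; with $W(x) = \log x$, $\log W(x) = \log\log x$ and the $(t_1+t_2)\log_{10} x$ term forces $|f(x) - q(x)|/\log\log x \to \infty$ for every $q \in \mathbb{Q}[x]$, so (7) holds and $(x_n)$ is log-Benford.

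For Part~(2), $t_1+t_2=0$ (so $t_2 = -t_1 \ne 0$ and $f(x) = t_2 \log_{10}\log x$): with $W(x) = \log x$, $|f(x)|/\log\log x \to |t_2|/\log 10 \in (0,\infty)$, so (7) fails at $q=0$ and $(x_n)$ is not log-Benford; with $W(x) = \log\log x$, $\log W(x) = \log\log\log x$ and $|f(x) - q(x)|/\log\log\log x \to \infty$ for every $q \in \mathbb{Q}[x]$ since $\log_{10}\log x / \log\log\log x \to \infty$, so (7) holds and $(x_n)$ is loglog-Benford.

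\textbf{Main obstacle.} There is no substantial difficulty: the entire argument is a careful bookkeeping of the growth rate of $f(x) = (t_1+t_2)\log_{10} x + t_2 \log_{10}\log x$ against $\log W(x)$ in each of the three regimes. The only point requiring a small caveat is the verification of the hypotheses of Theorem~\ref{lem2:sec2} when $t_2 = 0$, which is handled by simply skipping the reduction step.
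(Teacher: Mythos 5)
Your proof is correct and follows essentially the same route as the paper: both reduce $\log_{10} x_n$ to the Hardy field function $t_1 \log_{10} n + t_2 \log_{10}(n\log n)$ via Theorem~\ref{lem2:sec2}, and then apply the criterion of Theorem~\ref{BKS} with $W(x) = x$, $\log x$, or $\log\log x$ in the respective regimes. The only difference is the order in which the two lemmas are invoked, which is immaterial.
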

\begin{proof}
By Theorem \ref{BKS}, we have that
\begin{enumerate}
\item if $t_1 + t_2 \ne 0$, then the sequence $(t_1 \log_{10} n + t_2 \log_{10} (n \log n))_{n \in \mathbb{N}}$ is not uniformly distributed, but $(1/n)$-uniformly distributed 
\item  if $t_1 + t_2 = 0$, then  the sequence $(t_1 \log_{10} n + t_2 \log_{10} (n \log n))_{n \in \mathbb{N}}$ is not $(1/n)$-uniformly distributed, but  $(1/n \log n)$-uniformly distributed.
\end{enumerate}
On the other hand, Theorem \ref{lem2:sec2} implies that the sequence $\log_{10} x_n = t_1 \log_{10} n + t_2 \log_{10} p_n$ is $w(n)$-uniformly distributed if and only if the sequence $(t_1 \log_{10} n + t_2 \log_{10} (n \log n))_{n \in \mathbb{N}}$ is $w(n)$-uniformly distributed.
Therefore, we have the following:
\begin{enumerate}
\item If $t_1 + t_2 \ne 0$, then the sequence $(\log_{10} x_n)_{n \in \mathbb{N}}$ is not uniformly distributed, but $(1/n)$-uniformly distributed. It follows that $(x_n)_{n \in \mathbb{N}}$ is not Benford, but is log-Benford.
\item  If $t_1 + t_2 = 0$, then  the sequence $(\log_{10} x_n)_{n \in \mathbb{N}}$ is not $(1/n)$-uniformly distributed, but  $(1/n \log n)$-uniformly distributed. Thus, $(x_n)_{n \in \mathbb{N}}$ is not log-Benford, but is loglog-Benford.
\end{enumerate}
\end{proof}

We conclude this section by obtaining amplifications of two results established in \cite{MaSch}.
We start with the sequence $( \log 2 \times \cdots \times \log n)_{n}$. 
It was proved in \cite{MaSch} that this sequence is log-Benford and the question was posed whether it is actually Benford (see \cite{MaSch}, Theorem 4.1 and the discussion in Section 5).
The affirmative answer to this question corresponds to the case $j=1$ in the following more general result. 
Recall that $\log^{(j)}$ denotes the $j$-fold composition of the  logarithm.
\begin{Theorem}
\label{thm:sum-log}
Let $j \in \mathbb{N}$.  
Define
$$U_n^{(j)} = \prod_{k= A_j}^n \log^{(j)} k,$$
where  $A_j \in \mathbb{N}$ is chosen so that $\log^{(j)} x$ is defined on $x \geq A_j$. 
Then $(U_n^{(j)})_n$ is Benford. 
\end{Theorem}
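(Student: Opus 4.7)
The plan is to reduce the statement to Theorem \ref{thm:sum:ud:new} (the result about the distribution of partial sums $\sum_{k=1}^n f(k)$ for subpolynomial $f\in\mathbf{U}$). First I would rewrite
\[
\log_{10} U_n^{(j)} \;=\; \sum_{k=A_j}^n \log_{10}\!\bigl(\log^{(j)} k\bigr) \;=\; \frac{1}{\log 10}\sum_{k=A_j}^n \log^{(j+1)} k,
\]
so that, up to an irrelevant additive constant (which does not affect uniform distribution $\bmod\,1$), the sequence $(\log_{10}U_n^{(j)})_n$ has the form $\bigl(\sum_{k=1}^n f(k)\bigr)_n$ with
\[
f(x) \;=\; \frac{1}{\log 10}\,\log^{(j+1)} x \;\in\; \mathbf{U}.
\]
By the standard reformulation of the Benford property, it suffices to prove that $(\log_{10} U_n^{(j)})_n$ is uniformly distributed $\bmod\,1$.

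Next I would apply Theorem \ref{thm:sum:ud:new} with the weight $W(x)=x$ (so $w(x)=1$, $W(x)/w(x)=x$). Since $f$ is clearly subpolynomial, what needs to be checked is condition (3): for every $q(x)\in\mathbb{Q}[x]$,
\[
\lim_{x\to\infty} x\,\bigl|f(x) - q(x)\bigr| \;=\; \infty.
\]
There are two cases. If $q$ is nonconstant, then $|q(x)|$ grows at least linearly, while $f(x)\ll \log x$, so $|f(x)-q(x)|\sim|q(x)|\succ 1$, and trivially $x|f(x)-q(x)|\to\infty$. If $q(x)\equiv c$ is a constant, then because $\log^{(j+1)} x\to\infty$ as $x\to\infty$ (this is where the iterated logarithm being defined and tending to infinity is used), we have $|f(x)-c|\to\infty$, hence a fortiori $x|f(x)-c|\to\infty$.

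Therefore condition (3) is satisfied and Theorem \ref{thm:sum:ud:new} yields that $\bigl(\sum_{k=1}^n f(k)\bigr)_n$ is uniformly distributed $\bmod\,1$ (taking $w(n)=1$, uniform distribution $\bmod\,1$ is precisely $w(n)$-uniform distribution). Consequently $(\log_{10} U_n^{(j)})_n$ is uniformly distributed $\bmod\,1$, and $(U_n^{(j)})_n$ is Benford. There is really no substantial obstacle here: the only non-formal step is the constant-$q$ case of condition (3), which rests on the elementary fact that $\log^{(j+1)} x\to\infty$, and this is the reason the proof works uniformly in $j$.
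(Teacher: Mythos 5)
Your proof is correct and is exactly the approach the paper takes: the paper's own proof of Theorem~\ref{thm:sum-log} consists of the single remark that it follows directly from Theorem~\ref{thm:sum:ud:new} applied with $W(x)=x$, and you have simply spelled out the straightforward verification of condition~(3) in the two cases (nonconstant $q$, where the polynomial dominates; constant $q$, where $\log^{(j+1)}x\to\infty$).
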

Theorem \ref{thm:sum-log} follows directly from Theorem \ref{thm:sum:ud:new} in Section \ref{sec2}.

Consider now the sequence of primorials $(p_n \#)_{n \in \mathbb{N}}$, where $p_n \# = \prod\limits_{k=1}^n p_k$.
It was shown in \cite{MaSch} that the sequence $(p_n \#)_{n \in \mathbb{N}}$ is Benford.
We will examine a more general sequence 
\[ x_n = \prod_{k=1}^n k^{t_1} p_k^{t_2} = (n!)^{t_1} (p_n\#)^{t_2},\]
where $t_1, t_2 \in \mathbb{R}$ with $(t_1, t_2) \ne (0,0)$. 

\begin{Theorem}
\label{thm:prod:Benford}
Let 
\[ x_n = \prod_{k=1}^n k^{t_1} p_k^{t_2},\]
where $t_1, t_2 \in \mathbb{R}$ with $(t_1, t_2) \ne (0,0)$. 
\begin{enumerate}
\item If $t_1 + t_2 \ne 0$, then $(x_n)_{n \in \mathbb{N}}$ is Benford.
\item If $t_1 +  t_2 = 0$, then $(x_n)_{n \in \mathbb{N}}$ is log-Benford.  
\end{enumerate}
\end{Theorem}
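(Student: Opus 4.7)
The plan is to write $y_n := \log_{10} x_n = \sum_{k=1}^n f(k)$ with $f(k) := t_1\log_{10} k + t_2\log_{10} p_k$, and to reduce to a Hardy-field computation by comparing against the shadow
\[
\tilde f(x) := t_1\log_{10} x + t_2\log_{10}(x\log x) = (t_1+t_2)\log_{10}x + t_2\log_{10}\log x \in \mathbf{E},
\]
with partial sums $\tilde y_n := \sum_{k=1}^n \tilde f(k)$. The first step is to apply Theorem \ref{thm:sum:ud:new} to $\tilde f$ with an appropriate weight. In case (1), where $t_1+t_2\neq 0$, I take $W(x)=x$, so that the condition $\lim_{x\to\infty}x|\tilde f(x)-q(x)|=\infty$ for every $q\in\mathbb{Q}[x]$ is immediate (the dominant term $(t_1+t_2)x\log_{10} x$ outgrows any polynomial), which gives that $(\tilde y_n)$ is uniformly distributed mod $1$. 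In case (2), where $\tilde f(x)=t_2\log_{10}\log x$, I take $W(x)=\log x$; the condition $\lim_{x\to\infty}x\log x\cdot|\tilde f(x)-q(x)|=\infty$ holds for every $q\in\mathbb{Q}[x]$, giving that $(\tilde y_n)$ is $\tfrac{1}{n}$-uniformly distributed mod $1$.

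The crux is to transfer the equidistribution from $\tilde y_n$ to $y_n$ by absorbing the perturbation
\[
r_n := y_n - \tilde y_n = t_2\sum_{k=1}^n\log_{10}\!\left(\frac{p_k}{k\log k}\right).
\]
I would use Cipolla's iterative asymptotic for $p_k$, namely $p_k = k\log k + k\log\log k - k + k(\log\log k -2)/\log k + \cdots$, to absorb the smooth Hardy-field part of $r_n$ into a refined shadow $\tilde f^{(J)}(x) \in \mathbf{E}$ that incorporates the first $J$ terms of the asymptotic expansion of $\log_{10}p_k$. Theorem \ref{thm:sum:ud:new} still applies to $\tilde f^{(J)}$ and yields the same equidistribution for $\tilde y_n^{(J)}$. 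The residual $y_n-\tilde y_n^{(J)}$, which captures only the finer fluctuations of primes around their Cipolla approximation, is then handled by combining Lemma \ref{lem:slowper} with the exponential-sum estimate along primes of Mass\'e and Schneider (whose Benford theorem for the primorial is precisely the case $t_1=0,\,t_2=1$ of the present statement), extending it via the joint-equidistribution framework of Subsection~\ref{subsec:3.1} and Proposition~\ref{prop:app:ud-ave} to accommodate the factorial component in parallel.

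The hard part is this last step. The naive Cipolla truncation at any finite depth $J$ leaves a remainder of order $n\cdot(\log\log n)^{J+1}/(\log n)^{J+1}$, which never drops below the $\log W(n)$ threshold that would permit a direct appeal to Theorem \ref{thm:slowper}. Overcoming this requires an analytic ingredient beyond the slow-perturbation lemma, namely the Vinogradov--Korobov sharpening of the prime number theorem used by Mass\'e and Schneider to show that the Weyl sum $\sum_{n\le N} e(\alpha\vartheta(p_n))$ is $o(N)$ for every nonzero real $\alpha$. Combining this analytic input with the Hardy-field machinery above forces $\tfrac{1}{W(N)}\sum_{n\le N}w(n)e(my_n)\to 0$ for every nonzero integer $m$ and thereby establishes, via Theorem \ref{lem:Tsu:Equ}, the claimed Benford (case (1)) and log-Benford (case (2)) behaviour of $(x_n)$.
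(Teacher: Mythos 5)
Your reduction of $\log_{10}x_n$ to the shadow sum $\tilde y_n=\sum_{k\le n}\tilde f(k)$ and the application of Theorem~\ref{thm:sum:ud:new} to get the weighted equidistribution of $(\tilde y_n)$ are both correct. The gap is in the transfer from $\tilde y_n$ to $y_n$, and you have correctly diagnosed that it is a gap: after any finite Cipolla truncation the residual $r_n^{(J)}=y_n-\tilde y_n^{(J)}$ has increments of size $(\log\log n)^{J+1}/(\log n)^{J+1}$, which is far larger than the $w(n)/W(n)$ threshold that Lemma~\ref{lem:slowper} (or Theorem~\ref{thm:slowper}) needs, so the slow-perturbation lemma cannot absorb it. The proposed repair --- importing the Vinogradov--Korobov zero-free region via Mass\'e--Schneider and ``extending it via the joint-equidistribution framework of Subsection~\ref{subsec:3.1}'' --- is not an argument. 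The phase $m\,y_n=m\bigl(\tilde y_n^{(J)}+r_n^{(J)}\bigr)$ has smooth and arithmetic pieces that are not decorrelated, so one cannot estimate the two Weyl sums separately and add them; and the framework of Subsection~\ref{subsec:3.1} (Theorem~\ref{ud:d-dim}, Proposition~\ref{prop:app:ud-ave}) is exclusively about sequences indexed by Hardy-field functions, into which the prime-counting fluctuation $r_n$ does not fit. As written, the ``hard part'' remains a statement of intent rather than a proof.

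What the paper actually does is structurally different and bypasses the whole issue. It never compares $y_n$ to a shadow; instead it applies the weighted van der Corput trick (Theorem~\ref{vdC}) directly to $y_n$. For fixed $h$, the difference $y_{n+h}-y_n$ splits naturally into the Hardy-field function $f(n)=(a+b)\sum_{j\le h}\log_{10}(n+j)+b\sum_{j\le h}\log_{10}\log(n+j)$ (coming from the factorial and the shadow of the primorial) plus the genuinely arithmetic error $b\log_{10}\bigl(\frac{p_{n+h}\#/p_n\#}{Q_{n+h}/Q_n}\bigr)$. The arithmetic error per term is $O_h\bigl(\frac{\log\log n}{\log n}\bigr)$ by Rosser's \emph{elementary} inequality $n\log n\le p_n\le n\log n+Cn\log\log n$, so its weighted Ces\`aro average vanishes with no need for Vinogradov--Korobov or Cipolla; the remaining Hardy-field Weyl sum is then controlled by Theorem~\ref{lem:estimate:expsum:Eulersum}, and the $h\to\infty$ limit in the van der Corput criterion closes the argument. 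The crucial gain is that van der Corput converts a cumulative error (which is large) into a per-term error (which is small and averages out), which is exactly the decoupling your shadow approach lacks.
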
 

\begin{Remark}
We do not know whether $(x_n)_{n \in \mathbb{N}}$ is Benford or not when $t_1 + t_2 = 0$. 
\end{Remark}

\begin{proof}[Proof of Theorem \ref{thm:prod:Benford}.]
It suffices to prove that 
\begin{enumerate}[(i)]
\item for any $a, b \in \mathbb{R}$ with $a+b \ne 0$, 
\begin{equation}
\label{eq:cor:pairwise:log:thm3.31}
 \lim_{N \rightarrow \infty} \frac{1}{ N} \sum_{n=1}^N e ( a \log_{10} n! + b \log_{10} p_n \# ) = 0.
\end{equation} 
\item for any nonzero $a \in \mathbb{R}$,
\begin{equation}
\label{eq:cor:pairwise:log:thm3.31:2}
 \lim_{N \rightarrow \infty} \frac{1}{\log N} \sum_{n=1}^N \frac{1}{n}e ( a \log_{10} n!  - a \log_{10} p_n \# ) = 0.
\end{equation} 
\end{enumerate}

Let us prove \eqref{eq:cor:pairwise:log:thm3.31} first. 
Let $Q_n = \prod\limits_{m=2}^n m \log m$ for $n \geq 2$. 
Let $y_n = a \log_{10} n! + b \log_{10} p_n\#$.
Note that for any $n \geq 2$ and any $h \in \mathbb{N}$,
\[y_{n+h} - y_n = a \log_{10} ((n+h)!/n!) + b \log_{10} \frac{p_{n+h}\#/p_n\#}{Q_{n+h}/Q_n} + b \log_{10} (Q_{n+h}/Q_n).\]
We will use Theorem \ref{vdC} for $w(n)=1, n \in \mathbb{N}$.
By the triangle inequality, we have
\begin{equation}
\label{apply-vdC:thm3.31}
 \frac{1}{N} \left| \sum_{n=1}^N  e(y_{n+h} - y_n) \right| \leq \frac{1}{N} + \frac{2 \pi |b|}{ N} \sum_{n=2}^N  \log_{10} \frac{p_{n+h}\#/p_n\#}{Q_{n+h}/Q_n} + \frac{1}{ N} \left|\sum_{n=2}^N  e \left( b \log_{10} \frac{Q_{n+h}}{Q_n} + a \log_{10} \frac{(n+h)!}{n!} \right) \right|.
 \end{equation}

To estimate the quantity $\left( \frac{2 \pi |b|}{N} \sum\limits_{n=2}^N  \log_{10} \frac{p_{n+h}\#/p_n\#}{Q_{n+h}/Q_n} \right)$ on the right-hand side of \eqref{apply-vdC:thm3.31}, we will use the classical result of Roser (\cite{Ro}) which states that there exists $C > 0$ such that for all $n \in \mathbb{N}$,
 \begin{equation}
 \label{eq:primebound}
 n \log n \leq p_n \leq n \log n + C n \log \log n.
 \end{equation}
 It follows from \eqref{eq:primebound} that for $n \geq 3$,
 \[ 1 \leq  \frac{p_{n+h}\#/p_n\#}{Q_{n+h}/Q_n} \leq \prod_{j=1}^h \frac{(n+j) \log (n+j) + C(n+j) \log \log (n+j)}{(n+j) \log (n+j)} \leq \left( 1+ C \left( \frac{\log \log n}{\log n}\right) \right)^h,  \]
and so 
 \[ \frac{1}{ N} \sum_{n=3}^N  \log_{10} \frac{p_{n+h}\#/p_n\#}{Q_{n+h}/Q_n}  \leq \frac{Ch}{\log 10} \frac{1}{N} \sum_{n=3}^N \frac{\log \log n}{ \log n} \leq \frac{Ch}{\log 10} \frac{\log \log N}{\log N} \rightarrow 0 \]
as $N \rightarrow \infty.$ 

In order to apply Theorem \ref{vdC}, it remains to show that 
\begin{equation}
\label{thm:pari:eq:last:thm3.31}
\lim_{h \rightarrow \infty} \limsup_{N \rightarrow \infty} \frac{1}{ N} \left|\sum_{n=2}^N  e \left( b \log_{10} \frac{Q_{n+h}}{Q_n} + a \log_{10} \frac{(n+h)!}{n!} \right) \right| =0.
\end{equation}
Let $f(x) = (a+b) \sum\limits_{j=1}^h \log_{10} (x+j) + b \sum\limits_{j=1}^h \log_{10} \log (x+j)$.
Then, $f(n) = b \log_{10} (Q_{n+h}/Q_n) + a \log_{10} ((n+h)!/n!)$.
Note that
\[ \lim_{x \rightarrow \infty} |f(x) -  (a+b) h \log_{10} x -  b h \log_{10} \log x| = 0,\]
and so
\begin{equation} 
\label{eq:4.66}
\lim_{N \rightarrow \infty} \left| \frac{1}{N} \sum_{n=2}^N   \frac{1}{ N}  \sum_{n=2}^N  e \left( b \log_{10} \frac{Q_{n+h}}{Q_n} + a \log_{10} \frac{(n+h)!}{n!} \right) - \frac{1}{ N} \sum_{n=2}^N e \big( (a+b) h \log_{10} n  + bh \log_{10} \log n  \big)  \right| = 0.
\end{equation}
Applying Theorem \ref{lem:estimate:expsum:Eulersum} to $u(x) = (a+b) \log_{10} x  + b \log_{10} \log x$, we obtain
\begin{equation}
\label{eq:4.67}
 \lim_{h \rightarrow \infty} \limsup_{N \rightarrow \infty} \left| \frac{1}{ N} \sum_{n=2}^N e \big( (a+b) h \log_{10} n  + bh \log_{10} \log n  \big)  \right| = 0.
\end{equation}
Now \eqref{thm:pari:eq:last:thm3.31} follows from \eqref{eq:4.66} and \eqref{eq:4.67}. 
Thus, $\lim\limits_{h \rightarrow \infty} \limsup\limits_{N \rightarrow \infty} \frac{1}{N} \left| \sum\limits_{n=1}^N  e(y_{n+h} - y_n) \right| = 0$. Applying Theorem \ref{vdC}, we have \eqref{eq:cor:pairwise:log:thm3.31}.

Now let us prove \eqref{eq:cor:pairwise:log:thm3.31:2}: for any nonzero $a$,
\begin{equation}
\lim_{N \rightarrow \infty} \frac{1}{\log N} \sum_{n=1}^N \frac{1}{n}e ( a \log_{10} n!  - a \log_{10} p_n \# ) = 0.
\end{equation} 
Let $y_n  = a \log_{10} n! - a \log_{10} p_n\#$.
Note that for any $n \geq 2$ and any $h \in \mathbb{N}$,
\[y_{n+h} - y_n = - a \log_{10} \frac{p_{n+h}\#/p_n\#}{Q_{n+h}/Q_n} - a \log_{10} (Q_{n+h}/Q_n) + a \log_{10} ((n+h)!/n!).\]
We will use Theorem \ref{vdC} for $w(n)=1/n, n \in \mathbb{N}$. 
By the triangle inequality, we obtain
\begin{align}
\label{apply-vdC:2}
 \frac{1}{\log N} \left| \sum_{n=1}^N \frac{1}{n} e(y_{n+h} - y_n) \right| &\leq \frac{1}{\log N} + \frac{2 \pi |a|}{\log N} \sum_{n=2}^N \frac{1}{n} \log_{10} \frac{p_{n+h}\#/p_n\#}{Q_{n+h}/Q_n} \\
 &\quad + \frac{1}{\log N} \left|\sum_{n=2}^N \frac{1}{n} e \left( - a \log_{10} \frac{Q_{n+h}}{Q_n} + a \log_{10} \frac{(n+h)!}{n!} \right) \right|. \notag
 \end{align}
We use \eqref{eq:primebound} to estimate the quantity $\left( \frac{2 \pi |a|}{\log N} \sum\limits_{n=2}^N \frac{1}{n} \log_{10} \frac{p_{n+h}\#/p_n\#}{Q_{n+h}/Q_n} \right)$ on the right-hand side of \eqref{apply-vdC:2}: 
 \[ \frac{1}{\log N} \sum_{n=3}^N \frac{1}{n} \log_{10} \frac{p_{n+h}\#/p_n\#}{Q_{n+h}/Q_n}  \leq \frac{Ch}{\log N} \sum_{n=3}^N \frac{\log \log n}{n \log n} \leq Ch \frac{(\log \log N)^2}{\log N} \rightarrow 0 \]
as $N \rightarrow \infty.$ 

Thus, it remains to show that 
\begin{equation}
\label{thm:pari:eq:last}
\lim_{h \rightarrow \infty} \limsup_{N \rightarrow \infty} \frac{1}{\log N} \left|\sum_{n=2}^N \frac{1}{n} e \left( - a \log_{10} \frac{Q_{n+h}}{Q_n} + a \log_{10} \frac{(n+h)!}{n!} \right) \right| =0.
\end{equation}
Note that  $- a \log (Q_{n+h}/Q_n) + a \log ((n+h)!/n!) = - a \sum_{j=1}^h \log_{10} \log (n+j)$. 
Since $$\lim\limits_{x \rightarrow \infty} \left| \sum_{j=1}^h \log_{10} \log (x+j) - h \log_{10} \log x \right| =0,$$
we have that 
\begin{equation} 
\label{eq:comp:pair}
\lim_{N \rightarrow \infty} \left| \frac{1}{\log N} \sum_{n=2}^N \frac{1}{n} e \left(- a \sum_{j=1}^h \log_{10} \log (n+j) \right) 
- \frac{1}{\log N} \sum_{n=2}^n \frac{1}{n} e \left(- ah \log_{10} \log n\right)  \right| = 0.
\end{equation}
Applying Theorem \ref{lem:estimate:expsum:Eulersum} to $u(x) = -a \log_{10} \log x$, we have that 
\begin{equation}
\label{eq:apply:Euler}
\lim_{h \rightarrow \infty} \limsup_{N \rightarrow \infty} \frac{1}{\log N} \left| \sum_{n=2}^N \frac{1}{n} e \left(- ah \log_{10} \log n\right)  \right| =0. 
\end{equation}
Therefore, \eqref{thm:pari:eq:last} follows, and so does \eqref{eq:cor:pairwise:log:thm3.31:2}. 
 \end{proof} 

The following result is an immediate consequence of Theorem \ref{thm:prod:Benford} and Theorem \ref{WeylCriterion} (2).
\begin{Corollary}
\label{thm:primorial:factorial}
The sequence 
\[ (\log_{10} n!, \log_{10} p_n \#)_{n \in \mathbb{N}}\]
is $(1/n)$-uniformly distributed in $\mathbb{T}^2$. 
Therefore, $(n!, p_n \#)$ satisfies joint log-Benford law: for any strings of digits $S_1 = a_1 a_2 \dots a_l$ and $S_2 = b_1 b_2 \dots b_m$, where $l, m \in \mathbb{N}$, $a_1, b_1 \in \{1, 2, \dots, 9\}$ and $a_i, b_j \in \{0, 1, \dots, 9\}$ for $2 \leq i \leq l, 2 \leq j \leq m$, the logarithmic density of the set $A = \{n:  n! \text{ starts with } S_1 \text{ and } p_n\# \text{ starts with } S_2 \}$  is given by 
\[ \log_{10} (1+ 1/S_1) \cdot \log_{10} (1+ 1/S_2). \]
\end{Corollary}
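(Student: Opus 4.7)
The plan is to derive Corollary \ref{thm:primorial:factorial} in three stages: first verify the multidimensional Weyl criterion for $(1/n)$-uniform distribution of the pair, then translate the resulting uniform distribution statement into the joint digit-frequency statement via the standard cylinder-set description of leading digits.

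First, by Theorem \ref{WeylCriterion}(2) applied with $w(n)=1/n$ and $W(N)\sim\log N$, it suffices to show that for every nonzero $(m_1,m_2)\in\mathbb{Z}^2$,
\[
\lim_{N\to\infty}\frac{1}{\log N}\sum_{n=1}^{N}\frac{1}{n}\,e\bigl(m_1\log_{10}n!+m_2\log_{10}p_n\#\bigr)=0.
\]
The vanishing of this exponential sum will follow from Theorem \ref{thm:prod:Benford} applied with $(t_1,t_2)=(m_1,m_2)$, split into two cases according to the sign of $m_1+m_2$.

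Second, if $m_1+m_2\neq 0$, then Theorem \ref{thm:prod:Benford}(1) asserts that $x_n:=(n!)^{m_1}(p_n\#)^{m_2}$ is Benford, i.e.\ the sequence $(m_1\log_{10}n!+m_2\log_{10}p_n\#)_{n\in\mathbb{N}}$ is uniformly distributed $\bmod\,1$. Invoking Theorem \ref{lem:Tsu1} with $a_n=e(m_1\log_{10}n!+m_2\log_{10}p_n\#)$, uniform distribution upgrades automatically to $(1/n)$-uniform distribution, producing the required vanishing. If instead $m_1+m_2=0$ (necessarily with $m_1\neq 0$), then Theorem \ref{thm:prod:Benford}(2) gives directly that $(m_1\log_{10}n!-m_1\log_{10}p_n\#)_{n\in\mathbb{N}}$ is $(1/n)$-uniformly distributed $\bmod\,1$, which is exactly the required vanishing. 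Together these cases establish $(1/n)$-uniform distribution of $(\log_{10}n!,\log_{10}p_n\#)_{n\in\mathbb{N}}$ in $\mathbb{T}^2$.

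Third, I pass to the digit statement. For any positive real $x$ and any length-$\ell$ digit string $S=a_1\cdots a_\ell$ with value $|S|=\sum_{j=1}^{\ell}a_j 10^{\ell-j}$, one checks that $x$ starts with $S$ exactly when $\{\log_{10}x\}\in I_S$, where $I_S\subset[0,1)$ is the half-open arc
\[
I_S=\bigl[\log_{10}|S|-(\ell-1),\ \log_{10}(|S|+1)-(\ell-1)\bigr),
\]
of length $\log_{10}(1+1/|S|)$ (for $|S|=10^\ell-1$ the right endpoint equals $1$, and one reads the arc as closed on the right in $\mathbb{T}$, which is harmless). Hence the event defining $A$ corresponds to $(\{\log_{10}n!\},\{\log_{10}p_n\#\})\in I_{S_1}\times I_{S_2}$, a rectangle in $\mathbb{T}^2$ of area $\log_{10}(1+1/S_1)\cdot\log_{10}(1+1/S_2)$. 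Applying the $(1/n)$-uniform distribution in $\mathbb{T}^2$ just proved to the indicator of this rectangle (approximating it in the usual way by continuous functions if needed, or noting that Definition \ref{def:weighted-u.d.} extends coordinate-wise to boxes) yields the asserted logarithmic density and completes the proof.

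No step presents a serious obstacle: the substantive analytic content is packaged in Theorem \ref{thm:prod:Benford}, and the only point requiring mild care is the elementary but slightly finicky translation from "leading string equals $S$" to the fractional-part condition, in particular the boundary case $|S|=10^\ell-1$, which affects only a measure-zero set.
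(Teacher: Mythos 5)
Your proof is correct and follows the same route the paper intends: the paper states the corollary as an immediate consequence of Theorem \ref{thm:prod:Benford} together with the Weyl criterion (Theorem \ref{WeylCriterion}), and your argument is precisely the expansion of that remark, splitting the nonzero frequency $(m_1,m_2)\in\mathbb{Z}^2$ according to whether $m_1+m_2$ vanishes, upgrading the Benford case to $(1/n)$-weighted averages via Theorem \ref{lem:Tsu1}, and then performing the standard leading-string-to-fractional-part translation. (The only minor point: the formula in Theorem \ref{WeylCriterion}(2) should carry the weights $w(n)/W(N)$ rather than $1/N$ — an evident typo in the paper — and you used it in the correct weighted form.)
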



\begin{thebibliography}{999}

\bibitem{Ap}
T. M. Apostol, {\em Introduction to analytic number theory.}
Undergraduate Texts in Mathematics. Springer-Verlag, New York-Heidelberg, 1976.


\bibitem{AA}
V. I. Arnold and A. Avez,
{\em Ergodic problems of classical mechanics.}
Translated from the French by A. Avez. W. A. Benjamin, Inc., New York-Amsterdam, 1968.

\bibitem{Ben} 
F. Benford, 
{\em The law of anomalous numbers.} 
Proc. Am. Philos. Soc. 78 (1938), 551-572.


\bibitem{BKS} 
V. Bergelson, G. Kolesnik, and Y. Son, 
{\em Uniform distribution of subpolynomial functions along primes and applications.}
J. Anal. Math. 137 (2019), no. 1, 135-187. 

\bibitem{BMc} 
V. Bergelson and R. McCutcheon, 
{\em An ergodic IP polynomial Szemer\'edi theorem.}
 Mem. Amer. Math. Soc. 146 (2000), viii+106 pp.

\bibitem{BMR}
V. Bergelson, J. Moreira, and F. K. Richter, 
{\em Multiple ergodic averages along functions from a Hardy field: convergence, recurrence and combinatorial applications.} 
Adv. Math. 443 (2024), Paper No. 109597, 50 pp.

\bibitem{Bos1981}
M. Boshernitzan,
{\em An extension of Hardy's class L of ``orders of infinity".}
J. Analyse Math. 39 (1981), 235-255.

\bibitem{Bos1982}
M. Boshernitzan,
{\em New ``orders of infinity".}
J. Analyse Math. 41 (1982), 130-167.

\bibitem{Bos}
M. Boshernitzan, 
{\em Uniform distribution and Hardy fields.}
J. Anal. Math. 62 (1994), 225-240.

\bibitem{Dia}
P. Diaconis,
{\em The distribution of leading digits and uniform distribution $\bmod \, 1$.}
Ann. Probability 5 (1977), no. 1, 72-81.

\bibitem{Dun}
R. L. Duncan, 
{\em Note on the initial digit problem.}
Fibonacci Quart., 7 (1969), 474-475.

\bibitem{Har1} G. H. Hardy, Properties of logarithmico-exponential functions, {\em Proc. London Math. Soc.} {\bf 10} (1912), 54-90.

\bibitem{Har2} G. H. Hardy, {\em Orders of infinity}, (Cambridge Tracts in Math. and Math. Phys. vol. 12), 2nd edition, Cambridge, 1924.

\bibitem{HarWri} 
G. H. Hardy and E. M. Wright,
{\em An introduction to the theory of numbers.}
Sixth edition. Revised by D. R. Heath-Brown and J. H. Silverman. With a foreword by Andrew Wiles
Oxford University Press, Oxford, 2008. xxii+621 pp.

\bibitem{KN} L. Kuipers and H. Niederreiter, {\em Uniform distribution of sequences}, Pure Appl. Math., Wiley-Interscience, 1974.


\bibitem{MaSch11}
B. Mass\'{e} and D. Schneider,
{\em A survey on weighted densities and their connection with the first digit phenomenon.} 
Rocky Mountain J. Math. 41 (2011), no. 5, 1395-1415.

\bibitem{MaSch}
B. Mass\'{e} and D. Schneider,
{\em The mantissa distribution of the primorial numbers.}
Acta Arith. 163 (2014), no. 1, 45-58.

\bibitem{Mo} H. L. Montgomery, Ten Lectures on the Interface Between Analytic Number Theory and Harmonic Analysis, CBMS Reg. Conf. Ser. Math. 84, Amer. Math. Soc., 1994.

\bibitem{Newc}
S. Newcomb, 
{\em Note on the frequency of use of the different digits in natural numbers.} 
 American Journal of Mathematics. 4 (1881),  39-40.

\bibitem{N} 
H. Niederreiter, 
{\em Distribution $\bmod \, 1$ of monotone sequences.}
Nederl. Akad. Wetensch. Indag. Math. 46 (1984), no. 3, 315-327.

\bibitem{Ra}
R. A, Raimi,
{\em The first digit problem.}
Amer. Math. Monthly 83 (1976), no. 7, 521-538.

\bibitem{Rh} 
G. Rhin, 
{\em Sur la r\'{e}partition modulo 1 des suites $f(p)$.} 
 Acta. Arith. 23 (1973), 217-248.

\bibitem{Ric}
F. K. Richter, 
{\em Uniform distribution in nilmanifolds along functions from a Hardy field.} 
J. Anal. Math. 149 (2023), no. 2, 421-483.

\bibitem{Ro}
B. Rosser, 
{\em The $n$-th prime is greater than $n \log n$.}
Proc. Lond. Math. Soc. (2), vol.45 (1939), pp. 21-44

\bibitem{Ru}
W. Rudin, 
{\em Fourier analysis on groups.}
Reprint of the 1962 original. Wiley Classics Library. A Wiley-Interscience Publication. John Wiley \& Sons, Inc., New York, 1990. x+285 pp. 

\bibitem{SP} O. Strauch and  \v{S}. Porubsk\'{y} 
{\em Distribution of sequences: a sampler.}
Schriftenreihe der Slowakischen Akademie der Wissenschaften [Series of the Slovak Academy of Sciences], 1. Peter Lang, Frankfurt am Main, 2005. xxii+543 pp. 


\bibitem{Tsu}
M. Tsuji,
{\em On the uniform distribution of numbers mod. $1$.}
J. Math. Soc. Japan 4 (1952), 313-322.

\bibitem{Wh} R. E. Whitney,
{\em Initial digits for the sequence of primes.}
Amer. Math. Monthly 79 (1972), 150-152.

\bibitem{Wi}
A. Wintner,
{\em On the cyclical distribution of the logarithms of the prime numbers.}
The Quarterly Journal of Mathematics, Volume os-6, Issue 1, 1935, Pages 65–68.

\end{thebibliography}
\end{document}